\documentclass[a4paper,english]{amsart}
\usepackage[latin9]{inputenc}
\usepackage{color,hyperref}
\usepackage{amstext}
\usepackage{amsthm}
\usepackage{amssymb}
\PassOptionsToPackage{normalem}{ulem}
\usepackage{ulem}

\makeatletter


\numberwithin{equation}{section}
\numberwithin{figure}{section}
\theoremstyle{plain}
\newtheorem{thm}{\protect\theoremname}
  \theoremstyle{definition}
  \newtheorem{defn}[thm]{\protect\definitionname}
  \theoremstyle{remark}
  \newtheorem{rem}[thm]{\protect\remarkname}
  \theoremstyle{plain}
  \newtheorem{lem}[thm]{\protect\lemmaname}
  \theoremstyle{plain}
  \newtheorem{cor}[thm]{\protect\corollaryname}
  \theoremstyle{plain}
  \newtheorem{prop}[thm]{\protect\propositionname}

\makeatother

\usepackage{babel}
  \providecommand{\corollaryname}{Corollary}
  \providecommand{\definitionname}{Definition}
  \providecommand{\lemmaname}{Lemma}
  \providecommand{\propositionname}{Proposition}
  \providecommand{\remarkname}{Remark}
\providecommand{\theoremname}{Theorem}

\begin{document}

\title[Spectral Invariance on Conic Manifolds with Boundary]{Spectral Invariance of Pseudodifferential Boundary Value Problems on Manifolds
with Conical Singularities}

\author{Pedro T. P. Lopes}
\address{Instituto de Matem\'atica e Estat\'istica, Universidade de S\~ao Paulo,  Rua do Mat\~ao 1010, 05508-090, S\~ao Paulo, SP, Brazil}
\email{pplopes@ime.usp.br}
\thanks{Pedro T. P. Lopes was partially supported by FAPESP (Processo n\'umero 2016/07016-8)}

\author{Elmar Schrohe}
\address{Institut für Analysis, Leibniz Universität Hannover, Welfengarten 1,
30167 Hannover, Germany.}
\email{schrohe@math.uni-hannover.de}


\subjclass[2010]{58J32, 35J70, 47L15, 47A53}
\keywords{Boundary value problems, manifolds with conical singularities, pseudodifferential analysis, spectral invariance.}
\date{\today}

\begin{abstract}
We prove the spectral invariance of the algebra of classical pseudodifferential
boundary value problems on manifolds with conical singularities in the $L_{p}$-setting.
As a consequence we also obtain the spectral invariance of the classical
Boutet de Monvel algebra of zero order operators with parameters.
In order to establish these results, we show the equivalence of Fredholm
property and ellipticity for both cases. 
\end{abstract}

\maketitle

\section{Introduction}

Elliptic boundary value problems on manifolds with conical singularities
have been studied since the 60's, where the work of V. A. Kondratiev
\cite{Kondratiev} stands out, see also Kozlov, Maz'ya and Rossmann
\cite{KozlovMazyaRossmonn} for a detailed presentation. The
pseudodifferential analysis started with  the work
of R. Melrose and G. Mendoza \cite{MelroseMendoza,Melrose}, B. Plamenevsky \cite{Plamenevskij}, and B. -W.
Schulze \cite{Schulze}. Algebras of pseudodifferential boundary
value problems for conical singularities were constructed in the 90's
by A. O. Derviz \cite{Derviz} and E. Schrohe and B.-W. Schulze \cite{SchroheSchulzeConicalBoundaryI,SchroheSchulzeConicalBoundaryII}.
The latter approach combines elements of the Boutet de Monvel calculus \cite{boutetMonvel}
with the pseudodifferential analysis developed
by B. -W. Schulze \cite{SchulzeNorthHolland,Schulze}.
While initially only $L_{2}$-based Sobolev spaces were used, 
S. Coriasco, E. Schrohe and J. Seiler established the continuity
also on Bessel potential  and Besov spaces \cite{CoriascoSchroheSeilerRealizations},
see also \cite{CoriascoSchroheSeilerBoundedHinfty}, relying on work of 
G. Grubb and N. Kokholm \cite{Grubblp,Grubblpparameters}.

Our main result is  the spectral invariance
of the algebra developed  
in \cite{SchroheSchulzeConicalBoundaryII} in the $L_{p}$-setting,
see Theorem \ref{thm:Teorema Principal}. 
This algebra contains, after the composition with order
reducing operators, the classical differential boundary value problems
studied by V. A. Kondratiev \cite{Kondratiev},
hence also their inverses, whenever these exist.
As a by-product we
obtain the spectral invariance of the algebra of zero order classical Boutet
de Monvel operators with parameters in the $L_{p}$-setting, see  Theorem \ref{thm:Teoremabmcomparametros}.
This algebra includes, after composition with order reducing operators,
the differential boundary value problems studied by M. S. Agranovich
and M. I. Vishik in \cite{AgranovichVishik}, which were an important
ingredient for the work of Kondratiev.

It is an immediate consequence of Theorem \ref{thm:Teorema Principal} that the invertibility 
of a conically degenerate boundary value problem is to a large extent independent 
of the space it is considered  on: 
It depends neither on the Sobolev regularity parameter $s$ nor on $1<p<\infty$. 
This is of great practical importance as it allows to check invertibility in the most convenient setting. A similar result holds for the Fredholm property, as we show in 
Corollary \ref{equivalence}. 

This article extends the results of \cite{SchroheSeilerSpectConical} 
to conical manifolds with boundary.  
The need to work with Besov spaces led to interesting new features. In Theorem  \ref{thm:Teoremabmcomparametros}, for example, we consider a zero order parameter-dependent operator $A=\{A(\lambda); \lambda\in \Lambda\}$ in Boutet de Monvel's calculus. 
We show that the invertibility of $A(\lambda)$ for each $\lambda$ together with a 
norm estimate $\|A(\lambda)^{-1}\|\le c\langle \lambda\rangle^r$ 
for a constant  $c\ge 0$ and sufficiently small $r>0$ implies that the inverse also is 
parameter-dependent of order zero. 
In particular, the operator norm will then be uniformly bounded. 
Similar effects can be observed when showing the equivalence of parameter-ellipticity and the Fredholm property with parameters. 

This paper is a step toward the analysis of nonlinear partial differential equations on 
manifolds with boundary and conical singularities, see e.g. 
\cite{RoidosSchrohePorous} by Roidos and Schrohe,  \cite{ShaoSimonett2014} by Shao and Simonett or \cite{Vertman2016} by Vertman for the case without boundary. 
A next step concerns the analysis of resolvents of closed extensions in the spirit 
of Gil, Krainer and Mendoza \cite{GilKrainerMendoza} or \cite{SchroheSeilerResolventConeDifferential} in the case without boundary and
Krainer \cite{Krainer} for conic manifolds with boundary.

\section{Parameter-dependent Boutet de Monvel algebra\label{sec:Explicando-Parameter-dependent-BdM}}

To make this article readable for non-experts, we briefly describe the parameter-dependent Boutet
de Monvel algebra with classical symbols on compact manifolds with
boundary in the $L_{p}$-setting. We first define several operator  classes on 
the half-space $\mathbb{R}_{+}^{n}=\left\{ x\in\mathbb{R}^{n};\,x_{n}>0\right\} $.

The set of parameters of the operators and symbols will always be a conical open set $\Lambda\subset\mathbb{R}^{l}$, that is, $p\in\Lambda$ implies that $tp\in\Lambda$
for  $t>0$. It can be the empty set, in which case we recover
the usual symbols and operators. We write $\mathbb{N}_{0}:=\left\{ 0,1,2,...\right\}$
and $\mathbb{R}_{++}=\mathbb{R}_{+}\times\mathbb{R}_{+}$.
For a Fréchet space $W$, the Schwartz space $\mathcal{S}\left(\mathbb{R}^{n},W\right)$
consists of all $u\in C^{\infty}\left(\mathbb{R}^{n},W\right)$
such that $\sup_{x\in\mathbb{R}^{n}}p\left(x^{\alpha}\partial_{x}^{\beta}u\left(x\right)\right)<\infty$
for every continuous seminorm $p$ of $W$. We simply write 
$\mathcal{S}\left(\mathbb{R}^{n}\right)$, if $W=\mathbb{C}$.
If $\Omega\subset\mathbb{R}^{n}$
is an open set, $\mathcal{S}(\Omega)$ denotes the restrictions
to $\Omega$ of functions in $\mathcal{S}\left(\mathbb{R}^{n}\right)$, and 
$C_{c}^{\infty}(\Omega)$ the space of smooth functions with compact support in 
$\Omega$.
The operator of restriction
of distributions defined in $\mathbb{R}^{n}$ to $\Omega$ is denoted
by $r_{\Omega}:\mathcal{D}'\left(\mathbb{R}^{n}\right)\to\mathcal{D}'\left(\Omega\right)$.
The extension by zero of a function $u$ defined in $\Omega$
to $\mathbb{R}^{n}$ will be denoted by $e_{\Omega}$:
\[
e_{\Omega}\left(u\right)\left(x\right)=\left\{ \begin{array}{c}
u\left(x\right),\,x\in\Omega\\
0,\,x\notin\Omega
\end{array}\right..
\]
If $\Omega=\mathbb{R}_{+}^{n}$, we denote $r_{\mathbb R^n_+}$ also by $r^+$ 
and $e_{\mathbb{R}_{+}^{n}}$ by $e^{+}$. 
The open
ball in $\mathbb{R}^{n}$ with the Euclidean norm whose center is
$x$ and radius is $r>0$ will be denoted by $B_{r}\left(x\right)$. Our convention for the Fourier transform
is $\mathcal{F}u(\xi)=\hat{u}\left(\xi\right)=\int e^{-ix\xi}u(x)dx$.
We shall often use the function $\left\langle .\right\rangle :\mathbb{R}^{n}\to\mathbb{R}$
defined by
\[
\left\langle \xi\right\rangle :=\sqrt{1+\left|\xi\right|^{2}}
\]
and sometimes we use $\left\langle \xi,\lambda\right\rangle :=\sqrt{1+\left|\left(\xi,\lambda\right)\right|^{2}}$
and similar expressions, as well.

Finally, given two Banach spaces $E$ an $F$, we denote by $\mathcal{B}\left(E,F\right)$
the bounded operators from $E$ to $F$ and use the notation
$\mathcal{B}\left(E\right):=\mathcal{B}\left(E,E\right)$.
\begin{defn}
\label{def:(Pseudodifferential-Symbols)}The space $S^{m}\left(\mathbb{R}^{n}\times\mathbb{R}^{n},\Lambda\right)$
of parameter-dependent symbols of order $m\in\mathbb{R}$ consists
of all functions $p\in C^{\infty}\left(\mathbb{R}^{n}\times\mathbb{R}^{n}\times\Lambda\right)$
that satisfy 
\[
\left|\partial_{x}^{\beta}\partial_{\xi}^{\alpha}\partial_{\lambda}^{\gamma}p\left(x,\xi,\lambda\right)\right|\le C_{\alpha\beta\gamma}\left\langle \xi,\lambda\right\rangle ^{m-\left|\alpha\right|-\left|\gamma\right|},\,\,\,\left(x,\xi,\lambda\right)\in\mathbb{R}^{n}\times\mathbb{R}^{n}\times\Lambda.
\]
A symbol $p$ defines a parameter-dependent pseudodifferential operator
$op\left(p\right)\left(\lambda\right):\mathcal{S}\left(\mathbb{R}^{n}\right)\to\mathcal{S}\left(\mathbb{R}^{n}\right)$
by the formula:
\[
op\left(p\right)\left(\lambda\right)u\left(x\right)=(2\pi)^{-n}\int e^{ix\xi}p\left(x,\xi,\lambda\right)\hat{u}\left(\xi\right)d\xi.
\]
We say that $p$ is classical,  if there are symbols $p_{\left(m-j\right)}\in S^{m-j}\left(\mathbb{R}^{n}\times\mathbb{R}^{n},\Lambda\right)$,
$j\in\mathbb{N}_{0}$, such that
\begin{enumerate}
\item For all $t\ge1$ and $\left|\left(\xi,\lambda\right)\right|\ge1$,
we have $p_{\left(m-j\right)}\left(x,t\xi,t\lambda\right)=t^{m-j}p_{\left(m-j\right)}\left(x,\xi,\lambda\right)$.
\item We have the asymptotic expansion  $p\sim\sum_{j=0}^{\infty}p_{\left(m-j\right)}$,
i.e., $p-\sum_{j=0}^{N-1}p_{\left(m-j\right)}\in S^{m-N}\left(\mathbb{R}^{n}\times\mathbb{R}^{n},\Lambda\right)$,
for all $N\in\mathbb{N}_{0}$.
\end{enumerate}
This subset is denoted by $S_{cl}^{m}\left(\mathbb{R}^{n}\times\mathbb{R}^{n},\Lambda\right)$.
It is a Fréchet space with the natural seminorms.
\end{defn}
\begin{defn}
Let $p\in S_{cl}^{m}\left(\mathbb{R}^{n}\times\mathbb{R}^{n},\Lambda\right)$,
$m\in\mathbb{Z}$, be written as a function of $\left(x',x_{n},\xi',\xi_{n},\lambda\right)\in\mathbb{R}^{n-1}\times\mathbb{R}\times\mathbb{R}^{n-1}\times\mathbb{R}\times\Lambda$.
We say that it satisfies the transmission condition, if $p\sim\sum_{j=0}^{\infty}p_{\left(m-j\right)}$
and if, for all $k\in\mathbb{N}_{0}$ and for all $\alpha\in\mathbb{N}_{0}^{n+l}$,
we have
\[
D_{x_{n}}^{k}D_{\left(\xi,\lambda\right)}^{\alpha}p_{\left(m-j\right)}\left(x',0,0,1,0\right)=\left(-1\right)^{m-j-\left|\alpha\right|}D_{x_{n}}^{k}D_{\left(\xi,\lambda\right)}^{\alpha}p_{\left(m-j\right)}\left(x',0,0,-1,0\right).
\]
In this case, the operator $P\left(\lambda\right)_{+}:=r^{+}op\left(p\right)\left(\lambda\right)e^{+}\colon\mathcal{S}(\mathbb{R}_{+}^{n})\to\mathcal{S}(\mathbb{R}_{+}^{n})$
is well defined.
\end{defn}
Two more classes of functions are required. Our notation here follows
G. Grubb \cite{Grubbamarelo}.
\begin{defn}
We denote by $S^{m}(\mathbb{R}^{n-1},\mathcal{S}_{+},\Lambda)$, $m\in\mathbb{R}$,
the space of all functions $\tilde{f}\in C^{\infty}\left(\mathbb{R}^{n-1}\times\mathbb{R}_{+}\times\mathbb{R}^{n-1}\times\Lambda\right)$
that satisfy:
\[
\big\Vert x_{n}^{k}D_{x_{n}}^{k'}D_{x'}^{\beta'}D_{\xi'}^{\alpha'}D_{\lambda}^{\gamma}\tilde{f}\left(x',x_{n},\xi',\lambda\right)\big\Vert _{L^{\infty}\left(\mathbb{R}_{+x_{n}}\right)}\le C_{k,k',\alpha',\beta',\gamma}\left\langle \xi',\lambda\right\rangle ^{m+1-k+k'-\left|\alpha'\right|-\left|\gamma\right|}.
\]
The subset $S_{cl}^{m}(\mathbb{R}^{n-1},\mathcal{S}_{+},\Lambda)$
consists of all $\tilde{f}$ with an asymptotic expansion $\tilde{f}\sim\sum_{j=0}^{\infty}\tilde{f}_{\left(m-j\right)}$, i.e.
there are functions $\tilde{f}_{\left(m-j\right)}\in S^{m-j}(\mathbb{R}^{n-1},\mathcal{S}_{+},\Lambda)$,
$j\in\mathbb{N}_{0}$, such that $\tilde{f}-\sum_{j=0}^{N-1}\tilde{f}_{\left(m-j\right)}\in
S^{m-N}(\mathbb{R}^{n-1},\mathcal{S}_{+},\Lambda)$ for
all $N\in\mathbb{N}_{0}$, and
\[
\tilde{f}_{\left(m-j\right)}\Big(x',\frac{1}{t}x_{n},t\xi',t\lambda\Big)
=t^{m+1-j}\tilde{f}_{\left(m-j\right)}(x',x_{n},\xi',\lambda),\, t\ge1,\,\left|(\xi',\lambda)\right|\ge1.
\]
Similarly, $S^{m}(\mathbb{R}^{n-1},\mathcal{S}_{++},\Lambda)$ denotes
all $\tilde{g}\in C^{\infty}\left(\mathbb{R}^{n-1}\times\mathbb{R}_{++}\times\mathbb{R}^{n-1}\times\Lambda\right)$
with
\begin{eqnarray*}
\lefteqn{\big\Vert y_{n}^{l}x_{n}^{k}D_{y_{n}}^{l'}D_{x_{n}}^{k'}D_{x'}^{\beta'}D_{\xi'}^{\alpha'}D_{\lambda}^{\gamma}\tilde{g}\left(x',x_{n},y_{n},\xi',\lambda\right)\big\Vert _{L^{\infty}\left(\mathbb{R}_{++\left(x_{n},y_{n}\right)}\right)}}\\
&\le& C_{k,k',l,l',\alpha',\beta',\gamma}\left\langle \xi',\lambda\right\rangle ^{m+2-k+k'-l+l'-\left|\alpha'\right|-\left|\gamma\right|}.
\end{eqnarray*}
Write $\tilde{g}\in S_{cl}^{m}(\mathbb{R}^{n-1},\mathcal{S}_{+},\Lambda),$
if $\tilde{g}\sim\sum_{j=0}^{\infty}\tilde{g}_{\left(m-j\right)}$
with $\tilde{g}_{\left(m-j\right)}\in S^{m-j}(\mathbb{R}^{n-1},\mathcal{S}_{++},\Lambda)$
such that $\tilde{g}-\sum_{j=0}^{N-1}\tilde{g}_{\left(m-j\right)}$
belongs to $S^{m-N}(\mathbb{R}^{n-1},\mathcal{S}_{++},\Lambda)$,
for all $N\in\mathbb{N}_{0}$, and
\[
\tilde{g}_{\left(m-j\right)}\Big(x',\frac{1}{t}x_{n},\frac{1}{t}y_{n},t\xi',t\lambda\Big)=t^{m+2-j}\tilde{g}_{\left(m-j\right)}\left(x',x_{n},y_{n},\xi',\lambda\right),\, t\ge1,\,\left|\left(\xi',\lambda\right)\right|\ge1.
\]
\end{defn}
We may now define the operators that, together
with the pseudodifferential ones, appear in the Boutet de Monvel calculus:
the Poisson, trace and singular Green operators. We will always restrict ourselves to the classical elements. The notation
$\gamma_{j}:\mathcal{S}(\mathbb{R}_{+}^{n})\to\mathcal{S}(\mathbb{R}^{n-1})$,
$j\in\mathbb{N}_{0}$, indicates the operator $\gamma_{j}u\left(x'\right)=\lim_{x_{n}\to0}D_{x_{n}}^{j}u\left(x',x_{n}\right)$
as well as its extension to Sobolev, Bessel and Besov spaces.

\begin{defn}
\label{def:Poisson,TraceandGreen}Let $\lambda\in\Lambda$, 
$m\in \mathbb R$ and $d\in \mathbb N_0$.\\
1) A classical parameter-dependent Poisson operator of order $m$
is an operator family $K\left(\lambda\right):\mathcal{S}(\mathbb{R}^{n-1})\to\mathcal{S}(\mathbb{R}_{+}^{n})$ associated with $\tilde{k}\in S_{cl}^{m-1}(\mathbb{R}^{n-1},\mathcal{S}_{+},\Lambda)$
of the form
\begin{equation}
K\left(\lambda\right)u\left(x',x_{n}\right)=\left(2\pi\right)^{1-n}\int_{\mathbb{R}^{n-1}}e^{ix'\xi'}\tilde{k}(x',x_{n},\xi',\lambda)\hat{u}\left(\xi'\right)d\xi',\label{eq:DefPoisson}
\end{equation}
For $\tilde{k}\sim\sum_{j=0}^{\infty}\tilde{k}_{\left(m-1-j\right)}$,
we define $\tilde{k}_{\left(m-1\right)}\left(x',\xi',D_{n},\lambda\right):\mathbb{C}\to\mathcal{S}\left(\mathbb{R}_{+}\right)$
by
\[
\tilde{k}_{\left(m-1\right)}\left(x',\xi',D_{n},\lambda\right)\left(v\right)=v\tilde{k}_{\left(m-1\right)}\left(x',x_{n},\xi',\lambda\right).
\]
2) A classical parameter-dependent trace operator of order $m$
and class $d$ is an operator family $T\left(\lambda\right):\mathcal{S}(\mathbb{R}_{+}^{n})\to\mathcal{S}(\mathbb{R}^{n-1})$
of the form
\[
T\left(\lambda\right)=\sum_{j=0}^{d-1}S_{j}\left(\lambda\right)\gamma_{j}+T'\left(\lambda\right),
\]
where $S_{j}\left(\lambda\right)$ is a parameter-dependent pseudodifferential
operator of order $m-j$ on $\mathbb{R}^{n-1}$ and $T'\left(\lambda\right):\mathcal{S}(\mathbb{R}_{+}^{n})\to\mathcal{S}(\mathbb{R}^{n-1})$
is of the form
\begin{equation}
T'(\lambda)u\left(x'\right)=\left(2\pi\right)^{1-n}\int_{\mathbb{R}^{n-1}}e^{ix'\xi'}\int_{\mathbb{R}_{+}}\tilde{t}\left(x',x_{n},\xi',\lambda\right)\left(\mathcal{F}_{x'\to\xi'}u\right)\left(\xi',x_{n}\right)dx_{n}d\xi'\label{eq:DefTrace}
\end{equation}
with $\tilde{t}\in$$S_{cl}^{m}(\mathbb{R}^{n-1},\mathcal{S}_{+},\Lambda)$.
For $\tilde{t}\sim\sum_{j=0}^{\infty}\tilde{t}_{\left(m-j\right)}$
we define $\tilde{t}_{\left(m\right)}\left(x',\xi',D_{n},\lambda\right):\mathcal{S}\left(\mathbb{R}_{+}\right)\to\mathbb{C}$
by
\[
\tilde{t}_{\left(m\right)}\left(x',\xi',D_{n},\lambda\right)u=\int_{\mathbb{R}_{+}}\tilde{t}_{\left(m\right)}\left(x',x_{n},\xi',\lambda\right)u\left(x_{n}\right)dx_{n}.
\]
3) A classical parameter-dependent singular Green operator of order $m$
and type $d$ is an operator family $G\left(\lambda\right):\mathcal{S}(\mathbb{R}_{+}^{n})\to\mathcal{S}(\mathbb{R}_{+}^{n})$
of the form
\[
G\left(\lambda\right)=\sum_{j=0}^{d-1}K_{j}'\left(\lambda\right)\gamma_{j}+G'\left(\lambda\right),
\]
where $K_{j}'$ are Poisson operators of order $m-j$ and $G'\left(\lambda\right):\mathcal{S}\left(\mathbb{R}_{+}^{n}\right)\to\mathcal{S}\left(\mathbb{R}_{+}^{n}\right)$
is an operator of the form
\begin{equation}
G'\left(\lambda\right)u(x)=(2\pi)^{1-n}\int_{\mathbb{R}^{n-1}}e^{ix'\xi'}\int_{\mathbb{R}_{+}}\tilde{g}(x',x_{n},y_{n},\xi',\lambda)\left(\mathcal{F}_{x'\to\xi'}u\right)\left(\xi',y_{n}\right)dy_{n}d\xi',\label{eq:DefGreen}
\end{equation}
where $\tilde{g}\in S_{cl}^{m-1}(\mathbb{R}^{n-1},\mathcal{S}_{++},\Lambda)$.
We define the operator $g_{\left(m-1\right)}\left(x',\xi',D_{n},\lambda\right):\mathcal{S}\left(\mathbb{R}_{+}\right)\to\mathcal{S}\left(\mathbb{R}_{+}\right)$
by
\begin{eqnarray*}\lefteqn{
g_{\left(m-1\right)}\left(x',\xi',D_{n},\lambda\right)u\left(x_{n}\right)}\\
&=&\sum_{l=0}^{d-1}\tilde{k}'_{l\left(m-l-1\right)}\left(x',x_{n},\xi',\lambda\right)D_{x_{n}}^{l}u\left(0\right)
+\int_{\mathbb{R}_{+}}\tilde{g}_{\left(m-1\right)}\left(x',x_{n},y_{n},\xi',\lambda\right)u\left(y_{n}\right)dy_{n}.
\end{eqnarray*}
\end{defn}
\begin{rem}
\label{rem:Poisson,TraceandGreencont} With a symbol $p\in S_{cl}^{m}\left(\mathbb{R}^{n}\times\mathbb{\mathbb{R}}^{n},\Lambda\right)$
that satisfies the transmission condition, we associate the operator
$p_{\left(m\right)+}\left(x',0,\xi',D_{n},\lambda\right):\mathcal{S}\left(\mathbb{R}_{+}\right)\to\mathcal{S}\left(\mathbb{R}_{+}\right)$
defined by: 
\[
p_{\left(m\right)+}\left(x',0,\xi',D_{n},\lambda\right)u\left(x_{n}\right)=\frac{1}{2\pi}\int_{\mathbb{R}}e^{ix_{n}\xi_{n}}p_{\left(m\right)}\left(x',0,\xi',\xi_{n},\lambda\right)\widehat{e^{+}u}\left(\xi_{n}\right)d\xi_{n}.
\]
\end{rem}
\begin{defn}
Let $n_{1}$, $n_{2}$, $n_{3}$ and $n_{4}\in\mathbb{N}_{0}$. The
set of classical parameter-dependent Boutet de Monvel operators  on $\mathbb{R}_{+}^{n}$,
denoted by $\mathcal{B}_{n_{1},n_{2},n_{3},n_{4}}^{m,d}\left(\mathbb{R}^{n},\Lambda\right)$
for $m\in\mathbb{Z}$ and $d\in\mathbb{N}_{0}$, or just by $\mathcal{B}^{m,d}\left(\mathbb{R}^{n},\Lambda\right)$,
consists of all operators $A$ given by
\begin{equation}
A\left(\lambda\right)=\left(\begin{array}{cc}
P_{+}\left(\lambda\right)+G\left(\lambda\right) & K\left(\lambda\right)\\
T\left(\lambda\right) & S\left(\lambda\right)
\end{array}\right):\begin{array}{c}
\mathcal{S}\left(\mathbb{R}_{+}^{n}\right)^{n_{1}}\\
\oplus\\
\mathcal{S}\left(\mathbb{R}^{n-1}\right)^{n_{2}}
\end{array}\to\begin{array}{c}
\mathcal{S}\left(\mathbb{R}_{+}^{n}\right)^{n_{3}}\\
\oplus\\
\mathcal{S}\left(\mathbb{R}^{n-1}\right)^{n_{4}}
\end{array},\label{eq:DefAlambda}
\end{equation}
where: $P_{+}$ is a pseudodifferential operator
of order $m$ satisfying the transmission condition, 
$G$ is a singular Green operators of order
$m$ and type $d$, $K$ is a Poisson operator
of order $m$, $T$ is a trace operator of order
$m$ and type $d$ and $S$ is a pseudodifferential
operator of order $m$. All are parameter-dependent in the respective classes.
\end{defn}
The following algebra is also useful to prove spectral invariance: 
\begin{defn}
Let $n_{1}$, $n_{2}$, $n_{3}$, $n_{4}\in\mathbb{N}_{0}$ and $1<p<\infty$.
We define the set $\tilde{\mathcal{B}}_{n_{1},n_{2},n_{3},n_{4}}^{p}\left(\mathbb{R}^{n},\Lambda\right)$,
also denoted by $\tilde{\mathcal{B}}^{p}\left(\mathbb{R}^{n},\Lambda\right)$,
as the set of all operators $A$ of the form (\ref{eq:DefAlambda}),
where: $P_{+}$ is of order $0$, $G$ is of order
$0$ and type $0$, $K$ is
of order $\frac{1}{p}$, $T$ is of order $-\frac{1}{p}$ and type $0$ and $S$
is of order $0$. All are parameter-dependent in the respective classes.
\end{defn}

\begin{defn}
\label{def:Local boundary principal symbol}With $A\in\mathcal{B}_{n_{1},n_{2},n_{3},n_{4}}^{m,d}\left(\mathbb{R}^{n},\Lambda\right)$,
we associate the operator-valued principal boundary symbol 
$\sigma_{\partial}\left(A\right)$,
defined on 
$\mathbb{R}^{n-1}\times\left(\left(\mathbb{R}^{n-1}\times\Lambda\right)\backslash\left\{ 0\right\} \right)$. 
The operator 
\begin{eqnarray}\label{boundarysymboldefinition}
\sigma_\partial(A)(x',\xi',\lambda): 
\mathcal{S}\left(\mathbb{R}_{+}\right)^{n_{1}}\oplus\mathbb{C}^{n_{2}}
\to \mathcal{S}\left(\mathbb{R}_{+}\right)^{n_{3}}
\oplus
\mathbb{C}^{n_{4}}
\end{eqnarray}
is given by 
\begin{eqnarray}
\begin{pmatrix}
p_{\left(m\right)+}\left(x',0,\xi',D_{n},\lambda\right)+g_{(m-1)}\left(x',\xi',D_{n},\lambda\right) & k_{(m-1)}\left(x',\xi',D_{n},\lambda\right)\\
t_{(m)}\left(x',\xi',D_{n},\lambda\right) & s_{(m)}\left(x',\xi',\lambda\right)
\end{pmatrix}
\nonumber
\end{eqnarray}
where the entries are the matrix version of the operators in Definition
\ref{def:Poisson,TraceandGreen} and Remark \ref{rem:Poisson,TraceandGreencont}.

Similarly, with $A\in\tilde{\mathcal{B}}_{n_{1},n_{2},n_{3},n_{4}}^{p}\left(\mathbb{R}^{n},\Lambda\right)$,
we associate an operator $\sigma_{\partial}\left(A\right)\left(x',\xi',\lambda\right)$
acting as in \eqref{boundarysymboldefinition}, given as
\begin{eqnarray}
\begin{pmatrix}
p_{\left(0\right)+}\left(x',0,\xi',D_{n},\lambda\right)+g_{(-1)}\left(x',\xi',D_{n},\lambda\right) & k_{(\frac{1}{p}-1)}\left(x',\xi',D_{n},\lambda\right)\\
t_{(-\frac{1}{p})}\left(x',\xi',D_{n},\lambda\right) & s_{(0)}\left(x',\xi',\lambda\right)
\end{pmatrix}
\nonumber.
\end{eqnarray}
\end{defn}
Let now $M$ be a manifold with boundary, $E_{0}$ and $E_{1}$
two complex hermitian vector bundles over $M$ and $F_{0}$ and $F_{1}$
two complex hermitian vector bundles over $\partial M$. Let $U_{j}\subset M$,
$j=1,...,N$, be open cover of $M$ consisting of trivializing sets for the vector bundles, 
$\Phi_{1}$, \ldots, $\Phi_{N}\in C^{\infty}\left(M\right)$ be a partition
of unity subordinate to $U_{1}$, \ldots, $U_{N}$ and $\Psi_{1}$, \ldots,$\Psi_{N}\in C^{\infty}\left(M\right)$
be  supported in $U_{j}$ such that $\Psi_{j}\Phi_{j}=\Phi_{j}$.

A linear operator $A\left(\lambda\right):C^{\infty}\left(M,E_{0}\right)\oplus C^{\infty}\left(\partial M,F_{0}\right)\to C^{\infty}\left(M,E_{1}\right)\oplus C^{\infty}\left(\partial M,F_{1}\right)$
can always be written as 
\[
A\left(\lambda\right)=\sum_{j=1}^{N}\Phi_{j}A\left(\lambda\right)\Psi_{j}+\sum_{j=1}^{N}\Phi_{j}A\left(\lambda\right)\left(1-\Psi_{j}\right).
\]

Using the above definitions, we define the Boutet de Monvel algebra
on $M$:
\begin{defn}
A family  $A\left(\lambda\right):C^{\infty}\left(M,E_{0}\right)\oplus C^{\infty}\left(\partial M,F_{0}\right)\to C^{\infty}\left(M,E_{1}\right)\oplus C^{\infty}\left(\partial M,F_{1}\right)$,
$\lambda\in\Lambda$, is called a parameter-dependent Boutet de Monvel
operator of order $m\in\mathbb{Z}$ and class $d\in\mathbb{N}_{0}$,
if

1) The operators $\Psi A\left(\lambda\right)\Phi:\mathcal{S}\left(\mathbb{R}_{+}^{n}\right)^{n_{1}}\oplus\mathcal{S}\left(\mathbb{R}^{n-1}\right)^{n_{2}}\to\mathcal{S}\left(\mathbb{R}_{+}^{n}\right)^{n_{3}}\oplus\mathcal{S}\left(\mathbb{R}^{n-1}\right)^{n_{4}}$
belong to $\mathcal{B}^{m,d}\left(\mathbb{R}^{n},\Lambda\right)$ after localization.

2) The Schwartz kernels of the operators $\sum_{j=1}^{N}\Phi_{j}A\left(\lambda\right)\left(1-\Psi_{j}\right)$
belong to
\[
\begin{pmatrix}
\mathcal{S}\left(\Lambda,C^{\infty}\left(M\times M,Hom\left(\pi_{2}^{*} E_{0},\pi_{1}^{*}E_{1}\right)\right)\right) & \mathcal{S}\left(\Lambda,C^{\infty}\left(M\times\partial M,Hom\left(\pi_{2}^{*}F_{0},\pi_{1}^{*}E_{1}\right)\right)\right)\\
\mathcal{S}\left(\Lambda,C^{\infty}\left(\partial M\times M,Hom\left(\pi_{2}^{*}E_{0},\pi_{1}^{*}F_{1}\right)\right)\right) & \mathcal{S}\left(\Lambda,C^{\infty}\left(\partial M\times\partial M,Hom\left(\pi_{2}^{*}F_{0},\pi_{1}^{*}F_{1}\right)\right)\right)
\end{pmatrix},
\]
where $Hom$ indicates the space of homomorphisms and $\pi_{i}:M\times M\to M$
is given by $\pi_{i}\left(x_{1},x_{2}\right)=x_{i}$ for $i=1,2$. 

If $\partial M=\emptyset$, the algebra reduces to the classical parameter-dependent
pseudodifferential operators. The above definition is independent
of the partitions of unity and trivializing sets we choose.
\end{defn}
A central notion is parameter-ellipticity:
\begin{defn}
Given a   parameter-dependent Boutet de Monvel operator $A\in\mathcal{B}_{E_{0},F_{0},E_{1},F_{1}}^{m,d}\left(M,\Lambda\right)$
 we define:

1) The interior principal symbol $\sigma_{\psi}(A)\in C^{\infty}((T^{*}M\times\Lambda)\backslash\left\{ 0\right\} ,Hom(\pi_{T^{*}M\times\Lambda}^{*}E_{0},$ $\pi_{T^{*}M\times\Lambda}^{*}E_{1}))$,
where $\pi_{T^{*}M\times\Lambda}:T^{*}M\times\Lambda\to M$ is the
canonical projection. It is the principal symbol of the pseudodifferential
operator part of the operator $A$.

2) The boundary principal symbol $\sigma_\partial(A)$. For $(z,\lambda)\in\left(T^{*}\partial M\times\Lambda\right)\backslash\left\{ 0\right\} $ we let
\[
\sigma_{\partial}\left(A\right)\left(z\right)\left(\lambda\right):\pi_{T^{*}\partial M\times\Lambda}^{*}\begin{pmatrix}
\left.E_{0}\right|_{\partial M}\otimes\mathcal{S}\left(\mathbb{R}_{+}\right)\\
\oplus\\
F_{0}
\end{pmatrix}\to\pi_{T^{*}\partial M\times\Lambda}^{*}
\begin{pmatrix}
\left.E_{1}\right|_{\partial M}\otimes\mathcal{S}\left(\mathbb{R}_{+}\right)\\
\oplus\\
F_{1}
\end{pmatrix},
\]
where $\pi_{T^{*}\partial M\times\Lambda}:\left(T^{*}\partial M\times\Lambda\right)\backslash\left\{ 0\right\} \to\partial M$
is the canonical projection. After localization, it corresponds to
the symbol in Definition \ref{def:Local boundary principal symbol}.

We say that $A\left(\lambda\right)$ is parameter-elliptic if both
symbols are invertible. With obvious changes, 
we can also define parameter-ellipticity,
interior and boundary principal symbols of operators $A\in\tilde{\mathcal{B}}_{E_{0},F_{0},E_{1},F_{1}}^{p}\left(M,\Lambda\right)$. 
\end{defn}
The above operators act continuously on Bessel and Besov spaces. First
we fix a dyadic partition of unity $\left\{ \varphi_{j};\,j\in\mathbb{N}_{0}\right\} $.

\begin{defn}
Let $\varphi_{0}\in C_{c}^{\infty}(\mathbb{R}^{n})$ be supported 
$\left\{ \xi;\,\left|\xi\right|<2\right\} $,
$0\le\varphi_{0}\le1$ and $\varphi_{0}\left(\xi\right)=1$ in a neighborhood
of the closed unit ball. Define $\varphi_{j}\in C_{c}^{\infty}(\mathbb{R}^{n})$,
$j\ge1$, by $\varphi_{j}\left(\xi\right)=\varphi_{0}\left(2^{-j}\xi\right)-\varphi_{0}\left(2^{-j+1}\xi\right)$.
\end{defn}

\begin{rem}
\label{rem:Definicao conjuntos Kj} We use the following notation:
$K_{j}:=\left\{ \xi\in\mathbb{R}^{n};2^{j-1}\le\left|\xi\right|\le2^{j+1}\right\} $,
for $j\ge1$, and $K_{0}:=\left\{ \xi\in\mathbb{R}^{n};\left|\xi\right|\le2\right\} $.
The above definition implies that $\mbox{supp}\left(\varphi_{j}\right)\subset\text{interior}\left(K_{j}\right)$,
for $j\ge0$. Moreover, we see that $\varphi_{j}\left(\xi\right)=\varphi_{1}\left(2^{-j+1}\xi\right)$,
for $j\ge2$ and $\sum_{j=0}^{\infty}\varphi_{j}\left(\xi\right)=1$,
$\xi\in\mathbb{R}^{n}$.
\end{rem}
\begin{defn}
For each $s\in\mathbb{R}$, we define the operator $\left\langle D\right\rangle ^{s}:\mathcal{S}'\left(\mathbb{R}^{n}\right)\to\mathcal{S}'\left(\mathbb{R}^{n}\right)$
as the pseudodifferential operator with symbol $\xi\in\mathbb{R}^{n}\mapsto\left\langle \xi\right\rangle ^{s}$. Moreover, we write  $\varphi_{j}(D)u=op(\varphi_{j})u$.

1) The Bessel potential space $H_{p}^{s}\left(\mathbb{R}^{n}\right)=\left\{ u\in\mathcal{S}'\left(\mathbb{R}^{n}\right);\,\left\langle D\right\rangle ^{s}u\in L_{p}\left(\mathbb{R}^{n}\right)\right\} $,
for $1<p<\infty$ and $s\in\mathbb{R}$, is the Banach space with
norm $\left\Vert u\right\Vert _{H_{p}^{s}\left(\mathbb{R}^{n}\right)}:=\left\Vert \left\langle D\right\rangle ^{s}u\right\Vert _{L_{p}\left(\mathbb{R}^{n}\right)}$.

2) The Besov space $B_{p}^{s}\left(\mathbb{R}^{n}\right)$, for $s\in\mathbb{R}$
and $1<p<\infty$, is the Banach space of all tempered distributions
$f\in\mathcal{S}'\left(\mathbb{R}^{n}\right)$ that satisfy:
\[
\left\Vert f\right\Vert _{B_{p}^{s}\left(\mathbb{R}^{n}\right)}:=\Big(\sum_{j=0}^{\infty}2^{jsp}\left\Vert \varphi_{j}\left(D\right)f\right\Vert _{L^{p}\left(\mathbb{R}^{n}\right)}^{p}\Big)^{\frac{1}{p}}<\infty.
\]

For an open set $\Omega\subset\mathbb{R}^{n}$, we define the Bessel
potential spaces $H_{p}^{s}\left(\Omega\right)$, as the set of restrictions
of $H_{p}^{s}\left(\mathbb{R}^{n}\right)$ to $\Omega$ with norm
\[
\left\Vert u\right\Vert _{H_{p}^{s}\left(\Omega\right)}:=\left\{ \mbox{inf }\left\Vert v\right\Vert _{H_{p}^{s}\left(\mathbb{R}^{n}\right)};\,r_{\Omega}\left(v\right)=u\right\} .
\]
 Similarly, we define the Besov spaces $B_{p}^{s}\left(\Omega\right)$.
Together with partition of unity and local charts, this leads to the
spaces $H_{p}^{s}\left(M\right)$, $H_{p}^{s}\left(M,E\right)$, $B_{p}^{s}\left(\partial M\right)$
and $B_{p}^{s}\left(\partial M,E\right)$, where $E$ is a vector
bundle over $M$ or $\partial M$.
\end{defn}
\begin{rem}
\label{rem:BesovandBesselSpaces} Let $s\in\mathbb{R}$, $1<p<\infty$
and $\frac{1}{p}+\frac{1}{q}=1$.

1) There are continuous inclusions $C_{c}^{\infty}\left(\mathbb{R}^{n}\right)\hookrightarrow\mathcal{S}(\mathbb{R}^{n})\hookrightarrow B_{p}^{s}\left(\mathbb{R}^{n}\right)\hookrightarrow\mathcal{S}'\left(\mathbb{R}^{n}\right)$.
Moreover the spaces $C_{c}^{\infty}\left(\mathbb{R}^{n}\right)$ and
$\mathcal{S}\left(\mathbb{R}^{n}\right)$ are dense in $B_{p}^{s}\left(\mathbb{R}^{n}\right)$.
The same can be said of $H_{p}^{s}\left(\mathbb{R}^{n}\right)$.

2) The dual of $B_{p}^{s}\left(\mathbb{R}^{n}\right)$ is $B_{q}^{-s}\left(\mathbb{R}^{n}\right)$,
where the identification is given by the $L^{2}$ scalar product.
Again the same holds for $H_{p}^{s}\left(\mathbb{R}^{n}\right)$ and
$H_{q}^{-s}\left(\mathbb{R}^{n}\right)$.

3) A pseudodifferential operator with symbol $a\in S^{m}\left(\mathbb{R}^{n}\times\mathbb{R}^{n}\right)$
extends to continuous operators $op(a):H_{p}^{s}\left(\mathbb{R}^{n}\right)\to H_{p}^{s-m}\left(\mathbb{R}^{n}\right)$
and $op(a):B_{p}^{s}\left(\mathbb{R}^{n}\right)\to B_{p}^{s-m}\left(\mathbb{R}^{n}\right)$
for all $s\in\mathbb{R}$.

4) The following interpolation holds: $\left(L^{p}\left(\mathbb{R}^{n}\right),H_{p}^{1}\left(\mathbb{R}^{n}\right)\right)_{\theta,p}=B_{p}^{\theta}\left(\mathbb{R}^{n}\right)$,
for all $0<\theta<1$, where $\left(X,Y\right)_{\theta,p}$ denotes
the real interpolation space of the interpolation couple $\left(X,Y\right)$,
as in A. Lunardi \cite{Lunardi}.

5) If $M$ is a compact manifold (with or without boundary) and $E$
is a vector bundle over $M$, then $H_{p}^{s}(M,E)\hookrightarrow H_{p}^{s'}(M,E)$
and $B_{p}^{s}(M,E)\hookrightarrow B_{p}^{s'}(M,E)$ are compact inclusions,
whenever $s>s'$.

6) The trace functional $\gamma_{0}:\mathcal{S}(\mathbb{R}^{n})\to\mathcal{S}(\mathbb{R}^{n-1})$
extends to a continuous and surjective map $\gamma_{0}:H_{p}^{s}(\mathbb{R}^{n})\to B_{p}^{s-\frac{1}{p}}(\mathbb{R}^{n-1})$
when $s>\frac{1}{p}$.

7) The Besov spaces do not depend on the choice of the dyadic partition of unity; different partitions yield equivalent norms.
\end{rem}
\begin{rem}
Let $G$ be a $UMD$ Banach space that satisfies the property $\left(\alpha\right)$.
Using Bochner integrals, we can define $B_{p}^{s}\left(\mathbb{R},G\right)$
and $H_{p}^{s}\left(\mathbb{R},G\right)$ in the same way as before,
see, for instance, \cite{amannFunctionSpacesAnisotropic,DenkKaip}.
It is worth noting that $B_{p}^{s}\left(\mathbb{R}^{n}\right)$
and $B_{p}^{s}\left(\partial X,E\right)$ are $UMD$ spaces with the
property $\left(\alpha\right)$ for all $s\in\mathbb{R}$ and $1<p<\infty$.
Later, we also use that $B_{p}^{s}\left(\mathbb{R},G\right)\subset H_{p}^{1}\left(\mathbb{R},G\right):=\left\{ u\in L_{p}\left(\mathbb{R},G\right);\,\frac{du}{dt}\in L_{p}\left(\mathbb{R},G\right)\right\} $,
for all $0<s<1$.
\end{rem}
Let us now state the following properties of composition, adjoints
and continuity of Boutet de Monvel operators \cite{RempelSchulze,Grubbverde,Grubblp,Grubblpparameters}.
\begin{thm}
\label{thm:Propriedades de BdM}

{\rm1) (}Composition{\rm)} Let $A\in\mathcal{B}_{E_{1},F_{1},E_{2},F_{2}}^{m,d}\left(M,\Lambda\right)$, $B\in\mathcal{B}_{E_{0},F_{0},E_{1},F_{1}}^{m',d'}\left(M,\Lambda\right)$. Then $AB\in\mathcal{B}_{E_{0},F_{0},E_{2},F_{2}}^{m+m',d''}\left(M,\Lambda\right)$,
where $d'':=\max\left\{ m'+d,d'\right\} $. 
Similarly, if $A\in\tilde{\mathcal{B}}_{E_{1},F_{1},E_{2},F_{2}}^{p}\left(M,\Lambda\right)$
and $B\in\tilde{\mathcal{B}}_{E_{0},F_{0},E_{1},F_{1}}^{p}\left(M,\Lambda\right)$,
then $AB\in\tilde{\mathcal{B}}_{E_{0},F_{0},E_{2},F_{2}}^{p}\left(M,\Lambda\right)$.

{\rm2) (}Adjoint{\rm)}
Let $A\in\tilde{\mathcal{B}}_{E_{0},F_{0},E_{1},F_{1}}^{p}\left(M,\Lambda\right)$.
Then $A^{*}\in\tilde{\mathcal{B}}_{E_{1},F_{1},E_{0},F_{0}}^{q}\left(M,\Lambda\right)$, where
$\frac{1}{p}+\frac{1}{q}=1$ and $A^{*}$ is the only operator
that satisfies, for every $u\in C^{\infty}\left(M,E_{0}\right)\oplus C^{\infty}\left(\partial M,F_{0}\right)$
and $v\in C^{\infty}\left(M,E_{1}\right)\oplus C^{\infty}\left(\partial M,F_{1}\right)$,
the relation 
\[
\left(A\left(\lambda\right)u,v\right)_{L^{2}(M,E_{1})\oplus L^{2}(M,F_{1})}=\left(u,A^{*}\left(\lambda\right)v\right)_{L^{2}(M,E_{0})\oplus L^{2}(M,F_{0})}.
\]

{\rm3) (}Continuity{\rm)}
An operator $A\in\mathcal{B}_{E_{0},F_{0},E_{1},F_{1}}^{m,d}\left(M,\Lambda\right)$
induces bounded operators $A\left(\lambda\right):H_{p}^{s}\left(M,E_{0}\right)\oplus B_{p}^{s-\frac{1}{p}}\left(\partial M,F_{0}\right)\to H_{p}^{s-m}\left(M,E_{1}\right)\oplus B_{p}^{s-m-\frac{1}{p}}\left(\partial M,F_{1}\right)$
for all $s>d-1+\frac{1}{p}$. Similarly $A\in\tilde{\mathcal{B}}_{E_{0},F_{0},E_{1},F_{1}}^{p}\left(M,\Lambda\right)$
induces bounded operators $A\left(\lambda\right):H_{p}^{s}\left(M,E_{0}\right)\oplus B_{p}^{s}\left(\partial M,F_{0}\right)\to H_{p}^{s}\left(M,E_{1}\right)\oplus B_{p}^{s}\left(\partial M,F_{1}\right)$,
$\forall s>-1+\frac{1}{p}$.

{\rm4) (}Fredholm property{\rm)}
If $A\in\mathcal{B}_{E_{0},F_{0},E_{1},F_{1}}^{m,d}\left(M,\Lambda\right)$,
$d=\max\left\{ m,0\right\} $, is parameter-elliptic, then
there exists a $B\in\mathcal{B}_{E_{1},F_{1},E_{0},F_{0}}^{-m,d'}\left(M,\Lambda\right)$,
$d'=\max\left\{ -m,0\right\} $, such that
\begin{equation}
AB-I\in\mathcal{B}_{E_{1},F_{1},E_{1},F_{1}}^{-\infty,d'}\left(M,\Lambda\right)\,\,\,\,\,\mbox{and}\,\,\,\,\,BA-I\in\mathcal{B}_{E_{0},F_{0},E_{0},F_{0}}^{-\infty,d}\left(M,\Lambda\right).\label{eq:AB-IBeA-I}
\end{equation}

As a consequence,  $A\left(\lambda\right)$
is a Fredholm operator of index $0$ for each $\lambda\in\Lambda$, and there exists a constant
$\lambda_{0}>0$ such that $A\left(\lambda\right)$ is invertible, if $\left|\lambda\right|\ge\lambda_{0}$.

Similarly, if $A\in\tilde{\mathcal{B}}_{E_{0},F_{0},E_{1},F_{1}}^{p}\left(M,\Lambda\right)$
is parameter-elliptic, then there exists a $B\in\tilde{\mathcal{B}}_{E_{1},F_{1},E_{0},F_{0}}^{p}\left(M,\Lambda\right)$
such that Equation \eqref{eq:AB-IBeA-I} holds for $d=d'=0$.

\end{thm}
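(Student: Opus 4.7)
All four statements are standard results of the Boutet de Monvel calculus and its parameter-dependent $L_p$ extension, due essentially to \cite{RempelSchulze,Grubbverde,Grubblp,Grubblpparameters}. My plan is to reduce everything to the local model $\mathbb{R}^n_+$ via the partition of unity from the definition of the algebra: the off-diagonal pieces $\sum\Phi_j A(\lambda)(1-\Psi_j)$ are smoothing with kernels in $\mathcal{S}(\Lambda,C^\infty)$ and contribute only negligible, rapidly $\lambda$-decaying corrections, so every property reduces to a local statement on $\mathbb{R}^n_+$.

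For (1), I would proceed block by block in the $2\times 2$ matrix. The composition $(P_1)_+(P_2)_+$ decomposes via the Boutet de Monvel leftover formula as $(P_1P_2)_+$ plus a singular Green operator of type $\max\{m_2,0\}$; the remaining twelve products $P_+G'$, $GP_+'$, $GG'$, $P_+K'$, $GK'$, $KS'$, $TP_+'$, $TG'$, $ST'$, $TK'$, $SS'$ are treated by inserting the defining oscillatory integrals into one another and checking the symbol estimates in the classes of Definition \ref{def:(Pseudodifferential-Symbols)} and the auxiliary $S_{cl}^m(\mathbb{R}^{n-1},\mathcal{S}_+,\Lambda)$, $S_{cl}^m(\mathbb{R}^{n-1},\mathcal{S}_{++},\Lambda)$. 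The type formula $d''=\max\{m'+d,d'\}$ is forced by the $\gamma_j$'s produced by the trace and Green contributions. For $\tilde{\mathcal{B}}^p$ the argument is identical, since the orders $0,\pm1/p$ add up consistently and all types are $0$. For (2), I would compute the $L^2$-formal adjoint entry by entry: duality interchanges Poisson and trace operators, and under the Besov/Bessel pairing $B_p^s(\mathbb{R}^{n-1})'=B_q^{-s}(\mathbb{R}^{n-1})$ of Remark \ref{rem:BesovandBesselSpaces}(2) the shifts $1/p$ and $-1/p$ defining $\tilde{\mathcal{B}}^p$ become $1/q$ and $-1/q$, placing the adjoint in $\tilde{\mathcal{B}}^q$. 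The classical entries $P_+^*,G^*,S^*$ remain in the same class with conjugated symbols.

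Continuity (3) follows for the pseudodifferential components from Remark \ref{rem:BesovandBesselSpaces}(3); for Poisson, trace and singular Green operators the continuity estimates $K\colon B_p^{s-1/p}\to H_p^{s-m}$, $T\colon H_p^s\to B_p^{s-m-1/p}$ and $G\colon H_p^s\to H_p^{s-m}$ for $s>d-1+1/p$ are the content of Grubb--Kokholm \cite{Grubblp,Grubblpparameters}; the lower bound on $s$ comes from the continuity of $\gamma_j$, $j\le d-1$, of Remark \ref{rem:BesovandBesselSpaces}(6). For $\tilde{\mathcal{B}}^p$ the specific orders $1/p$ and $-1/p$ are precisely what make the domain and codomain match. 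For (4), I would construct a parametrix $B$ by inverting the interior and boundary principal symbols fibrewise (possible by parameter-ellipticity) and summing the subleading terms asymptotically in the appropriate symbol classes; the residuals $AB-I$ and $BA-I$ then lie in the respective $\mathcal{B}^{-\infty}$ classes. Since smoothing parameter-dependent operators have Schwartz-decaying kernels and factor through compactly embedded spaces (Remark \ref{rem:BesovandBesselSpaces}(5)), each $A(\lambda)$ is Fredholm of index zero; a Neumann series argument on the remainder gives invertibility for $|\lambda|\ge\lambda_0$. The main technical obstacle in my view is the careful bookkeeping of orders and types throughout (1) and (2): one must verify that every composition preserves the exact shifts $\pm1/p$ built into the $\tilde{\mathcal{B}}^p$ convention, and that taking the $L^2$-adjoint swaps them consistently to $\pm1/q$; aside from that, the proof is an assembly of facts already available in the cited literature.
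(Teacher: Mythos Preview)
Your proposal is correct and in fact more detailed than what the paper does: the paper states Theorem~\ref{thm:Propriedades de BdM} without proof, simply citing \cite{RempelSchulze,Grubbverde,Grubblp,Grubblpparameters} as the sources for all four parts. Your outline of the local reduction, block-by-block composition rules, adjoint computation, continuity estimates, and parametrix construction accurately summarizes the arguments in those references and is entirely in line with the paper's intent.
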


\subsection{The equivalence between ellipticity and Fredholm property}

In this section, we prove that the Fredholm property together with
some growth condition on $\lambda$ implies parameter-dependent ellipticity.
The use of Besov spaces makes the proofs a little more elaborate than
e.g. the proof in the parameter-independent $L^{2}$-case studied
by S. Rempel and B.-W. Schulze \cite{RempelSchulze}. To make it clearer,
we first study the pseudodifferential term on Besov spaces and then
the boundary terms.

\subsubsection{Pseudodifferential operators with parameters on a manifold without
boundary acting on Besov spaces. \label{subsec:Pseudodifferential-term.}}

In this section, we prove the following theorem:
\begin{thm}
\label{thm:Spectral-Invariance-in-Besov-Spaces}Let $M$ be a compact
manifold without boundary, $E$ and $F$ be vector bundles over $M$.
Let $A\left(\lambda\right):C^{\infty}\left(M,E\right)\to C^{\infty}\left(M,F\right)$,
$\lambda\in\Lambda$, be a classical parameter-dependent pseudodifferential
operator of order $0$. Then the following conditions are equivalent:
\begin{enumerate}\renewcommand{\labelenumi}{\roman{enumi})}
\item $A$ is parameter-elliptic.
\item
 There exist uniformly bounded operators $B_{j}\left(\lambda\right):B_{p}^{0}(M,F)\to B_{p}^{0}(M,E)$,
$\lambda\in\Lambda$, $j=1$ and $2$, such that
\[
B_{1}\left(\lambda\right)A\left(\lambda\right)=1+K_{1}\left(\lambda\right)\,\mbox{and}\,A\left(\lambda\right)B_{2}\left(\lambda\right)=1+K_{2}\left(\lambda\right).
\]
where $K_{1}\left(\lambda\right):B_{p}^{0}(M,E)\to B_{p}^{0}(M,E)$
and $K_{2}\left(\lambda\right):B_{p}^{0}(M,F)\to B_{p}^{0}(M,F)$
are compact operators for every $\lambda\in\Lambda$ and $\lim_{\left|\lambda\right|\to\infty}K_{j}\left(\lambda\right)=0$.

\item There exist bounded operators $B_{j}\left(\lambda\right):B_{p}^{0}(M,F)\to B_{p}^{0}(M,E)$,
$\lambda\in\Lambda$, $j=1$ and $2$, such that
\[
B_{1}\left(\lambda\right)A\left(\lambda\right)=1+K_{1}\left(\lambda\right)\,\mbox{and}\,A\left(\lambda\right)B_{2}\left(\lambda\right)=1+K_{2}\left(\lambda\right).
\]
where $K_{1}\left(\lambda\right):B_{p}^{0}(M,E)\to B_{p}^{0}(M,E)$
and $K_{2}\left(\lambda\right):B_{p}^{0}(M,F)\to B_{p}^{0}(M,F)$
are compact operators for every $\lambda\in\Lambda$.
Moreover, $\lim_{\left|\lambda\right|\to\infty}K_{j}\left(\lambda\right)=0$
and there exist $M\in\mathbb{N}_{0}$ and $C>0$ such that $\left\Vert B_{j}\left(\lambda\right)\right\Vert _{\mathcal{B}\left(B_{p}^{0}(M,F),B_{p}^{0}(M,E)\right)}\le C\left\langle \ln\left(\lambda\right)\right\rangle ^{M}$,
for $j=1$ and $2$.
\end{enumerate}
\end{thm}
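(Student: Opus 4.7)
The plan is the following. The implication (i) $\Rightarrow$ (ii) follows from Theorem~\ref{thm:Propriedades de BdM}(4) applied on the closed manifold $M$: parameter-ellipticity yields a parameter-dependent parametrix $B \in \mathcal{B}^{0}(M,\Lambda)$, which is uniformly bounded from $B_{p}^{0}(M,F)$ to $B_{p}^{0}(M,E)$ by the continuity statement, and the remainders $BA - I$ and $AB - I$, being of order $-\infty$ in the parameter-dependent class, give families of smoothing operators whose $\mathcal{B}(B_{p}^{0})$-norm decays rapidly in $|\lambda|$ by the symbol estimates. The implication (ii) $\Rightarrow$ (iii) is trivial.

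The substance is in (iii) $\Rightarrow$ (i), which I would establish by contradiction. Assume there exists $(x_{0},\xi_{0},\lambda_{0})$ with $|(\xi_{0},\lambda_{0})| = 1$ at which $\sigma_{\psi}(A)$ fails to be invertible; for concreteness, suppose it fails to be injective and pick a unit vector $v$ in the fiber of $E$ at $x_{0}$ with $\sigma_{\psi}(A)(x_{0},\xi_{0},\lambda_{0})\,v = 0$ (the non-surjective case is treated analogously via the formal adjoint and $B_{2}$). Working in a local chart around $x_{0}$ trivializing both bundles, I would introduce the oscillating test functions
\[
u_{t}(x) = t^{n/(2p)} \chi(x)\, \phi\bigl(\sqrt{t}(x-x_{0})\bigr)\, e^{i t \xi_{0} \cdot x}\, v, \qquad t \ge 1,
\]
with $\phi \in C_{c}^{\infty}(\mathbb{R}^{n})$ satisfying $\phi(0) \neq 0$ and $\chi \in C_{c}^{\infty}$ equal to one near $x_{0}$. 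The $\sqrt{t}$-scaling is chosen so that $\|u_{t}\|_{L^{p}}$ is uniformly bounded above and below by positive constants, while $\widehat{u_{t}}$ is concentrated in a ball of radius $O(\sqrt{t})$ around $t\xi_{0}$. For $t$ large, this places the frequency content of $u_{t}$ into the dyadic annulus $K_{j(t)}$ with $j(t) \sim \log_{2} t$, so that $\|u_{t}\|_{B_{p}^{0}}$ is equivalent to $\|u_{t}\|_{L^{p}}$ with constants independent of $t$.

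The central estimate is
\[
\bigl\| A(t\lambda_{0})\, u_{t} - \sigma_{\psi}(A)(x_{0},\xi_{0},\lambda_{0})\, u_{t} \bigr\|_{B_{p}^{0}} = O\bigl(t^{-1/2}\bigr),
\]
which I would obtain by writing $A(t\lambda_{0})u_{t}$ as an oscillatory integral, substituting $\eta = (\xi - t\xi_{0})/\sqrt{t}$ and $z = \sqrt{t}(x - x_{0})$, then using the degree-zero homogeneity of $a_{(0)}$ together with a first-order Taylor expansion at $(x_{0},\xi_{0},\lambda_{0})$; the Taylor remainder produces a factor $O((|z|+|\eta|)/\sqrt{t})$ controlled by the Schwartz decay of $\widehat{\phi}$, and the lower-order terms $a - a_{(0)} \in S^{-1}$ contribute $O(1/t)$. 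After the Jacobian rescaling this yields the $O(t^{-1/2})$ error in $L^{p}$, and since both $u_{t}$ and $A(t\lambda_{0})u_{t}$ share the same frequency localization up to rapidly decaying tails, the estimate transfers to $B_{p}^{0}$. Since $\sigma_{\psi}(A)(x_{0},\xi_{0},\lambda_{0})v = 0$, this reads $\|A(t\lambda_{0})u_{t}\|_{B_{p}^{0}} = O(t^{-1/2})$.

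Applying the parametrix relation $B_{1}(t\lambda_{0}) A(t\lambda_{0}) u_{t} = u_{t} + K_{1}(t\lambda_{0}) u_{t}$ then gives
\[
\bigl\| u_{t} + K_{1}(t\lambda_{0}) u_{t} \bigr\|_{B_{p}^{0}} \le C \langle \ln t \rangle^{M} \cdot O\bigl(t^{-1/2}\bigr) \longrightarrow 0
\]
as $t \to \infty$, because any fixed power of $\ln t$ is defeated by an algebraic decay. On the other hand, $\|K_{1}(t\lambda_{0})\| \to 0$ implies that the left-hand side is bounded below by $\tfrac{1}{2}\|u_{t}\|_{B_{p}^{0}} \ge c > 0$ for large $t$, a contradiction. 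I expect the main obstacle to be the careful bookkeeping in the $B_{p}^{0}$-version of the key estimate: one must show that both $u_{t}$ and $A(t\lambda_{0})u_{t}$ have Fourier content concentrated in the same dyadic annulus $K_{j(t)}$, up to Schwartz-tail errors that can be absorbed, so that the passage from the $L^{p}$-estimate on the oscillatory integral to the $B_{p}^{0}$-estimate costs only a universal constant independent of $t$.
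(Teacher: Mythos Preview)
Your strategy is the paper's: oscillatory test families localize $A(t\lambda_0)$ to its principal symbol at $(x_0,\xi_0,\lambda_0)$, and the parametrix relation gives a contradiction. Two points, however, separate your sketch from a complete proof, and both concern the passage from $L_p$ to $B_p^0$.

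First, the claim that $u_t$ and $A(t\lambda_0)u_t$ ``share the same frequency localization up to rapidly decaying tails'' is where the work lies, and your choice of $\phi\in C_c^\infty$ makes it harder than necessary. With $\phi$ spatially compactly supported, $\widehat{\phi}$ has only Schwartz decay, so $\widehat{u_t}$ is \emph{not} supported in a single annulus $K_{j(t)}$, and Lemma~\ref{lem:LpBp} does not apply directly. Worse, since $A$ has $x$-dependent symbol, $A(t\lambda_0)u_t$ need not inherit the approximate frequency localization of $u_t$ in any obvious way. The paper avoids both issues by reversing your choice: it takes $v$ with $\widehat{v}$ supported in $\{\tfrac12<|\xi|<1\}$, so that $\mathcal{F}(R_s v)$ is \emph{exactly} supported in at most three consecutive $K_j$ (Lemma~\ref{lem:Rs em supp de F na bola}), and handles the spatial localization separately via $\|(1-\tilde\Phi)R_s u\|\to0$. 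For the $B_p^0$-estimate on the error it does \emph{not} argue by frequency localization of $A(s\lambda)R_s u$; instead it proves the elementary bound $\|R_s w\|_{B_p^\theta}\le C_\theta(1+s\langle\eta\rangle)^\theta\|w\|_{H_p^1}$ (item~3 of Lemma~\ref{lem:Propriedades R_s}), establishes $H_p^1$-convergence of $R_s^{-1}\bigl(A(s\lambda)R_su - a_{(0)}(y,\eta,\lambda)u\bigr)$, and absorbs the $s^\theta$ loss into the $s^{-\tau}$ decay by choosing $0<r<\theta+r<\tau<\tfrac13$. This interpolation trick is the substance you are missing; your $\tau=\tfrac12$ scaling leaves no room for it as written, since with $\tau=\tfrac12$ the $\xi$-Taylor term in the analogue of \eqref{eq:estimativa2} is only $O(1)$.

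Second, you assume $\lambda_0\neq0$ when you invoke $\|K_1(t\lambda_0)\|\to0$. The ellipticity condition must also be checked at points with $\lambda_0=0$, $\xi_0\neq0$; there $K_1(t\lambda_0)=K_1(0)$ is a fixed compact operator, and one needs instead that $u_t\to0$ weakly in $B_p^0$ (the paper's item~3 of Lemma~\ref{lem:Rs em supp de F na bola}), which again relies on the compact Fourier-support choice to get uniform $B_p^0$-bounds on $R_s u$.
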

The third item also holds if $\left\Vert B_{j}\left(\lambda\right)\right\Vert _{\mathcal{B}\left(B_{p}^{0}(M,F),B_{p}^{0}(M,E)\right)}\le C\left\langle \lambda\right\rangle ^{r}$,
for some sufficiently small $r$, as a careful study of our proof
shows.

We note that $A\left(\lambda\right)B_{2}\left(\lambda\right)=1+K_{2}\left(\lambda\right)$
is equivalent to $B_{2}\left(\lambda\right)^{*}A\left(\lambda\right)^{*}=1+K_{2}\left(\lambda\right)^{*}$,
where $*$ indicates the adjoint. This is the condition that we shall
need. Obviously condition $i)$ implies that $\mbox{dim}\left(E\right)=\mbox{dim}\left(F\right)$.

If $i)$ holds,
then we can find a parametrix to $A\left(\lambda\right)$ by Theorem \ref{thm:Propriedades de BdM}(4)  so that
$ii)$ is true, and $ii)$ trivially implies $iii)$. So we only need to prove that $iii)$ implies $i)$.
\begin{defn}
Let $s>0$, $0<\tau<\frac{1}{3}$ and $(y,\eta)\in\mathbb{R}^{n}\times\mathbb{R}^{n}$.
We define the operator $R_{s}(y,\eta):\mathcal{S}\left(\mathbb{R}^{n}\right)\to\mathcal{S}\left(\mathbb{R}^{n}\right)$,
also denoted just by $R_{s}$, by
\[
R_{s}u\left(x\right)=s^{\frac{\tau n}{p}}e^{isx\eta}u\left(s^{\tau}\left(x-y\right)\right).
\]
\end{defn}
Below we collect some well-known facts about the operators $R_{s}$.
The items 1, 2, 4, 5 and 6 can be found in \cite{RempelSchulze,SchroheSeilerSpectConical}.
As we are dealing also with Besov spaces, some estimates must be done
more carefully. The third item was not proven in the previous references.
Statement 7 is stronger than usual. Both are necessary, as $R_{s}$
is not an isometry in the space $B_{p}^{0}\left(\mathbb{R}^{n}\right)$.

\begin{lem}
\label{lem:Propriedades R_s}The operator $R_{s}=R_{s}\left(y,\eta\right)$
has the following properties:
\begin{enumerate}\renewcommand{\labelenumi}{\arabic{enumi})}
\item $\left\Vert R_{s}u\right\Vert _{L^{p}\left(\mathbb{R}^{n}\right)}=\left\Vert u\right\Vert _{L^{p}\left(\mathbb{R}^{n}\right)}$
for all $u\in\mathcal{S}\left(\mathbb{R}^{n}\right)$.

\item $\lim_{s\to\infty}R_{s}u=0$ weakly in $L_{p}\left(\mathbb{R}^{n}\right)$
for all $u\in\mathcal{S}\left(\mathbb{R}^{n}\right)$.

\item $R_{s}:B_{p}^{\theta}\left(\mathbb{R}^{n}\right)\to B_{p}^{\theta}\left(\mathbb{R}^{n}\right)$
is continuous for all $s>0$ and $\left\Vert R_{s}u\right\Vert _{B_{p}^{\theta}\left(\mathbb{R}^{n}\right)}\le C_{\theta}\left(1+s\left\langle \eta\right\rangle \right)^{\theta}\left\Vert u\right\Vert _{H_{p}^{1}\left(\mathbb{R}^{n}\right)}$,
for every $\theta\in\left]0,1\right[$, $s\ge1$ and $u\in\mathcal{S}\left(\mathbb{R}^{n}\right)$.
The constant $C_{\theta}$ depends on $\theta$, but not on $y$,
$\eta$ or $s$.

\item  The operator $R_{s}$ is invertible. Its inverse is given by
\[
R_{s}^{-1}u\left(x\right)=s^{-\frac{\tau n}{p}}e^{-is\left(y+s^{-\tau}x\right)\eta}u\left(y+s^{-\tau}x\right).
\]

\item The Fourier transform of $R_{s}u$ is given by
\[
\mathcal{F}\left(R_{s}u\right)\left(\xi\right)=s^{\frac{\tau n}{p}-n\tau}e^{-iy\left(\xi-s\eta\right)}\hat{u}\left(s^{-\tau}\left(\xi-s\eta\right)\right).
\]

\item Let $a\in S^{m}\left(\mathbb{R}^{n}\times\mathbb{R}^{n},\Lambda\right)$.
Then
\[
R_{s}^{-1}op(a)\left(s\lambda\right)R_{s}u(x)=op(a_{s})\left(\lambda\right)u(x),
\]
where $a_{s}(x,\xi,\lambda)=a\left(y+s^{-\tau}x,s\eta+s^{\tau}\xi,s\lambda\right)$.

\item Let $a\in S_{cl}^{0}\left(\mathbb{R}^{n}\times\mathbb{R}^{n},\Lambda\right)$
be  classical, $u\in\mathcal{S}\left(\mathbb{R}^{n}\right)$,
$\lambda\in\Lambda$ with $\left(\eta,\lambda\right)\ne\left(0,0\right)$
and  $0<r<\tau$. Then
\begin{equation}
\lim_{s\to\infty}s^{r}\left\Vert op(a)\left(s\lambda\right)R_{s}u-a_{(0)}(y,\eta,\lambda)R_{s}u\right\Vert _{B_{p}^{0}\left(\mathbb{R}^{n}\right)}=0.\label{eq:ConvergenciaBp}
\end{equation}
\end{enumerate}
\end{lem}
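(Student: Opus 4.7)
My plan is to treat the items in order of increasing difficulty, isolating the substantive work in items 3 and 7.

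Items 1, 4, 5, and 6 I would dispatch by direct computation. Item 1 is the substitution $z=s^{\tau}(x-y)$, whose Jacobian $s^{-\tau n}$ cancels the factor $s^{\tau n}$ produced by the $p$-th power of the prefactor $s^{\tau n/p}$. For item 4, composing $R_s$ with the candidate inverse immediately yields the identity. Item 5 is the Fourier transform of a modulated--translated--dilated function, assembled from the usual rules. For item 6 I would substitute the formula of item 5 into $op(a)(s\lambda)R_s u$, apply $R_s^{-1}$, perform the substitution $\xi'=s^{-\tau}(\xi-s\eta)$, and check that the extraneous phases collapse to $e^{ix\xi'}$ while the composite symbol becomes $a(y+s^{-\tau}x,s\eta+s^{\tau}\xi',s\lambda)$. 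For item 2, I would test against $v\in C_c^{\infty}$ and change variables as above; for $\eta\neq 0$ the phase of order $s^{1-\tau}$ forces decay by nonstationary phase / repeated integration by parts in $z$, while for $\eta=0$ the prefactor becomes $s^{-\tau n/q}\to 0$ with the remaining integrand bounded. Density in $L^{q}$ then extends the statement to arbitrary test functions.

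For item 3 the plan is real interpolation between $L^p$ and $H_p^{1}$. Item 1 already gives $\|R_s\|_{\mathcal{B}(L^p)}=1$. Direct differentiation yields $\partial_j R_s u=is\eta_j R_s u+s^{\tau}R_s(\partial_j u)$, from which $\|R_s\|_{\mathcal{B}(H_p^{1})}\leq C(1+s\langle\eta\rangle)$ for $s\geq 1$, using $s^{\tau}\leq s\leq s\langle\eta\rangle$. The identity $(L^p,H_p^{1})_{\theta,p}=B_p^{\theta}$ from Remark~\ref{rem:BesovandBesselSpaces}(4) and the standard interpolation of operators then produce $\|R_s\|_{\mathcal{B}(B_p^{\theta})}\leq C_{\theta}(1+s\langle\eta\rangle)^{\theta}$, and the continuous embedding $H_p^{1}\hookrightarrow B_p^{\theta}$ (valid since $(X,Y)_{\theta,p}\supset Y$ whenever $Y\hookrightarrow X$) gives the stated form of the bound.

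Item 7 is the main point. Using item 6 and the scalar nature of $a_{(0)}(y,\eta,\lambda)$, I would rewrite
\[
op(a)(s\lambda)R_s u-a_{(0)}(y,\eta,\lambda)R_s u=R_s\bigl[op(b_s)(\lambda)u\bigr],\qquad b_s:=a_s-a_{(0)}(y,\eta,\lambda).
\]
Decomposing $a=a_{(0)}+a_{-1}$ with $a_{-1}\in S^{-1}$ and using homogeneity, for $s$ large the principal part of $b_s$ coincides with $a_{(0)}(y+s^{-\tau}x,\eta+s^{\tau-1}\xi,\lambda)-a_{(0)}(y,\eta,\lambda)$ on the relevant frequency region (this is where the hypothesis $(\eta,\lambda)\neq 0$ enters). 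A Taylor expansion around $(y,\eta)$ gives a pointwise bound of order $s^{-\tau}+s^{\tau-1}=O(s^{-\tau})$ since $\tau<1/2$, while the remainder $a_{-1}(y+s^{-\tau}x,s\eta+s^{\tau}\xi,s\lambda)$ is $O(s^{-\tau})$ thanks to $\langle s\eta+s^{\tau}\xi,s\lambda\rangle\geq c s^{\tau}\langle\xi\rangle$. Combined with the rapid decay of $\hat u$, this should yield $\|op(b_s)(\lambda)u\|_{H_p^{1}}\leq C_u s^{-\tau}$. Applying item 3 and $B_p^{\theta}\hookrightarrow B_p^{0}$,
\[
s^{r}\,\bigl\|R_s[op(b_s)(\lambda)u]\bigr\|_{B_p^{0}}\leq C_u\,s^{r+\theta-\tau}\,\langle\eta\rangle^{\theta},
\]
which tends to $0$ as $s\to\infty$ provided $\theta\in(0,\tau-r)$; such a $\theta$ exists because $r<\tau$.

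The hard part will be the $H_p^{1}$-decay estimate for $op(b_s)(\lambda)u$. The symbol $b_s$ does not converge to $0$ uniformly in $\xi$, since for large $|\xi|$ the Taylor step breaks down and one is left only with the uniform boundedness $|b_s|\leq 2\|a_{(0)}\|_{\infty}$. I would handle this by splitting the $\xi$-integration with a cutoff at $|\xi|\sim s^{\kappa}$ for some $0<\kappa<1-\tau$: on the good regime Taylor provides the rate $s^{-\tau}$, while on the complement the Schwartz decay $|\hat u(\xi)|\leq C_N\langle\xi\rangle^{-N}$ makes the contribution of any polynomial order in $s$ negligible, and the extra factor $\langle\xi\rangle$ coming from $\langle D\rangle$ is absorbed since $\langle\xi\rangle^{|\alpha|}|\partial_\xi^{\alpha}b_s|$ remains uniformly controlled through the homogeneity bound.
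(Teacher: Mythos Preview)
Your overall architecture for item 7 --- reduce to an $H_p^1$ estimate on $op(b_s)(\lambda)u$ with $b_s=a_s-a_{(0)}(y,\eta,\lambda)$, then feed this through the interpolation bound of item 3 with $\theta<\tau-r$ --- is exactly the route the paper takes. Items 1--6 and the interpolation argument for item 3 match the paper as well.

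Two points in item 7 need repair.

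\textbf{The lower bound you quote is false.} The inequality $\langle s\eta+s^{\tau}\xi,s\lambda\rangle\ge c\,s^{\tau}\langle\xi\rangle$ fails when $\lambda=0$ and $\xi$ is near $-s^{1-\tau}\eta$, where the left side is $O(1)$ while the right side is of order $s$. What is true, via Peetre, is $\langle s\eta+s^{\tau}\xi,s\lambda\rangle^{-1}\le C\,s^{\tau-1}\langle\xi\rangle\,|(\eta,\lambda)|^{-1}$, which still gives $|a_{-1}(\ldots)|\le C\,s^{\tau-1}\langle\xi\rangle=O(s^{-\tau}\langle\xi\rangle)$ since $\tau<1/3<1/2$; this is what the paper uses and it suffices. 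Note also that the same Peetre bound makes your $\xi$-splitting unnecessary: it yields the global estimate $|b_s(x,\xi)|\le C\langle x\rangle\langle\xi\rangle^{2}s^{-\tau}$ for all $\xi$ (this is where $\tau<1/3$, rather than $1/2$, is actually used in the paper).

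\textbf{You have not explained why $op(b_s)(\lambda)u\in L_p$ with the claimed rate.} Your pointwise symbol bound contains a factor $\langle x\rangle$ from the $x$-Taylor step, so the crude estimate $|op(b_s)u(x)|\le\int|b_s||\hat u|\,d\xi$ grows linearly in $x$ and is not $L_p$-integrable; the rapid decay of $\hat u$ only helps in $\xi$, not in $x$. The paper closes this gap by writing $x^{\gamma}op(b_s)u(x)$ via integration by parts in $\xi$, and then bounding each resulting term: for $\sigma=0$ one uses the pointwise bound above, and for $\sigma\neq0$ one has $|D_{\xi}^{\sigma}a_s|\le C\,s^{(2\tau-1)|\sigma|}\langle\xi\rangle^{|\sigma|}$, which is $O(s^{-\tau})$ since $2\tau-1<-\tau$ under $\tau<1/3$. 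This gives $|op(b_s)u(x)|\le C_N\,s^{-\tau}\langle x\rangle^{-N}$ and hence the $L_p$ (and then $H_p^1$) bound with rate $s^{-\tau}$. Your final sentence about $\langle\xi\rangle^{|\alpha|}|\partial_\xi^{\alpha}b_s|$ is pointing at the right derivative estimates, but you have attached them to the wrong purpose (absorbing $\langle D\rangle$) rather than to obtaining decay in $x$.
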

\begin{proof}
1), 4) and 5) are just simple computations, and 6) follows from 4),
5) and the definition of pseudodifferential operators.

In order to prove 2), we just have to note that $\lim_{s\to\infty}\int_{\mathbb{R}^{n}}R_{s}u\left(x\right)v\left(x\right)dx=0$
for all $u\in\mathcal{S}\left(\mathbb{R}^{n}\right)$ and $v\in\mathcal{S}\left(\mathbb{R}^{n}\right)$.
The proof follows then from the fact that $L_{p}(\mathbb{R}^{n})'\simeq L_{q}\left(\mathbb{R}^{n}\right)$,
for $\frac{1}{p}+\frac{1}{q}=1$, and that $R_{s}$ is an isometry.

3) The operator $R_{s}:B_{p}^{\theta}\left(\mathbb{R}^{n}\right)\to B_{p}^{\theta}\left(\mathbb{R}^{n}\right)$
is continuous for all $s\in\mathbb{R}$, as $R_{s}$ is the composition
of dilatation, translation and multiplication by $e^{is\eta x}$.
The estimate follows by interpolation. In fact, for $s\ge1$, it is
easy  to see that $\left\Vert R_{s}u\right\Vert _{H_{p}^{1}\left(\mathbb{R}^{n}\right)}\le\left(1+s\left\langle \eta\right\rangle \right)\left\Vert u\right\Vert _{H_{p}^{1}\left(\mathbb{R}^{n}\right)}$.
As $\left(L_{p}\left(\mathbb{R}^{n}\right),H_{p}^{1}\left(\mathbb{R}^{n}\right)\right)_{\theta,p}=B_{p}^{\theta}\left(\mathbb{R}^{n}\right)$,
we conclude (see A. Lunardi \cite[Corollary 1.1.7]{Lunardi}) that
there exists a constant $C_{\theta}$ such that
\[
\left\Vert R_{s}u\right\Vert _{B_{p}^{\theta}\left(\mathbb{R}^{n}\right)}\le C_{\theta}\left\Vert R_{s}u\right\Vert _{H_{p}^{1}\left(\mathbb{R}^{n}\right)}^{\theta}\left\Vert R_{s}u\right\Vert _{L^{p}\left(\mathbb{R}^{n}\right)}^{1-\theta}\le C_{\theta}\left(1+s\left\langle \eta\right\rangle \right)^{\theta}\left\Vert u\right\Vert _{H_{p}^{1}\left(\mathbb{R}^{n}\right)}.
\]

7) This is the longest statement we need to prove. We divide the proof
into several steps. Our first goal is the  $L_p$-convergence: 
\begin{equation}
\lim_{s\to\infty}s^{r}\left\Vert op(a_{s})\left(\lambda\right)u-a_{(0)}(y,\eta,\lambda)u\right\Vert _{L_{p}\left(\mathbb{R}^{n}\right)}=0,\,\text{ where }\,u\in\mathcal{S}\left(\mathbb{R}^{n}\right).\label{eq:Lpconvergence}
\end{equation}

In a {\bf first step} let us show that, for every $\left(x,\xi\right)\in\mathbb{R}^{n}\times\mathbb{R}^{n}$,
\begin{equation}
\left|a\left(y+s^{-\tau}x,s\eta+s^{\tau}\xi,s\lambda\right)-a_{\left(0\right)}\left(y,\eta,\lambda\right)\right|\le C_{\lambda,\eta}\left\langle x\right\rangle \left\langle \xi\right\rangle ^{2}s^{-\tau}.\label{eq:estamenosazero}
\end{equation}
Let $\chi:\mathbb{R}^{n}\times\Lambda\to\mathbb{C}$ be a smooth function
that is equal to $0$ in a neighborhood of the origin and  equal
to $1$ outside a closed ball centered at the origin that does
not contain $\left(\eta,\lambda\right)$. For $s\ge1$, we have
\begin{eqnarray}
|a\left(y+s^{-\tau}x,s\eta+s^{\tau}\xi,s\lambda\right)
&-&\chi\left(s\eta+s^{\tau}\xi,s\lambda\right)a_{\left(0\right)}\left(y+s^{-\tau}x,s\eta+s^{\tau}\xi,s\lambda\right)|
\nonumber\\
&\le&
\frac{C}{\left\langle s\eta+s^{\tau}\xi,s\lambda\right\rangle }\le Cs^{\tau}\left\langle s\eta,s\lambda\right\rangle ^{-1}\left\langle \xi\right\rangle ,\label{eq:estimativa1}
\end{eqnarray}
where we have used Peetre's inequality. Since $a_{\left(0\right)}\left(y,s\eta,s\lambda\right)=a_{\left(0\right)}\left(y,\eta,\lambda\right)$,
\begin{eqnarray}
\lefteqn{\left|\chi\left(s\eta+s^{\tau}\xi,s\lambda\right)a_{\left(0\right)}\left(y+s^{-\tau}x,s\eta+s^{\tau}\xi,s\lambda\right)-a_{\left(0\right)}\left(y,\eta,\lambda\right)\right|}
\nonumber\\
&\le&
\sum_{j=1}^{n}\left(\int_{0}^{1}s^{-\tau}\left|x_{j}\right|\left|\chi\left(s\eta+ts^{\tau}\xi,s\lambda\right)\left(\partial_{x_{j}}a_{\left(0\right)}\right)\left(y+ts^{-\tau}x,s\eta+ts^{\tau}\xi,s\lambda\right)\right|dt\right.
\nonumber\\
&&
\nonumber+
\left.s^{\tau}\int_{0}^{1}\left|\xi_{j}\partial_{\xi_{j}}\left(\chi a_{\left(0\right)}\right)\left(y+ts^{-\tau}x,s\eta+ts^{\tau}\xi,s\lambda\right)\right|dt\right)\\
&&\le\sum_{j=1}^{n}\Big(C_{1}s^{-\tau}\left|x_{j}\right|+C_{2}\frac{s^{2\tau}}{\left\langle s\eta,s\lambda\right\rangle }\left|\xi_{j}\right|\left\langle \xi\right\rangle \Big).\label{eq:estimativa2}
\end{eqnarray}
The estimates (\ref{eq:estimativa1}) and (\ref{eq:estimativa2})
imply (\ref{eq:estamenosazero}) for $\tau<\frac{1}{3}$.

In a {\bf second step} we are going to show the pointwise convergence of the integrand of (\ref{eq:Lpconvergence})
for all $u\in\mathcal{S}\left(\mathbb{R}^{n}\right)$ and  all
$x\in\mathbb{R}^{n}$. We know that
\begin{eqnarray}
\lefteqn{s^{r}\left(op\left(a_{s}\right)\left(\lambda\right)u\left(x\right)-a_{\left(0\right)}\left(y,\eta,\lambda\right)u\left(x\right)\right)}
\nonumber\\
&=&
(2\pi)^{-n}\int_{\mathbb{R}^{n}}e^{ix\xi}s^{r}\left(a\left(y+s^{-\tau}x,s\eta+s^{\tau}\xi,s\lambda\right)-a_{\left(0\right)}\left(y,\eta,\lambda\right)\right)\hat u \left(\xi\right)d\xi.
\nonumber
\end{eqnarray}
The integrand goes to zero, as we have seen in Equation (\ref{eq:estamenosazero}).
Moreover
\[
\left|s^{r}\left(a\left(y+s^{-\tau}x,s\eta+s^{\tau}\xi,s\lambda\right)-a_{\left(0\right)}\left(y,\eta,\lambda\right)\right)\hat u\left(\xi\right)\right|\le C_{\lambda,\eta}\left\langle x\right\rangle \left\langle \xi\right\rangle ^{2}\left|\hat{u}\left(\xi\right)\right|,
\]
is integrable with respect to $\xi$, so that the dominated convergence theorem applies.

In the {\bf third step} we will finally prove  \eqref{eq:Lpconvergence}. It is enough
to show that the integrand is dominated. Indeed, 
integration by parts shows that 
\begin{eqnarray}
\lefteqn{
s^{r}x^{\gamma}\left(op\left(a_{s}\right)\left(\lambda\right)u\left(x\right)-a_{\left(0\right)}\left(y,\eta,\lambda\right)u\left(x\right)\right)\label{eq:(*)}
\nonumber}\\
&=&\left(-1\right)^{|\gamma|}\sum_{\sigma\le\gamma}
\binom\gamma\sigma
s^{r}(2\pi)^{-n}\int_{\mathbb{R}^{n}}e^{ix\xi}
D_{\xi}^{\sigma}(a(y+s^{-\tau}x,s\eta+s^{\tau}\xi,s\lambda)
\\
&&-a_{(0)}(y,\eta,\lambda))D_{\xi}^{\gamma-\sigma}\hat{u}(\xi)d\xi.
\nonumber 
\end{eqnarray}
For $\sigma=0$, we recall  (\ref{eq:estamenosazero}); for
$\sigma\ne0$, we use that $r+2\tau\left|\sigma\right|-\left|\sigma\right|<0$
and obtain
\[
s^{r}\left|D_{\xi}^{\sigma}\left(a\left(y+s^{-\tau}x,s\eta+s^{\tau}\xi,s\lambda\right)\right)\right|\le C\left|\left(\eta,\lambda\right)\right|^{-\left|\sigma\right|}\left\langle \xi\right\rangle ^{\left|\sigma\right|}.
\]
As $\xi\mapsto \langle \xi\rangle ^{M}\hat{u}\left(\xi\right)$
is integrable for all $M>0$,  \eqref{eq:(*)}
can be estimated by $\tilde{C}_{\lambda,\eta,\gamma}\left\langle x\right\rangle $.
Hence, for arbitrary $N$, 
\[
s^{r}\left|op\left(a_{s}\right)\left(\lambda\right)u\left(x\right)-a_{\left(0\right)}\left(y,\eta,\lambda\right)u\left(x\right)\right|\le C_{\lambda,\eta,N}\left\langle x\right\rangle ^{-N}.
\]
The dominated convergence then shows the desired $L_p$-convergence.

Our next goal is to show $L_p$-convergence of the derivative:
\begin{equation}
\lim_{s\to\infty}s^{r}\left\Vert op(a_{s})\left(\lambda\right)u-a_{(0)}(y,\eta,\lambda)u\right\Vert _{H_{p}^{1}\left(\mathbb{R}^{n}\right)}=0,\,\text{ }\,u\in\mathcal{S}\left(\mathbb{R}^{n}\right).\label{eq:Convergencia H1}
\end{equation}
Let us first observe that
\[
\partial_{x_{j}}op(a_{s})\left(\lambda\right)=op\left(a_{s}\right)\left(\lambda\right)\partial_{x_{j}}u+s^{-\tau}op\left(\left(\partial_{x_{j}}a\right)_{s}\right)\left(\lambda\right)u.
\]
Using Equation (\ref{eq:Lpconvergence}) and the fact that $r<\tau$,
we conclude that 
\[
\lim_{s\to\infty}s^{r}\left\Vert op\left(a_{s}\right)\left(\lambda\right)\partial_{x_{j}}u-a_{(0)}(y,\eta,\lambda)\partial_{x_{j}}u\right\Vert _{L_{p}\left(\mathbb{R}^{n}\right)}=0
\]
and
\begin{eqnarray*}
\lefteqn{\lim_{s\to\infty}s^{r}\left\Vert s^{-\tau}op\left(\left(\partial_{x_{j}}a\right)_{s}\right)\left(\lambda\right)u\right\Vert _{L_{p}\left(\mathbb{R}^{n}\right)}}\\ 
& \le & \lim_{s\to\infty}s^{r-\tau}\left\Vert op\left(\left(\partial_{x_{j}}a\right)_{s}\right)\left(\lambda\right)u-\left(\partial_{x_{j}}a\right){}_{(0)}(y,\eta,\lambda)u\right\Vert _{L_{p}\left(\mathbb{R}^{n}\right)}\\
 &  & +\lim_{s\to\infty}s^{r-\tau}\left\Vert \left(\partial_{x_{j}}a\right){}_{(0)}(y,\eta,\lambda)u\right\Vert _{L_{p}\left(\mathbb{R}^{n}\right)}=0
\end{eqnarray*}
for all $u\in\mathcal{S}\left(\mathbb{R}^{n}\right)$. Hence 
\begin{equation}
\lim_{s\to\infty}s^{r}\left\Vert \partial_{x_{j}}op(a_{s})\left(\lambda\right)u-a_{(0)}(y,\eta,\lambda)\partial_{x_{j}}u\right\Vert _{L_{p}\left(\mathbb{R}^{n}\right)}=0.\label{eq:convergenciaderivada}
\end{equation}
Equations (\ref{eq:Lpconvergence}) and (\ref{eq:convergenciaderivada})
imply (\ref{eq:Convergencia H1}).

In order to finish the proof of item (7), choose $\theta>0$ such that $\theta+r<\tau$.
Then item (3) implies that
\begin{eqnarray*}\lefteqn{
s^{r}\left\Vert op(a)\left(s\lambda\right)R_{s}u-a_{(0)}(y,\eta,\lambda)R_{s}u\right\Vert _{B_{p}^{0}\left(\mathbb{R}^{n}\right)}}\\
&\le& s^{r}\left\Vert R_{s}\left(R_{s}^{-1}op(a)\left(s\lambda\right)R_{s}u-a_{(0)}(y,\eta,\lambda)u\right)\right\Vert _{B_{p}^{\theta}\left(\mathbb{R}^{n}\right)}\\
&\le& C_{\theta}\left(1+s\left\langle \eta\right\rangle \right)^{\theta}s^{r}\left\Vert op(a_{s})\left(\lambda\right)u-a_{(0)}(y,\eta,\lambda)u\right\Vert _{H_{p}^{1}\left(\mathbb{R}^{n}\right)}.
\end{eqnarray*}
As the last term goes to zero, we obtain (\ref{eq:ConvergenciaBp}).
\end{proof}

\begin{cor}
\label{cor:simbolo rmais}Let $a\in S_{cl}^{0}(\mathbb{R}^{n}\times\mathbb{R}^{n},\Lambda)$
satisfy the transmission condition and $u\in\mathcal{S}({\mathbb{R}_{+}^{n}})$.
Then
\[
\lim_{s\to\infty}s^{r}\left\Vert r^{+}op(a)\left(s\lambda\right)R_{s}\left(e^{+}u\right)-a_{(0)}(y,\eta,\lambda)r^{+}R_{s}\left(e^{+}u\right)\right\Vert _{L_{p}(\mathbb{R}_{+}^{n})}=0,
\]
 for $\left(y,\eta,\lambda\right)\in\overline{\mathbb{R}_{+}^{n}}\times\left(\left(\mathbb{R}^{n}\times\Lambda\right)\backslash\left\{ 0\right\} \right)$
and $0<r<\tau$, where $R_{s}=R_{s}\left(y,\eta\right)$.
\end{cor}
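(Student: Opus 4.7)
The plan is to reduce the half-space statement to the $L^p(\mathbb{R}^n)$-convergence (\ref{eq:Lpconvergence}) of Lemma \ref{lem:Propriedades R_s}(7) via the conjugation identity of item 6 of that lemma, and then to handle the non-Schwartz input $e^+u$ by exploiting the transmission condition on $a$.

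Since $a$ has order zero, $op(a)(s\lambda)$ is bounded on $L^p(\mathbb{R}^n)$ by Remark \ref{rem:BesovandBesselSpaces}(3), and $R_s$ is an $L^p$-isometry by item 1 of Lemma \ref{lem:Propriedades R_s}. Hence the identity $op(a)(s\lambda)R_s v = R_s\,op(a_s)(\lambda)v$ of item 6 extends by density from $v\in\mathcal{S}(\mathbb{R}^n)$ to all $v\in L^p(\mathbb{R}^n)$, in particular to $v=e^+u$. Using $\|r^+\|_{L^p(\mathbb{R}^n)\to L^p(\mathbb{R}^n_+)}\le 1$ and that $R_s$ preserves the $L^p$-norm, I obtain
\[
\bigl\|r^+op(a)(s\lambda)R_s e^+u - a_{(0)}(y,\eta,\lambda)\,r^+ R_s e^+u\bigr\|_{L^p(\mathbb{R}^n_+)} \le \bigl\|op(a_s)(\lambda) e^+u - a_{(0)}(y,\eta,\lambda) e^+u\bigr\|_{L^p(\mathbb{R}^n)}.
\]
It therefore suffices to show that $s^r$ times the right-hand side tends to zero.

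This is formally (\ref{eq:Lpconvergence}) with $e^+u$ in place of a Schwartz function. The pointwise symbol estimate (\ref{eq:estamenosazero}) is symbol-theoretic and carries over, and pointwise convergence of the integrand in the oscillatory integral representation still holds since $\widehat{e^+u}$ is tempered. The only delicate step is obtaining an $L^p(\mathbb{R}^n)$-integrable dominating function: the integration by parts in $\xi$ of step 3 in the proof of Lemma \ref{lem:Propriedades R_s}(7) relied on rapid decay of $\hat u$, whereas $\widehat{e^+u}$ only decays as $\langle\xi_n\rangle^{-1}$. I would circumvent this by writing $e^+u = \tilde u - e^-v$ for some Schwartz extension $\tilde u\in\mathcal{S}(\mathbb{R}^n)$ of $u$ and $v=r^-\tilde u\in\mathcal{S}(\overline{\mathbb{R}^n_-})$: the Schwartz summand $\tilde u$ is handled by (\ref{eq:Lpconvergence}) directly, while for the jump correction $e^-v$ the transmission condition on $a$ guarantees that $op(a_s)(\lambda)e^-v$ admits the Boutet de Monvel plus/minus splitting into pieces Schwartz on each closed half-space, with seminorms uniform in $s$ by the scaling bounds inherited from (\ref{eq:estimativa1})--(\ref{eq:estimativa2}); the integration-by-parts-in-$x$ argument of step 3 then applies piecewise and produces the required $\langle x\rangle^{-N}$ domination.

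The main obstacle is establishing this scale-uniformity of the Boutet de Monvel plus/minus decomposition for the rescaled symbols $a_s$, which is what turns pointwise convergence into $L^p(\mathbb{R}^n)$-convergence at rate $s^{-r}$.
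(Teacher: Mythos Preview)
Your opening reduction---use boundedness of $r^{+}$, the $L_p$-isometry of $R_s$, and the conjugation formula $op(a)(s\lambda)R_s = R_s\,op(a_s)(\lambda)$ to bound the half-space expression by $\|op(a_s)(\lambda)e^{+}u - a_{(0)}(y,\eta,\lambda)e^{+}u\|_{L_p(\mathbb{R}^n)}$---is exactly what the paper does. The divergence is in the next step.

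The paper does not invoke the transmission condition at all. Its argument is simply: when $e^{+}u\in\mathcal{S}(\mathbb{R}^n)$, Equation~(\ref{eq:Lpconvergence}) applies verbatim and the corollary follows in one line. This is precisely the situation when $u\in C_c^{\infty}(\mathbb{R}^n_+)$, since then $e^{+}u$ is a genuine smooth compactly supported function on $\mathbb{R}^n$. The remark in the paper's proof that $R_s$ maps $C_c^{\infty}(\mathbb{R}^n_+)$ to itself is pointing at this: for compactly supported $u$ the extension $e^{+}u$ is Schwartz, so the $L_p$-convergence~(\ref{eq:Lpconvergence}) already proved in Lemma~\ref{lem:Propriedades R_s}(7) suffices. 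And indeed, the only place the corollary is used later (the interior-symbol part of Theorem~\ref{thm:equivalenciaFredholmelipticBdM}) takes $u=cv$ with $v\in C_c^{\infty}(\mathbb{R}^n_+)$.

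Your route---splitting $e^{+}u=\tilde u - e^{-}v$ and controlling the jump term via the Boutet de Monvel plus/minus decomposition for the rescaled symbols $a_s$---targets the full $\mathcal{S}(\mathbb{R}^n_+)$ statement, which the paper's three-line proof does not directly cover. The approach is sound in principle, but the ``main obstacle'' you flag (scale-uniform control of the plus/minus pieces for $a_s$) is a real difficulty: $a_s(x,\xi,\lambda)=a(y+s^{-\tau}x,s\eta+s^{\tau}\xi,s\lambda)$ is not a classical symbol with transmission property in the standard sense, so the usual half-space mapping estimates do not apply off the shelf. You would have to redo the transmission-type analysis uniformly in $s$, essentially reproving a sharpened version of (\ref{eq:Lpconvergence}) for functions with a first-order jump. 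That is considerably more work than the paper's argument, which sidesteps the issue entirely by restricting (implicitly) to inputs with $e^{+}u$ smooth.
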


\begin{proof}
We use that  $r^{+}:L_{p}\left(\mathbb{R}^{n}\right)\to L_{p}\left(\mathbb{R}_{+}^{n}\right)$
is continuous, that $R_{s}:L_{p}\left(\mathbb{R}^{n}\right)\to L_{p}\left(\mathbb{R}^{n}\right)$
is an isometry  mapping $C_{c}^{\infty}(\mathbb{R}_{+}^{n})$
to $C_{c}^{\infty}(\mathbb{R}_{+}^{n})$, and
Equation (\ref{eq:Lpconvergence}). 
\end{proof}
In order to control the action of $R_{s}$ on Besov spaces, we recall
the equivalence of Besov norm and $L_{p}$ norm on certain subsets of $\mathcal{S}(\mathbb{R}^{n})$,
see e.g.\ \cite{LeopoldSchrohe}.

\begin{lem}
{\rm(}Besov space property{\rm)} \label{lem:LpBp}There is a constant $C>0$
such that
\[
C^{-1}\left\Vert u\right\Vert _{B_{p}^{0}\left(\mathbb{R}^{n}\right)}\le\left\Vert u\right\Vert _{L_{p}\left(\mathbb{R}^{n}\right)}\le C\left\Vert u\right\Vert _{B_{p}^{0}\left(\mathbb{R}^{n}\right)},
\]
for all $u\in\mathcal{S}'\left(\mathbb{R}^{n}\right)$ with $\mbox{supp}\, \mathcal{F}(u)\subset\cup_{k=m}^{m+2}K_{k}$
for some $m\ge0$. Here $C$ does not depend on $m$. In particular,
under these circumstances, $u\in L_{p}\left(\mathbb{R}^{n}\right)$ if and only if $ u\in B_{p}^{0}\left(\mathbb{R}^{n}\right)$.
\end{lem}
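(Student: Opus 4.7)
The strategy is to exploit the fact that the support hypothesis on $\hat u$ reduces the Littlewood--Paley decomposition to a bounded number of nonzero pieces, with the bound independent of $m$. Recall from Remark \ref{rem:Definicao conjuntos Kj} that $\operatorname{supp}(\varphi_j)\subset\operatorname{interior}(K_j)$, and observe that $\operatorname{interior}(K_j)\cap K_k\neq\emptyset$ iff $|j-k|\le 1$ (easy check from $K_j=\{2^{j-1}\le|\xi|\le 2^{j+1}\}$, $j\ge 1$). Hence, if $\operatorname{supp}\mathcal F(u)\subset\bigcup_{k=m}^{m+2}K_k$, then $\varphi_j\,\mathcal F(u)\equiv 0$ unless $j$ belongs to the index set $J_m:=\{m-1,m,m+1,m+2,m+3\}\cap\mathbb N_0$, whose cardinality is at most $5$ regardless of $m$.

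For the upper bound $\|u\|_{L_p}\le C\|u\|_{B_p^0}$: since $\sum_{j=0}^\infty\varphi_j\equiv 1$, the identity $u=\sum_{j\in J_m}\varphi_j(D)u$ holds as a \emph{finite} sum in $\mathcal S'(\mathbb R^n)$. The triangle inequality in $L_p$ together with H\"older's inequality applied to at most $5$ summands gives
\[
\|u\|_{L_p(\mathbb R^n)}\le\sum_{j\in J_m}\|\varphi_j(D)u\|_{L_p(\mathbb R^n)}\le 5^{1/q}\Big(\sum_{j\in J_m}\|\varphi_j(D)u\|_{L_p}^p\Big)^{1/p}\le 5^{1/q}\|u\|_{B_p^0(\mathbb R^n)},
\]
where $1/p+1/q=1$. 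This constant is clearly independent of $m$.

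For the lower bound $\|u\|_{B_p^0}\le C\|u\|_{L_p}$: only the at most $5$ indices $j\in J_m$ contribute to the Besov norm, so it suffices to bound each $\|\varphi_j(D)u\|_{L_p}$ by $C\|u\|_{L_p}$ with $C$ independent of $j$. Writing $\varphi_j(D)u=\mathcal F^{-1}(\varphi_j)*u$ and applying Young's inequality yields $\|\varphi_j(D)u\|_{L_p}\le\|\mathcal F^{-1}(\varphi_j)\|_{L_1}\|u\|_{L_p}$. From the identity $\varphi_j(\xi)=\varphi_1(2^{-j+1}\xi)$ for $j\ge 2$ (Remark \ref{rem:Definicao conjuntos Kj}), a change of variables shows $\mathcal F^{-1}(\varphi_j)(x)=2^{(j-1)n}(\mathcal F^{-1}\varphi_1)(2^{j-1}x)$, and its $L_1$-norm equals $\|\mathcal F^{-1}\varphi_1\|_{L_1}$, independent of $j$. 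The remaining indices $j=0,1$ contribute fixed constants. Summing the at most $5$ nonzero $p$-th powers finishes the argument.

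The main subtlety is not computational but structural: the key to uniformity in $m$ is the dilation invariance of the Fourier-multiplier norms of $\varphi_j(D)$ on $L_p(\mathbb R^n)$, combined with the bounded overlap of the dyadic annuli $K_j$. Everything else amounts to triangle-inequality bookkeeping.
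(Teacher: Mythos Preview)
Your proof is correct and follows essentially the same approach as the paper: both exploit the finite decomposition $u=\sum_{j=m-1}^{m+3}\varphi_j(D)u$ and the dilation invariance $\|\mathcal F^{-1}\varphi_j\|_{L_1}=\|\mathcal F^{-1}\varphi_1\|_{L_1}$ (via Young's inequality) to obtain the uniform bound. Your version is simply more explicit about handling both inequalities and the edge cases $j=0,1$, whereas the paper compresses everything into the single estimate $\|\varphi_j(D)u\|_{L_p}\le\|\mathcal F^{-1}\varphi_1\|_{L_1}\|u\|_{L_p}$ and the phrase ``implies the result.''
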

The number 2 could be replaced by a different one. We recall that
the sets $K_{j}$ were defined in Remark \ref{rem:Definicao conjuntos Kj}.
\begin{proof}
As $\varphi_{j}(\xi)=\varphi_{1}(2^{-j+1}\xi)$
for $j\ge1$, and $u=\sum_{j=m-1}^{m+3}\varphi_{j}(D)u$,
the estimate
\[
\Vert \varphi_{j}(D)u\Vert _{L^{p}\left(\mathbb{R}^{n}\right)}=\Vert \mathcal{F}^{-1}(\varphi_{j})*u\Vert _{L^{p}(\mathbb{R}^{n})}
\le
\Vert \mathcal{F}^{-1}\varphi_{1}\Vert _{L^{1}\left(\mathbb{R}^{n}\right)}\Vert u\Vert _{L^{p}\left(\mathbb{R}^{n}\right)},j\ge1,
\]
implies the result.
\end{proof}

The operator $R_{s}$ has important properties when acting  on functions whose Fourier transform is supported in 
$\tilde{K}:=\left\{ \xi\in\mathbb{R}^{n};\,\frac{1}{2}<\left|\xi\right|<1\right\} $.

\begin{lem}
\label{lem:Rs em supp de F na bola} There is a constant $s_{0}>0$,
that depends only on $\eta$, for which the operator $R_{s}=R_{s}\left(y,\eta\right)$
has the following properties:
\begin{enumerate}\renewcommand{\labelenumi}{\arabic{enumi})}
\item 
If $u\in\mathcal{S}'(\mathbb{R}^{n})$ and $\mbox{supp}\,(\mathcal{F}u)\subset\tilde{K}$,
then, for every $s\ge s_0$, there is an $m\in\mathbb{N}_{0}$ that depends on $s$, such
that 
$\mbox{supp}\,\mathcal{F}\left(R_{s}u\right)\subset\cup_{k=m}^{m+2}K_{k}$.

\item 
There exists a constant $C>0$ such that $C^{-1}\left\Vert u\right\Vert _{B_{p}^{0}\left(\mathbb{R}^{n}\right)}\le\left\Vert R_{s}u\right\Vert _{B_{p}^{0}\left(\mathbb{R}^{n}\right)}\le C\left\Vert u\right\Vert _{B_{p}^{0}\left(\mathbb{R}^{n}\right)}$
for all $s>s_{0}$ and all $u\in B_{p}^{0}\left(\mathbb{R}^{n}\right)$
with $\mbox{supp}\left(\mathcal{F}u\right)\subset\tilde{K}$.

\item 
For $u\in\mathcal{S}(\mathbb{R}^{n})$ with $\mbox{supp}(\mathcal{F}u)\subset\tilde{K}$,
 $\lim_{s\to\infty}R_{s}u=0$ weakly in $B_{p}^{0}\left(\mathbb{R}^{n}\right)$.
\end{enumerate}
\end{lem}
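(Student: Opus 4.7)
\emph{Proof plan.} Everything hinges on the Fourier--transform formula of Lemma~\ref{lem:Propriedades R_s}(5), namely
$\mathcal{F}(R_{s}u)(\xi)=s^{\tau n/p-n\tau}e^{-iy(\xi-s\eta)}\hat{u}(s^{-\tau}(\xi-s\eta))$.
If $\mbox{supp}\,\hat{u}\subset\tilde{K}$, this forces $\mbox{supp}\,\mathcal{F}(R_{s}u)\subset\{\xi:s^{\tau}/2<|\xi-s\eta|<s^{\tau}\}$, and every assertion of the lemma will be read off from this localisation.

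For (1), I would note that for $\xi$ in this shell one has $s|\eta|-s^{\tau}\le|\xi|\le s|\eta|+s^{\tau}$ (with the obvious simplification when $\eta=0$, where $|\xi|\in(s^{\tau}/2,s^{\tau})$). Because $\tau<1/3<1$, the ratio $(s|\eta|+s^{\tau})/(s|\eta|-s^{\tau})$ tends to $1$ as $s\to\infty$, so there exists an $s_{0}$ depending only on $\eta$ beyond which this ratio is at most $2^{4}=16$. One then selects $m\in\mathbb{N}_{0}$ with $2^{m-1}\le s|\eta|-s^{\tau}$ and $s|\eta|+s^{\tau}\le 2^{m+3}$; this is precisely the condition $\mbox{supp}\,\mathcal{F}(R_{s}u)\subset K_{m}\cup K_{m+1}\cup K_{m+2}$.

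Part (2) then follows from the Besov space property of Lemma~\ref{lem:LpBp}, whose constant does not depend on the dyadic index $m$. Applied to $R_{s}u$ (using (1)), it gives $\|R_{s}u\|_{B_{p}^{0}}\simeq\|R_{s}u\|_{L_{p}}$ uniformly in $s\ge s_{0}$; applied to $u$ itself, whose Fourier support sits in $\tilde{K}\subset K_{0}\cup K_{1}\cup K_{2}$, it gives $\|u\|_{L_{p}}\simeq\|u\|_{B_{p}^{0}}$. Chaining these with the $L_{p}$-isometry $\|R_{s}u\|_{L_{p}}=\|u\|_{L_{p}}$ from Lemma~\ref{lem:Propriedades R_s}(1) yields the two--sided bound. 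For a general $u\in B_{p}^{0}$ with Fourier support in $\tilde{K}$, rather than just Schwartz $u$, I would approximate by Schwartz functions whose Fourier transforms remain supported in a fixed slightly larger compact neighbourhood of $\overline{\tilde{K}}$ and pass to the limit using these uniform bounds.

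For (3), I would exploit that the dual of $B_{p}^{0}$ is $B_{q}^{0}$ under the $L^{2}$ pairing (Remark~\ref{rem:BesovandBesselSpaces}(2)) and that $\mathcal{S}$ is dense in $B_{q}^{0}$ (Remark~\ref{rem:BesovandBesselSpaces}(1)). For any test function $v\in\mathcal{S}$, $\langle R_{s}u,v\rangle\to 0$ by the weak convergence $R_{s}u\rightharpoonup 0$ in $L_{p}$ from Lemma~\ref{lem:Propriedades R_s}(2); the uniform bound $\|R_{s}u\|_{B_{p}^{0}}\le C\|u\|_{B_{p}^{0}}$ from (2) then extends this to arbitrary $v\in B_{q}^{0}$ by a standard $\varepsilon/3$ argument. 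The main obstacle is the elementary but slightly fiddly size estimate in (1), in particular pinning down $s_{0}(\eta)$ correctly and treating $\eta=0$ on the same footing; once (1) is secured, (2) and (3) are essentially formal consequences.
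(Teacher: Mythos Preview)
Your proposal is correct and follows essentially the same approach as the paper: both derive (1) from the Fourier formula in Lemma~\ref{lem:Propriedades R_s}(5), obtain (2) by combining (1) with Lemma~\ref{lem:LpBp} and the $L_p$-isometry of $R_s$, and deduce (3) from weak $L_p$-convergence together with the uniform $B_p^0$-bound and density of $\mathcal S$ in $B_q^0$. One small arithmetic slip in your (1): three consecutive dyadic shells $K_m\cup K_{m+1}\cup K_{m+2}$ cover an annulus of ratio $2^4$, but to guarantee that an arbitrary annulus of ratio $\rho$ fits inside some such union you need $\rho\le 2^3$, not $\rho\le 2^4$; since your ratio tends to $1$, simply choose $s_0$ so that it is $\le 8$ (the paper instead takes $s_0$ with $2s^\tau<s|\eta|$, which gives the cruder but sufficient inclusion $\tfrac12 s|\eta|<|\xi|<2s|\eta|$).
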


\begin{proof}
1) By item 5) of  Lemma \ref{lem:Propriedades R_s},  $\mathcal{F}\left(R_{s}u\right)\left(\xi\right)=0$,
unless $\frac{1}{2}<\left|s^{-\tau}\left(\xi-s\eta\right)\right|<1$.
If $\eta=0$, this means that $\frac{1}{2}s^{\tau}<\left|\xi\right|<s^{\tau}$.
If $\eta\ne0$, choose $s_{0}>0$ such that $2s^{\tau}<s\left|\eta\right|$,
for $s>s_{0}$. Then $\mbox{supp}\, \mathcal{F}\left(R_{s}\left(u\right)\right)\subset\left\{ \xi;\frac{1}{2}s\left|\eta\right|<\left|\xi\right|<2s\left|\eta\right|\right\} $,
for $s>s_{0}$. The result now follows easily.

2) As $\mbox{supp}\,\mathcal{F}(R_{s}u)\subset\cup_{k=m}^{m+2}K_{k}$
and $\mbox{supp}\left(\mathcal{F}\left(u\right)\right)\subset \tilde K$,
the result follows from Lemma \ref{lem:LpBp} and the fact that $R_{s}$
is an isometry in $L_{p}\left(\mathbb{R}^{n}\right)$.

3) From item 2) of Lemma \ref{lem:Propriedades R_s}, we know that
\[
\lim_{s\to\infty}\int_{\mathbb{R}^{n}}R_{s}u\left(x\right)v\left(x\right)dx=0,\,v\in\mathcal{S}\left(\mathbb{R}^{n}\right).
\]
However, $B_{q}^{0}\left(\mathbb{R}^{n}\right)\cong B_{p}^{0}\left(\mathbb{R}^{n}\right)'$,
for $\frac{1}{p}+\frac{1}{q}=1$, and $\mathcal{S}\left(\mathbb{R}^{n}\right)$
is dense in $B_{q}^{0}\left(\mathbb{R}^{n}\right)$. 
As, by item 2),  $\left\Vert R_{s}u\right\Vert _{B_{p}^{0}\left(\mathbb{R}^{n}\right)}$
is uniformly bounded in $s$ for all fixed $u\in\mathcal{S}\left(\mathbb{R}^{n}\right)$
such that $\mbox{supp}\left(\mathcal{F}u\right)\subset\tilde{K}$,
the result follows.
\end{proof}
We now prove Theorem \ref{thm:Spectral-Invariance-in-Besov-Spaces}.
The next simple lemma will be useful:
\begin{lem}
\label{lem:injetora-adj-implica-iso} Let $E$ and $F$ be Banach
spaces and $E'$ and $F'$ be their dual spaces. If $A:E\to F$ is
a bounded linear operator such that $A$ is injective, has closed
range and its adjoint $A^{*}:F'\to E'$ is also injective, then $A$
is an isomorphism.
\end{lem}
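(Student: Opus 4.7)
The plan is to use the closed range theorem together with the open mapping theorem. Since $A$ is assumed injective with closed range, it suffices to show that $A$ is surjective; the boundedness of the inverse then follows automatically from the open mapping theorem applied to $A:E\to F$.

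To establish surjectivity, I would first argue that $A(E)$ is dense in $F$. Suppose for contradiction that $A(E)\neq F$. Since $A(E)$ is closed by hypothesis, we can pick $v\in F\setminus A(E)$ and apply the Hahn--Banach theorem to produce a functional $\varphi\in F'$ such that $\varphi|_{A(E)}=0$ but $\varphi(v)\neq 0$. The condition $\varphi(Au)=0$ for all $u\in E$ is exactly $(A^{*}\varphi)(u)=0$ for all $u\in E$, i.e.\ $A^{*}\varphi=0$. By the assumed injectivity of $A^{*}$ this forces $\varphi=0$, contradicting $\varphi(v)\neq 0$. Hence $A(E)$ is dense in $F$, and combined with closedness we get $A(E)=F$.

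At this point $A:E\to F$ is a continuous linear bijection between Banach spaces, so the open mapping theorem (equivalently, the bounded inverse theorem) yields that $A^{-1}:F\to E$ is bounded, which means $A$ is a Banach space isomorphism.

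The main obstacle, if any, is purely notational: the argument is a textbook application of Hahn--Banach plus the bounded inverse theorem, and the only subtlety is being careful that $\varphi|_{A(E)}=0$ is genuinely equivalent to $A^{*}\varphi=0$, which follows immediately from the definition of the Banach space adjoint. No additional hypothesis (reflexivity, separability, etc.) is needed. The closed-range assumption on $A$ is essential: without it, density of $A(E)$ alone would not give surjectivity.
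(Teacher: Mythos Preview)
Your proof is correct and follows essentially the same approach as the paper: assume the range is proper, use Hahn--Banach to produce a nonzero functional vanishing on it, and derive a contradiction from the injectivity of $A^{*}$. The paper omits the explicit mention of the open mapping theorem for boundedness of the inverse, but this is implicit in calling $A$ an isomorphism.
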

\begin{proof}
Suppose that the range $R\left(A\right)$ of $A$ is a proper subset
of $F$. By the Hahn-Banach Theorem, there is an $f\in F^{*}$, $f\ne0$,
such that $\left.f\right|_{R\left(A\right)}=0$. This implies that
$A^{*}\left(f\right)=f\circ A=0$. As $A^{*}:F'\to E'$ is injective,
we conclude that $f=0$, which is a contradiction.
\end{proof}
\begin{proof}
(of Theorem \ref{thm:Spectral-Invariance-in-Besov-Spaces})

As it suffices to prove the implication $iii)\implies i)$, consider
$A\left(\lambda\right)$, $B\left(\lambda\right)$ and $K\left(\lambda\right)$
as in $iii)$. 
Our aim is to prove that the principal symbol $p_{\left(0\right)}\left(z,\lambda\right)$
of $A$ is invertible for every $\left(z,\lambda\right)\in\left(T^{*}M\times\Lambda\right)\backslash\left\{ 0\right\} $.
We focus on a trivializing coordinate neighborhood $U$ containing $x=\pi(z)$. 
We choose smooth functions $\Phi, \Psi$ and $\mathrm H$ supported in $U$ 
such that $\Phi$ equals $1$ near $x$ and $\Psi\Phi= \Phi$, $\mathrm H\Psi=\Psi$.  
Denote by 
$\tilde A(\lambda)\in 
\mathcal B(B^0_p(\mathbb R^n)^{N_1}, B^0_p(\mathbb R^n)^{N_2})$ and $\tilde B(\lambda)\in \mathcal B(B^0_p(\mathbb R^n)^{N_2}, B^0_p(\mathbb R^n)^{N_1})$ the operators $\mathrm H A(\lambda)\Psi$ and $\Phi B(\lambda)\mathrm H$ in local coordinates.  
Then our assumptions imply that there are compact operators $\tilde K(\lambda)$, tending to zero in $\mathcal B(B^0_p(\mathbb R^n)^{N_1})$ as $|\lambda|\to \infty$ 
such that 
\begin{eqnarray}\label{eq:vaphipsi igual BA menos K}
\tilde B(\lambda)\tilde A(\lambda) = \tilde \Phi + \tilde K(\lambda),
\end{eqnarray}   
where  $\tilde \Phi$ is $\Phi$ in  local coordinates. 
Here we use the fact that $\tilde B(\lambda)$ has logarithmic growth and that 
$\Phi B(\lambda)\mathrm H^2A(\lambda) \Psi$ differs from 
$\Phi B(\lambda)A(\lambda)\mathrm \Psi$ by a compact operator whose norm tends to zero as $|\lambda| \to \infty$. 

Denote by $(y,\eta,\lambda)\in \mathbb R^n\times (\mathbb R^n\times \Lambda)\setminus \{0\}$ the point corresponding to $(z,\lambda)$ and fix an element 
$u=cv\in\mathcal{S}\left(\mathbb{R}^{n}\right)^{N_{1}}$,
where $c\in\mathbb{C}^{N_{1}}$
and $0\ne v\in\mathcal{S}\left(\mathbb{R}^{n}\right)$ with  
$\mbox{supp}\left(\mathcal{F}v\right)\subset\left\{ \xi;\,\frac{1}{2}<|{\xi}|<1\right\} $.
Equation (\ref{eq:vaphipsi igual BA menos K}) together with item
$ii)$ of Lemma \ref{lem:Rs em supp de F na bola} implies that
\begin{eqnarray}
\lefteqn{\left\Vert u\right\Vert _{B_{p}^{0}\left(\mathbb{R}^{n}\right)^{N_{1}}}\le C\left(\Vert \tilde{B}\left(s\lambda\right)\Vert _{\mathcal{B}\left(B_{p}^{0}\left(\mathbb{R}^{n}\right)^{N_{2}},B_{p}^{0}\left(\mathbb{R}^{n}\right)^{N_{1}}\right)}\Vert \tilde{A}\left(s\lambda\right)R_{s}u\Vert _{B_{p}^{0}\left(\mathbb{R}^{n}\right)^{N_{2}}}\right.}
\nonumber\\
&&+
\left.\Vert \tilde{K}\left(s\lambda\right)R_{s}u\Vert _{B_{p}^{0}\left(\mathbb{R}^{n}\right)^{ N_{1}}}+\Vert (1-\tilde \Phi)R_{s}u\Vert _{B_{p}^{0}\left(\mathbb{R}^{n}\right)^{ N_{1}}}\right).\label{eq:speccomp}
\end{eqnarray}
We claim that $\lim_{s\to\infty}\Vert \tilde{K}\left(s\lambda\right)R_{s}u\Vert _{B_{p}^{0}\left(\mathbb{R}^{n}\right)^{ N_{1}}}=0$: 
Indeed  $\Vert \tilde{K}\left(s\lambda\right)\Vert _{\mathcal{B}(B_{p}^{0}\left(\mathbb{R}^{n})^{ N_{1}}\right)}\to0$  for $\lambda\ne0$,
and $\Vert R_{s}u\Vert _{B_{p}^{0}\left(\mathbb{R}^{n}\right)^{ N_{1}}}\le C\left\Vert u\right\Vert _{B_{p}^{0}\left(\mathbb{R}^{n}\right)^{N_{1}}}$.
For $\lambda=0$, we use that $\tilde{K}\left(0\right)$
is compact and  the third item of Lemma \ref{lem:Rs em supp de F na bola},
which implies that $\lim_{s\to\infty}R_{s}u=0$ weakly in $B_{p}^{0}\left(\mathbb{R}^{n}\right)^{N_{1}}$.

Since $\tilde \Phi\in C_{c}^{\infty}\left(\mathbb{R}^{n}\right)$
is equal to $1$ in a neighborhood of $y$, 
$\lim_{s\to\infty}(1-\tilde \Phi)R_{s}(y,\eta)u=0$
in the topology of $\mathcal{S}\left(\mathbb{R}^{n}\right)$ and,
therefore, also in the topology of $B_{p}^{0}\left(\mathbb{R}^{n}\right)$.
We moreover estimate
\begin{eqnarray}
\lefteqn{\Vert \tilde{A}\left(s\lambda\right)R_{s}u\Vert _{B_{p}^{0}\left(\mathbb{R}^{n}\right)^{N_{2}}}
\le
\Vert \tilde{A}\left(s\lambda\right)R_{s}u-p_{\left(0\right)}\left(y,\eta,\lambda\right)R_{s}u\Vert _{B_{p}^{0}\left(\mathbb{R}^{n}\right)^{ N_{2}}}
}
\nonumber\\
&&
+C\left\Vert p_{\left(0\right)}\left(y,\eta,\lambda\right)c\right\Vert _{\mathcal{B}\left(\mathbb{C}^{N_{1}},\mathbb{C}^{N_{2}}\right)}\Vert v\Vert _{B_{p}^{0}(\mathbb{R}^{n})}.
\nonumber
\end{eqnarray}
Item 7 of Lemma \ref{lem:Propriedades R_s} implies that 
$\lim_{s\to\infty}s^{r}\Vert \tilde{A}\left(s\lambda\right)R_{s}u-p_{\left(0\right)}\left(y,\eta,\lambda\right)R_{s}u\Vert _{B_{p}^{0}\left(\mathbb{R}^{n}\right)^{N_{2}}}=0$ 
for $r$ sufficiently small. 
By assumption, $\Vert \tilde{B}(s\lambda)\Vert _{\mathcal{B}(B_{p}^{0}(\mathbb{R}^{n})^{N_{2}},B_{p}^{0}(\mathbb{R}^{n})^{N_{1}})}
\le\tilde{C}\langle \ln(s\lambda)\rangle ^{M}$.
Taking $s$ sufficiently large, we conclude that 
\begin{eqnarray}
\lefteqn{C\Big(\Vert \tilde{B}\left(s\lambda\right)\Vert _{\mathcal{B}\left(B_{p}^{0}\left(\mathbb{R}^{n}\right)^{ N_{2}},B_{p}^{0}\left(\mathbb{R}^{n}\right)^{ N_{1}}\right)}\Vert \tilde{A}\left(s\lambda\right)R_{s}u-p_{\left(0\right)}\left(y,\eta,\lambda\right)R_{s}u\Vert _{B_{p}^{0}\left(\mathbb{R}^{n}\right)^{ N_{2}}}}
\nonumber\\
&&+
\Vert \tilde{K}\left(s\lambda\right)R_{s}u\Vert _{B_{p}^{0}\left(\mathbb{R}^{n}\right)^{N_{1}}}+\Vert (1-\tilde \Phi)R_{s}u\Vert _{B_{p}^{0}\left(\mathbb{R}^{n}\right)^{N_{1}}}\Big)
\le\frac12\Vert u\Vert _{B_{p}^{0}\left(\mathbb{R}^{n}\right)^{N_{1}}}.
\nonumber
\end{eqnarray}
Hence, for sufficiently large $s$, we have
\[
\left\Vert c\right\Vert _{\mathbb{C}^{N_{1}}}\left\Vert v\right\Vert _{B_{p}^{0}\left(\mathbb{R}^{n}\right)}=\left\Vert u\right\Vert _{B_{p}^{0}\left(\mathbb{R}^{n}\right)^{N_{1}}}
\le
\tilde{C}\left\langle \ln\left(s\lambda\right)\right\rangle ^{M}\left\Vert p_{\left(0\right)}\left(y,\eta,\lambda\right)c\right\Vert _{\mathcal{B}(\mathbb{C}^{N_{1}},\mathbb{C}^{N_{2}})}\left\Vert v\right\Vert _{B_{p}^{0}\left(\mathbb{R}^{n}\right)}.
\]
As $v\ne0$, this clearly implies that $p_{\left(0\right)}\left(y,\eta,\lambda\right)$
is injective.

An analogous argument applies to the adjoint operator. We conclude
that $p_{\left(0\right)}\left(y,\eta,\lambda\right)^{*}$, that is,
the adjoint of $p_{\left(0\right)}\left(y,\eta,\lambda\right)$ and
the principal symbol of $A\left(\lambda\right)^{*}$, is also injective.
Lemma \ref{lem:injetora-adj-implica-iso} then tells us that $p_{\left(0\right)}\left(y,\eta,\lambda\right)$
is an isomorphism and, in particular, that $N_{2}=N_{1}$. Therefore
$A\left(\lambda\right)$ is an elliptic operator.
\end{proof}

\subsubsection{Boutet de Monvel operators with parameters acting on $L_{p}$-spaces}

\begin{thm}
\label{thm:equivalenciaFredholmelipticBdM}Let $M$ be a compact manifold
with boundary $\partial M$. Let $E_{0}$ and $E_{1}$ be vector bundles
over $M$, $F_{0}$ and $F_{1}$ be vector bundles over $\partial M$
and $A\in\tilde{\mathcal{B}}^p_{E_{0},F_{0},E_{1},F_{1}}\left(M,\Lambda\right)$.
Then the following conditions are equivalent:
\begin{enumerate} \renewcommand{\labelenumi}{\roman{enumi})}
\item 
The operator $A\left(\lambda\right)$ is an elliptic parameter-dependent
operator.

\item 
We find bounded operators $B_{1}\left(\lambda\right):L^{p}\left(M,E_{0}\right)\oplus B_{p}^{0}\left(M,F_{0}\right)\to L^{p}\left(M,E_{1}\right)\oplus B_{p}^{0}\left(M,F_{1}\right)$
and $B_{2}\left(\lambda\right):L^{p}\left(M,E_{1}\right)\oplus B_{p}^{0}\left(M,F_{1}\right)\to L^{p}\left(M,E_{0}\right)\oplus B_{p}^{0}\left(M,F_{0}\right)$
such that
\[
B_{1}\left(\lambda\right)A\left(\lambda\right)=1+K_{1}\left(\lambda\right)\,\,\,\text{and}\,\,\,A\left(\lambda\right)B_{2}\left(\lambda\right)=1+K_{2}\left(\lambda\right),\,\lambda\in\Lambda,
\]
where the $B_{j}\left(\lambda\right)$ are uniformly bounded in $\lambda$
and $K_{1}\left(\lambda\right):L^{p}\left(M,E_{0}\right)\oplus B_{p}^{0}\left(M,F_{0}\right)\to L^{p}\left(M,E_{0}\right)\oplus B_{p}^{0}\left(M,F_{0}\right)$
and $K_{2}\left(\lambda\right):L^{p}\left(M,E_{1}\right)\oplus B_{p}^{0}\left(M,F_{1}\right)\to L^{p}\left(M,E_{1}\right)\oplus B_{p}^{0}\left(M,F_{1}\right)$
are compact and  $\lim_{\left|\lambda\right|\to\infty}K_{j}\left(\lambda\right)=0$,
$j=1,2$.

\item 
Condition $ii)$ holds with the uniform boundedness of the $B_{j}\left(\lambda\right)$
replaced by the condition  that, for $j=1,2$ and some $M\in \mathbb{N}_0$,
\[
\left\Vert B_{j}\left(\lambda\right)\right\Vert _{\mathcal{B}\left(L^{p}\left(M,E_{1}\right)\oplus B_{p}^{0}\left(M,F_{1}\right),L^{p}\left(M,E_{0}\right)\oplus B_{p}^{0}\left(M,F_{0}\right)\right)}\le C\left\langle \ln\left(\lambda\right)\right\rangle ^{M}.
\]
\end{enumerate} 
\end{thm}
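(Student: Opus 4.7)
The implication i) $\Rightarrow$ ii) follows from Theorem \ref{thm:Propriedades de BdM}(4) applied to $\tilde{\mathcal{B}}^p$ (where $d = d' = 0$), and ii) $\Rightarrow$ iii) is trivial. The work is iii) $\Rightarrow$ i): assuming the parametrix-like identities with logarithmic growth, we must show that both $\sigma_\psi(A)$ and $\sigma_\partial(A)$ are invertible. The plan is to reuse the scaling machinery of Lemmas \ref{lem:Propriedades R_s} and \ref{lem:Rs em supp de F na bola}, upgraded by a normal dilation to probe the boundary symbol.

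For the interior symbol we fix $z_0\in \mathrm{int}(M)$ and localize $A(\lambda)$ with cutoffs supported away from $\partial M$. The resulting operator is a classical parameter-dependent pseudodifferential operator of order $0$ acting on $L^p$, and condition iii) localizes to the analogous statement on $L^p(\mathbb{R}^n)$. The argument of Theorem \ref{thm:Spectral-Invariance-in-Besov-Spaces} goes through essentially verbatim, working in $L^p$ instead of $B_p^0$; note that $R_s(y,\eta)$ is already $L^p$-isometric and the required $L^p$-convergence is exactly equation (\ref{eq:Lpconvergence}) from the proof of Lemma \ref{lem:Propriedades R_s}(7), so Lemma \ref{lem:LpBp} and the delicate item (3) of Lemma \ref{lem:Propriedades R_s} are not needed. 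This yields invertibility of $\sigma_\psi(A)(z,\lambda)$ for interior $z$, extended to $\partial M$ by continuity.

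For the boundary symbol, fix $(y',\eta',\lambda)\in \mathbb{R}^{n-1}\times((\mathbb{R}^{n-1}\times\Lambda)\setminus\{0\})$ corresponding to a point on $T^*\partial M\times\Lambda$. Localize iii) in a chart near $y'$ to obtain, in local coordinates on $L^p(\mathbb{R}_+^n)^{N_1}\oplus B_p^0(\mathbb{R}^{n-1})^{N_2}$,
\[
\tilde B(\lambda)\tilde A(\lambda) = \tilde \Phi + \tilde K(\lambda),
\]
with $\tilde K(\lambda)$ compact, $\tilde K(\lambda)\to 0$ as $|\lambda|\to \infty$, and $\|\tilde B(\lambda)\|\le C\langle \ln\lambda\rangle^M$. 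Define the mixed scaling operator $\mathcal R_s = \mathcal R_s(y',\eta')$ on $\mathcal S(\mathbb{R}_+^n)^{N_1}\oplus \mathcal S(\mathbb{R}^{n-1})^{N_2}$ by
\[
\mathcal R_s^{\mathrm{int}} u(x',x_n) = s^{(n-1)\tau/p + 1/p} e^{is x'\eta'} u(s^\tau(x'-y'), s x_n),\quad \mathcal R_s^{\mathrm{tan}} w(x') = s^{(n-1)\tau/p} e^{is x'\eta'} w(s^\tau(x'-y')).
\]
The interior factor is an $L^p(\mathbb{R}_+^n)$-isometry whose extra $s x_n$ dilation implements the $\kappa_s$-twisted homogeneity of the boundary symbol (recalling that in $\tilde{\mathcal{B}}^p$, Poisson has order $1/p$ and trace order $-1/p$, tuned exactly so that the whole boundary symbol is $\kappa_s$-homogeneous of degree $0$); the tangential factor is the operator $R_s$ of Section \ref{subsec:Pseudodifferential-term.}, for which Lemma \ref{lem:Rs em supp de F na bola} provides the two-sided norm bounds and the weak vanishing on $B_p^0(\mathbb{R}^{n-1})^{N_2}$. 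Applying $\tilde A(s\lambda)\mathcal R_s$ to test elements $U=(u,w)$ with $u(x',x_n) = v(x')\phi(x_n)c_1$, $w(x') = v(x')c_2$ where $v\in \mathcal S(\mathbb R^{n-1})$ has Fourier support in $\{1/2<|\xi'|<1\}$, $\phi\in\mathcal S(\mathbb R_+)$ and $c_j$ are constant vectors, one proves the key convergence
\[
\lim_{s\to\infty} s^r \bigl\|\tilde A(s\lambda)\mathcal R_s U - \mathcal R_s\,\sigma_\partial(A)(y',\eta',\lambda)U\bigr\|_{L^p(\mathbb R_+^n)^{N_2}\oplus B_p^0(\mathbb R^{n-1})^{N_4}} = 0
\]
for sufficiently small $r>0$, by treating each of the four operator types (pseudodifferential part with transmission condition, singular Green, Poisson, trace, and boundary pseudodifferential) separately via asymptotic expansions, exactly as in the proof of Lemma \ref{lem:Propriedades R_s}(7); the Fourier-support assumption on $v$ turns Lemmas \ref{lem:LpBp} and \ref{lem:Rs em supp de F na bola} into tools that control the $B_p^0$-behaviour on the tangential side.

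With this convergence established, we plug into $\tilde B(s\lambda)\tilde A(s\lambda)\mathcal R_s U = \tilde \Phi \mathcal R_s U + \tilde K(s\lambda)\mathcal R_s U$ and argue as in equation (\ref{eq:speccomp}): the logarithmic bound on $\|\tilde B(s\lambda)\|$ absorbs the $s^{-r}$ gain, $\tilde K(s\lambda)\mathcal R_s U\to 0$ (using compactness of $\tilde K(0)$ with weak convergence of $\mathcal R_s U$ on both factors), and $(1-\tilde \Phi)\mathcal R_s U\to 0$ in $\mathcal S$; hence
\[
\|U\|\le \tilde C\langle \ln(s\lambda)\rangle^M \,\|\sigma_\partial(A)(y',\eta',\lambda)U\| + o(1),
\]
giving injectivity of $\sigma_\partial(A)(y',\eta',\lambda)$ on a dense subspace, hence on $L^p(\mathbb R_+)^{n_1}\oplus \mathbb C^{n_2}$. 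By Theorem \ref{thm:Propriedades de BdM}(2) the adjoint $A(\lambda)^*$ lies in $\tilde{\mathcal B}^q$ with boundary symbol $\sigma_\partial(A)(y',\eta',\lambda)^*$, and the same argument applied to $A^*$ yields injectivity of the adjoint. Lemma \ref{lem:injetora-adj-implica-iso} then upgrades injectivity to bijectivity. The main obstacle I anticipate is the convergence step for the Poisson and trace blocks, because they couple the normal $L^p$-topology with the tangential $B_p^0$-topology and require compatible twisted scalings of both factors; the non-isometric action of $R_s$ on $B_p^0(\mathbb R^{n-1})$ forces us to restrict test functions to Fourier-band-limited elements, and then to invoke density to conclude.
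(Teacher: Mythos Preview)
Your overall architecture matches the paper's: the implications i)\,$\Rightarrow$\,ii)\,$\Rightarrow$\,iii) are handled identically, and for the boundary symbol your mixed scaling $\mathcal R_s=(\mathcal R_s^{\mathrm{int}},\mathcal R_s^{\mathrm{tan}})$ is exactly the paper's $R_s\otimes S_s$ on the interior factor and $R_s$ on the tangential factor. The convergence statement you assert is precisely Proposition~\ref{prop:PseudoGreenPoissonTrace}, and the endgame via Lemma~\ref{lem:injetora-adj-implica-iso} and the adjoint is the same.

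There is, however, a genuine gap in your treatment of the \emph{interior} symbol. You prove invertibility of $\sigma_\psi(A)(z,\lambda)$ only for $z$ over $\mathrm{int}(M)$ and then write ``extended to $\partial M$ by continuity''. This inference is invalid: a continuous matrix-valued function can be invertible on a dense open set and singular on its boundary (think of a scalar symbol behaving like $x_n$ near $\partial M$). Nothing in condition iii) rules this out a priori. The paper does \emph{not} argue by continuity; instead it runs the scaling argument directly at boundary points $y=(y',0)$. One applies $R_s(y,\eta)$ with $(y,\eta)\in\overline{\mathbb R^n_+}\times\mathbb R^n$ to $e^+u$ for $u\in C_c^\infty(\mathbb R^n_+)$, and the key additional ingredient is Proposition~\ref{lem:GreenTrace}: for such test functions the singular Green and trace contributions satisfy $s^r\|\tilde G(s\lambda)R_s(e^+u)\|_{L_p}\to 0$ and $s^r\|\tilde T(s\lambda)R_s(e^+u)\|_{B_p^0}\to 0$ for \emph{every} $r>0$, so they are absorbed by the logarithmic growth of $\tilde B(s\lambda)$. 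Combined with Corollary~\ref{cor:simbolo rmais} for the pseudodifferential part, this gives injectivity of $p_{(0)}(y,\eta,\lambda)$ at boundary points as well.

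If you prefer to avoid Proposition~\ref{lem:GreenTrace}, there is a legitimate alternative, but it is not ``continuity'': once you have established invertibility of $\sigma_\partial(A)(x',\xi',\lambda)$ on $L^p(\mathbb R_+)^{n_1}\oplus\mathbb C^{n_2}$, the upper-left block $p_{(0)+}(x',0,\xi',D_n,\lambda)+g_{(-1)}$ is Fredholm modulo finite rank, and Wiener--Hopf theory then forces $p_{(0)}(x',0,\xi',\xi_n,\lambda)$ to be invertible for all $\xi_n$. Either route closes the gap; as written, your proof does not.
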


\begin{rem}
Let $A^{*}\left(\lambda\right)$ be the adjoint operator of $A\left(\lambda\right)$.
Theorem \ref{thm:Propriedades de BdM} tells us that $A\left(\lambda\right)B_{2}\left(\lambda\right)=1+K_{2}\left(\lambda\right)$
is equivalent to
\[
B_{2}^{*}\left(\lambda\right)A^{*}\left(\lambda\right)=1+K_{2}^{*}\left(\lambda\right),
\]
which is the condition that we will need later.
\end{rem}
Again a standard parametrix construction shows that $i)$ implies
$ii)$. As $ii)$ implies $iii)$ trivially, we only have to prove
that $iii)$ implies $i)$.

We fix a point $\left(y,\eta\right)\in\mathbb{R}^{n-1}\times\mathbb{R}^{n-1}$
and a constant $0<\tau<\frac{1}{3}$. For every $s>0$, we define
the isometries $R_{s}=R_{s}\left(y,\eta\right):L^{p}\left(\mathbb{R}^{n-1}\right)\to L^{p}\left(\mathbb{R}^{n-1}\right)$,
$S_{s}:L^{p}\left(\mathbb{R}_{+}\right)\to L^{p}\left(\mathbb{R}_{+}\right)$
and $R_{s}\otimes S_{s}:L^{p}\left(\mathbb{R}_{+}^{n}\right)\to L^{p}\left(\mathbb{R}_{+}^{n}\right)$ by
\begin{eqnarray*}
R_{s}v\left(x'\right)&=&s^{\frac{\tau\left(n-1\right)}{p}}e^{isx'\eta}v\left(s^{\tau}\left(x'-y\right)\right),\\
S_{s}w\left(x_{n}\right)&=&s^{\frac{1}{p}}w\left(sx_{n}\right),\\
R_{s}\otimes S_{s}u\left(x\right)&=&s^{\frac{\tau\left(n-1\right)}{p}}s^{\frac{1}{p}}e^{isx'\eta}v\left(s^{\tau}\left(x'-y\right),sx_{n}\right).
\end{eqnarray*}
The following simple proposition will be useful. It is very similar
to the results we have already seen.

\begin{prop}
The operator $R_{s}\otimes S_{s}:L^{p}\left(\mathbb{R}_{+}^{n}\right)\to L^{p}\left(\mathbb{R}_{+}^{n}\right)$
satisfies:
\begin{enumerate} \renewcommand{\labelenumi}{\arabic{enumi})}
\item  $\left\Vert R_{s}\otimes S_{s}u\right\Vert _{L^{p}\left(\mathbb{R}_{+}^{n}\right)}=\left\Vert u\right\Vert _{L^{p}\left(\mathbb{R}_{+}^{n}\right)}$,
$u\in L^{p}\left(\mathbb{R}_{+}^{n}\right)$.

\item 
$\lim_{s\to\infty}R_{s}\otimes S_{s}u=0$ in the weak topology
of $L^{p}\left(\mathbb{R}_{+}^{n}\right)$.
\end{enumerate}
\end{prop}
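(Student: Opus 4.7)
The plan is to adapt the arguments used for $R_s$ on $\mathbb R^n$ from items (1) and (2) of Lemma \ref{lem:Propriedades R_s}. Both statements reduce to the same change of variables $y'=s^\tau(x'-y)$, $y_n=sx_n$, whose Jacobian is $s^{-\tau(n-1)}s^{-1}$. Crucially, the map $x_n\mapsto sx_n$ preserves $\mathbb R^n_+$ since $s>0$, so no boundary issues arise.

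For statement (1), I would simply substitute this change of variables into $\int_{\mathbb R^n_+}|R_s\otimes S_s u(x)|^p\,dx$ and use $|e^{isx'\eta}|=1$. The prefactor raised to the $p$-th power, $s^{\tau(n-1)}s$, cancels the Jacobian exactly and one recovers $\|u\|_{L^p(\mathbb R^n_+)}^p$; this is a routine verification.

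For statement (2), statement (1) yields uniform boundedness of $\{R_s\otimes S_s\}_{s>0}$ on $L^p(\mathbb R^n_+)$. Combined with $L^p(\mathbb R^n_+)'\cong L^q(\mathbb R^n_+)$ for $\tfrac1p+\tfrac1q=1$ and density of $\mathcal S(\mathbb R^n_+)$ in both $L^p$ and $L^q$, a standard $3\varepsilon$-argument reduces the weak limit to showing that for $u,v\in\mathcal S(\mathbb R^n_+)$,
\[
\int_{\mathbb R^n_+}(R_s\otimes S_s u)(x)\,v(x)\,dx \xrightarrow{s\to\infty} 0.
\]
Applying the same change of variables, this integral equals
\[
s^{-\tau(n-1)/q}s^{-1/q}\int_{\mathbb R^n_+}e^{is\eta(y+s^{-\tau}y')}u(y',y_n)\,v(y+s^{-\tau}y',s^{-1}y_n)\,dy'\,dy_n.
\]
The remaining integral is bounded uniformly in $s\ge 1$ because $u\in\mathcal S$ is integrable and $v$ is bounded, while the prefactor $s^{-\tau(n-1)/q-1/q}$ tends to zero. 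Note that no oscillatory cancellation in $\eta$ is required; the scaling alone suffices.

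No serious obstacle is anticipated: the proposition is the half-space analogue of items (1)--(2) of Lemma \ref{lem:Propriedades R_s}, and the only new ingredient, the half-line scaling $S_s$, is innocuous because $s>0$ keeps the change of variables inside $\mathbb R^n_+$. The mildest care-point is simply choosing the test functions for the weak limit from a dense subset of the dual $L^q(\mathbb R^n_+)$ and then extending both slots by density using the uniform bound from statement (1).
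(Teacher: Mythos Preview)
Your proof is correct. Item (1) is handled identically to the paper (a routine change of variables). For item (2), both you and the paper reduce via the isometry property and duality to checking $\int (R_s\otimes S_s u)\,v\,dx\to 0$ on a dense set, but the execution differs slightly: the paper restricts further to tensor products $u=u_1\otimes u_2$, $v=v_1\otimes v_2$ with $u_i,v_i$ compactly supported, factors the integral as $\bigl(\int R_su_1\,v_1\bigr)\bigl(\int S_su_2\,v_2\bigr)$, and observes that each factor tends to zero separately. Your direct change of variables on general Schwartz functions avoids the tensor-product density step and makes explicit the nice point that the prefactor $s^{-\tau(n-1)/q-1/q}$ alone forces convergence, with no oscillatory cancellation from $e^{isx'\eta}$ needed. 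The paper's route has the minor advantage of recycling the already-proven weak convergence of $R_s$ on $\mathbb{R}^{n-1}$; yours is more self-contained. Either way the argument is short and the difference is cosmetic.
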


\begin{proof}
1) is easily verified.

2) Due to the first item and the fact that $L^{q}\left(\mathbb{R}_{+}^{n}\right)\cong L^{p}\left(\mathbb{R}_{+}^{n}\right)'$,
it is enough to prove that if $u\left(x\right)=u_{1}\left(x'\right)u_{2}\left(x_{n}\right)$
and $v\left(x\right)=v_{1}\left(x'\right)v_{2}\left(x_{n}\right)$,
where $u_{1},v_{1}\in C_{c}^{\infty}\left(\mathbb{R}^{n-1}\right)$
and $u_{2},v_{2}\in C_{c}^{\infty}\left(\overline{\mathbb{R}_{+}}\right)$,
then
\begin{eqnarray*}
\lefteqn{\lim_{s\to\infty}\int_{\mathbb{R}_{+}^{n}}R_{s}\otimes S_{s}u\left(x\right)v\left(x\right)dx}\\
&=&\lim_{s\to\infty}\left(\int_{\mathbb{R}^{n-1}}R_{s}u_{1}\left(x'\right)v_{1}\left(x'\right)dx'\right)\left(\int_{\mathbb{R}_{+}}S_{s}u_{2}\left(x_{n}\right)v_{2}\left(x_{n}\right)dx_{n}\right)=0.
\end{eqnarray*}
A simple computation shows that both terms on the right hand side
go to zero as $s\to\infty$.
\end{proof}

\begin{prop}
\label{prop:PseudoGreenPoissonTrace}Let $0<r<\tau$ and let $v\in\mathcal{S}\left(\mathbb{R}^{n-1}\right)$
be such that $\mathcal{F}\left(v\right)$ has compact support. 
Denote by $C\left(s\right)$ a function such that $\lim_{s\to\infty}C\left(s\right)=0$.
Then 

{\rm1)} {\rm (}Pseudodifferential operator in the interior{\rm)} Let $p\in S_{cl}^{0}\left(\mathbb{R}^{n}\times\mathbb{R}^{n},\Lambda\right)$
satisfy the transmission condition and $p\sim\sum_{j\in\mathbb{N}_{0}}p_{\left(-j\right)}$
be its asymptotic expansion. Then 
\begin{eqnarray*}\lefteqn{s^{r}\Big\Vert op\left(p\right)\left(s\lambda\right)\left(R_{s}v\otimes S_{s}w\right)}\\
&&-R_{s}\otimes S_{s}
\Big(\frac{v\left(x'\right)}{2\pi}\int_{\mathbb{R}}e^{ix_{n}\xi_{n}}p_{\left(0\right)}\left(y,x_{n},\eta,\xi_{n},\lambda\right)\mathcal{F}_{x_{n}\to\xi_{n}}\left(e^{+}w\right)\left(\xi_{n}\right)d\xi_{n}\Big)\Big\Vert _{L_{p}\left(\mathbb{R}_{+}^{n}\right)}\\
&\le& C\left(s\right)\left\Vert w\right\Vert _{L^{p}\left(\mathbb{R}_{+}\right)}, 
\quad w\in\mathcal{S}(\mathbb{R}_{+}).
\end{eqnarray*}

{\rm2)} {\rm (}Singular Green operators{\rm)} Let $ S_{cl}^{-1}(\mathbb{R}^{n-1},\mathcal{S}_{++},\Lambda)
\ni\tilde{g}\sim\sum_{j\in\mathbb{N}_{0}}\tilde{g}_{\left(-1-j\right)}$
and $G\left(\lambda\right):\mathcal{S}(\mathbb{R}_{+}^{n})\to\mathcal{S}(\mathbb{R}_{+}^{n})$
be defined by \eqref{eq:DefGreen}. 
Then, for $w\in\mathcal{S}\left(\mathbb{R}_{+}\right)$,
\begin{eqnarray*}\lefteqn{s^{r}\Big\Vert G\left(s\lambda\right)\left(R_{s}v\otimes S_{s}w\right)}\\
&&-R_{s}\otimes S_{s}\Big(v(x')\int_{\mathbb{R}_{+}}\tilde{g}_{\left(-1\right)}\left(y,x_{n},y_{n},\eta,\lambda\right)w(y_{n})dy_{n}\Big)\Big\Vert _{L_{p}\left(\mathbb{R}_{+}^{n}\right)}\le C\left(s\right)\Vert w\Vert _{L^{p}\left(\mathbb{R}_{+}\right)}.
\end{eqnarray*}

{\rm3)} {\rm(}Trace operators{\rm)} Let $S_{cl}^{-\frac{1}{p}}(\mathbb{R}^{n-1},\mathcal{S}_{+},\Lambda)\ni\tilde{t}\sim\sum_{j\in\mathbb{N}_{0}}\tilde{t}_{\left(-\frac{1}{p}-j\right)}$
and $T\left(\lambda\right):\mathcal{S}\left(\mathbb{R}_{+}^{n}\right)\to\mathcal{S}\left(\mathbb{R}^{n-1}\right)$
be defined by \eqref{eq:DefTrace}. Then for $w\in\mathcal{S}\left(\mathbb{R}_{+}\right)$,
\begin{eqnarray*}
&s^{r}\!\!\!&\Big\Vert T\left(s\lambda\right)\left(R_{s}v\otimes S_{s}w\right)
-R_{s}\Big(v\left(x'\right)\int_{\mathbb{R}_{+}}\tilde{t}_{\left(-\frac{1}{p}\right)}\left(y,x_{n},\eta,\lambda\right)w\left(x_{n}\right)dx_{n}\Big)\Big\Vert _{B_{p}^{0}(\mathbb{R}^{n-1})}\\
&&\le C\left(s\right)\left\Vert w\right\Vert _{L^{p}\left(\mathbb{R}_{+}\right)}.
\end{eqnarray*}

{\rm4)} {\rm(}Poisson operators{\rm)} Let 
$S_{cl}^{\frac{1}{p}-1}(\mathbb{R}^{n-1},\mathcal{S}_{+},\Lambda)
\ni\tilde{k}
\sim\sum_{j\in\mathbb{N}_{0}}\tilde{k}_{\left(\frac{1}{p}-1-j\right)}$ and $K\left(\lambda\right):\mathcal{S}(\mathbb{R}^{n-1})\to\mathcal{S}(\mathbb{R}_{+}^{n})$
be defined by (\ref{eq:DefPoisson}). Then for $w\in\mathcal{S}\left(\mathbb{R}_{+}\right)$,
\begin{eqnarray*}
\lim_{s\to\infty}s^{r}\left\Vert K\left(s\lambda\right)\left(R_{s}v\right)-R_{s}\otimes S_{s}\left(\tilde{k}_{\left(\frac{1}{p}-1\right)}\left(y,x_{n},\eta,\lambda\right)v\left(x'\right)\right)\right\Vert _{L_{p}\left(\mathbb{R}_{+}^{n}\right)}=0.
\end{eqnarray*}
\end{prop}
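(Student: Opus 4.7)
My plan is to follow the strategy of Lemma \ref{lem:Propriedades R_s}(7) uniformly across the four parts. In each case I would conjugate the rescaled operator by the isometry $R_s \otimes S_s$ (or $R_s$), reducing the action on $R_s v \otimes S_s w$ (resp.\ $R_s v$) to the action of a scaled operator with symbol $p_s$ (resp.\ $\tilde g_s$, $\tilde t_s$, $\tilde k_s$) applied to the fixed input $v \otimes w$ (resp.\ $v$). For the pseudodifferential part, the conjugation gives $(R_s \otimes S_s)^{-1}\,op(p)(s\lambda)\,(R_s \otimes S_s) = op(p_s)(\lambda)$, where $p_s(\tilde x', \tilde x_n, \zeta', \zeta_n, \lambda) = p(y + s^{-\tau}\tilde x', s^{-1}\tilde x_n, s\eta + s^\tau\zeta', s\zeta_n, s\lambda)$; for the Green, trace and Poisson operators, the homogeneities of degree $t^{m+2-j}$ and $t^{m+1-j}$ built into $S_{cl}^m(\mathbb R^{n-1},\mathcal S_{++},\Lambda)$ and $S_{cl}^m(\mathbb R^{n-1},\mathcal S_+,\Lambda)$ yield analogous rescalings that leave $\tilde x_n$ (and $\tilde y_n$) as parameters in the limit. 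In every case the conjugation-target is the fixed operator obtained by freezing the principal symbol at $(y, \eta, \lambda)$.

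For \textbf{Parts 1, 2, and 4}, I would bound the pointwise difference between the rescaled symbol and its limit using the homogeneity of the principal part, the classical estimates on the lower-order terms, and a Taylor expansion in the $(x, \xi)$ variables as in inequality (\ref{eq:estamenosazero}); this yields an $s^{-(\tau - r)}$ decay of the difference, with a polynomial-growth dominator. Iterated integrations by parts in the tangential and normal variables, exploiting the Schwartz decay of $\hat v$, the classical decay of the symbol in $\xi_n$, and the rapid decay in $x_n$ (and $y_n$) built into the classes $\mathcal S_+$ and $\mathcal S_{++}$, produce an integrable dominator; dominated convergence then gives the required $L_p(\mathbb R^n_+)$-estimate. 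The linearity of the error in $w$, combined with the continuity estimate of Theorem \ref{thm:Propriedades de BdM}(3) applied to the difference symbol, promotes this pointwise convergence to the uniform bound $C(s)\|w\|_{L^p(\mathbb R_+)}$ with $C(s) = O(s^{-(\tau - r)}) \to 0$. For Part 4 only pointwise convergence is asked, so the argument there is a direct adaptation of Lemma \ref{lem:Propriedades R_s}(7).

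\textbf{The main obstacle} will be Part 3, the trace operator, since the target norm is $B_p^0(\mathbb R^{n-1})$ and $R_s$ is not an isometry there. After the $R_s$-conjugation on the output side, the error takes the form $R_s(E_s)$, where $E_s$ is a tangential function obtained by integrating the trace kernel against $S_s w$ in the normal direction and pairing with $v$. Since $\mathcal F v$ has compact support, so does $\mathcal F E_s$; Lemma \ref{lem:Rs em supp de F na bola}(1) then shows that, for $s$ sufficiently large, $\mathcal F(R_s E_s)$ is supported in a union of a fixed number of dyadic annuli $\bigcup_{k=m}^{m+2} K_k$ with $m = m(s)$. Lemma \ref{lem:LpBp} gives $\|R_s E_s\|_{B_p^0(\mathbb R^{n-1})} \leq C\|R_s E_s\|_{L^p(\mathbb R^{n-1})} = C\|E_s\|_{L^p(\mathbb R^{n-1})}$ with $C$ independent of $s$, and the problem reduces to an $L_p(\mathbb R^{n-1})$-estimate handled in full analogy with Parts 1 and 2.
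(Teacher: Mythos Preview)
Your treatment of Parts 1), 2) and 4) matches the paper's approach: conjugate by $R_s\otimes S_s$ (resp.\ $R_s$), compare the rescaled symbol to the frozen principal part via a Taylor expansion as in \eqref{eq:estamenosazero}, and close with integration by parts and dominated convergence. The paper only sketches these parts by reference to \cite{RempelSchulze}, so what you wrote is exactly the intended argument.

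There is, however, a genuine gap in your Part 3). The step ``since $\mathcal F v$ has compact support, so does $\mathcal F E_s$'' fails: the trace symbol $\tilde t(x',x_n,\xi',\lambda)$ depends on $x'$, and after conjugation the amplitude becomes $\tilde t(y+s^{-\tau}x',\,\cdot\,,\,s\eta+s^\tau\xi',s\lambda)$, which still depends on $x'$. Hence $R_s^{-1}T(s\lambda)(R_sv\otimes S_sw)$ is a genuine pseudodifferential action on $v$, not a Fourier multiplier, and it does \emph{not} preserve the Fourier support of $v$. Consequently Lemma~\ref{lem:Rs em supp de F na bola}(1) is inapplicable to $E_s$, and the reduction to an $L_p$-estimate via Lemma~\ref{lem:LpBp} breaks down. (The same objection applies if you try to localise the Fourier support of the unconjugated error directly.)

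The paper circumvents this by proving a \emph{stronger} convergence before applying $R_s$: one first establishes the $L_p(\mathbb R^{n-1})$ estimate for $R_s^{-1}T(s\lambda)(R_sv\otimes S_sw)-v\!\int\tilde t_{(-1/p)}w$ with rate $s^{r-\tau}$, then upgrades it to $H_p^1(\mathbb R^{n-1})$ by differentiating in $x'$ (the extra derivative hitting $\tilde t$ produces an additional $s^{-\tau}$ factor). Finally one invokes the growth bound $\|R_s u\|_{B_p^\theta}\le C_\theta(1+s\langle\eta\rangle)^\theta\|u\|_{H_p^1}$ from Lemma~\ref{lem:Propriedades R_s}(3): choosing $0<\theta<\tau-r$, the $s^\theta$ loss from $R_s$ is absorbed by the $s^{r-\tau}$ gain from the $H_p^1$ estimate, and $B_p^0\hookleftarrow B_p^\theta$ finishes the job. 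You should replace your Fourier-support argument by this interpolation step.
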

\begin{proof}
The items 1), 2) and 4) extend the results 
in \cite[Section 2.3.4.2]{RempelSchulze}.
They can be obtained by replacing the operators $R_{s}$ and $S_{s}$
in \cite{RempelSchulze} by the definitions given here and arguing similarly as for the third item. 

The
third item is more delicate, as the limit is taken in the Besov space:
Let $q$ be such that $\frac{1}{p}+\frac{1}{q}=1$. Using item
4, 5 and 6 of Lemma \ref{lem:Propriedades R_s},
we find  that
\begin{eqnarray*}\lefteqn{R_{s}^{-1}T\left(s\lambda\right)\left(R_{s}v\otimes S_{s}w\right)\left(x'\right)}\\
&=&\int_{\mathbb{R}^{n-1}}e^{ix'\xi'}\Big(\int_{\mathbb{R}_{+}}s^{-\frac{1}{q}}\tilde{t}\left(y+s^{-\tau}x',\frac{x_{n}}{s},s\eta+s^{\tau}\xi',s\lambda\right)\hat v(\xi')w\left(x_{n}\right)dx_{n}\Big)d\xi'.
\end{eqnarray*}
Fix $\left(y,\eta,\lambda\right)\in\mathbb{R}^{n-1}\times\mathbb{R}^{n-1}\times\Lambda$
such that $\left(\eta,\lambda\right)\ne\left(0,0\right)$. We will
use the simple fact that if $v\in\mathcal{S}(\mathbb{R}^{n-1})$
is such that $\text{supp}\,(\mathcal{F}(v))$
is compact, then for all $\theta\in\left]0,1\right[$ and for all
$\xi'\in\text{supp}\left(\mathcal{F}\left(v\right)\right)$, there
is a $s_{0}>0$ such that
\[
C^{-1}s^{M}\left|\left(\eta,\lambda\right)\right|^{M}\le\left\langle s\eta+\theta s^{\tau}\xi',s\lambda\right\rangle ^{M}\le Cs^{M}\left|\left(\eta,\lambda\right)\right|^{M},\,s\ge s_{0}.
\]
The constant $C$ does not depend on $\theta$, $s\ge s_{0}$ and
$\xi'\in\text{supp}\left(\mathcal{F}\left(v\right)\right)$.

We start by establishing {\bf $\mathbf L_p$-convergence}:
Let $0<r<\tau$
and $v\in\mathcal{S}(\mathbb{R}^{n-1})$ with $\text{supp}\left(\mathcal{F}(v)\right)$
compact. 
Then, for all $w\in\mathcal{S}\left(\mathbb{R}_{+}\right)$,
we have
\begin{eqnarray*}&s^{r}\!\!\!&\Big\Vert R_{s}^{-1}T\left(s\lambda\right)\left(R_{s}v\otimes S_{s}w\right)-v\left(x'\right)\int_{\mathbb{R}_{+}}\tilde{t}_{\left(-\frac{1}{p}\right)}\left(y,x_{n},\eta,\lambda\right)w\left(x_{n}\right)dx_{n}\Big\Vert _{L_{p}\left(\mathbb{R}^{n-1}\right)}\\
&&\le C\left(s\right)\left\Vert w\right\Vert _{L_{p}\left(\mathbb{R}_{+}\right)},\label{eq:Lpconvergencetrace}
\end{eqnarray*}
where $C\left(s\right)$ is a constant that depends on $s$, $\left(y,\eta,\lambda\right)$
and $v$ but not on $w$. Moreover, $\lim_{s\to\infty}C\left(s\right)=0$.

We divide the proof into steps, always assuming that $s\ge s_{0}$.
First we see that
\begin{eqnarray}
\lefteqn{s^{r}\Big|R_{s}^{-1}T\left(s\lambda\right)\left(R_{s}v\otimes S_{s}w\right)-v\left(x'\right)\int_{\mathbb{R}_{+}}\tilde{t}_{\left(-\frac{1}{p}\right)}\left(y,x_{n},\eta,\lambda\right)w\left(x_{n}\right)dx_{n}\Big|}
\nonumber\\
&&\le\left\Vert w\right\Vert _{L^{p}\left(\mathbb{R}_{+}\right)}
\Big(\int_{\mathbb{R}_{+}}\Big|\Big(\int_{\mathbb{R}^{n-1}}e^{ix'\xi'}s^{r-\frac{1}{q}}\Big(\tilde{t}(y+s^{-\tau}x',\frac{x_{n}}{s},s\eta+s^{\tau}\xi',s\lambda)\\
&&-s^{\frac{1}{q}}\tilde{t}_{\left(-\frac{1}{p}\right)}\left(y,x_{n},\eta,\lambda\right)\Big)\hat{v}\left(\xi'\right)d\xi'\Big)\Big|^{q}dx_{n}\Big)^{\frac{1}{q}}.\label{eq:tracemain}
\nonumber
\end{eqnarray}

In a {\bf first step} we will prove that, for all $\left(x',x_{n},\xi'\right)\in\mathbb{R}^{n-1}\times\mathbb{R}_{+}\times\mathbb{R}^{n-1}$
and $M\in\mathbb{N}_{0}$, there is a constant that depends on $\eta$,
$\lambda$ and $M$ such that
\begin{eqnarray}\lefteqn{\left|s^{r-\frac{1}{q}}\left(\tilde{t}\left(y+s^{-\tau}x',\frac{x_{n}}{s},s\eta+s^{\tau}\xi',s\lambda\right)-s^{\frac{1}{q}}\tilde{t}_{\left(-\frac{1}{p}\right)}\left(y,x_{n},\eta,\lambda\right)\right)\right|}\nonumber\\
&\le& C_{\eta,\lambda,M}\left\langle x_{n}\right\rangle ^{-M}s^{r-\tau},\,\xi'\in\text{supp}\left(\mathcal{F}\left(v\right)\right)\label{eq:pointwiseconvergencetrace-1}
\end{eqnarray}

Let us fix a function $\chi\in C^{\infty}\left(\mathbb{R}^{n-1}\times\Lambda\right)$
that is zero near the origin and equal to 1 outside
a closed ball that does not contain $\left(\eta,\lambda\right)$.
We note that
\begin{eqnarray}
\lefteqn{\Big|s^{r-\frac{1}{q}}x_{n}^{M}\Big(\tilde{t}\Big(y+s^{-\tau}x',\frac{x_{n}}{s},s\eta+s^{\tau}\xi',s\lambda\Big)\label{eq:eq1}}
\\
&&-\chi\left(s\eta+s^{\tau}\xi',s\lambda\right)
\tilde{t}_{\left(-\frac{1}{p}\right)}\Big(y+s^{-\tau}x',\frac{x_{n}}{s},s\eta+s^{\tau}\xi',s\lambda\Big)\Big)\Big|
\nonumber\\
&\le&
C_{1}s^{-\frac{1}{q}+r+M}\left\langle s\eta+s^{\tau}\xi',s\lambda\right\rangle ^{-\frac{1}{p}-M}\le C_{2}s^{-1+r}\left|\left(\eta,\lambda\right)\right|^{-\frac{1}{p}-M}
\nonumber
\end{eqnarray}
for $\xi'\in\text{supp}\left(\mathcal{F}\left(v\right)\right).$
We now study the term
\begin{eqnarray}
s^{r-\frac{1}{q}+M}&&\left(\frac{x_{n}}{s}\right)^{M}\big(\chi\left(s\eta+s^{\tau}\xi',s\lambda\right)\tilde{t}_{\left(-\frac{1}{p}\right)}\left(y+s^{-\tau}x',\frac{x_{n}}{s},s\eta+s^{\tau}\xi',s\lambda\right)\nonumber\\
&&-s^{\frac{1}{q}}\tilde{t}_{\left(-\frac{1}{p}\right)}\big(y,x_{n},\eta,\lambda\big)\big)\label{eq:boundaryterm-1}
\end{eqnarray}
Using the fact that $s^{-\frac{1}{q}}\tilde{t}_{\left(-\frac{1}{p}\right)}\left(y,\frac{x_{n}}{s},s\eta,s\lambda\right)=\tilde{t}_{\left(-\frac{1}{p}\right)}\left(y,x_{n},\eta,\lambda\right)$, and a Taylor expansion
we conclude that the expression (\ref{eq:boundaryterm-1}) is smaller
or equal to
\begin{eqnarray}
\lefteqn{s^{r-\tau-\frac{1}{q}+M}
\sum_{\left|\beta\right|=1}\left|x'^{\beta}\right|
\int_{0}^{1}\Big|\frac{x_{n}}{s}\Big|^{M}\Big|\partial_{x'}^{\beta}\big(\chi\tilde{t}_{\left(-\frac{1}{p}\right)}\big)\big(y+\theta s^{-\tau}x',\frac{x_{n}}{s},s\eta+\theta s^{\tau}\xi',s\lambda\big)\Big|d\theta}
\nonumber\\
&&\!\!\!\!\!\!\!\!+
s^{r+\tau-\frac{1}{q}+M}\sum_{\left|\beta\right|=1}\big|\xi'^{\beta}\big|
\int_{0}^{1}\Big|\frac{x_{n}}{s}\Big|^{M}
\Big|\partial_{\xi'}^{\beta}\big(\chi\tilde{t}_{\big(-\frac{1}{p}\big)}\big)\big(y+\theta s^{-\tau}x',\frac{x_{n}}{s},s\eta+\theta s^{\tau}\xi',s\lambda\big)\Big|d\theta
\nonumber\\
&&\mbox{\ \ \ \ \ \ \  }\le
C\left(s^{r-\tau}\left\langle x'\right\rangle \left|\left(\eta,\lambda\right)\right|^{-\frac{1}{p}+1-M}+s^{r+\tau-1}\left\langle \xi'\right\rangle \left|\left(\eta,\lambda\right)\right|^{-\frac{1}{p}-M}\right).\label{eq:eq2}
\end{eqnarray}

As $0<r<\tau<\frac{1}{3}$, we conclude that $-1+r<r-\tau$ and $r+\tau-1<r-\tau$.
Hence (\ref{eq:pointwiseconvergencetrace-1}) follows from the estimates
of (\ref{eq:eq1}) and (\ref{eq:eq2}).

In a {\bf second step}  we will next show that the limit of Equation (\ref{eq:tracemain}) as $s\to\infty$
is zero. This is true, as it is smaller than or equal to
\[
C_{\eta,\lambda}s^{r-\tau}\int_{\mathbb{R}^{n-1}}\left|\hat{v}\left(\xi'\right)\right|d\xi'\Big(\int_{\mathbb{R}_{+}}\left\langle x_{n}\right\rangle ^{-M}dx_{n}\Big)^{\frac{1}{q}},\,M>1.
\]

In a {\bf third step} we want to prove that, for all $M\in\mathbb{N}_{0}$, the
expression (\ref{eq:tracemain}) is bounded by $C_{M}\left\langle x'\right\rangle ^{-M}$,
for a constant $C_{M}>0$. Then Lebesgue's dominated convergence theorem
will imply that (\ref{eq:Lpconvergencetrace}) holds. In order to
do that, we note that
\[
x'^{\gamma}\int
e^{ix'\xi'}s^{r-\frac{1}{q}}\left(\tilde{t}\left(y+s^{-\tau}x',\frac{x_{n}}{s},s\eta+s^{\tau}\xi',s\lambda\right)-s^{\frac{1}{q}}\tilde{t}_{\left(-\frac{1}{p}\right)}\left(y,x_{n},\eta,\lambda\right)\right)\hat{v}\left(\xi'\right)d\xi'
\]
 is a linear combination of terms of the form
\[
\int_{\mathbb{R}^{n-1}}e^{ix'\xi'}s^{r-\frac{1}{q}}D_{\xi'}^{\sigma}\left(\tilde{t}\left(y+s^{-\tau}x',\frac{x_{n}}{s},s\eta+s^{\tau}\xi',s\lambda\right)-s^{\frac{1}{q}}\tilde{t}_{\left(-\frac{1}{p}\right)}\left(y,x_{n},\eta,\lambda\right)\right)D_{\xi'}^{\gamma-\sigma}\hat{v}\left(\xi'\right)d\xi'.
\]

If $\sigma=0$, we have already proven that the above expression is
smaller than $C_{\eta,\lambda,M}\left\langle x_{n}\right\rangle ^{-M}s^{r-\tau}$.
For $\sigma\ne0$, we estimate
\begin{eqnarray}
\lefteqn{\left|s^{r-\frac{1}{q}}x_{n}^{M}D_{\xi'}^{\sigma}\left(\tilde{t}\left(y+s^{-\tau}x',\frac{x_{n}}{s},s\eta+s^{\tau}\xi',s\lambda\right)\right)\right|}
\nonumber\\
&\le&
\left|s^{r-\frac{1}{q}+\tau\left|\sigma\right|+M}\left(\frac{x_{n}}{s}\right)^{M}\left(D_{\xi'}^{\sigma}\tilde{t}\right)\left(y+s^{-\tau}x',\frac{x_{n}}{s},s\eta+s^{\tau}\xi',s\lambda\right)\right|
\nonumber\\
&\le&
C_{1}s^{r-\frac{1}{q}+\tau\left|\sigma\right|+M}\left\langle s\eta+s^{\tau}\xi',s\lambda\right\rangle ^{-\frac{1}{p}+1-M-\left|\sigma\right|}
\nonumber\\
&\le& C_{2}s^{r+\left(\tau-1\right)\left|\sigma\right|}\left|\left(\eta,\lambda\right)\right|^{-\left|\sigma\right|-\frac{1}{p}+1-M}.
\end{eqnarray}
Hence $\big|s^{r-\frac{1}{q}}D_{\xi'}^{\sigma}\left(\tilde{t}\left(y+s^{-\tau}x',\frac{x_{n}}{s},s\eta+s^{\tau}\xi',s\lambda\right)\right)\big|\le C_{\eta,\lambda,M}\left\langle x_{n}\right\rangle ^{-M}s^{r-\tau}$.
The result now follows easily.

We will next establish the $\mathbf{L_p}${\bf-convergence of the derivative}.
Let $0<r<\tau$ and $v\in\mathcal{S}\left(\mathbb{R}^{n-1}\right)$
with $\text{supp}\left(\mathcal{F}\left(v\right)\right)$  compact. Then,
for all $w\in\mathcal{S}\left(\mathbb{R}_{+}\right)$, we have
\begin{eqnarray}s^{r}\Big\Vert R_{s}^{-1}T\left(s\lambda\right)
\left(R_{s}v\otimes S_{s}w\right)-v\left(x'\right)\int_{\mathbb{R}_{+}}\tilde{t}_{\left(-\frac{1}{p}\right)}\left(y,x_{n},\eta,\lambda\right)w\left(x_{n}\right)dx_{n}\Big\Vert _{H_{p}^{1}\left(\mathbb{R}^{n-1}\right)}\nonumber\\
\le C\left(s\right)\left\Vert w\right\Vert _{L_{p}\left(\mathbb{R}_{+}\right)},\label{eq:convH1BdM}
\end{eqnarray}
where $C\left(s\right)$ is a constant that depends on $s$, $\left(y,\eta,\lambda\right)$
and $v$ but not on $w$. Moreover, $\lim_{s\to\infty}C\left(s\right)=0$.

Let us first fix a notation. We denote by $\left(\partial_{x_{j}}T\right)\left(\lambda\right)$,
$j=1,...,n-1$, the operator:
\[
\left(\partial_{x_{j}}T\right)\left(\lambda\right)\left(u\right)\left(x'\right)
=\int_{\mathbb{R}^{n-1}}e^{ix'\xi'}\int_{\mathbb{R}_{+}}\partial_{x_{j}}\tilde{t}\left(x',x_{n},\xi',\lambda\right)\left(\mathcal{F}_{x'\to\xi'}u\right)\left(\xi',x_{n}\right)\,dx_{n}d\xi'.
\]
Now, let us first observe that, for $j=1,...,n-1$, 
\begin{eqnarray}
\lefteqn{\partial_{x_{j}}R_{s}^{-1}T\left(s\lambda\right)\left(R_{s}v\otimes S_{s}w\right)}\nonumber\\
&=&R_{s}^{-1}T\left(s\lambda\right)\left(R_{s}\left(\partial_{x_{j}}v\right)\otimes S_{s}w\right)+s^{-\tau}R_{s}^{-1}\left(\partial_{x_{j}}T\right)\left(s\lambda\right)\left(R_{s}v\otimes S_{s}w\right).\label{eq:eqi}
\end{eqnarray}
Using Equation (\ref{eq:Lpconvergencetrace}) and the fact that $r<\tau$,
we conclude that 
\begin{eqnarray}
\lefteqn{s^{r}\Big\Vert R_{s}^{-1}T\left(s\lambda\right)\left(R_{s}\left(\partial_{x_{j}}v\right)\otimes S_{s}w\right)}\nonumber\\
&&-\partial_{x_{j}}v\left(x'\right)\int_{\mathbb{R}_{+}}\tilde{t}_{\left(-\frac{1}{p}\right)}\left(y,x_{n},\eta,\lambda\right)w\left(x_{n}\right)dx_{n}\Big\Vert _{L_{p}\left(\mathbb{R}^{n-1}\right)}
\le C\left(s\right)\left\Vert w\right\Vert _{L_{p}\left(\mathbb{R}_{+}\right)}\label{eq:eqii}
\end{eqnarray}
and
\begin{eqnarray}
\lefteqn{s^{r}\left\Vert s^{-\tau}R_{s}^{-1}\left(\partial_{x_{j}}T\right)\left(s\lambda\right)\left(R_{s}v\otimes S_{s}w\right)\right\Vert _{L_{p}\left(\mathbb{R}^{n-1}\right)}}
\nonumber\\
&\le&
s^{r-\tau}\Big\Vert R_{s}^{-1}\left(\partial_{x_{j}}T\right)\left(s\lambda\right)\left(R_{s}v\otimes S_{s}w\right)\nonumber \\
&&-v\left(x'\right)\int_{\mathbb{R}_{+}}\left(\partial_{x_{j}}\tilde{t}\right){}_{\left(-\frac{1}{p}\right)}\left(y,x_{n},\eta,\lambda\right)w\left(x_{n}\right)dx_{n}\Big\Vert _{L_{p}\left(\mathbb{R}^{n-1}\right)}
\nonumber\\
&&+
s^{r-\tau}\left\Vert v\right\Vert _{L_{p}\left(\mathbb{R}^{n-1}\right)}\left\Vert x_{n}\mapsto\left(\partial_{x_{j}}\tilde{t}\right){}_{\left(-\frac{1}{p}\right)}\left(y,x_{n},\eta,\lambda\right)\right\Vert _{L_{q}\left(\mathbb{R}_{+}\right)}\left\Vert w\right\Vert _{L_{p}\left(\mathbb{R}_{+}\right)}\nonumber\\
&\le& C\left(s\right)\left\Vert w\right\Vert _{L_{p}\left(\mathbb{R}_{+}\right)}.\label{eq:eqiii}
\end{eqnarray}
The expressions (\ref{eq:eqi}), (\ref{eq:eqii}) and (\ref{eq:eqiii})
imply that
\begin{eqnarray}
\lefteqn{
s^{r}\Big\Vert \partial_{x_{j}}\Big(R_{s}^{-1}T\left(s\lambda\right)\left(R_{s}v\otimes S_{s}w\right)}\nonumber\\
&&-v\left(x'\right)\int_{\mathbb{R}_{+}}\tilde{t}_{\left(-\frac{1}{p}\right)}\left(y,x_{n},\eta,\lambda\right)w\left(x_{n}\right)dx_{n}\Big)\Big\Vert _{L_{p}\left(\mathbb{R}^{n-1}\right)}
\le C\left(s\right)\left\Vert w\right\Vert _{L_{p}\left(\mathbb{R}_{+}\right)}.\label{eq:eq3}
\end{eqnarray}
Finally,  \eqref{eq:convH1BdM} is a consequence of
Equations \eqref{eq:eq3} and \eqref{eq:Lpconvergencetrace}.

We are now in the position to prove item 3. Choose $0<\theta<\theta+r<\tau$. Then
\begin{eqnarray}
\lefteqn{s^{r}\Big\Vert T\left(s\lambda\right)\left(R_{s}v\otimes S_{s}w\right)-\left(R_{s}v\right)\left(x'\right)\int_{\mathbb{R}_{+}}\tilde{t}_{\left(-\frac{1}{p}\right)}\left(y,x_{n},\eta,\lambda\right)w\left(x_{n}\right)dx_{n}\Big\Vert _{B_{p}^{0}\left(\mathbb{R}^{n-1}\right)}}
\nonumber\\
&\le&
s^{r}\Big\Vert R_{s}\Big(R_{s}^{-1}T(s\lambda)\left(R_{s}v\otimes S_{s}w\right)-v\left(x'\right)\int_{\mathbb{R}_{+}}\tilde{t}_{\left(-\frac{1}{p}\right)}\left(y,x_{n},\eta,\lambda\right)w\left(x_{n}\right)dx_{n}\Big)\Big\Vert _{B_{p}^{\theta}\left(\mathbb{R}^{n-1}\right)}
\nonumber\\
&\le&
C_{\theta}\left(1+s\left\langle \eta\right\rangle \right)^{\theta}s^{r}\Big\Vert R_{s}^{-1}T\left(s\lambda\right)\left(R_{s}v\otimes S_{s}w\right)\nonumber\\
&&-v\left(x'\right)\int_{\mathbb{R}_{+}}\tilde{t}_{\left(-\frac{1}{p}\right)}\left(y,x_{n},\eta,\lambda\right)w\left(x_{n}\right)dx_{n}\Big\Vert _{H_{p}^{1}\left(\mathbb{R}^{n}\right)}
\le
C\left(s\right)\left\Vert w\right\Vert _{L_{p}\left(\mathbb{R}_{+}\right)}.
\nonumber
\end{eqnarray}
\end{proof}
We also need to understand the action of the singular Green and trace operators on the operators $R_{s}=R_{s}\left(y,\eta\right)$ for
$\left(y,\eta\right)\in\overline{\mathbb{R}_{+}^{n}}\times\mathbb{R}^{n}$.
Notice that $\left(y,\eta\right)\in\overline{\mathbb{R}_{+}^{n}}\times\mathbb{R}^{n}$
instead of $\mathbb{R}^{n-1}\times\mathbb{R}^{n-1}$ as in the previous
proposition. 
\begin{prop}
\label{lem:GreenTrace}Let $R_{s}=R_{s}\left(y,\eta\right)$, where
$\eta=\left(\eta',\eta_{n}\right)\in\mathbb{R}^{n-1}\times\mathbb{R}$
and $y=\left(y',0\right)\in\mathbb{R}^{n-1}\times\mathbb{R}$. For
$u\in C_{c}^{\infty}\left(\mathbb{R}_{+}^{n}\right)$ the following
properties hold:

{\rm1) (}Green{\rm)} For $\tilde{g}\in S_{cl}^{-1}\left(\mathbb{R}^{n-1},\mathcal{S}_{++},\Lambda\right)$ define
$G\left(\lambda\right):\mathcal{S}\left(\mathbb{R}_{+}^{n}\right)\to\mathcal{S}\left(\mathbb{R}_{+}^{n}\right)$
by Equation \eqref{eq:DefGreen}. Then $\lim_{s\to\infty}s^{r}\left\Vert G\left(s\lambda\right)R_{s}\left(e^{+}u\right)\right\Vert _{L_{p}\left(\mathbb{R}_{+}^{n}\right)}=0$ for all $r>0$.

{\rm2) (}Trace{\rm)} For $\tilde{t}\in S_{cl}^{-\frac{1}{p}}\left(\mathbb{R}^{n-1},\mathcal{S}_{+},\Lambda\right)$ define
$T\left(\lambda\right):\mathcal{S}\left(\mathbb{R}_{+}^{n}\right)\to\mathcal{S}\left(\mathbb{R}^{n-1}\right)$
by Equation \eqref{eq:DefTrace}. Then $\lim_{s\to\infty}s^{r}\left\Vert T\left(s\lambda\right)R_{s}\left(e^{+}u\right)\right\Vert _{B_{p}^{0}\left(\mathbb{R}^{n-1}\right)}=0$ for all $r>0$.
\end{prop}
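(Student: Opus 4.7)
The plan is to compute $G(s\lambda)R_s(e^+u)$ and $T(s\lambda)R_s(e^+u)$ explicitly and extract rapid decay from a scale mismatch between $R_s$ and the boundary operators. Since $u\in C_c^{\infty}(\mathbb{R}_{+}^{n})$ has compact support in the open half-space, $\mbox{supp}(u)\subset\{x_n\ge c\}$ for some $c>0$, and therefore $R_s(e^+u)$ is supported in a box of horizontal size $s^{-\tau}$ around $y'$ with normal range $x_n\in s^{-\tau}[c,C]$. After the parameter rescaling $\lambda\mapsto s\lambda$, the Green and trace operators concentrate in the normal variable on scale $\langle s\eta',s\lambda\rangle^{-1}\sim s^{-1}$ when $(\eta',\lambda)\ne 0$. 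Since $s^{-\tau}\gg s^{-1}$ for $\tau<1$, the test function $R_s(e^+u)$ lies far outside the effective reach of the operators in the normal direction, which produces the rapid decay.

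For part (1), I compute
\begin{equation*}
\mathcal{F}_{x'\to\xi'}(R_s(e^+u))(\xi',y_n)=s^{\tau/p-\tau}e^{isy_n\eta_n}e^{-iy'(\xi'-s\eta')}\hat{u}(s^{-\tau}(\xi'-s\eta'),s^\tau y_n),
\end{equation*}
and substitute $\tilde\xi'=s^{-\tau}(\xi'-s\eta')$, $y_n=s^{-\tau}z_n$ in \eqref{eq:DefGreen} with $\lambda$ replaced by $s\lambda$. Then $\hat u(\tilde\xi',z_n)$ is Schwartz in $\tilde\xi'$ and compactly supported in $z_n\in[c,C]\subset(0,\infty)$, so the problem reduces to estimating $\tilde g(x',x_n,s^{-\tau}z_n,s\eta'+s^\tau\tilde\xi',s\lambda)$. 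The homogeneity of the principal part,
\begin{equation*}
\tilde g_{(-1)}(x',X_n,Y_n,\Xi',\Lambda)=|(\Xi',\Lambda)|\,\tilde g_{(-1)}\bigl(x',|(\Xi',\Lambda)|X_n,|(\Xi',\Lambda)|Y_n,\Xi'/|(\Xi',\Lambda)|,\Lambda/|(\Xi',\Lambda)|\bigr)
\end{equation*}
for $|(\Xi',\Lambda)|\ge 1$, together with the Schwartz decay of $\tilde g_{(-1)}$ in $(X_n,Y_n)$ uniform on the unit sphere in $(\Xi',\Lambda)$ and analogous estimates for the remainder terms in $\tilde g\sim\sum_j\tilde g_{(-1-j)}$, gives, for $(\eta',\lambda)\ne 0$ and $s$ large,
\begin{equation*}
|\tilde g(x',x_n,s^{-\tau}z_n,s\eta'+s^\tau\tilde\xi',s\lambda)|\le C_N\,s\,(1+sx_n)^{-N}(1+s^{1-\tau}z_n)^{-N}
\end{equation*}
for every $N\in\mathbb{N}_0$, uniformly in $\tilde\xi'$ on compact sets. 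For $z_n\ge c$ the factor $(1+s^{1-\tau}z_n)^{-N}$ is $O(s^{-N(1-\tau)})$; the $L_p(\mathbb{R}_+)$-norm of $(1+sx_n)^{-N}$ is $O(s^{-1/p})$; and the oscillation $e^{is^\tau(x'-y')\tilde\xi'}$ combined with the Schwartz property of $\hat u$ in $\tilde\xi'$ yields Schwartz decay of the output in $s^\tau(x'-y')$, contributing $O(s^{-\tau(n-1)/p})$ to the $L_p(\mathbb{R}^{n-1})$-norm in $x'$. Tracking the prefactor $s^{\tau/p+\tau n-3\tau}$ from the substitutions, one obtains $\|G(s\lambda)R_s(e^+u)\|_{L_p(\mathbb{R}_{+}^{n})}\le C_N\,s^{-N(1-\tau)+A}$ for every $N$, with a constant $A$ independent of $N$, hence $o(s^{-r})$ for every $r>0$. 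If $(\eta',\lambda)=0$ but $\eta_n\ne 0$, the decay comes instead from repeated integration by parts in $y_n$ against $e^{isy_n\eta_n}$, gaining a factor $(s\eta_n)^{-N}$ per step.

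Part (2) for the trace proceeds the same way: after the analogous change of variables, the homogeneity and $\mathcal{S}_+$ decay of $\tilde t_{(-1/p)}$ give $|\tilde t(x',s^{-\tau}z_n,s\eta'+s^\tau\tilde\xi',s\lambda)|\le C_N\,s^{1-1/p}(1+s^{1-\tau}z_n)^{-N}$, which yields rapid $L_p(\mathbb{R}^{n-1})$-decay of $T(s\lambda)R_s(e^+u)$. To upgrade to $B_p^0$, I observe that the Fourier transform of $T(s\lambda)R_s(e^+u)$ in $x'$ is supported in a ball of radius $O(s^\tau)$ centred at $s\eta'$, hence in at most $O(1+\log s)$ dyadic annuli $K_k$ (and, for $\eta'\ne 0$ with $s$ large, in only three consecutive ones). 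Lemma~\ref{lem:LpBp} then gives $\|T(s\lambda)R_s(e^+u)\|_{B_p^0}\le C(1+\log s)^{1/p}\|T(s\lambda)R_s(e^+u)\|_{L_p}$, and the logarithmic factor is absorbed by the rapid $L_p$-decay. The main obstacle is the symbol estimate: one must show that the Schwartz-type decay of the principal parts in the transversal variables after rescaling by $\langle\xi',\lambda\rangle$ survives the passage to the full symbols $\tilde g,\tilde t$ and remains uniform in $\tilde\xi'$ on compact sets. The rest is routine bookkeeping of the many powers of $s$ introduced by the substitutions, and the step $L_p\to B_p^0$ in part (2) can alternatively be handled by an $L_p$- plus $H_p^1$-convergence argument as in Proposition~\ref{prop:PseudoGreenPoissonTrace}.
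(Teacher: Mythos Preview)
Your overall strategy is correct and close to the paper's: both exploit that $u$ is supported in $\{x_n\ge c>0\}$ while the Green and trace symbols decay rapidly in the normal variable on the scale $\langle\xi',\lambda\rangle^{-1}\sim s^{-1}$, producing arbitrary polynomial decay in $s$. Your computation for part~1 and the $L_p$-estimate in part~2 are essentially right; the only adjustment needed is that your bound on $\tilde g$ (resp.\ $\tilde t$) is not uniform for $\tilde\xi'\in\mathbb{R}^{n-1}$ but grows polynomially in $\langle\tilde\xi'\rangle$ via Peetre's inequality, which is then absorbed by the Schwartz decay of $\hat u(\tilde\xi',\cdot)$. The paper organises things slightly differently: it conjugates by $R_s^{-1}(y',\eta')$ in the tangential variables and uses the raw symbol estimate $\|x_n^N\tilde t\|_{L^\infty}\le C\langle\xi',\lambda\rangle^{1-1/p-N}$ directly, without decomposing into homogeneous components. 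Your treatment of the degenerate case $(\eta',\lambda)=0$, $\eta_n\ne 0$ via integration by parts against $e^{isy_n\eta_n}$ is actually more explicit than the paper's sketch.

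There is, however, a genuine gap in your primary route from $L_p$ to $B_p^0$ in part~2. You claim that the Fourier transform of $T(s\lambda)R_s(e^+u)$ is supported in a ball of radius $O(s^\tau)$ about $s\eta'$, so that Lemma~\ref{lem:LpBp} applies. But $u\in C_c^\infty(\mathbb{R}_+^n)$ has compact support in \emph{physical} space, so $\mathcal{F}_{x'}u(\xi',x_n)$ is entire in $\xi'$ and never compactly supported; after the substitution $\xi'=s\eta'+s^\tau\tilde\xi'$ the integrand is merely Schwartz-\emph{concentrated} near $s\eta'$, not supported there. In addition, the $x'$-dependence of the symbol $\tilde t$ would smear any Fourier support even if it existed. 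Hence Lemma~\ref{lem:LpBp} cannot be invoked. The alternative you mention in your last sentence---prove convergence in both $L_p$ and $H_p^1$ for $R_s^{-1}T(s\lambda)R_s(e^+u)$ and then use the Besov estimate for $R_s$ from Lemma~\ref{lem:Propriedades R_s}(3) together with interpolation---is exactly what the paper does, and is the correct way to close the argument.
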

\begin{proof}
The proof is analogous to that of Proposition
\ref{prop:PseudoGreenPoissonTrace}. Let us sketch the proof of $2)$
as $1)$ is similar. 

Let $\frac{1}{p}+\frac{1}{q}=1$, $R_{s}^{-1}:=R_{s}^{-1}\left(y',\eta'\right):\mathcal{S}\left(\mathbb{R}^{n-1}\right)\to\mathcal{S}\left(\mathbb{R}^{n-1}\right)$
and $R_{s}:=R_{s}\left(y,\eta\right):\mathcal{S}\left(\mathbb{R}^{n}\right)\to\mathcal{S}\left(\mathbb{R}^{n}\right)$.
Using item 6 of Lemma \ref{lem:Propriedades R_s}
in $\left(x',\xi'\right)$ and the definition of $R_{s}$,
we obtain that
\begin{eqnarray*}
R_{s}^{-1}T\left(s\lambda\right)\left(R_{s}u\right)\left(x'\right)
&=&\int_{\mathbb{R}^{n-1}}e^{ix'\xi'}
\Big(\int_{\mathbb{R}_{+}}e^{i\tau s^{1-\tau}x_{n}\eta_{n}}s^{-\frac{\tau}{q}}\\
&\times& \tilde{t}\left(y+s^{-\tau}x',\frac{x_{n}}{s^{\tau}},s\eta+s^{\tau}\xi',s\lambda\right)\mathcal{F}_{x'\to\xi'}u\left(\xi',x_{n}\right)dx_{n}\Big)d\xi'.
\end{eqnarray*}
Now, we note that
\[
\left(\frac{x_{n}}{s^{\tau}}\right)^{N}\left|\tilde{t}\left(y+s^{-\tau}x',\frac{x_{n}}{s^{\tau}},s\eta+s^{\tau}\xi',s\lambda\right)\right|\le C_{N}\left\langle s\eta+s^{\tau}\xi',s\lambda\right\rangle ^{-\frac{1}{p}+1-N},
\]
On the support of $u$, we have $x_{n}\ge R>0$ for a certain
constant $R>0$. 
Hence
\[
\left|\tilde{t}\left(y+s^{-\tau}x',\frac{x_{n}}{s^{\tau}},s\eta+s^{\tau}\xi',s\lambda\right)\right|\le C_{N}\left\langle s\eta+s^{\tau}\xi',s\lambda\right\rangle ^{-\frac{1}{p}+1-N}s^{\tau N}R^{-N}
\]
\[
\le C_{N}s^{\left(\frac{1}{p}-1\right)\left(\tau-1\right)+\left(2\tau-1\right)N}\left\langle \eta,\lambda\right\rangle ^{-\frac{1}{p}+1-N}\left\langle \xi'\right\rangle ^{N+\frac{1}{p}-1}R^{-N}.
\]

As $2\tau-1<0$, we can always choose $N\in\mathbb{N}_{0}$
so large that, for all $\left(x',x_{n},\xi',\lambda\right)\in\mathbb{R}^{n-1}\times\mathbb{R}_{+}\times\mathbb{R}^{n-1}\times\Lambda$
such that $x_{n}\ge R$ and for all $r>0$, we have
\[
\lim_{s\to\infty}s^{r}\left(s^{-\frac{\tau}{q}}\tilde{t}\left(y+s^{-\tau}x',\frac{x_{n}}{s^\tau},s\eta+s^{\tau}\xi',s\lambda\right)\right)=0.
\]
For large $N\in\mathbb{N}_{0}$, the dominated convergence theorem implies that
\[
\lim_{s\to\infty}s^{r}\left(R_{s}^{-1}T\left(s\lambda\right)\left(R_{s}u\right)\left(x'\right)\right)=0,\,\,\, r>0.
\]

Now, to finish the proof, we just study $L_{p}$ and $H_{p}^{1}$
convergence. Using integration by parts in the expression $x'^{\gamma}R_{s}^{-1}T\left(s\lambda\right)\left(R_{s}u\right)\left(x'\right)$,
we see that we can dominate $R_{s}^{-1}T\left(s\lambda\right)\left(R_{s}u\right)\left(x'\right)$
by $\left\langle x'\right\rangle ^{-N}$ for every $N$. Hence 
\[
\lim_{s\to\infty}s^{r}\left\Vert R_{s}^{-1}T\left(s\lambda\right)\left(R_{s}u\right)\right\Vert _{L_{p}\left(\mathbb{R}^{n-1}\right)}=0.
\]
If we take derivatives of first order in $x'$, we find that 
\[
\lim_{s\to\infty}s^{r}\left\Vert R_{s}^{-1}T\left(s\lambda\right)\left(R_{s}u\right)\right\Vert _{H_{p}^{1}\left(\mathbb{R}^{n-1}\right)}=0.
\]
The estimate of the norm of $R_{s}$ on Besov space and the same argument
with interpolation of Proposition \ref{prop:PseudoGreenPoissonTrace}
lead us to the conclusion that 
\[
\lim_{s\to\infty}s^{r}\left\Vert T\left(s\lambda\right)\left(R_{s}u\right)\right\Vert _{B_{p}^{0}\left(\mathbb{R}^{n-1}\right)}=0.
\]
\end{proof}
Finally, we prove the main Theorem of this sub-section.

\begin{proof}
(of Theorem \ref{thm:equivalenciaFredholmelipticBdM})
Let $A=\left(\begin{array}{cc}
P_{+}+G & K\\
T & S
\end{array}\right)\in\tilde{\mathcal{B}}^{p}_{E_{0},F_{0},E_{1},F_{1}}\left(M,\Lambda\right)$
and $B_{1}$, $B_{2}$,
$K_{1}$ and $K_{2}$ be as in Theorem \ref{thm:equivalenciaFredholmelipticBdM}.iii). 
Write
$B_{1}=\left(\begin{array}{cc}
B_{11} & B_{12}\\
B_{21} & B_{22}
\end{array}\right)$ and 
decompose similarly $K_1$.

Next we choose smooth functions $\Phi, \Psi$ and $\mathrm H$, supported in a trivializing neighborhood $U$ of $x=\pi(z)$, 
such that $\Phi$ equals $1$ near $x$ and $\Psi\Phi= \Phi$, $\mathrm H\Psi=\Psi$.  
We denote by 
$\tilde P_+(\lambda), \tilde G(\lambda) \in 
\mathcal B(L^p(\mathbb R^n_+)^{n_1}, L^p(\mathbb R^n_+)^{n_3})$,
$\tilde T(\lambda) \in \mathcal B(L^p(\mathbb R^n_+)^{n_1},B^0_p(\mathbb R^{n-1})^{n_4})$,
$\tilde B_{11}(\lambda)\in \mathcal B(L_p(\mathbb R^n_+)^{n_3}, L_p(\mathbb R^n)^{n_1})$ 
and $\tilde B_{12}(\lambda)\in \mathcal B(B^0_p(\mathbb R^{n-1})^{n_4}, L_p(\mathbb R^n_+)^{n_1})$ 
the operators $\mathrm H P_+(\lambda)\Psi$, $\mathrm H G(\lambda)\Psi$, $\Phi B_{11}(\lambda)\mathrm H$,  $\mathrm H T(\lambda) \Psi$, and $\Phi B_{12}(\lambda)\mathrm H$ 
in local coordinates.  

The identity $B_{1}A=I+K_{1}$ implies that 
\begin{eqnarray}\label{eq:Proof1}
\tilde B_{11}(\lambda) (\tilde P_+(\lambda) +\tilde G(\lambda))
+ \tilde B_{12} (\lambda)\tilde T(\lambda) = \tilde \Phi + \tilde K(\lambda),
\end{eqnarray}
where $\tilde \Phi$ is the function $\Phi$ in local coordinates and $\tilde K(\lambda)$ is 
the operator which collects the terms arising from the localizations of 
$\Phi K_{11}(\lambda)\mathrm H$, 
$\Phi B_{11}(\lambda)(1-\mathrm H^2)(P_+(\lambda)+ G(\lambda))\Psi$ 
and $\Phi B_{12}(\lambda) (1-\mathrm H^2) T(\lambda)\Psi$.  
As the latter two operators have smooth integral kernels, with seminorms rapidly 
decreasing with respect to $\lambda$, $\tilde K(\lambda)$ is compact and its norm  tends to zero as $|\lambda|\to\infty$. 

\subsubsection*{The interior principal symbol}
In order to prove the invertibility of the interior principal symbol $p_{\left(0\right)}\left(z,\lambda\right):\pi_{T^{*}M\times\Lambda}^{*}\left(E_{0}\right)\to\pi_{T^{*}M\times\Lambda}^{*}\left(E_{1}\right)$
for $(z,\lambda)\in\left(T^{*}M\times\Lambda\right)\backslash\left\{ 0\right\} $, 
fix $u=cv\in C_{c}^{\infty}(\mathbb{R}_{+}^{n})^{n_{1}}$,
where $c\in\mathbb{C}^{n_{1}}$
and $0\neq v\in C_{c}^{\infty}(\mathbb{R}_{+}^{n})$.
Denote by $(y,\eta)\in \overline{\mathbb R}^n_+\times \mathbb R^n$ the point corresponding 
to $z$ in local coordinates. 
For $R_{s}=R_{s}\left(y,\eta\right)$ we note
that $R_s(e^+u)\in C^\infty_c(\mathbb R^n_+)$, since 
$\text{supp} \,R_s (e^+u)  \subset \mathbb R^n_+$.  
In particular $\Vert u\Vert _{L_{p}(\mathbb{R}_{+}^{n})^{n_{1}}}=\Vert r^{+}R_{s}(e^{+}u)\Vert _{L_{p}(\mathbb{R}_{+}^{n})^{n_{1}}}$.
Hence we obtain from \eqref{eq:Proof1}
\begin{eqnarray}
\lefteqn{\Vert u\Vert _{L_{p}(\mathbb{R}_{+}^{n})^{n_{1}}}
\le \Vert {B}_{11}(s\lambda)\Vert _{\mathcal{B}(L_{p}(\mathbb{R}_{+}^{n})^{ n_{3}},L_{p}(\mathbb{R}_{+}^{n})^{ n_{1}})}
\Vert \tilde{P}(s\lambda)R_{s}(e^{+}u)\Vert _{L_{p}(\mathbb{R}_{+}^{n})^{ n_{3}}}
}
\nonumber\\
&&
+\Vert (\tilde B_{11}\tilde G+ \tilde B_{12}\tilde T)(s\lambda)R_{s}(e^{+}u)\Vert _{L_{p}(\mathbb{R}_{+}^{n})^{ n_{1}}}
+\Vert \tilde K (s\lambda)R_{s}(e^{+}u)\Vert _{L_{p}(\mathbb{R}_{+}^{n})^{ n_{1}}}
\nonumber\\
&&+\Vert (1-\tilde \Phi)R_{s}(e^{+}u)\Vert _{L_{p}(\mathbb{R}_{+}^{n})^{ n_{1}}}.
\label{eq:speccomp-1-1}
\end{eqnarray}
On the right hand side of Equation \eqref{eq:speccomp-1-1},
we estimate
\begin{eqnarray}
\Vert \tilde{P}(s\lambda)R_{s}(e^{+}u)\Vert _{L_{p}(\mathbb{R}_{+}^{n})^{ n_{3}}}&\le&\Vert \tilde{P}(s\lambda)R_{s}(e^{+}u)-p_{(0)}(y,\eta,\lambda)R_{s}(e^{+}u)\Vert _{L_{p}(\mathbb{R}_{+}^{n})^{ n_{3}}}\nonumber\\
&&+C\Vert p_{(0)}(y,\eta,\lambda)c\Vert _{\mathcal{B}(\mathbb{C}^{n_{1}},\mathbb{C}^{n_{3}})}\Vert v\Vert _{L_{p}(\mathbb{R}_{+}^{n})}\label{eq:PRs}
\end{eqnarray}
and note that Corollary \ref{cor:simbolo rmais} implies that 
\[
\lim_{s\to\infty}s^{r}\Vert \tilde{P}(s\lambda)R_{s}(e^{+}u)-p_{(0)}(y,\eta,\lambda)R_{s}(e^{+}u)\Vert _{L_{p}(\mathbb{R}_{+}^{n})^{n_{3}}}=0.
\]
We claim that also $\tilde K(s\lambda)R_s(e^+u)$ tends to zero: For $\lambda=0$ we infer this from  the fact that $\tilde K(0)$ is compact, while   $R_s(e^+u)$ weakly tends to zero. 
For $\lambda \not=0$ the norm of $\tilde K(s\lambda)$ tends to zero as $s\to\infty$, whereas $R_s(e^+u)$ is bounded. 
Finally, it is easy to check that $\lim_{s\to\infty}(1-\tilde \Phi)R_{s}(e^{+}u)=0$
in $\mathcal{S}\left(\mathbb{R}^{n}\right)^{ n_{1}}$ and therefore
also in $L_{p}\left(\mathbb{R}_{+}^{n}\right)^{ n_{1}}$. 

If we assume, for an instant, that also the second summand on the right hand side of 
\eqref{eq:speccomp-1-1} tends to zero as $s\to \infty$, 
then, taking $s$ sufficiently large, the boundedness of $R_{s}$, Inequality \eqref{eq:PRs} and  Equation (\ref{eq:speccomp-1-1})
imply together with the assumption that  $\Vert \tilde{B}_{11}(s\lambda)\Vert _{\mathcal{B}(L_{p}(\mathbb{R}_{+}^{n})^{n_{3}},L_{p}(\mathbb{R}_{+}^{n})^{ n_{1}})}\le C\langle \text{ln}(s\lambda)\rangle ^{M}$,
that
\[
\left\Vert c\right\Vert _{\mathbb{C}^{n_{1}}}\left\Vert v\right\Vert _{L_{p}(\mathbb{R}_{+}^{n})}=\left\Vert u\right\Vert _{L_{p}\left(\mathbb{R}_{+}^{n}\right)^{n_{1}}}\le\tilde{C}\left\Vert p_{\left(0\right)}\left(y,\eta,\lambda\right)c\right\Vert _{\mathcal{B}\left(\mathbb{C}^{n_{1}},\mathbb{C}^{n_{3}}\right)}\left\Vert v\right\Vert _{L_{p}\left(\mathbb{R}_{+}^{n}\right)}.
\]
Hence $p_{\left(0\right)}\left(y,\eta,\lambda\right)$ is injective. 
The same argument,  applied to the adjoint operator, shows the injectivity of
$p_{\left(0\right)}\left(y,\eta,\lambda\right)^{*}$ and thus the invertibility of 
$p_{\left(0\right)}\left(y,\eta,\lambda\right)$.
In particular,  $n_{1}=n_{3}$.
In order to establish the convergence to zero of the second summand in \eqref{eq:speccomp-1-1},
we distinguish two cases. 

\subsubsection*{Case 1: $x\notin\partial M$.} 
Then $U$ can be taken as a subset of the interior of $M$. 
According to the rules of the calculus,  $\tilde T(s\lambda)$ and $\tilde G(s\lambda)$ are regularizing elements in their
respective classes; in particular, they are compact. For $\lambda\neq 0$, their operator norms  are rapidly decreasing as $s\to\infty$. Arguing as for $\tilde K$ above, we obtain the assertion from the assumptions on $B$. 
  
\subsubsection*{Case 2: $x\in \partial M$}
Here, statements 1) and 2) of Proposition \ref{lem:GreenTrace} assert that, 
for every $r>0$, the norms of
$s^r\tilde G(s\lambda)R_s(e^+u) $ and $s^r\tilde  T(s\lambda) R_s(e^+u) $  go to zero in the corresponding spaces as $s\to\infty$. The assertion then follows from the fact that the norm of $B(s\lambda)$ grows at most logarithmically in $s$ by assumption.  

\subsubsection*{The boundary principal symbol}
We have to show that, for any given  $\left(z,\lambda\right)\in\left(T^{*}\partial M\times\Lambda\right)\backslash\{0\}$, $\sigma_\partial(A)(z,\lambda)$  is invertible in  
$$Hom(\pi_{\partial M}^{*}\left(\left(\left.E_{0}\right|_{\partial M}\otimes\mathcal{S}\left(\mathbb{R}_{+}\right)\right)\oplus F_{0}\right),
\pi_{\partial M}^{*}\left(\left(\left.E_{1}\right|_{\partial M}\otimes\mathcal{S}\left(\mathbb{R}_{+}\right)\right)\oplus F_{1}\right))).$$
Let $\tilde{B}$ and $\tilde{A}$ be the operators $\mathrm{H}A\Psi$ and $\Phi B \mathrm H$ in local coordinates, respectively. Write the principal boundary symbol of $\tilde A$ in the form 
\begin{equation}
\left(\begin{array}{cc}
p_{\left(0\right)}{}_{+}(x',0,\xi',D_{n},\lambda)+g_{\left(-1\right)}(x',\xi',D_{n},\lambda) & k_{\left(\frac{1}{p}-1\right)}(x',\xi',D_{n},\lambda)\\
t_{\left(-\frac{1}{p}\right)}(x',\xi',D_{n},\lambda) & s_{\left(0\right)}(x',\xi',\lambda)
\end{array}\right)
\label{eq:bps}
\end{equation}
and let  $\left(y,\eta\right)\in\mathbb{R}^{n-1}\times\mathbb{R}^{n-1}$
be the point that corresponds to $z$ in local coordinates.

Fix a function $0\neq u'\in\mathcal{S}\left(\mathbb{R}^{n-1}\right)$
with $\mbox{supp}\left(\mathcal{F}u'\right)\subset\left\{ \xi;\,\frac{1}{2}<|{\xi}|<1\right\} $.
For $u=\left(u_{1},...,u_{n_{1}}\right)\in\mathcal{S}\left(\mathbb{R}_{+}\right)^{n_{1}}$
and $v=\left(v_{1},...,v_{n_{3}}\right)\in\mathbb{C}^{n_{2}}$, not both zero,  
denote by $u'\otimes u$ and $u'\otimes v$ the functions $\mathbb{R}_{+}^{n}\ni \left(x',x_{n}\right)\mapsto\left(u'\left(x'\right)u_{1}\left(x_{n}\right),...,u'\left(x'\right)u_{n_{1}}\left(x_{n}\right)\right)$
and  $\mathbb{R}^{n-1} \ni x' \mapsto \left(u'\left(x'\right)v_{1},...,u'\left(x'\right)v_{n_{2}}\right)$, respectively.
According to Lemmas \ref{lem:Propriedades R_s}, \ref{lem:LpBp} and \ref{lem:Rs em supp de F na bola} there are constants 
such that
\begin{eqnarray*}
\lefteqn{\left\Vert u'\right\Vert _{L_{p}\left(\mathbb{R}^{n-1}\right)}
=\left\Vert R_{s}u'\right\Vert _{L_{p}\left(\mathbb{R}^{n-1}\right)}}\\
&\le& C_{1}\left\Vert u'\right\Vert _{B_{p}^{0}\left(\mathbb{R}^{n-1}\right)}\le C_{2}\left\Vert R_{s}u'\right\Vert _{B_{p}^{0}\left(\mathbb{R}^{n-1}\right)}\le C_{3}\left\Vert u'\right\Vert _{L_{p}\left(\mathbb{R}^{n-1}\right)}, \quad s\ge 1.
\end{eqnarray*}

Writing $\left\Vert .\right\Vert _{L_{p}B_{p}^{0}}$ for the
norm in $L_{p}(\mathbb{R}_{+}^{n})^{ n_{1}}\oplus B_{p}^{0}(\mathbb{R}^{n-1})^{ n_{2}}$, in analogy with Equation \ref{eq:Proof1} conclude from the identity $B_1A=I+K_1$ that  
\begin{eqnarray}
\lefteqn{\left\Vert u'\right\Vert _{L^{p}(\mathbb R^{n-1})}\left\Vert \binom{u}{v}\right\Vert _{L^{p}\left(\mathbb{R}_{+}\right)^{ n_{1}}\oplus \mathbb{C}^{n_{2}}}}
\nonumber\\
&\le&
C\left\Vert \tilde \Phi\binom{R_{s}u'\otimes S_{s}u}{R_{s}u'\otimes v}\right\Vert _{L_{p}B_{p}^{0}} +
C\left\Vert \left(1-\tilde \Phi\right)
\binom{R_{s}u'\otimes S_{s}u}{R_{s}u'\otimes v}\right\Vert _{L_{p}B_{p}^{0}}
\nonumber\\
&\le&
C\left(\left\Vert \tilde{B}\left(s\lambda\right)\tilde{A}\left(s\lambda\right)
\begin{pmatrix}
R_{s}\otimes S_{s} & 0\\
0 & R_{s}
\end{pmatrix} 
\binom{u'\otimes u}{u'\otimes v}
-\tilde{B}\left(s\lambda\right)
\begin{pmatrix}
R_{s}\otimes S_{s} & 0\\
0 & R_{s}
\end{pmatrix}
\right.\right.
\nonumber\\
&&\times
\left.
\begin{pmatrix}
p_{\left(0\right)}{}_{+}(x',0,\xi',D_{n},\lambda)+g_{\left(-1\right)}(x',\xi',D_{n},\lambda) & k_{\left(\frac{1}{p}-1\right)}(x',\xi',D_{n},\lambda)\\
t_{\left(-\frac{1}{p}\right)}(x',\xi',D_{n},\lambda) & s_{\left(0\right)}(x',\xi',\lambda)
\end{pmatrix}
\binom{u'\otimes u}{u'\otimes v}
\right\Vert _{L_{p}B_{p}^{0}}
\nonumber\\
&&+
\left\Vert \tilde{B}\left(s\lambda\right)
\begin{pmatrix} 
R_{s}\otimes S_{s} & 0\\
0 & R_{s}
\end{pmatrix}\right. \nonumber\\
&&\times
\left.\begin{pmatrix}
p_{\left(0\right)}{}_{+}(x',0,\xi',D_{n},\lambda)+g_{\left(-1\right)}(x',\xi',D_{n},\lambda) & k_{\left(\frac{1}{p}-1\right)}(x',\xi',D_{n},\lambda)\\
t_{\left(-\frac{1}{p}\right)}(x',\xi',D_{n},\lambda) & s_{\left(0\right)}(x',\xi',\lambda)
\end{pmatrix}\binom{u'\otimes u}{u'\otimes v}
\right\Vert _{L_{p}B_{p}^{0}}
\nonumber\\
&&+
\left.\left\Vert \tilde{K}\left(s\lambda\right)
\binom{R_{s}u'\otimes S_{s}u}{R_{s}u'\otimes v}
\right\Vert _{L_{p}B_{p}^{0}}
+\left\Vert \left(1-\tilde \Phi\right)
\binom{R_{s}u'\otimes S_{s}u}{R_{s}u'\otimes v}\right\Vert _{L_{p}B_{p}^{0}}\right).
\nonumber
\end{eqnarray}
Let us first consider the case where $\lambda\ne0$. 
We infer from Proposition  \ref{prop:PseudoGreenPoissonTrace} and the fact that 
the norm of $\tilde B(s\lambda)$ is $O(\langle\ln(s\lambda)\rangle^M)$ that the first summand on the right hand side 
is $o(\|(u'\otimes u )\oplus (u'\otimes v)\|)$. 
The same is true for the third summand, since the norm of $\tilde K(s\lambda)$ 
tends to zero as $s\to \infty$. The fourth summand tends to zero in 
$\mathcal  S(\mathbb R^n_+)^{n_1} \oplus S(\mathbb R^{n-1}_+)^{n_2}$, 
a fortiori in the $L_pB^0_p$-norm. Taking $s$ sufficiently large, we may achieve that 
the sum of the first, the third and the fourth summand is $\le  \frac12 (\|(u'\otimes u )\oplus (u'\otimes v)\|)$.
From the boundedness of $\tilde B(s\lambda)$, $R_s$ and $S_s$ for this fixed value of $s$, 
we conclude that, with norms taken in  $L^{p}\left(\mathbb{R}_{+}\right)^{n_{1}}\oplus\mathbb{C}^{n_{2}}$ and $L^{p}\left(\mathbb{R}_{+}\right)^{n_{3}}\oplus\mathbb{C}^{n_{4}}$,
\begin{eqnarray*}\lefteqn{\left\Vert \binom {u}{v} \right\Vert 
}\\
&\le&\!\!\!  C\left\Vert \begin{pmatrix}
p_{\left(0\right)+}(x',0,\xi',D_{n},\lambda)+g_{\left(-1\right)}(x',\xi',D_{n},\lambda) & k_{\left(\frac{1}{p}-1\right)}(x',\xi',D_{n},\lambda)\\
t_{\left(-\frac{1}{p}\right)}(x',\xi',D_{n},\lambda) & s_{\left(0\right)}(x',\xi',\lambda)
\end{pmatrix}\binom{u}{v}\right\Vert 
\end{eqnarray*}
In case  $\lambda=0$, we obtain the same conclusion using the compactness of $\tilde{K}\left(0\right)$. 

Hence the operator from Equation (\ref{eq:bps}) is injective and
has closed range. As the same can be said of the adjoint, we conclude
from Lemma \ref{lem:injetora-adj-implica-iso} that the principal
boundary symbol is an isomorphism.
\end{proof}

\subsection{The spectral invariance of the parameter-dependent Boutet de Monvel
algebra\label{subsec:SpectralInvarianceBMparameters}}

\begin{thm}
\label{thm:Teoremabmcomparametros} Let $A\in\tilde{\mathcal{B}}_{E_{0},F_{0},E_{1},F_{1}}^{p}\left(M,\Lambda\right)$
be a parameter-dependent operator. Suppose that, for each $\lambda\in\Lambda$,
the operator
\[
A\left(\lambda\right):L_{p}\left(M,E_{0}\right)\oplus B_{p}^{0}\left(\partial M,F_{0}\right)\to L_{p}\left(M,E_{1}\right)\oplus B_{p}^{0}\left(\partial M,F_{1}\right)
\]
is invertible. If there are constants $C>0$ and $M\in\mathbb{N}_{0}$
such that 
\[
\left\Vert A\left(\lambda\right)^{-1}\right\Vert _{\mathcal{B}\left(L_{p}\left(M,E_{1}\right)\oplus B_{p}^{0}\left(\partial M,F_{1}\right),L_{p}\left(M,E_{0}\right)\oplus B_{p}^{0}\left(\partial M,F_{0}\right)\right)}\le C\left\langle \ln\left(\lambda\right)\right\rangle ^{M}, \lambda\in\Lambda,
\]
then $A\left(\lambda\right)^{-1}\in\tilde{\mathcal{B}}_{E_{1},F_{1},E_{0},F_{0}}^{p}\left(M,\Lambda\right)$.
\end{thm}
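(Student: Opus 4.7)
The plan is to use Theorem \ref{thm:equivalenciaFredholmelipticBdM} to extract parameter-ellipticity from the hypotheses, construct a parametrix via Theorem \ref{thm:Propriedades de BdM}, and show that the correction relating this parametrix to $A^{-1}$ is residual.

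First, the hypotheses immediately verify condition $iii)$ of Theorem \ref{thm:equivalenciaFredholmelipticBdM} with the choice $B_{1}(\lambda) = B_{2}(\lambda) = A(\lambda)^{-1}$ and $K_{1}(\lambda) = K_{2}(\lambda) = 0$: the logarithmic growth bound on $\|A(\lambda)^{-1}\|$ is exactly the one required, while the vanishing of the $K_{j}$'s trivially fulfils the compactness and limit conditions. Hence $A$ is parameter-elliptic.

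Second, the fourth item of Theorem \ref{thm:Propriedades de BdM} produces a parametrix $B\in\tilde{\mathcal{B}}^{p}_{E_{1},F_{1},E_{0},F_{0}}(M,\Lambda)$ such that $R := AB - I$ (and $BA - I$) is residual of order $-\infty$ and type $0$. The algebraic identity $A^{-1} = B - A^{-1} R$ then reduces the problem to showing that $A^{-1} R$ belongs to the residual class $\tilde{\mathcal{B}}^{-\infty,0}_{E_{1},F_{1},E_{0},F_{0}}(M,\Lambda)$.

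Third, to establish that $A^{-1} R$ is residual, it suffices to verify, for each block component of the $2\times 2$ matrix, that the Schwartz kernel is smooth in both spatial variables and Schwartz in $\lambda$ together with all $\lambda$-derivatives. Smoothness in the spatial variables follows from elliptic regularity: since $A(\lambda)$ is parameter-elliptic in the Boutet de Monvel calculus, $A(\lambda)^{-1}$ preserves $C^{\infty}$-sections on $M$ and $\partial M$; composing with the smoothing operator $R(\lambda)$ yields smooth kernels. For decay in $\lambda$, one uses that $\|R(\lambda)\|$ between any pair of Sobolev-Besov spaces decays faster than any polynomial in $\lambda$, while $\|A(\lambda)^{-1}\|$ grows at most like $\langle \ln \lambda\rangle^{M}$. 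Differentiating $A(\lambda)A(\lambda)^{-1} = I$ and using that $\partial_{\lambda}^{k} A$ has $\lambda$-order reduced by $k$ yields, by induction, estimates of the form $\|\partial_{\lambda}^{k} A(\lambda)^{-1}\| \le C_{k}\langle \ln \lambda\rangle^{c_{k}M}\langle\lambda\rangle^{-k}$; Leibniz's rule applied to $\partial_{\lambda}^{k}(A^{-1}R)$ then produces the required Schwartz bounds.

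The main technical obstacle is the rigorous verification in this last step that each of the four matrix components of $A^{-1}R$ belongs to the correct kernel class in $\tilde{\mathcal{B}}^{-\infty,0}$. One must track the interplay between the block structure of $A^{-1}$ (acting on interior versus boundary sections) and the four entries of $R$, and confirm that elliptic regularity, combined with the logarithmic norm bound, applies uniformly across the relevant Sobolev-Besov scales. Once this is settled, the identity $A^{-1} = B - A^{-1}R$ exhibits $A^{-1}$ as a sum of an element of $\tilde{\mathcal{B}}^{p}_{E_{1},F_{1},E_{0},F_{0}}(M,\Lambda)$ and a residual element, completing the proof.
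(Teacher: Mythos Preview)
Your approach is correct and follows the same overall strategy as the paper, but you use a one-sided decomposition $A^{-1}=B-A^{-1}R$ where the paper uses the two-sided ``sandwich'' formula
\[
A^{-1}=B-K_{2}A^{-1}=B-K_{2}\bigl(B-A^{-1}K_{1}\bigr)=B-K_{2}B+K_{2}A^{-1}K_{1},
\]
obtained by combining \emph{both} parametrix identities $AB=I+K_{1}$ and $BA=I+K_{2}$. The point of the sandwich is that the only occurrence of the a priori unknown operator $A^{-1}$ is flanked by smoothing operators on each side; hence the Schwartz kernel of $K_{2}A^{-1}K_{1}$ is smooth in both spatial variables directly from the smoothness of the kernels of $K_{1}$ and $K_{2}$, and the rapid $\lambda$-decay follows just from the logarithmic bound on $\|A(\lambda)^{-1}\|$ together with the Schwartz decay of the $K_{j}$. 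No appeal to elliptic regularity for $A^{-1}$ and no inductive control of $\partial_{\lambda}^{k}A(\lambda)^{-1}$ is needed. Your route works, but the ``main technical obstacle'' you flag---verifying that $A^{-1}R$ is residual via elliptic regularity and a Leibniz/induction argument on $\lambda$-derivatives---is precisely what the sandwich trick eliminates in one line.
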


\begin{proof}
By Theorem \ref{thm:equivalenciaFredholmelipticBdM} $A$ is parameter-elliptic.
Hence we find a parametrix $B\in\tilde{\mathcal{B}}_{E_{1},F_{1},E_{0},F_{0}}^{p}\left(M,\Lambda\right)$
and  $K_{1}\in\mathcal{B}_{E_{1},F_{1},E_{1},F_{1}}^{-\infty,0}\left(M,\Lambda\right)$
and $K_{2}\in\mathcal{B}_{E_{0},F_{0},E_{0},F_{0}}^{-\infty,0}\left(M,\Lambda\right)$
such that $AB=I+K_{1}$
and $BA=I+K_{2}$.
We conclude that
\[
A\left(\lambda\right)^{-1}=B\left(\lambda\right)-K_{2}\left(\lambda\right)A\left(\lambda\right)^{-1}=B\left(\lambda\right)-K_{2}\left(\lambda\right)\left(B\left(\lambda\right)-A\left(\lambda\right)^{-1}K_{1}\left(\lambda\right)\right).
\]
As $K_{2}B\in\mathcal{B}_{E_{1},F_{1},E_{0},F_{0}}^{-\infty,0}\left(M,\Lambda\right)$, $A\left(\lambda\right)^{-1}$ grows at most as $\left\langle \ln\left(\lambda\right)\right\rangle ^{M}$
in $\lambda$ and $K_{j}\left(\lambda\right)$, $j=1,2$,  are integral operators with smooth kernels whose derivatives decay rapidly with respect to $\lambda$, we see that $K_{2}A^{-1}K_{1}\in\mathcal{B}_{E_{1},F_{1},E_{0},F_{0}}^{-\infty,0}\left(M,\Lambda\right)$
and $A^{-1}\in\tilde{\mathcal{B}}_{E_{1},F_{1},E_{0},F_{0}}^{p}\left(M,\Lambda\right)$.
\end{proof}

\section{Boundary value problems on manifolds with conical singularities}

In this section, we provide the  definitions and results concerning manifolds
with boundary and conical singularities that we shall need. Details
can be found in
\cite{SchroheSchulzeConicalBoundaryI,SchroheSchulzeConicalBoundaryII}.
\begin{defn}
A compact manifold with boundary and conical singularities of dimension
$n$ is a triple $\left(D,\Sigma,\mathcal{F}\right)$ formed by:

1) A compact Hausdorff topological space $D$.

2) A finite subset $\Sigma\subset D$, which we call conical points,
such that $D\backslash\Sigma$ is an $n$-dimensional smooth manifold
with boundary.

3) A set of functions $\mathcal{F}_{\sigma}=\left\{ \varphi:U_{\sigma}\to X_{\sigma}\times\left[0,1\right[/X_{\sigma}\times\left\{ 0\right\} ,\,\sigma\in\Sigma\right\} $
such that:

i) The sets $U_{\sigma}\subset D$ are open and disjoint sets. Moreover,
each $U_{\sigma}$ is a neighborhood of $\sigma\in\Sigma$.

ii) $X_{\sigma}$ is a compact smooth manifold with boundary for each $\sigma\in\Sigma$.

iii) The function $\varphi_{\sigma}:U_{\sigma}\to X_{\sigma}\times\left[0,1\right[/X_{\sigma}\times\left\{ 0\right\} $
is a homeomorphism, $\varphi_{\sigma}(\sigma)=X_{\sigma}\times\left\{ 0\right\} /X_{\sigma}\times\left\{ 0\right\} $
and $\varphi_{\sigma}:U_{\sigma}\backslash\left\{ \sigma\right\} \to X_{\sigma}\times\left]0,1\right[$
is a diffeomorphism.
\end{defn}
\begin{rem}
For each $\sigma\in\Sigma$, we could use a different function $\tilde{\varphi}_{\sigma}:U_{\sigma}\to X_{\sigma}\times\left[0,1\right[/X_{\sigma}\times\left\{ 0\right\} $
with the same properties as in item iii), as long as, for each $\sigma$,
\[
\tilde{\varphi}_{\sigma}\circ\varphi_{\sigma}^{-1}:X_{\sigma}\times\left]0,1\right[\to X_{\sigma}\times\left]0,1\right[
\]
extends to a diffeomorphism $\tilde{\varphi}_{\sigma}\circ\varphi_{\sigma}^{-1}:X_{\sigma}\times\left]-1,1\right[\to X_{\sigma}\times\left]-1,1\right[$.
These are the changes of variables that we allow to do near the singularities.
\end{rem}
For the analysis of the typical (pseudo-) differential boundary value problems on these
manifolds, we introduce the Fuchs type boundary value problems on a manifold with corners $\mathbb{D}$. It is obtained by gluing the sets $X_{\sigma}\times\left[0,1\right[$
in place of $U_{\sigma}$, using the functions $\varphi_{\sigma}$. In this
way, the singularities are identified with the sets $X_{\sigma}\times\left\{ 0\right\} $.
The above remark ensures that the use of different functions $\tilde{\varphi}_{\sigma}$
instead of $\varphi_{\sigma}$ leads to diffeomorphic manifolds with corners.
In order to avoid unnecessary complications with the notation, we
shall consider manifolds with just one point singularity. A neighborhood of the conical point will always be identified with $X\times\left[0,1\right[/X\times\left\{0\right\}$ and a neighborhood of the corner will always be identified with $X\times\left[0,1\right[$, where $X$ is a compact manifold with boundary. For a finite
number of singularities the definitions and arguments are analogous.

We will denote by $\text{int}\left(\mathbb{D}\right)$ the manifold
with boundary $\mathbb{D}\backslash\left(X\times\left\{ 0\right\} \right)$.
By $\text{int}\left(\mathbb{B}\right)$, we denote the boundary of $\text{int}\left(\mathbb{D}\right)$. In a neighborhood of the singularity, it can be identified with $\partial X\times\left]0,1\right[$. Finally $\mathbb{B}$
is the manifold with boundary given by $\text{int}\left(\mathbb{B}\right)\cup\left(\partial X\times\left\{ 0\right\} \right)$.
In particular, in a neighborhood of the singularity, it can be identified
with $\partial X\times\left[0,1\right[$. We will also use $2\mathbb{D}$
to denote a manifold with boundary in which $\mathbb{D}$ is embedded.
The boundary of $2\mathbb{D}$ is $2\mathbb{B}$, a
manifold without boundary.

We divide our presentation into two parts. First we define the classes of functions
and distributions and then the operators.  The operators
acting on a neighborhood of the singularity will be defined as operators
on $X\times\left]0,1\right[$. We denote by $E_{0}$ and $E_{1}$ two vector
bundles over $\mathbb{D}$ and by $F_{0}$ and $F_{1}$ two vector
bundles over $\mathbb{B}$. Let $\pi_{X}:X\times\left[0,1\right[\to X$
be the projection operator, then there are vector bundles
$E_{0}'$ and $E_{1}'$ over $X$ such that $E_{0}$ and $E_{1}$
can be identified with $\pi_{X}^{*}\left(E_{0}'\right)$ and $\pi_{X}^{*}\left(E_{1}'\right)$,
respectively. Similarly, if $\pi_{\partial X}:\partial X\times\left[0,1\right[\to\partial X$
is the projection operator, then there are vector bundles
$F_{0}'$ and $F_{1}'$ over $\partial X$ such that $F_{0}$ and
$F_{1}$ can be identified with $\pi_{\partial X}^{*}\left(F_{0}'\right)$
and $\pi_{\partial X}^{*}\left(F_{1}'\right)$, respectively. $E_{0}$ will denote $E_{0}$ and $E_{0}'$ and the same will be done for $E_{1}$, $F_{0}$ and $F_{1}$. We also
denote by $2E_{0}$, $2F_{0}$, ... the vector bundles over $2\mathbb{D}$
and $2\mathbb{B}$, whose restriction to $\mathbb{D}$ and $\mathbb{B}$
are $E_{0}$ and $F_{0}$.

Finally, a cut-off function $\omega\in C_{c}^{\infty}\left(\overline{\mathbb{R}_{+}}\right)$
is a smooth nonnegative function that is equal to $1$ in
a neighborhood of $0$ and equal to $0$, outside $\left[0,1\right]$.

\subsection{Classes of functions and distributions}

In the following sections, $X$ is a manifold endowed with a Riemannian metric and with boundary $\partial X$. All vector bundles are assumed to be hermitian. We use the notation $X^{\wedge}:=\mathbb{R}_{+}\times X$
and $\partial X^{\wedge}:=\mathbb{R}_{+}\times\partial X$ and we will denote by $E$, $E_{0}$ and $E_{1}$ vector bundles
over $X$ or $\mathbb{D}$ and by $F$, $F_{0}$ and $F_{1}$ vector bundles over $\partial X$ or $\mathbb{B}$. The vector bundles
$E$, $E_{0}$, $E_{1}$, $F$, $F_{0}$ and $F_{1}$ will also refer
to the pullback bundles in $X\times\mathbb{R}$,
$X^{\wedge}$, $\partial X\times\mathbb{R}$ and $\partial X^{\wedge}$.
Finally we denote by $C^{\infty}(X,E,F)$ the set $C^{\infty}\left(X,E\right)\oplus C^{\infty}\left(\partial X,F\right)$.

\begin{defn}
Let $W$ be a Fréchet space and $\gamma\in\mathbb{R}$. We define
the Fréchet space $\mathcal{T}_{\gamma}\left(\mathbb{R}_{+},W\right)$
as the space of all functions $\varphi\in C^{\infty}\left(\mathbb{R}_{+},W\right)$
that satisfy
\[
\mbox{sup}\left\{ \left\langle \ln\left(t\right)\right\rangle ^{l}p\left(t^{\frac{1}{2}-\gamma}\left(t\partial_{t}\right)^{k}\varphi\left(t\right)\right),\,t\in\mathbb{R}_{+}\right\} <\infty,
\]
for all $k.\,l\in\mathbb{N}_{0}$ and for all continuous seminorms
$p$ of $W$. We write $\mathcal{T}_{\gamma}\left(\mathbb{R}_{+}\right)$
when $W=\mathbb{C}$.
\end{defn}
\begin{defn}
Let $\omega\in C^{\infty}\left(\left[0,1\right[\right)$ be a cut-off function. The space of functions $C_{\gamma}^{\infty}\left(\mathbb{D}\right)$, $\gamma \in \mathbb{R}$, consists of all functions $u\in C^{\infty}\left(\text{int}\left(\mathbb{D}\right)\right)$ such that $\omega u\in\mathcal{T}_{\gamma-\frac{n}{2}}\left(X^{\wedge}\right)$. Similarly, $C_{\gamma}^{\infty}\left(\mathbb{B}\right)$ are all the functions $u\in C^{\infty}\left(\text{int}\left(\mathbb{B}\right)\right)$ such that $\omega u\in\mathcal{T}_{\gamma-\frac{n-1}{2}}\left(\partial X^{\wedge}\right)$.
\end{defn}

\begin{defn}
Let $X=\cup_{j=1}^{M}U_{j}$  be a cover of $X$ consisting of trivializing sets
and $\varphi_{j}:U_{j}\subset X\to V_{j}\subset\overline{\mathbb{R}_{+}^{n}}$
be coordinate charts and $\left(\psi_{j}\right)_{j=1}^{M}$ be a partition
of unity subordinate to $U_{j}$, $j=1,...,M$. The space $H_{p}^{s}\left(X\times\mathbb{R},E\right)$
is defined as the set of distributions $\mathcal{D}'\left(\mathbb{R}\times X,E\right)$
such that $\left(t,x\right)\in\mathbb{R}\times\mathbb{R}_{+}^{n}\mapsto\left(\psi_{j}u\right)\left(t,\varphi_{j}^{-1}\left(x\right)\right)$
belong to $H_{p}^{s}\left(\mathbb{R}\times\mathbb{R}_{+}^{n},\mathbb{C}^{N}\right)$,
where $N$ is the dimension of $E$, with norm given by:
\[
\left\Vert u\right\Vert _{H_{p}^{s}\left(X\times\mathbb{R},E\right)}=\sum_{j=1}^{M}\left\Vert \left(\psi_{j}u\right)\left(t,\varphi_{j}^{-1}\left(x\right)\right)\right\Vert _{H_{p}^{s}\left(\mathbb{R}\times\mathbb{R}_{+}^{n},\mathbb{C}^{N}\right)}.
\]
The space $\mathcal{H}_{p}^{s,\gamma}\left(X^{\wedge},E\right)$
is the space of all distributions $u\in\mathcal{D}'\left(X^{\wedge},E\right)$
such that $u\left(t,x\right)=t^{-\frac{n+1}{2}+\gamma}v\left(\ln\left(t\right),x\right)$,
where $v\in H_{p}^{s}\left(X\times\mathbb{R},E\right)$. Its norm
is given by $\left\Vert u\right\Vert _{\mathcal{H}_{p}^{s,\gamma}\left(X^{\wedge},E\right)}:=\left\Vert v\right\Vert _{H_{p}^{s}\left(X\times\mathbb{R},E\right)}$.

Similarly, using the space $B_{p}^{s}\left(\mathbb{R}^{n},\mathbb{C}^{N}\right)$
instead of $H_{p}^{s}\left(\mathbb{R}\times\mathbb{R}_{+}^{n},\mathbb{C}^{N}\right)$,
we define the space $B_{p}^{s}\left(\partial X\times\mathbb{R},F\right)$,
where $N$ is the dimension of $F$. Associated to it is the space $\mathcal{B}_{p}^{s,\gamma}\left(\partial X^{\wedge},F\right)$ of all distributions $u\in\mathcal{D}'\left(\partial X^{\wedge},F\right)$
such that $u\left(t,x\right)=t^{-\frac{n}{2}+\gamma}v\left(\text{ln}\left(t\right),x\right),$
where $v\in B_{p}^{s}\left(\partial X\times\mathbb{R},F\right)$.
Its norm is given by $\left\Vert u\right\Vert _{\mathcal{B}_{p}^{s,\gamma}\left(\partial X^{\wedge},F\right)}:=\left\Vert v\right\Vert _{B_{p}^{s}\left(\partial X\times\mathbb{R},F\right)}.$
\end{defn}
\begin{rem}
\label{rem:s=00003D1 cone} The above definition implies $u\mapsto\left\Vert t\partial_{t}u\right\Vert _{L_{p}\left(X^{\wedge},E,dx\frac{dt}{t}\right)}+\left\Vert u\right\Vert _{L_{p}\left(\mathbb{R}_{+},H_{p}^{1}\left(X,E\right),\frac{dt}{t}\right)}$
is an equivalent norm for $\mathcal{H}_{p}^{1,\frac{n+1}{2}}\left(X^{\wedge},E\right)$.
\end{rem}
Finally, we need Bessel and Besov spaces with asymptotics. First let
us define asymptotic types.
\begin{defn}
We say that $P=\left\{ \left(p_{j},m_{j},L_{j}\right);\,j\in\left\{ 1,...,M\right\} \right\} $
is an asymptotic type for $C^{\infty}\left(X,E\right)$ with weight
$\left(\gamma,k\right)\in\mathbb{R}\times\mathbb{N}_{0}$ if $p_{j}\in\mathbb{C}$,
$\frac{n+1}{2}-\gamma-k<\text{Re}\left(p_{j}\right)<\frac{n+1}{2}-\gamma$,
are distinct numbers, $m_{j}\in\mathbb{N}_{0}$ and $L_{j}\subset C^{\infty}\left(X,E\right)$
are finite dimensional spaces. The set of all asymptotic types is
denoted by $As\left(X,E,\gamma,k\right)$. Similarly, we say that
$Q=\left\{ \left(p_{j},m_{j},L_{j}\right);\,j\in\left\{ 1,...,M\right\} \right\} $
is an asymptotic type for $C^{\infty}\left(\partial X,F\right)$ with
weight $\left(\gamma,k\right)\in\mathbb{R}\times\mathbb{N}_{0}$ and write $Q\in As\left(\partial X,F,\gamma,k\right)$, if $p_{j}\in\mathbb{C}$,
$\frac{n}{2}-\gamma-k<\text{Re}\left(p_{j}\right)<\frac{n}{2}-\gamma$,
are distinct numbers, $m_{j}\in\mathbb{N}_{0}$ and $L_{j}\subset C^{\infty}\left(\partial X,F\right)$
are finite dimensional spaces.
\end{defn}
\begin{defn}
The Bessel potential and Besov space with asymptotics, respectively,
are defined as follows:

1) Let $P=\left\{ \left(p_{j},m_{j},L_{j}\right);\,j\in\left\{ 1,...,M\right\} \right\} \in As\left(X,E,\gamma,k\right)$.
We define 
$$\mathcal{H}_{p,P}^{s,\gamma}\left(\mathbb{D},E\right)=\cap_{\epsilon>0}\mathcal{H}_{p}^{s,\gamma+k-\epsilon}\left(\mathbb{D},E\right)\oplus\mathcal{E}_{P}\left(X\right),$$
where $\mathcal{E}_{P}:=\left\{X^{\wedge}\ni \left(t,x\right)\mapsto \omega\left(t\right)\sum_{j=1}^{M}\sum_{k=0}^{m_{j}}t^{-p_{j}}\ln^{k}\left(t\right)v_{jk}\left(x\right),\,v_{jk}\in L_{j}\right\} $.

2) Let $\tilde{P}=\{ (\tilde{p}_{j},\tilde{m}_{j},\tilde{L}_{j});\,j\in\{ 1,...,M\} \} \in As \left(\partial X,F,\gamma,k\right)$.
We define 
$$\mathcal{B}_{p,\tilde{P}}^{s,\gamma} \left(\mathbb{B},F\right) 
=\cap_{\epsilon>0}\mathcal{B}_{p}^{s,\gamma+k-\epsilon}\left(\mathbb{B},F\right)\oplus\mathcal{E}_{\tilde{P}}\left(\partial X\right),$$
where 
$\mathcal{E}_{\tilde{P}}
:=\big\{\partial X^{\wedge}\ni\left(t,x\right) \mapsto \omega\left(t\right)\sum_{j=1}^{M}\sum_{k=0}^{\tilde{m}_{j}}t^{-\tilde{p}_{j}}\ln^{k}\left(t\right)v_{jk}\left(x\right),\,v_{jk}\in\tilde{L}_{j}\big\} $
and $\omega$ is a cut-off function.
\end{defn}
\begin{rem}

1) The scalar product of $L^{2}\left(\partial X^{\wedge},F,dx\frac{dt}{t}\right)$
allows the identification $\mathcal{B}_{p}^{s,\frac{n}{2}}\left(\partial X^{\wedge},F\right)'\cong\mathcal{B}_{q}^{-s,\frac{n}{2}}\left(\partial X^{\wedge},F\right)$,
$\frac{1}{p}+\frac{1}{q}=1$. As $\mathcal{B}_{p}^{s,\gamma}\left(\partial X^{\wedge},F\right)=t^{\gamma-\frac{n}{2}}\mathcal{B}_{p}^{s,\frac{n}{2}}\left(\partial X^{\wedge},F\right)$
and $\mathcal{B}_{q}^{-s,-\gamma}\left(\partial X^{\wedge},F\right)=t^{-\gamma-\frac{n}{2}}\mathcal{B}_{q}^{-s,\frac{n}{2}}\left(\partial X^{\wedge},F\right)$,
we conclude that $\mathcal{B}_{p}^{s,\gamma}\left(\partial X^{\wedge},F\right)'\cong\mathcal{B}_{q}^{-s,-\gamma}\left(\partial X^{\wedge},F\right)$,
if we use the scalar product of $L^{2}\left(\partial X^{\wedge},F,t^{n-1}dtdx\right)$.

2) In the same way, $\mathcal{H}_{p}^{s,\frac{n+1}{2}}\left(X^{\wedge},E\right)'\cong\mathcal{H}_{q}^{-s,\frac{n+1}{2}}\left(X^{\wedge},E\right)$,
if we use the scalar product of $L^{2}\left(X^{\wedge},E,dx\frac{dt}{t}\right)$,
and $\mathcal{H}_{p}^{s,\gamma}\left(X^{\wedge},E\right)'\cong\mathcal{H}_{q}^{-s,-\gamma}\left(X^{\wedge},E\right)$,
if we use that
of $L^{2}\left(X^{\wedge},E,t^{n}dtdx\right)$.
\end{rem}
\begin{defn}
Let $s,\gamma\in\mathbb{R}$ and $1<p<\infty$.

1) We define $\mathcal{H}_{p}^{s,\gamma}\left(\mathbb{D},E\right)$
as the space of all distributions $u\in H_{p,{\rm loc}}^{s}\left(\text{int}\left(\mathbb{D}\right),E\right)$
such that, for any cut-off function $\omega$, here considered as a function on $\mathbb D$, we have $\omega u\in\mathcal{H}_{p}^{s,\gamma}\left(X^{\wedge},E\right)$.
Its norm is given by
\[
\left\Vert u\right\Vert _{\mathcal{H}_{p}^{s,\gamma}\left(\mathbb{D},E\right)}:=\left\Vert \omega u\right\Vert _{\mathcal{H}_{p}^{s,\gamma}\left(X^{\wedge},E\right)}+\left\Vert \left(1-\omega\right)u\right\Vert _{H_{p,{\rm loc}}^{s}\left(\text{int}\left(\mathbb{D}\right),E\right)}.
\]

2) Similarly, we obtain  $\mathcal{B}_{p}^{s,\gamma}\left(\mathbb{B},F\right)$
from $\mathcal{B}_{p}^{s,\gamma}\left(\partial X^{\wedge},F\right)$ and $B_{p,{\rm loc}}^{s}\left(\text{int}\left(\mathbb{B}\right),F\right)$.
\end{defn}

\subsection{Classes of operators}

We are going to use the natural identification 
\[
\mathcal{T}_{\gamma}\left(\mathbb{R}_{+},C^{\infty}\left(X,E,F\right)\right)\cong\mathcal{T}_{\gamma}\left(\mathbb{R}_{+},C^{\infty}\left(X,E\right)\right)\oplus\mathcal{T}_{\gamma}\left(\mathbb{R}_{+},C^{\infty}\left(\partial X,F\right)\right)
\]
and write $\Gamma_{\sigma}:=\left\{ z\in\mathbb{C};\,Re\left(z\right)=\sigma\right\} $. The latter set will be obviously identified with $\mathbb{R}$, when it is convenient to do so.

\begin{defn}
The weighted Mellin transform is the continuous linear operator $\mathcal{M}_{\gamma}:\mathcal{T}_{\gamma}\left(\mathbb{R}_{+},C^{\infty}\left(X,E,F\right)\right)\to\mathcal{S}\left(\Gamma_{\frac{1}{2}-\gamma},C^{\infty}\left(X,E,F\right)\right)$
defined by
\[
\mathcal{M}_{\gamma}\varphi\left(z\right)=\int_{0}^{\infty}t^{z}\varphi\left(t\right)\frac{dt}{t},\,\,\,\,z\in\Gamma_{\frac{1}{2}-\gamma}.
\]
It is an invertible operator, whose inverse is given by
\[
\mathcal{M}_{\gamma}^{-1}\varphi\left(t\right)=\frac{1}{2\pi i}\int_{\Gamma_{\frac{1}{2}-\gamma}}t^{-z}\varphi\left(z\right)dz=\frac{1}{2\pi}\int t^{-\left(\frac{1}{2}-\gamma+i\tau\right)}\varphi\left(\frac{1}{2}-\gamma+i\tau\right)d\tau.
\]
\end{defn}

\begin{defn}
For $m\in\mathbb{Z}$ and $d\in\mathbb{N}_{0}$, $M\mathcal{B}_{E_{0},F_{0},E_{1},F_{1}}^{m,d}\left(X,\mathbb{R}_{+};\Gamma_{\gamma}\right)$ is the space of all functions $h\in C^{\infty}\left(\mathbb{R}_{+},\mathcal{B}_{E_{0},F_{0},E_{1},F_{1}}^{m,d}\left(X,\Gamma_{\gamma}\right)\right)$
that satisfy
\[
\sup\left\{ p\left(\left(t\partial_{t}\right)^{k}h\left(t\right)\right),\,t\in\mathbb{R}_{+}\right\} <\infty,
\]
for all continuous seminorms $p$ of $\mathcal{B}_{E_{0},F_{0},E_{1},F_{1}}^{m,d}\left(X,\Gamma_{\gamma}\right)$.
In a similar way we define $M\tilde{\mathcal{B}}_{E_{0},F_{0},E_{1},F_{1}}^{p}\left(X,\mathbb{R}_{+};\Gamma_{\gamma}\right)$.

To a function in $M\mathcal{B}_{E_{0},F_{0},E_{1},F_{1}}^{m,d}\left(X,\mathbb{R}_{+};\Gamma_{\gamma}\right)$
or $M\tilde{\mathcal{B}}_{E_{0},F_{0},E_{1},F_{1}}^{p}\left(X,\mathbb{R}_{+};\Gamma_{\gamma}\right)$ we  associate 
the Mellin operator 
$$op_{M}^{\gamma}\left(h\right):\mathcal{T}_{\gamma}\left(\mathbb{R}_{+},C^{\infty}\left(X,E_{0},F_{0}\right)\right)\to\mathcal{T}_{\gamma}\left(\mathbb{R}_{+},C^{\infty}\left(X,E_{1},F_{1}\right)\right)$$
by
\[
\left[op_{M}^{\gamma}\left(h\right)\varphi\right]\left(t\right)=\frac{1}{2\pi}\int_{\mathbb{R}}t^{-\left(\frac{1}{2}-\gamma+i\tau\right)}h\left(t,\frac{1}{2}-\gamma+i\tau\right)\left(\mathcal{M}_{\gamma}\varphi\right)\left(\frac{1}{2}-\gamma+i\tau\right)d\tau.
\]
\end{defn}
We also need to define the discrete Mellin asymptotic types:
\begin{defn}
A discrete Mellin asymptotic type of order $d\in\mathbb{N}_{0}$ is a set
\[
P=\left\{ \left(p_{j},m_{j},L_{j}\right)\right\} _{j\in\mathbb{Z}},
\]
where $p_{j}\in\mathbb{C}$ satisfy $Re\left(p_{j}\right)\to\pm\infty$
as $j\to\mp\infty$, $m_{j}\in\mathbb{N}_{0}$ and $L_{j}$ are finite-dimensional
subspaces of operators of finite rank in $\mathcal{B}_{E_{0},F_{0},E_{1},F_{1}}^{-\infty,d}\left(X\right)$.
The collection of all these asymptotic types is denoted by $As\left(\mathcal{B}_{E_{0},F_{0},E_{1},F_{1}}^{-\infty,d}\left(X\right)\right)$.
Moreover, we let $\pi_{\mathbb{C}}P:=\left\{ p_{j}:\,j\in\mathbb{Z}\right\} \subset\mathbb{C}$.
\end{defn}
The asymptotic types are used to define the following meromorphic
functions.
\begin{defn}
The space $M_{P\,E_{0},F_{0},E_{1},F_{1}}^{m,d}\left(X\right)$, $P\in As\left(\mathcal{B}_{E_{0},F_{0},E_{1},F_{1}}^{-\infty,d}\left(X\right)\right)$,
is the space of all meromorphic functions $a:\mathbb{C}\backslash\pi_{\mathbb{C}}P\to\mathcal{B}_{E_{0},F_{0},E_{1},F_{1}}^{m,d}\left(X\right)$
such that:

i) For every $p_{j}\in\pi_{\mathbb{C}}P$, there is a neighborhood
of $p_{j}$ where $a$ can be written as
\[
a\left(z\right)=\sum_{k=0}^{m_{j}}\nu_{jk}\left(z-p_{j}\right)^{-k-1}+a_{0}\left(z\right).
\]

Above, $a_{0}$ is a holomorphic function near $p_{j}$, with values
in $\mathcal{B}_{E_{0},F_{0},E_{1},F_{1}}^{m,d}\left(X\right)$ and
$\nu_{jk}\in L_{j}$, for $k=0,...,m_{j}$.

ii) For every $N\in\mathbb{N}_{0}$, the function $\gamma\in\left[-N,N\right]\mapsto a_{N}\left(\gamma+i.\right)\in\mathcal{B}_{E_{0},F_{0},E_{1},F_{1}}^{m,d}\left(X,\mathbb{R}\right)$
is continuous, where
\[
a_{N}\left(z\right):=a\left(z\right)-\sum_{\left|Re\left(p_{j}\right)\right|\le N}\sum_{k=0}^{m_{j}}\nu_{jk}\left(z-p_{j}\right)^{-k-1}.
\]

For $P\in As\left(\mathcal{B}_{E_{0},F_{0},E_{1},F_{1}}^{-\infty,0}\left(X\right)\right)$,
we can also define $\tilde{M}_{P\,E_{0},F_{0},E_{1},F_{1}}^{p}\left(X\right)$
replacing $\mathcal{B}_{E_{0},F_{0},E_{1},F_{1}}^{m,d}\left(X\right)$
by $\tilde{\mathcal{B}}_{E_{0},F_{0},E_{1},F_{1}}^{p}\left(X\right)$.
When $P=\emptyset$, we also use the notations $M_{\mathcal{O}\,E_{0},F_{0},E_{1},F_{1}}^{m,d}\left(X\right)$
and $\tilde{M}_{\mathcal{O}\,E_{0},F_{0},E_{1},F_{1}}^{p}\left(X\right)$.
\end{defn}
The last operator that we need are the Green ones.
\begin{defn}
We define $C_{G\,E_{0},F_{0},E_{1},F_{1}}^{d}\left(\mathbb{D};\gamma,\gamma',k\right)$
as the space of operators of the form
\[
\mathcal{G}=\sum_{j=0}^{d}\mathcal{G}_{j}\left(\begin{array}{cc}
D^{j} & 0\\
0 & 1
\end{array}\right)+\mathcal{G}_{0},
\]
where, for each $\mathcal{G}_{j}$, there exist asymptotic types $P\in As\left(X,E_{1},\gamma',k\right)$ and 
$P'\in As\left(X,E_{0},-\gamma,k\right)$, $Q\in As\left(\partial X,F_{1},\gamma'-\frac{1}{2},k\right)$
and $Q'\in As\left(\partial X,F_{0},-\gamma-\frac{1}{2},k\right)$,
such that $\mathcal{G}_{j}$ and its formal adjoint with respect to
$\mathcal{H}_{2}^{0,0}\left(\mathbb{D},E_{j}\right)\oplus\mathcal{B}_{2}^{-\frac{1}{2},-\frac{1}{2}}\left(\mathbb{B},F_{j}\right)$,
$j=0,1$, define continuous operators:
\[
\mathcal{G}_{j}:\!\!\!\begin{array}{c}
\mathcal{\mathcal{H}}_{p}^{s,\gamma}\left(\mathbb{D},E_{0}\right)\\
\oplus\\
\mathcal{\mathcal{B}}_{p}^{r,\gamma-\frac{1}{2}}\left(\mathbb{B},F_{0}\right)
\end{array}
\!\!\!\to\!\!\!
\begin{array}{c}
\mathcal{\mathcal{H}}_{p,P}^{\infty,\gamma'}\left(\mathbb{D},E_{1}\right)\\
\oplus\\
\mathcal{\mathcal{B}}_{p,Q}^{\infty,\gamma'-\frac{1}{2}}\left(\mathbb{B},F_{1}\right)
\end{array}
\text{, }\mathcal{G}_{j}^{*}:\!\!\!\begin{array}{c}
\mathcal{\mathcal{H}}_{q}^{s,-\gamma'}\left(\mathbb{D},E_{1}\right)\\
\oplus\\
\mathcal{\mathcal{B}}_{q}^{r,-\gamma'-\frac{1}{2}}\left(\mathbb{B},F_{1}\right)
\end{array}
\!\!\!\to\!\!\!
\begin{array}{c}
\mathcal{\mathcal{H}}_{q,P'}^{\infty,-\gamma}\left(\mathbb{D},E_{0}\right)\\
\oplus\\
\mathcal{\mathcal{B}}_{q,Q'}^{\infty,-\gamma-\frac{1}{2}}\left(\mathbb{B},F_{0}\right)
\end{array}
\]
for all $r\in\mathbb{R}$, $s>-1+\frac{1}{p}$ on the left hand side and
$s>-1+\frac{1}{q}$ on the right hand side. Near the boundary $\mathbb{B}$ of $\mathbb{D}$, the operators $D^{j}$ coincide
with $\left(-i\partial_{\nu}\right)^{j}$ where $\partial_{\nu}$
is the normal derivative.

Similarly, $\tilde{C}_{G\,E_{0},F_{0},E_{1},F_{1}}^{p}\left(\mathbb{D};k\right)$
denotes the space of all operators $\mathcal{G}$ for which there exist asymptotic
types $P\in As\left(X,E_{1},\frac{n+1}{2},k\right)$, $P'\in As\left(X,E_{0},\frac{n+1}{2},k\right)$,
$Q\in As\left(\partial X,F_{1},\frac{n}{2},k\right)$ and $Q'\in As\left(\partial X,F_{0},\frac{n}{2},k\right)$,
such that $\mathcal{G}$ and its formal adjoint $\mathcal{G}^{*}$
with respect to $\mathcal{H}_{2}^{0,\frac{n+1}{2}}\left(\mathbb{D},E_{j}\right)\oplus\mathcal{B}_{2}^{0,\frac{n}{2}}\left(\mathbb{B},F_{j}\right)$,
$j=0,1$, define continuous operators:
\[
\mathcal{G}_{j}:\!\!\!\begin{array}{c}
\mathcal{\mathcal{H}}_{p}^{s,\frac{n+1}{2}}\left(\mathbb{D},E_{0}\right)\\
\oplus\\
\mathcal{\mathcal{B}}_{p}^{r,\frac{n}{2}}\left(\mathbb{B},F_{0}\right)
\end{array}\to\begin{array}{c}
\mathcal{\mathcal{H}}_{p,P}^{\infty,\frac{n+1}{2}}\left(\mathbb{D},E_{1}\right)\\
\oplus\\
\mathcal{\mathcal{B}}_{p,Q}^{\infty,\frac{n}{2}}\left(\mathbb{B},F_{1}\right)
\end{array}\text{, }\mathcal{G}_{j}^{*}:\!\!\!\begin{array}{c}
\mathcal{\mathcal{H}}_{q}^{s,\frac{n+1}{2}}\left(\mathbb{D},E_{1}\right)\\
\oplus\\
\mathcal{\mathcal{B}}_{q}^{r,\frac{n}{2}}\left(\mathbb{B},F_{1}\right)
\end{array}\to\begin{array}{c}
\mathcal{\mathcal{H}}_{q,P'}^{\infty,\frac{n+1}{2}}\left(\mathbb{D},E_{0}\right)\\
\oplus\\
\mathcal{\mathcal{B}}_{q,Q'}^{\infty,\frac{n}{2}}\left(\mathbb{B},F_{0}\right)
\end{array}
\]
for all $r\in\mathbb{R}$, $s>-1+\frac{1}{p}$ on the left hand side and
$s>-1+\frac{1}{q}$ on the right hand side.
\end{defn}

It is an immediate consequence of the embedding properties for cone Sobolev spaces that, for $r\in \mathbb R$, $s>d+1/p-1$ and arbitrary $r',s'\in \mathbb R$, an operator 
$$C_{G\,E_{0},F_{0},E_{1},F_{1}}^{d}\left(\mathbb{D};\gamma,\gamma',k\right)
\ni\mathcal{G}:
\begin{array}{c}
\mathcal{\mathcal{H}}_{p}^{s,\gamma}\left(\mathbb{D},E_{0}\right)\\
\oplus\\
\mathcal{\mathcal{B}}_{p}^{r,\gamma-\frac{1}{2}}\left(\mathbb{B},F_{0}\right)
\end{array}
\!\!\!\to\!\!\!
\begin{array}{c}
\mathcal{\mathcal{H}}_{p}^{s',\gamma'}\left(\mathbb{D},E_{1}\right)\\
\oplus\\
\mathcal{\mathcal{B}}_{p}^{r',\gamma'-\frac{1}{2}}\left(\mathbb{B},F_{1}\right)
\end{array}
$$
is compact. An analogous statement applies to operators in 
$\tilde{C}_{G\,E_{0},F_{0},E_{1},F_{1}}^{p}\left(\mathbb{D};k\right)$.

Finally, we can define the cone algebra for boundary value problems.
\begin{defn}
\label{def:boutetconical}For $\gamma\in\mathbb{R}$, $m\in\mathbb{Z}$,
$d\in\mathbb{N}_{0}$ and $k\in\mathbb{N}_{0}$ we define the space $C_{E_{0},F_{0},E_{1},F_{1}}^{m,d}\left(\mathbb{D},\left(\gamma,\gamma-m,k\right)\right)$
of all  operators $A:C_{\gamma}^{\infty}\left(\mathbb{D},E_{0}\right)\oplus C_{\gamma-\frac{1}{2}}^{\infty}\left(\mathbb{B},F_{0}\right)\to C_{\gamma-m}^{\infty}\left(\mathbb{D},E_{1}\right)\oplus C_{\gamma-m-\frac{1}{2}}^{\infty}\left(\mathbb{B},F_{1}\right)$
of the form
\begin{equation}
A=\omega_{0}A_{M}\omega_{1}+\left(1-\omega_{2}\right)A_{\psi}\left(1-\omega_{3}\right)+M+G, \label{eq:CBVP}
\end{equation} where $\omega_{1},\,...,\,\omega_{4}\in C^{\infty}\left[0,1\right[$
are cut-off functions. The operator $A_{M}$ is a Mellin operator:
$A_{M}=t^{-m}op_{M}^{\gamma-\frac{n}{2}}\left(h\right)$, with $h\in C^{\infty}(\overline{\mathbb{R}_{+}},M_{O\,E_{0},F_{0},E_{1},F_{1}}^{m,d}\left(X\right))$.
The operator $A_{\psi}$ is a Boutet de Monvel operator $A_{\psi}\in\mathcal{B}_{2E_{0},2F_{0},2E_{1},2F_{1}}^{m,d}\left(2\mathbb{D}\right)$.
The operator $M$ is a smoothing Mellin operator: $M=\omega_{0}\left(\sum_{l=0}^{k-1}t^{-m+l}op_{M}^{\gamma_{l}-\frac{n}{2}}\left(h_{l}\right)\right)\omega_{1}$
with $h_{l}\in M_{P_{l}\,E_{0},F_{0},E_{1},F_{1}}^{-\infty,d}\left(X\right)$,
$\pi_{\mathbb{C}}P_{l}\cap\Gamma_{\frac{n+1}{2}-\gamma_{l}}=\emptyset$,
and $\gamma-l\le\gamma_{l}\le\gamma$. The operator $G$ is a Green
operator: $G\in C_{G\,E_{0},F_{0},E_{1},F_{1}}^{d}\left(\mathbb{D};\gamma,\gamma-m,k\right)$.

Similarly, the algebra $\tilde{C}_{E_{0},F_{0},E_{1},F_{1}}^{p}\left(\mathbb{D},k\right)$
is defined as the space of all continuous operators $A:C_{\frac{n+1}{2}}^{\infty}\left(\mathbb{D},E_{0}\right)\oplus C_{\frac{n}{2}}^{\infty}\left(\mathbb{B},F_{0}\right)\to C_{\frac{n+1}{2}}^{\infty}\left(\mathbb{D},E_{1}\right)\oplus C_{\frac{n}{2}}^{\infty}\left(\mathbb{B},F_{1}\right)$
of the form (\ref{eq:CBVP}), where $A_{M}=op_{M}^{\frac{1}{2}}\left(h\right)$, with $h\in C^{\infty}\left(\overline{\mathbb{R}_{+}},\tilde{M}_{O\,E_{0},F_{0},E_{1},F_{1}}^{p}\left(X\right)\right)$, $A_{\psi}\in\tilde{\mathcal{B}}_{2E_{0},2F_{0},2E_{1},2F_{1}}^{p}\left(2\mathbb{D}\right)$, $M=\omega_{0}\left(\sum_{l=0}^{k-1}t^{l}op_{M}^{\gamma_{l}-\frac{n}{2}}\left(h_{l}\right)\right)\omega_{1}$
with $h_{l}\in M_{P_{l}\,E_{0},F_{0},E_{1},F_{1}}^{-\infty,0}\left(X\right)$,
$\pi_{\mathbb{C}}P_{l}\cap\Gamma_{\frac{n+1}{2}-\gamma_{l}}=\emptyset$,
and $\frac{n+1}{2}-l\le\gamma_{l}\le\frac{n+1}{2}$, and $G\in\tilde{C}_{G\,E_{0},F_{0},E_{1},F_{1}}^{p}\left(\mathbb{D},k\right)$.
\end{defn}
\begin{defn}\label{defn:ellipticity}
(Ellipticity) Using the notation of Definition \ref{def:boutetconical}\label{def:Ellipticity-cone},
we say that $A\in C_{E_{0},F_{0},E_{1},F_{1}}^{m,d}\left(\mathbb{D},\left(\gamma,\gamma-m,k\right)\right)$, $d\le\max\{0,m\}$,
is elliptic if:

1) Outside the singularity $X\times\left\{ 0\right\} $, $A$ is an elliptic
Boutet de Monvel operator in $\mathcal{B}_{E_{0},F_{0},E_{1},F_{1}}^{m,d}\left(\text{int}\left(\mathbb{D}\right)\right)$:
Its interior symbol and boundary symbol are invertible at each point.

2) Its conormal symbol $\sigma_{M}\left(A\right)\left(z\right):=h\left(0,z\right)+h_{0}\left(z\right):H_{p}^{s}\left(X,E_{0}\right)\oplus B_{p}^{s-\frac{1}{p}}\left(X,F_{0}\right)\to H_{p}^{s-m}\left(X,E_{1}\right)\oplus B_{p}^{s-\frac{1}{p}-m}\left(X,F_{1}\right)$,
$s>d-1+\frac{1}{p}$, is invertible for each $z\in\Gamma_{\frac{n+1}{2}-\gamma}$
and its inverse belongs to $B_{E_{1},F_{1},E_{0},F_{0}}^{-m,d'}(X,\Gamma_{\frac{n+1}{2}-\gamma})$, $d'=\max\{-m,0\}$.

Similarly, we say that $A\in\tilde{C}_{E_{0},F_{0},E_{1},F_{1}}^{p}\left(\mathbb{D},k\right)$
is an elliptic operator if, outside the singularity $X\times\left\{ 0\right\} $,
$A$ is an elliptic operator in $\tilde{\mathcal{B}}_{E_{0},F_{0},E_{1},F_{1}}^{p}\left(\text{int}\left(\mathbb{D}\right)\right)$
and its conormal symbol $\sigma_{M}\left(A\right)\left(z\right):=h\left(0,z\right)+h_{0}\left(z\right):H_{p}^{s}\left(X,E_{0}\right)\oplus B_{p}^{s}\left(X,F_{0}\right)\to H_{p}^{s}\left(X,E_{1}\right)\oplus B_{p}^{s}\left(X,F_{1}\right)$
is invertible for each $z\in\Gamma_{\frac{n+1}{2}}$, $s>d-1+\frac{1}{p}$,
and its inverse belongs to $\tilde{B}_{E_{1},F_{1},E_{0},F_{0}}^{p}(X,\Gamma_{\frac{n+1}{2}})$.
\end{defn}

\begin{rem}\label{ellipticity}
Definition \ref{defn:ellipticity} follows \cite{SchroheSchulzeConicalBoundaryII}. 
Instead one might ask that
\begin{enumerate}
\item The principal pseudodifferential symbol $\sigma_\psi(A)$ is invertible on 
$T^*(\text{int}\,\mathbb D)\setminus \{0\}$ and, in local coordinates $(t,x,\tau,\xi)$ for the 
cotangent space in a collar neighborhood of the conical point, 
$t^m \sigma_\psi(A)(t,x, \tau/t, \xi)$ is smoothly invertible up to $t=0$. 
\item The boundary principal symbol $\sigma_\partial (A)$ is invertible on 
$T^*(\text{int}\, \mathbb B)\setminus \{0\}$ and, in local coordinates $(t,y, \tau, \eta) $ 
for the cotangent space in a collar neighborhood of the conical point, 
$t^m \sigma_\partial(A)(t,y, \tau/t, \eta)$ is smoothly invertible up to $t=0$. 
\item The conormal symbol is pointwise invertible. 
\end{enumerate} 
See \cite[Section 6.2.1]{HarutyunyanSchulze} for details. 
%
\end{rem}

\begin{prop}
\label{prop:propriedadesconecalculus}The operators in $C_{E_{0},F_{0},E_{1},F_{1}}^{m,d}\left(\mathbb{D},\left(\gamma,\gamma-m,k\right)\right)$
and $\tilde{C}_{E_{0},F_{0},E_{1},F_{1}}^{p}\left(\mathbb{D},k\right)$
have the following properties:

1) If $B\in C_{E_{0},F_{0},E_{1},F_{1}}^{m_{0},d_{0}}\left(\mathbb{D},\left(\gamma,\gamma-m_{0},k\right)\right)$
and $A\in C_{E_{1},F_{1},E_{2},F_{2}}^{m_{1},d_{1}}\left(\mathbb{D},\left(\gamma-m_{0},\gamma-m_{2},k\right)\right)$
then $AB\in C_{E_{0},F_{0},E_{2},F_{2}}^{m_{2},d_{2}}\left(\mathbb{D},\left(\gamma,\gamma-m_{2},k\right)\right)$,
where $m_{2}:=m_{0}+m_{1}$ and $d_{2}:=\max\left\{ m_{0}+d_{1},d_{0}\right\} $.
If $B\in\tilde{C}_{E_{0},F_{0},E_{1},F_{1}}^{p}\left(\mathbb{D},k\right)$
and $A\in\tilde{C}_{E_{1},F_{1},E_{2},F_{2}}^{p}\left(\mathbb{D},k\right)$,
then $AB\in\tilde{C}_{E_{0},F_{0},E_{2},F_{2}}^{p}\left(\mathbb{D},k\right)$.

2) If $A\in C_{E_{0},F_{0},E_{1},F_{1}}^{m,d}\left(\mathbb{D},\left(\gamma,\gamma-m,k\right)\right)$,
then $A$ extends to a continuous operator:
\[
A:\begin{array}{c}
\mathcal{H}_{p}^{s,\gamma}\left(\mathbb{D},E_{0}\right)\\
\oplus\\
\mathcal{B}_{p}^{s-\frac{1}{p},\gamma-\frac{1}{2}}\left(\mathbb{B},F_{0}\right)
\end{array}\to\begin{array}{c}
\mathcal{H}_{p}^{s-m,\gamma-m}\left(\mathbb{D},E_{1}\right)\\
\oplus\\
\mathcal{B}_{p}^{s-m-\frac{1}{p},\gamma-m-\frac{1}{2}}\left(\mathbb{B},F_{1}\right)
\end{array},\,\,s>d-1+\frac{1}{p}.
\]

If $A\in\tilde{C}_{E_{0},F_{0},E_{1},F_{1}}^{p}\left(\mathbb{D},k\right)$,
then $A$ extends to a continuous operator:
\[
A:\begin{array}{c}
\mathcal{H}_{p}^{s,\frac{n+1}{2}}\left(\mathbb{D},E_{0}\right)\\
\oplus\\
\mathcal{B}_{p}^{s,\frac{n}{2}}\left(\mathbb{B},F_{0}\right)
\end{array}\to\begin{array}{c}
\mathcal{H}_{p}^{s,\frac{n+1}{2}}\left(\mathbb{D},E_{1}\right)\\
\oplus\\
\mathcal{B}_{p}^{s,\frac{n}{2}}\left(\mathbb{B},F_{1}\right)
\end{array},\,\,s>-1+\frac{1}{p}.
\]

3) If $A\in\tilde{C}_{E_{0},F_{0},E_{1},F_{1}}^{p}\left(\mathbb{D},k\right)$,
then its formal adjoint with respect to the inner product in $\mathcal{H}_{2}^{0,\frac{n+1}{2}}\left(\mathbb{D},E_{j}\right)\oplus\mathcal{B}_{2}^{0,\frac{n}{2}}\left(\mathbb{B},F_{j}\right)$
belongs to $\tilde{C}_{E_{1},F_{1},E_{0},F_{0}}^{q}\left(\mathbb{D},k\right)$,
for $\frac{1}{p}+\frac{1}{q}=1$. If $A$ is elliptic, so is its adjoint.

4) If $A\in C_{E_{0},F_{0},E_{1},F_{1}}^{m,d}\left(\mathbb{D},\left(\gamma,\gamma-m,k\right)\right)$
is elliptic, $d:=\max\left\{ m,0\right\} $, then there
is an operator $B\in C_{E_{1},F_{1},E_{0},F_{0}}^{-m,d'}\left(\mathbb{D},\left(\gamma-m,\gamma,k\right)\right)$,
$d':=\max\left\{ -m,0\right\} $, such that
\[
BA-I\in C_{G\,E_{0},F_{0},E_{0},F_{0}}^{d}\left(\mathbb{D},\left(\gamma,\gamma,k\right)\right); AB-I\in C_{G\,E_{1},F_{1},E_{1},F_{1}}^{d'}\left(\mathbb{D},\left(\gamma-m,\gamma-m,k\right)\right).
\]

Similarly, if $\tilde{A}\in\tilde{C}_{E_{0},F_{0},E_{1},F_{1}}^{p}\left(\mathbb{D},k\right)$
is elliptic, then there is an operator $\tilde{B}\in\tilde{C}_{E_{1},F_{1},E_{0},F_{0}}^{p}\left(\mathbb{D},k\right)$,
such that
\[
\tilde B\tilde A-I\in\tilde{C}_{G\,E_{0},F_{0},E_{0},F_{0}}^{p}\left(\mathbb{D},k\right)\,\,\,\text{and}\,\,\,\tilde A\tilde B-I\in\tilde{C}_{G\,E_{1},F_{1},E_{1},F_{1}}^{p}\left(\mathbb{D},k\right).
\]
In particular, $A$ and $\tilde A$ are then Fredholm operators.

5) (Existence of order reducing operators) For $m\in\mathbb{Z}$, $m'\in \mathbb R$
and $\gamma\in\mathbb{R}$, there are elliptic  operators $A\in t^m C_{E_{0},E_{0}}^{m,0}\left(\mathbb{D},\left(\gamma,\gamma-m,k\right)\right)$
and $B\in t^{m'}C_{F_{0},F_{0}}^{m',0}\left(\mathbb{B},\left(\gamma,\gamma-m',k\right)\right)$,
such that $A:\mathcal{H}_{p}^{s,\gamma}\left(\mathbb{D},E_{0}\right)\to\mathcal{H}_{p}^{s-m,\gamma-m}\left(\mathbb{D},E_{0}\right)$
and $B:\mathcal{B}_{p}^{s-\frac{1}{p},\gamma-\frac{1}{2}}\left(\mathbb{B},F_{0}\right)\to\mathcal{B}_{p}^{s-m'-\frac{1}{p},\gamma-m'-\frac{1}{2}}\left(\mathbb{B},F_{0}\right)$
are invertible for all $s>-1+\frac{1}{p}$, see \cite[Section 6.4]{HarutyunyanSchulze}.
\end{prop}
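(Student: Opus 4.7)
The plan is to treat each of the five items by using the canonical decomposition \eqref{eq:CBVP}, which splits every cone operator into a Mellin part $\omega_0 A_M\omega_1$ near the singularity, a Boutet de Monvel part $(1-\omega_2)A_\psi(1-\omega_3)$ away from it, a smoothing Mellin remainder $M$, and a Green operator $G$. With this decomposition fixed, each assertion reduces to an analogous statement for the three constituents. For the classical algebra $C^{m,d}$ the arguments are those of \cite{SchroheSchulzeConicalBoundaryI,SchroheSchulzeConicalBoundaryII}, and for the new $p$-dependent algebra $\tilde C^p$ the same arguments apply, now on the Besov scale on $\mathbb B$ and with Theorem \ref{thm:Propriedades de BdM} in the role that its $L_2$-analog plays in the classical case.

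For continuity (item 2) the Boutet de Monvel part is controlled by Theorem \ref{thm:Propriedades de BdM}(3), the Green part is continuous by definition, and the Mellin part is handled by Mellin--Parseval together with the fibrewise continuity of the operator-valued symbol $h(t,z)$, again via Theorem \ref{thm:Propriedades de BdM}(3). For composition (item 1) the new point is the Mellin$\times$Mellin case: a Kumano-go-type composition formula yields a Mellin operator of the expected order plus a smoothing Mellin term; cross terms between the Mellin and the Boutet de Monvel pieces have essentially disjoint supports because of the cut-offs in \eqref{eq:CBVP} and therefore contribute only Green summands. For adjoints (item 3), Mellin quantization behaves well under $L^p\leftrightarrow L^q$ duality along the weight line, and combined with Theorem \ref{thm:Propriedades de BdM}(2) and the symmetric definition of the Green class the assertion follows.

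The Fredholm property (item 4) is where the main work lies. The parametrix $B$ is constructed by inverting the three principal symbols separately: Theorem \ref{thm:Propriedades de BdM}(4) provides $B_\psi$ in the appropriate Boutet de Monvel class on $2\mathbb D$, inverting both the interior and boundary symbols. Invertibility of the conormal symbol on $\Gamma_{(n+1)/2-\gamma}$ gives an inverse on that line, which is then meromorphically extended to a full strip; poles that would cross the weight line are absorbed into a smoothing Mellin remainder with compatible asymptotic type, and the result is quantized to $B_M$. Gluing $B_M$ and $B_\psi$ with cut-off functions produces $B$, and one checks that $AB-I$ and $BA-I$ lie in the Green class by tracing the asymptotic type calculus through the composition. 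The principal obstacle is ensuring that the smoothing-Mellin and Green interactions close up to produce operators with exactly the asymptotic mapping properties required by $C_G$ (respectively $\tilde C^p_G$). Finally, item 5) on order-reducing operators is not established here but taken from \cite[Section 6.4]{HarutyunyanSchulze}.
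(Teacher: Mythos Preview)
The paper does not prove this proposition at all: it is stated as a summary of known structural properties of the cone calculus, with the underlying arguments delegated to \cite{SchroheSchulzeConicalBoundaryI,SchroheSchulzeConicalBoundaryII} (items 1--4) and \cite[Section~6.4]{HarutyunyanSchulze} (item~5). There is therefore no ``paper's own proof'' to compare against; the proposition plays the role of imported background.

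Your sketch is a reasonable outline of how the proofs in those references actually proceed, and the overall architecture (decompose via \eqref{eq:CBVP}, handle Mellin, Boutet de Monvel, smoothing Mellin, and Green pieces separately, use Kumano-go-type composition for the Mellin part, absorb cross terms into Green operators) is correct. If you intend to present this as a self-contained proof rather than a citation, two points would need more substance than a sketch can carry: the closure of the Green class under composition with arbitrary cone operators (the asymptotic-type bookkeeping in \cite{SchroheSchulzeConicalBoundaryII} is genuinely lengthy), and the meromorphic Fredholm argument that extends $\sigma_M(A)^{-1}$ off the weight line while keeping track of the pole structure. As written, your sketch is an accurate roadmap but not a proof; given that the paper itself treats the proposition as a black box, simply citing the references as the paper does would be appropriate.
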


\subsection{The equivalence between the Fredholm property and the ellipticity}

\begin{thm}\label{thm:EquivFredEll}
Let $A\in\tilde{C}_{E_{0},F_{0},E_{1},F_{1}}^{p}\left(\mathbb{D},k\right)$.
Then the following conditions are equivalent:

i) $A$ is elliptic.

ii) $A:\begin{array}{c}
\mathcal{H}_{p}^{0,\frac{n+1}{2}}\left(\mathbb{D},E_{0}\right)\\
\oplus\\
\mathcal{B}_{p}^{0,\frac{n}{2}}\left(\mathbb{B},F_{0}\right)
\end{array}\to\begin{array}{c}
\mathcal{H}_{p}^{0,\frac{n+1}{2}}\left(\mathbb{D},E_{1}\right)\\
\oplus\\
\mathcal{B}_{p}^{0,\frac{n}{2}}\left(\mathbb{B},F_{1}\right)
\end{array}$ is Fredholm.
\end{thm}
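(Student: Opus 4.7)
The direction i)$\Rightarrow$ii) is immediate from part 4 of Proposition \ref{prop:propriedadesconecalculus}: ellipticity of $A$ yields a parametrix $\tilde B\in\tilde C^p_{E_1,F_1,E_0,F_0}(\mathbb D,k)$ with $A\tilde B-I$ and $\tilde B A-I$ in the Green class, and as noted after the definition of $\tilde C_G^p$ such operators are compact between the relevant cone Sobolev spaces. Hence $A$ is Fredholm.

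For ii)$\Rightarrow$i), Fredholmness furnishes bounded left and right parametrices $B_1,B_2$ with $B_1A-I$ and $AB_2-I$ compact. Three invertibilities must be checked (see Definition \ref{defn:ellipticity} and Remark \ref{ellipticity}). The \emph{interior} and \emph{boundary} principal symbols are handled by localization: at a point of $T^*(\mathrm{int}\,\mathbb D)\setminus\{0\}$, respectively $T^*(\mathrm{int}\,\mathbb B)\setminus\{0\}$, fix a trivializing chart at positive distance from the cone cross section $X\times\{0\}$. On such a chart the Mellin, smoothing Mellin and Green ingredients of $A$ contribute trivially, so the localized operator lies in $\tilde{\mathcal B}^p$ on $\mathbb R^n_+$ (or $\mathbb R^n$). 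Sandwiching the parametrix identities by cut-offs supported in the chart then produces exactly condition iii) of Theorem \ref{thm:equivalenciaFredholmelipticBdM} with $\Lambda=\emptyset$, and that theorem delivers the required invertibility of both local principal symbols.

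The main obstacle is the \emph{conormal} symbol. The plan is a Mellin scaling argument near the cone point, in the spirit of \cite{SchroheSeilerSpectConical}. Introduce the isometries $\kappa_\sigma$ on $\mathcal H_p^{0,(n+1)/2}(\mathbb D,E_0)\oplus\mathcal B_p^{0,n/2}(\mathbb B,F_0)$ which near the cone act by $(\kappa_\sigma u)(t,x)=\sigma^{1/p}u(\sigma t,x)$ (with the analogous formula on the boundary strand) and are extended trivially outside a cone neighborhood. Mellin operators with $t$-independent symbol commute with $\kappa_\sigma$, so a direct computation shows $\kappa_\sigma^{-1}A\kappa_\sigma \to op_M^{1/2}(\sigma_M(A))$ strongly as $\sigma\to\infty$, with $\sigma_M(A)(z)=h(0,z)+h_0(z)$. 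The contributions from $(1-\omega_2)A_\psi(1-\omega_3)$ and from $G$ vanish in the limit because of their support away from the cone, while the compactness of $K_1,K_2$, combined with $\kappa_\sigma u\rightharpoonup 0$ for $u$ concentrated near the cone, forces those error terms to disappear in the limit. Passing to the limit in $B_1A=I+K_1$ and $AB_2=I+K_2$ produces bounded inverses modulo compacts for $op_M^{1/2}(\sigma_M(A))$; the Mellin transform translates these into pointwise bounded-below estimates for $\sigma_M(A)(z)$ on the relevant Mellin line. Combined with the analogous argument for the adjoint and Lemma \ref{lem:injetora-adj-implica-iso}, this yields pointwise invertibility of $\sigma_M(A)(z)$.

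Finally, membership of the inverse family in the parameter-dependent class $\tilde{\mathcal B}^p(X,\Gamma)$, rather than mere pointwise boundedness, must be established. Here I would derive logarithmic growth of $\|\sigma_M(A)(z)^{-1}\|$ in $|\mathrm{Im}\,z|$ from the holomorphy of $h$ together with the parameter-uniform bound on $B_1,B_2$, and then apply Theorem \ref{thm:Teoremabmcomparametros} with $\Lambda$ the imaginary line to upgrade pointwise invertibility to membership in the parameter-dependent algebra. This last upgrade, together with the scaling limit above, is where the main technical work of the proof will concentrate.
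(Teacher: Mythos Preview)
Your treatment of i)$\Rightarrow$ii) and of the interior and boundary symbols in ii)$\Rightarrow$i) matches the paper's approach and is fine. The gap is in the conormal part.

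First a minor point: your $\kappa_\sigma$ is not an isometry on $\mathcal H_p^{0,(n+1)/2}\oplus\mathcal B_p^{0,n/2}$. These cone spaces are built from $L_p(\mathbb R_+,dt/t)$-type norms, so the correct dilation is $T_\epsilon u(t)=u(t/\epsilon)$ with no Jacobian factor; the $\sigma^{1/p}$ you insert is what one needs for $L_p(\mathbb R_+,dt)$. This is easily fixed.

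The real problem is the sentence ``the Mellin transform translates these into pointwise bounded-below estimates for $\sigma_M(A)(z)$.'' In the $L_2$ setting one could use Plancherel on the Mellin side to pass from an inequality $\|u\|\le c\|op_M^{1/2}(h_0)u\|$ to $\|v\|\le c\|h_0(i\tau)v\|$ pointwise in $\tau$. For $p\neq 2$, and especially with the Besov component $\mathcal B_p^{0,n/2}$, there is no such mechanism: the Mellin transform does not diagonalize the norm. The paper handles this by introducing a \emph{second} one-parameter family $R_{\epsilon,\tau_0}u(t)=\epsilon^{1/p}t^{-i\tau_0}u(t^\epsilon)$, which localizes in Mellin frequency near $i\tau_0$. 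A careful comparison of the cone Besov norm with $L_p(\mathbb R_+,B_p^0(\partial X);dt/t)$ on functions whose Mellin transform is supported in a single dyadic block (their Proposition~\ref{prop:LpBpMellin}) shows that $R_{\epsilon,\tau_0}$ distorts the $\mathcal{HB}$-norm by at most a factor $\langle\ln\tau_0\rangle$. Running the limit $\epsilon\to 0$ then converts the operator inequality into
\[
\|v\|_{L_p(X)\oplus B_p^0(\partial X)}\le C\langle\ln\tau\rangle^{2}\|h_0(i\tau)v\|_{L_p(X)\oplus B_p^0(\partial X)},
\]
which is exactly the logarithmic growth needed for Theorem~\ref{thm:Teoremabmcomparametros}. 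Your proposal to extract logarithmic growth from ``holomorphy of $h$ together with the parameter-uniform bound on $B_1,B_2$'' does not work: $B_1,B_2$ are abstract Fredholm inverses with no parameter dependence, and holomorphy of $h$ in $z$ gives no a priori control on $\|h(i\tau)^{-1}\|$ as $|\tau|\to\infty$. The logarithmic bound has to be \emph{produced} by the scaling argument, not harvested from the inputs.
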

That $i)$ implies $ii)$ follows from the existence of a parametrix
of an elliptic operator, as it is stated in item $4)$ of Proposition
\ref{prop:propriedadesconecalculus}. It remains to prove that $ii)$
implies $i)$. If $A$ is Fredholm, then  condition $1)$ of Definition
\ref{def:Ellipticity-cone} holds by Theorem \ref{thm:equivalenciaFredholmelipticBdM}.
In fact, the proof of Theorem \ref{thm:equivalenciaFredholmelipticBdM}
is local, so it applies in this context.
In the next two subsections, we will show that condition 
$2)$ of Definition \ref{def:Ellipticity-cone} holds.
We rely on the arguments in \cite[Section 3.1]{SchroheSeilerSpectConical};
however, the Besov space estimates need more attention. 
Before, however, we note the following consequence.

%

\begin{cor}\label{equivalence}
For $A\in C_{E_{0},F_{0},E_{1},F_{1}}^{m,d}(\mathbb D,\gamma, \gamma-m, k)$, 
$m\in \mathbb Z$, $d\le \max\{m,0\}$, $p\in\, ]1,\infty[$, $s\in \mathbb Z$, $s\ge d$, the following are equivalent:

{\rm i)} $A$ is elliptic.

{\rm ii)}
\begin{eqnarray}\label{eqn:FredholmA}
A:\begin{array}{c}
\mathcal{H}_{p}^{s,\gamma}\left(\mathbb{D},E_{0}\right)\\
\oplus\\
\mathcal{B}_{p}^{s-\frac{1}{p},\gamma-\frac{1}{2}}\left(\mathbb{B},F_{0}\right)
\end{array}\to\begin{array}{c}
\mathcal{H}_{p}^{s-m,\gamma-m}\left(\mathbb{D},E_{1}\right)\\
\oplus\\
\mathcal{B}_{p}^{s-m-\frac{1}{p},\gamma-m-\frac{1}{2}}\left(\mathbb{B},F_{1}\right)
\end{array}\text{ is Fredholm.}
\end{eqnarray}
 
In particular, the Fredholm property is independent of $p$ and $s$, subject to the condition $s\in \mathbb Z$, $s\ge d$. The same is then true for the kernel and the index.
\end{cor}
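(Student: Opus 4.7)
The implication $(i)\Rightarrow(ii)$ is immediate from Proposition \ref{prop:propriedadesconecalculus}(4): an elliptic $A$ admits a two-sided parametrix $B\in C^{-m,\max\{-m,0\}}_{E_1,F_1,E_0,F_0}(\mathbb{D},(\gamma-m,\gamma,k))$, and the remainders $AB-I$, $BA-I$ are Green operators, hence compact on the indicated scales by the compactness statement preceding Definition \ref{def:boutetconical}. Therefore $A$ is Fredholm on \eqref{eqn:FredholmA}.

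For the converse, the plan is to reduce to the setting of Theorem \ref{thm:EquivFredEll}, which covers the case of $\tilde{C}^{p}$-operators acting at the reference weight $(n+1)/2$. Using Proposition \ref{prop:propriedadesconecalculus}(5), combined with weight-shifting multipliers of the form $\omega(t)t^{\alpha}+(1-\omega(t))$ for $\alpha\in\mathbb{R}$ and a cut-off $\omega$ supported near the singularity, we construct invertible elements of the cone calculus
\begin{equation*}
R_{0}\colon \mathcal{H}_{p}^{0,\frac{n+1}{2}}(\mathbb{D},E_{0})\oplus \mathcal{B}_{p}^{0,\frac{n}{2}}(\mathbb{B},F_{0})\longrightarrow \mathcal{H}_{p}^{s,\gamma}(\mathbb{D},E_{0})\oplus \mathcal{B}_{p}^{s-\frac{1}{p},\gamma-\frac{1}{2}}(\mathbb{B},F_{0}),
\end{equation*}
\begin{equation*}
R_{1}\colon \mathcal{H}_{p}^{s-m,\gamma-m}(\mathbb{D},E_{1})\oplus \mathcal{B}_{p}^{s-m-\frac{1}{p},\gamma-m-\frac{1}{2}}(\mathbb{B},F_{1})\longrightarrow \mathcal{H}_{p}^{0,\frac{n+1}{2}}(\mathbb{D},E_{1})\oplus \mathcal{B}_{p}^{0,\frac{n}{2}}(\mathbb{B},F_{1}),
\end{equation*}
whose inverses also lie in the cone calculus. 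Setting $\tilde{A}:=R_{1}AR_{0}$, the composition rule in Proposition \ref{prop:propriedadesconecalculus}(1), together with the hypotheses $d\le\max\{m,0\}$ and $s\ge d$, places $\tilde{A}$ in $\tilde{C}_{E_{0},F_{0},E_{1},F_{1}}^{p}(\mathbb{D},k)$. Since $R_{0}$ and $R_{1}$ are isomorphisms, $A$ is Fredholm on \eqref{eqn:FredholmA} if and only if $\tilde{A}$ is Fredholm on the spaces appearing in Theorem \ref{thm:EquivFredEll}; moreover, the ellipticity of $A$ and of $\tilde{A}$ are equivalent, since the principal and conormal symbols transform multiplicatively under composition with invertible elliptic operators. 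Applying Theorem \ref{thm:EquivFredEll} to $\tilde{A}$ then closes the loop.

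The final assertions of the corollary follow immediately from the equivalence just proven: ellipticity is a symbol-level condition manifestly independent of $s$ and $p$, so neither is the Fredholm property. Given $(i)$, the parametrix from Proposition \ref{prop:propriedadesconecalculus}(4) is a single operator of the cone calculus that serves simultaneously for every admissible scale; by elliptic regularity, $\ker A$ lies in the intersection of all these scales, so kernel and index are invariant under changes of $s$ and $p$ within the stated range.

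The principal technical obstacle is the construction of $R_{0}$ and $R_{1}$ realizing the required (in general, non-integer) weight shifts---Proposition \ref{prop:propriedadesconecalculus}(5) provides only integer-order reducers, so these must be composed with $t^{\alpha}$-type multipliers---together with the bookkeeping of order and type through composition. Here the hypothesis $s\ge d$ is crucial: it ensures that $AR_{0}$ already has type zero, so that after post-composing with $R_{1}$ the result $\tilde{A}$ genuinely belongs to $\tilde{C}^{p}$ rather than to a calculus of higher type.
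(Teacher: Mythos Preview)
Your approach to the equivalence $(i)\Leftrightarrow(ii)$ is essentially identical to the paper's: reduce to Theorem \ref{thm:EquivFredEll} via order- and weight-reductions. The paper writes this reduction more explicitly as
\[
\tilde A=
\begin{pmatrix}Q^{s-m}&0\\0&\tilde{Q}^{s-m+\frac1p}\end{pmatrix}
t^{\frac{n+1}{2}-\gamma+m}\,A\,t^{-\frac{n+1}{2}+\gamma}
\begin{pmatrix}P^{-s}&0\\0&\tilde{P}^{-s+\frac1p}\end{pmatrix},
\]
with the order-reducers coming from Proposition \ref{prop:propriedadesconecalculus}(5) (note: that item gives real-order reducers on $\mathbb{B}$; the integer restriction only applies on $\mathbb{D}$, which is why $s\in\mathbb Z$ is needed). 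Your $\omega(t)t^\alpha+(1-\omega(t))$ is a fine alternative to the bare $t^\alpha$.

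Where your sketch is too compressed is in the last paragraph. Two points:

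\emph{Kernel across different $p$.} Saying ``by elliptic regularity $\ker A$ lies in the intersection of all these scales'' hides the essential mechanism. From $u=-Gu$ with $G$ Green one gets $u\in\mathcal H_{p,P}^{\infty,\gamma}(\mathbb D,E_0)\oplus\mathcal B_{p,Q}^{\infty,\gamma-\frac12}(\mathbb B,F_0)$, hence $u\in\mathcal H_p^{t,\gamma+\epsilon}\oplus\mathcal B_p^{t,\gamma+\epsilon-\frac12}$ for some $\epsilon>0$ and all $t$. It is this \emph{weight gain} $\epsilon>0$, not mere smoothness, that furnishes the embedding into $\mathcal H_q^{\,\cdot\,,\gamma}\oplus\mathcal B_q^{\,\cdot\,,\gamma-\frac12}$ for arbitrary $q\in\,]1,\infty[$. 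Without it you cannot pass between different $p$.

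\emph{Index.} Your clause ``so kernel and index are invariant'' does not follow: invariance of $\ker A$ alone says nothing about the cokernel. The paper closes this gap by passing to the reduced operator $\tilde A$, taking its formal adjoint $\tilde A'\in\tilde C^{q}_{E_1,F_1,E_0,F_0}(\mathbb D,k)$ (Proposition \ref{prop:propriedadesconecalculus}(3)), applying the same kernel-invariance argument to $\tilde A'$, and then using $\operatorname{ind}\tilde A=\dim\ker\tilde A-\dim\ker\tilde A'$. You should add this step.
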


\begin{proof}
According to item 4) of Proposition \ref{prop:propriedadesconecalculus}, ellipticity implies the Fredholm property. 
In order to see the converse, we note that, by item 5 of Proposition \ref{prop:propriedadesconecalculus}, we find   operators 
$P^{-s}\in t^{-s}C_{E_0,E_0}^{-s,0}(\mathbb D,\frac{n+1}2, \frac{n+1}2+s,k)$ and 
$Q^{s-m}\in t^{s-m} C^{s-m,0}_{E_1,E_1}(\mathbb D,\frac{n+1}2, \frac{n+1}2-s+m,k)$ defined on $\mathbb{D}$, 
$\tilde P^{-s+1/p}\in t^{-s+1/p} C^{-s+1/p}_{F_0,F_0}(\mathbb B,\frac n2, \frac n2+s-\frac1p,k)$ and 
$\tilde Q^{s-m+1/p}\in t^{s-m+1/p} C^{s-m+1/p}_{F_1,F_1}(\mathbb B,\frac n2, \frac n2-s+m-\frac 1p, k)$, defined on $\mathbb{B}$, such that 
$P^{-s}:\mathcal{H}_{p}^{0,\frac{n+1}{2}}\left(\mathbb{D},E_{0}\right)\to\mathcal{H}_{p}^{s,\frac{n+1}{2}}\left(\mathbb{D},E_{0}\right)$,
$\tilde{P}^{-s+\frac{1}{p}}:\mathcal{B}_{p}^{0,\frac{n}{2}}\left(\mathbb{B},F_{0}\right)\to\mathcal{B}_{p}^{s-\frac{1}{p},\frac{n}{2}}\left(\mathbb{B},F_{0}\right)$,
$Q^{s-m}:\mathcal{H}_{p}^{s-m,\frac{n+1}{2}}\left(\mathbb{D},E_{1}\right)\to\mathcal{H}_{p}^{0,\frac{n+1}{2}}\left(\mathbb{D},E_{1}\right)$, and
$\tilde{Q}^{s-m+\frac{1}{p}}:\mathcal{B}_{p}^{s-m-\frac{1}{p},\frac{n}{2}}\left(\mathbb{B},F_{1}\right)\to\mathcal{B}_{p}^{0,\frac{n}{2}}\left(\mathbb{B},F_{1}\right)$
are invertible. Here we use that $s\in \mathbb Z$. Since $A$ is a Fredholm operator,  the operator 
$\tilde A\in  \tilde C^p_{E_0,F_0,E_1,F_1}(\mathbb D,k)$, defined by
\begin{eqnarray}\label{eq:reduction}
\tilde A=
\begin{pmatrix}
Q^{s-m} & 0\\
0 & \tilde{Q}^{s-m+\frac{1}{p}}
\end{pmatrix}t^{\frac{n+1}{2}-\gamma+m}At^{-\frac{n+1}{2}+\gamma}
\begin{pmatrix}
P^{-s} & 0\\
0 & \tilde{P}^{-s+\frac{1}{p}}
\end{pmatrix}
\end{eqnarray}
is a Fredholm operator in $\mathcal B(\mathcal{H}_{p}^{0,\frac{n+1}{2}}(\mathbb{D},E_{0})\oplus
\mathcal{B}_{p}^{0,\frac{n}{2}}(\mathbb{B},F_{0}),
\mathcal{H}_{p}^{0,\frac{n+1}{2}}(\mathbb{D},E_{1})
\oplus
\mathcal{B}_{p}^{0,\frac{n}{2}}(\mathbb{B},F_{1}))$. 
By Theorem \ref{thm:EquivFredEll}, $\tilde A$ is elliptic,  hence so is $A$. 
As a consequence, the Fredholm property is independent of $p$ and $s$.

Suppose $A$ is elliptic and 
$u\in \mathcal{H}_{p}^{s,\gamma}(\mathbb{D},E_{0})\oplus
\mathcal{B}_{p}^{s-\frac{1}{p},\gamma-\frac{1}{2}}(\mathbb{B},F_{0})$ belongs to the
kernel of $A$. Then the existence of a parametrix and the mapping properties of the 
Green operators imply that, for some $\epsilon>0$ and all $t\in \mathbb R$, 
$u\in \mathcal{H}_{p}^{t,\gamma+\epsilon}(\mathbb{D},E_{0})\oplus
\mathcal{B}_{p}^{t-\frac{1}{p},\gamma+\epsilon-\frac{1}{2}}(\mathbb{B},F_{0})$.
Thus $u$ also is an element of 
$ \mathcal{H}_{q}^{t,\gamma}(\mathbb{D},E_{0})\oplus
\mathcal{B}_{q}^{t-\frac{1}{p},\gamma-\frac{1}{2}}(\mathbb{B},F_{0})$ for $1<q<\infty$ 
and $t\in \mathbb R$ and belongs to the kernel of $A$ on that space. This shows the independence of the kernel on $s$ and $p$. 

We next consider the formal adjoint  ${\tilde A}'$ of $\tilde A$ in the sense of item 3) of Proposition \ref{prop:propriedadesconecalculus}, which is an elliptic element of 
$\tilde C^q_{E_1,F_1,E_0,F_0}(\mathbb D,k)$, where $1/p+1/q=1$. 
Its extension to an operator in 
$\mathcal B(\mathcal{H}_{q}^{0,\frac{n+1}{2}}(\mathbb{D},E_{1})\oplus
\mathcal{B}_{q}^{0,\frac{n}{2}}(\mathbb{B},F_{1}),
\mathcal{H}_{q}^{0,\frac{n+1}{2}}(\mathbb{D},E_{0})
\oplus
\mathcal{B}_{q}^{0,\frac{n}{2}}(\mathbb{B},F_{0}))$
furnishes the adjoint to the operator $\tilde A$ acting as in \eqref{eq:reduction}. 
The index of $\tilde A$ then is the difference of the kernel dimensions of $\tilde A$ and 
${\tilde A}'$. By the same argument as above, these are independent of $p$ and $q$.
Hence the index of $\tilde A$ is independent of $p$ and the index of $A$ is independent 
of $s$ and $p$.  
\end{proof}

\subsection{Besov-space preliminaries}
 
Given dyadic partitions of unity $\left\{ \varphi_{j}\right\} _{j\in\mathbb{N}_{0}}\subset C_{c}^{\infty}(\mathbb{R})$
and $\left\{ \tilde{\varphi}_{j}\right\} _{j\in\mathbb{N}_{0}}\subset C_{c}^{\infty}(\mathbb{R}^{n-1})$
of $\mathbb{R}$ and $\mathbb{R}^{n-1}$, respectively, we define a dyadic partition of unity $\left\{ \psi_{j}\right\} _{j\in\mathbb{N}_{0}}\subset C_{c}^{\infty}(\mathbb{R}^{n})$ of $\mathbb{R}^{n}$ by
\begin{eqnarray*}
\psi_{0}\left(t,x\right)&:=&\varphi_{0}\left(t\right)\tilde{\varphi}_{0}\left(x\right),\\
\psi_{j}\left(t,x\right)&:=&\varphi_{j}\left(t\right)\Big(\sum_{k=0}^{j}\tilde{\varphi}_{k}\left(x\right)\Big)+\Big(\sum_{k=0}^{j-1}\varphi_{k}\left(t\right)\Big)\tilde{\varphi}_{j}\left(x\right)\\
&=&\psi_{0}\left(2^{-j}t,2^{-j}x\right)-\psi_{0}\left(2^{-j+1}t,2^{-j+1}x\right),\,\,\,j\ge1.
\end{eqnarray*}

Then $\text{supp}\left(\psi_{0}\right)\subset\left\{ \left(t,x\right)\in\mathbb{R}^{n};\left\Vert \left(t,x\right)\right\Vert_{N}<2\right\}$  and $\text{supp}\left(\psi_{j}\right)\subset\{ \left(t,x\right)\in\mathbb{R}^{n};$ $2^{j-1}<\left\Vert \left(t,x\right)\right\Vert_{N}<2^{j+1}\}$, for $j\ge1$. Here $\left\Vert \left(t,x\right)\right\Vert _{N}$ denotes the norm
\[
\left\Vert \left(t,x\right)\right\Vert _{N}=\max\left\{ \left|x\right|,\left|t\right|\right\} ,
\]
where $\left|x\right|$ denotes the Euclidean norm of $x\in\mathbb{R}^{n-1}$
and $\left|t\right|$ denotes the modulus of $t\in\mathbb{R}$. 

Item 8 of Remark \ref{rem:BesovandBesselSpaces} implies that we can choose the
following norm for $B_{p}^{s}\left(\mathbb{R}^{n}\right)$:
\[
\left\Vert f\right\Vert _{B_{p}^{s}\left(\mathbb{R}^{n}\right)}:=\Big(\sum_{j=0}^{\infty}2^{jsp}\left\Vert \psi_{j}\left(D\right)f\right\Vert _{L_{p}\left(\mathbb{R}^{n}\right)}^{p}\Big)^{\frac{1}{p}}.
\]

The next spaces are very useful for computations.
\begin{defn}
Let $G$ be a Banach space that is a $UMD$ Banach space with the
property $\left(\alpha\right)$. We define $\mathcal{B}_{p}^{s,\frac{1}{2}}\left(\mathbb{R}_{+},G\right)$
as the space of all $u\in\mathcal{D}'\left(\mathbb{R}_{+},G\right)$
such that $\left(\mathbb{R}\ni t\mapsto u\left(e^{-t}\right)\right)\in B_{p}^{s}\left(\mathbb{R},G\right)$
and $\mathcal{H}_{p}^{s,\frac{1}{2}}\left(\mathbb{R}_{+},G\right)$
as the set of all $u\in\mathcal{D}'\left(\mathbb{R}_{+},G\right)$
such that $\left(\mathbb{R}\ni t\mapsto u\left(e^{-t}\right)\right)\in H_{p}^{s}\left(\mathbb{R},G\right)$.
In particular, $\mathcal{H}_{p}^{0,\frac{1}{2}}\left(\mathbb{R}_{+},G\right)=L_{p}\left(\mathbb{R}_{+},G,\frac{dt}{t}\right)$
and $\mathcal{H}_{p}^{1,\frac{1}{2}}\left(\mathbb{R}_{+},G\right)=\left\{ u\in L_{p}\left(\mathbb{R}_{+},G,\frac{dt}{t}\right);\,t\partial_{t}u\in L_{p}\left(\mathbb{R}_{+},G,\frac{dt}{t}\right)\right\} $.
\end{defn}
\begin{prop}
\label{prop:Estimativa Besov do cone}There is a constant $C>0$ such
that 
\[
\left\Vert u\right\Vert _{\mathcal{B}_{p}^{0,\frac{n}{2}}\left(\partial X^{\wedge},F\right)}\le C\left\Vert u\right\Vert _{\mathcal{H}_{p}^{1,\frac{1}{2}}\left(\mathbb{R}_{+},B_{p}^{0}\left(\partial X,F\right)\right)},
\]
for all $u\in\mathcal{T}_{\frac{1}{2}}\left(\mathbb{R}_{+},C^{\infty}\left(\partial X,F\right)\right)$.
\end{prop}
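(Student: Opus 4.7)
The plan is to reduce the proposition to a scalar inequality on Euclidean space and then exploit the explicit tensor structure of the dyadic partition $\{\psi_j\}$ introduced just above the statement.  Setting $w(\tau,x):=u(e^{-\tau},x)$, the definitions of $\mathcal{H}_p^{1,1/2}(\mathbb{R}_+,G)$ and $\mathcal{B}_p^{0,n/2}(\partial X^\wedge,F)$---together with invariance of the Besov norm under reflection of the cylindrical factor, since the representation $u(t,x)=v(\ln t,x)$ gives $v(\sigma,x)=w(-\sigma,x)$---identify the two sides of the desired inequality with $\|w\|_{H_p^1(\mathbb{R},B_p^0(\partial X,F))}$ and $\|w\|_{B_p^0(\partial X\times\mathbb{R},F)}$, respectively.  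A partition of unity on $\partial X$ and a trivialization of $F$ then reduce matters to the local inequality
\begin{equation}\label{eq:plan-local}
\|w\|_{B_p^0(\mathbb{R}^n)}\le C\,\|w\|_{H_p^1(\mathbb{R}_\tau,B_p^0(\mathbb{R}^{n-1}_x))},\qquad w\in\mathcal{S}(\mathbb{R}^n).
\end{equation}

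To prove \eqref{eq:plan-local} I will use the equivalent norm $\|w\|_{B_p^0(\mathbb{R}^n)}^p=\sum_{j\ge 0}\|\psi_j(D)w\|_{L_p}^p$.  For $j\ge 1$ the explicit formula $\psi_j(\tau,\xi)=\varphi_j(\tau)S_j(\xi)+S^\tau_{j-1}(\tau)\tilde\varphi_j(\xi)$, with $S_j:=\sum_{k=0}^j\tilde\varphi_k$ and $S^\tau_{j-1}:=\sum_{l=0}^{j-1}\varphi_l$, splits $\psi_j(D)w$ into an ``easy'' piece $B_j:=S^\tau_{j-1}(D_\tau)\tilde\varphi_j(D_x)w$ and a ``main'' piece $A_j:=\varphi_j(D_\tau)S_j(D_x)w$.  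The $B_j$-contribution will be handled using uniform $L_p$-boundedness of the low-pass multiplier $S^\tau_{j-1}(D_\tau)$ followed by Fubini:
\[
\sum_j\|B_j\|_{L_p(\mathbb{R}^n)}^p\le C\sum_j\|\tilde\varphi_j(D_x)w\|_{L_p(\mathbb{R}^n)}^p=C\int_{\mathbb{R}}\|w(\tau,\cdot)\|_{B_p^0(\mathbb{R}^{n-1})}^p\,d\tau=C\|w\|_{L_p(\mathbb{R},B_p^0)}^p;
\]
the term $\|\psi_0(D)w\|_{L_p}$ will be controlled by the same strategy combined with Lemma~\ref{lem:LpBp} applied pointwise in $\tau$ to $\tilde\varphi_0(D_x)w$, whose $x$-frequency support lies in $K_0$.

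The crux is the $A_j$-piece.  For $j\ge 1$ the support of $\varphi_j$ avoids the origin, which allows the Poincar\'e-type factorization $\varphi_j(D_\tau)=\mu_j(D_\tau)\partial_\tau$ with symbol $\mu_j(\tau):=\varphi_j(\tau)/(i\tau)$.  The identity $\mu_j(\tau)=2^{-(j-1)}\mu_1(2^{-(j-1)}\tau)$ for $j\ge 2$, together with scale invariance of scalar $L_p(\mathbb{R})$-Fourier-multiplier norms, gives $\|\mu_j(D_\tau)\|_{L_p\to L_p}\le C\,2^{-j}$; applying this pointwise in $x$ and integrating yields
\[
\|A_j\|_{L_p(\mathbb{R}^n)}\le C\,2^{-j}\sum_{k=0}^j c_k,\qquad c_k:=\|\tilde\varphi_k(D_x)\partial_\tau w\|_{L_p(\mathbb{R}^n)}.
\]
A discrete Minkowski inequality applied to the kernel $K(j,k)=2^{-j}\mathbf{1}_{k\le j}$ followed by H\"older's inequality then produces $\sum_j\|A_j\|_{L_p}^p\le C\sum_k c_k^p=C\|\partial_\tau w\|_{L_p(\mathbb{R},B_p^0)}^p$.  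Combining this with the estimate on $\sum_jB_j^p$ establishes~\eqref{eq:plan-local}.

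The main obstacle is the mismatch between the isotropic dyadic partition defining $B_p^0(\mathbb{R}^n)$ and the tensor-product description of $H_p^1(\mathbb{R},B_p^0(\mathbb{R}^{n-1}))$: the Poincar\'e gain $2^{-j}$ from $\varphi_j(D_\tau)$ only just absorbs the polynomial loss coming from the partial low-pass sum $S_j(D_x)$, and carrying out this cancellation quantitatively---via the Minkowski--H\"older balancing---is the delicate point of the argument.
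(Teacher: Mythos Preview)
Your proof is correct and follows a genuinely different route from the paper's. Both arguments begin with the same reduction---change of variables $t\mapsto e^{-t}$, localization on $\partial X$, and the tensor splitting $\psi_j(D)=\varphi_j(D_\tau)S_j(D_x)+S^\tau_{j-1}(D_\tau)\tilde\varphi_j(D_x)$---and both control the second piece by the $L_p(\mathbb R,B_p^0)$-norm via the uniform $L_p$-boundedness of the low-pass filters. The divergence is in the first piece $A_j$. The paper pays for the partial sum $S_j(D_x)$ with the H\"older loss $(j+1)^{p-1}\le C_\theta 2^{j\theta p}$, which it then absorbs into the weight defining the intermediate space $B_p^\theta(\mathbb R,B_p^0(\mathbb R^{n-1}))$, and finishes with the embedding $H_p^1\hookrightarrow B_p^\theta$ for $\theta<1$. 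You instead exploit that $\varphi_j$ is supported away from the origin to write $\varphi_j(D_\tau)=\mu_j(D_\tau)\partial_\tau$ and extract a geometric gain $\|\mu_j(D_\tau)\|_{L_p\to L_p}\le C\,2^{-j}$; this gain directly cancels the $\ell^1$-loss $\sum_{k\le j}c_k$ via a Young/Schur-type estimate (your ``Minkowski plus H\"older''), without ever introducing an auxiliary vector-valued Besov space. Your argument is slightly more elementary in that it avoids the embedding $H_p^1\hookrightarrow B_p^\theta$ for UMD-valued spaces, while the paper's version is a bit shorter once that embedding is taken for granted.
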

\begin{proof}
In order to prove the proposition, we fix a constant $\theta\in\left]0,1\right[$
and a constant $C_{\theta}>1$ such that $j+1\le C_{\theta}2^{\theta j}$,
for all $j\in\mathbb{N}_{0}$. The Hölder inequality implies that,
for every non-negative real numbers $a_{0}$, ..., $a_{j}$, we have
\[
\left(\sum_{k=0}^{j}a_{k}\right)^{p}\le\left(j+1\right)^{p-1}\sum_{k=0}^{j}a_{k}^{p}\le C_{\theta}^{p}2^{j\theta p}\sum_{k=0}^{j}a_{k}^{p}.
\]

Now, let us first prove that $\left\Vert u\right\Vert _{B_{p}^{0}\left(\mathbb{R}^{n}\right)}\le C\left\Vert u\right\Vert _{H_{p}^{1}\left(\mathbb{R},B_{p}^{0}\left(\mathbb{R}^{n-1}\right)\right)}$:
\begin{eqnarray}
\lefteqn{\left\Vert u\right\Vert _{B_{p}^{0}\left(\mathbb{R}^{n}\right)}
=\Big(\sum_{j=0}^{\infty}\Vert \varphi_{j}\left(D_{t}\right)
\sum_{k=0}^{j}\tilde{\varphi}_{k}\left(D_{x}\right)u+\sum_{k=0}^{j-1}\varphi_{k}\left(D_{t}\right)\tilde{\varphi}_{j}\left(D_{x}\right)u\Vert _{L_{p}\left(\mathbb{R}^{n}\right)}^{p}\Big)^{\frac{1}{p}}}
\nonumber\\
&\le&
\Big(\sum_{j=0}^{\infty}\Big(\sum_{k=0}^{j}\left\Vert \varphi_{j}\left(D_{t}\right)\tilde{\varphi}_{k}\left(D_{x}\right)u\right\Vert _{L_{p}\left(\mathbb{R}^{n}\right)}\Big)^{p}\Big)^{\frac{1}{p}}\nonumber\\
&&+
\Big(\sum_{j=0}^{\infty}\Big(\sum_{k=0}^{j-1}\left\Vert \varphi_{k}\left(D_{t}\right)\tilde{\varphi}_{j}\left(D_{x}\right)u\right\Vert _{L_{p}\left(\mathbb{R}^{n}\right)}\Big)^{p}\Big)^{\frac{1}{p}}
\nonumber\\
&\le&
2C_{\theta}\Big(\sum_{j=0}^{\infty}\sum_{k=0}^{\infty}2^{j\theta p}\left\Vert \varphi_{j}\left(D_{t}\right)\tilde{\varphi}_{k}\left(D_{x}\right)u\right\Vert _{L_{p}\left(\mathbb{R}^{n}\right)}^{p}\Big)^{\frac{1}{p}}
\nonumber\\
&=&2C_{\theta}\Big(\sum_{j=0}^{\infty}2^{j\theta p}\int_{\mathbb{R}}\sum_{k=0}^{\infty}\left\Vert \varphi_{j}\left(D_{t}\right)\tilde{\varphi}_{k}\left(D_{x}\right)u\right\Vert _{L_{p}\left(\mathbb{R}_{x}^{n-1}\right)}^{p}dt\Big)^{\frac{1}{p}}
\nonumber\\
&=&
2C_{\theta}\Big(\sum_{j=0}^{\infty}2^{j\theta p}\left\Vert \varphi_{j}\left(D_{t}\right)u\right\Vert _{L_{p}\left(\mathbb{R},B_{p}^{0}\left(\mathbb{R}^{n-1}\right)\right)}^{p}\Big)^{\frac{1}{p}}=2C_{\theta}\left\Vert u\right\Vert _{B_{p}^{\theta}\left(\mathbb{R},B_{p}^{0}\left(\mathbb{R}^{n-1}\right)\right)}.
\nonumber
\end{eqnarray}
Choosing $\theta<1$, we conclude that
\[
\left\Vert u\right\Vert _{B_{p}^{0}\left(\mathbb{R}^{n}\right)}\le C_{\theta}\left\Vert u\right\Vert _{B_{p}^{\theta}\left(\mathbb{R},B_{p}^{0}\left(\mathbb{R}^{n-1}\right)\right)}\le C_{\theta}\left\Vert u\right\Vert _{H_{p}^{1}\left(\mathbb{R},B_{p}^{0}\left(\mathbb{R}^{n-1}\right)\right)}.
\]
Using a change of variable $t\mapsto e^{-t}$, we obtain that
\[
\left\Vert u\right\Vert _{\mathcal{B}_{p}^{0,\frac{n}{2}}\left(\mathbb{R}_{+}^{n}\right)}\le C_{\theta}\left\Vert u\right\Vert _{\mathcal{H}_{p}^{1,\frac{1}{2}}\left(\mathbb{R}_{+},B_{p}^{0}\left(\mathbb{R}^{n-1}\right)\right)},
\]
where $\mathcal{B}_{p}^{0,\gamma}(\mathbb{R}_{+}^{n})=\left\{v\left(\ln\left(t\right),x\right);\,v\in B_{p}^{0}\left(\mathbb{R}^{n}\right)\right\} $. Finally, using partition of unity and localization, we obtain the assertion.
\end{proof}
For the following proposition we write $\mathcal{HB}_{pE_{j},F_{j}}\left(X^{\wedge}\right):=\mathcal{H}_{p}^{0,\frac{n+1}{2}}\left(X^{\wedge},E_{j}\right)\oplus\mathcal{B}_{p}^{0,\frac{n}{2}}\left(\partial X^{\wedge},F_{j}\right)$
and $\mathcal{HB}_{pE_{j},F_{j}}\left(\mathbb{D}\right):=\mathcal{H}_{p}^{0,\frac{n+1}{2}}\left(\mathbb{D},E_{j}\right)\oplus\mathcal{B}_{p}^{0,\frac{n}{2}}\left(\mathbb{B},F_{j}\right)$, for $j=0,1$. We denote by $K_{j}$, $j\in\mathbb{N}_{0}$, the sets introduced in Remark \ref{rem:Definicao conjuntos Kj} for $n=1$.

\begin{prop}
\label{prop:LpBpMellin}There exists a constant $C$, independent of $m$, such that for all $u\in\mathcal{T}_{\frac{1}{2}}\left(\mathbb{R}_{+}\right)$
and all $v\in C^{\infty}\left(X,E,F\right)$ with
$supp(\tau\mapsto(\mathcal{M}_{\frac{1}{2}}u)(i\tau))\subset K_{m}$
\begin{eqnarray*}
\lefteqn{\frac{1}{C}\frac{1}{\left(m+1\right)}\left\Vert u\right\Vert _{L_{p}\left(\mathbb{R}_{+},\frac{dt}{t}\right)}\left\Vert v\right\Vert _{L_{p}\left(X,E\right)\oplus B_{p}^{0}\left(\partial X,F\right)}}\\
&\le&\left\Vert u\otimes v\right\Vert _{\mathcal{HB}_{pE,F}\left(X^{\wedge}\right)}\le C\left(m+1\right)\left\Vert u\right\Vert _{L_{p}\left(\mathbb{R}_{+},\frac{dt}{t}\right)}\left\Vert v\right\Vert _{L_{p}\left(X,E\right)\oplus B_{p}^{0}\left(\partial X,F\right)}.\end{eqnarray*}
\end{prop}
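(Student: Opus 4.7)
My first reduction is a change of variable \(t\mapsto s=-\ln t\): with \(U(s):=u(e^{-s})\) the Mellin assumption reads \(\widehat U\subset K_m\), and a partition of unity on \(X\) and \(\partial X\) reduces the inequality to the Euclidean statements \(\|U\otimes v_1\|_{L_p(\mathbb R^{n+1})}=\|U\|_{L_p}\|v_1\|_{L_p(\mathbb R_+^n)}\) (by Fubini, hence no \(m\)-dependence) and the non-trivial equivalence \(\|U\otimes v_2\|_{B_p^0(\mathbb R^n)}\asymp\|U\|_{L_p}\|v_2\|_{B_p^0(\mathbb R^{n-1})}\), with constants \((m+1)^{\pm 1}\). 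Only the second requires real work.

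For the upper bound I will use the max-norm dyadic partition \(\psi_j\) of \(\mathbb R^n\) introduced in the previous subsection. Writing \(U_k:=\varphi_k(D_s)U\), \(V_l:=\tilde\varphi_l(D_x)v_2\), and \(V_{\le j}:=\tilde\varphi_0(2^{-j}D_x)v_2=\sum_{l=0}^jV_l\), the support condition forces \(U=U_{m-1}+U_m+U_{m+1}\), and a direct computation yields that \(\psi_j(D)(U\otimes v_2)\) vanishes for \(j\le m-2\), is a combination of \(U_k\otimes V_{\le j}\) and \(U_k\otimes V_j\) (with \(k\in\{m-1,m,m+1\}\)) for \(j\in\{m-1,m,m+1\}\), and equals simply \(U\otimes V_j\) for \(j\ge m+2\). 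The tail \(j\ge m+2\) contributes at most \(\|U\|_{L_p}^p\|v_2\|_{B_p^0}^p\) by the \(L_p\)-tensor identity, while in the three exceptional blocks H\"older's inequality gives \(\|V_{\le j}\|_{L_p}\le(j+1)^{1-1/p}\|v_2\|_{B_p^0}\). Summing yields the upper bound with prefactor \((m+1)^{1-1/p}\le(m+1)\).

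The lower bound is the delicate part. My key observation is that \(\tilde\varphi_k(D_x)\) commutes with \(\psi_j(D)\), so
\[
\|U\|_{L_p}\|V_k\|_{L_p}=\|U\otimes V_k\|_{L_p}\le\sum_{j\in J_k}\|\psi_j(D)(U\otimes V_k)\|_{L_p}\le C\sum_{j\in J_k}\|\psi_j(D)(U\otimes v_2)\|_{L_p},
\]
where \(J_k:=\{j:\psi_j\not\equiv 0\text{ on }K_m\times K_k\}\). Since on \(K_m\times K_k\) one has \(\max(|\tau|,|\xi|)\in[2^{\max(m,k)-1},2^{\max(m,k)+1}]\), the set \(J_k\) has at most five elements uniformly in \(k\) and \(m\). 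Raising to the \(p\)-th power, summing over \(k\), and swapping summations gives
\[
\|U\|_{L_p}^p\|v_2\|_{B_p^0}^p\le C\sum_j n(j)\,\|\psi_j(D)(U\otimes v_2)\|_{L_p}^p,
\]
with \(n(j)=\#\{k:j\in J_k\}\) bounded by \(m+C\) when \(j\in\{m-2,\dots,m+2\}\) and by a constant otherwise; this gives the lower bound with sharp factor \((m+1)^{1/p}\le(m+1)\).

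The main obstacle is controlling the dependence on \(m\). Routing the argument through Proposition \ref{prop:Estimativa Besov do cone} together with the Bernstein estimate \(\|U'\|_{L_p}\lesssim 2^m\|U\|_{L_p}\) would only produce the useless exponential factor \(2^m\). Obtaining the polynomial factor \((m+1)\), which is essential for the subsequent \(\langle\ln\lambda\rangle^M\)-type estimates in Section 3, forces the detailed combinatorial bookkeeping above and exploits that the Fourier support of \(U\otimes V_k\) is the product of two annuli rather than a single one, together with the fact that the \(\psi_j\)-cover is based on the max-norm.
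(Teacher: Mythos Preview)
Your proof is correct and follows essentially the same route as the paper's: reduce via $t\mapsto -\ln t$ and a partition of unity to the Euclidean Lemma~\ref{lem:lpb0b0}, then exploit the max-norm dyadic partition $\psi_j$ together with the support constraint $\widehat U\subset K_m$ to track which blocks $\varphi_k(D_t)\otimes\tilde\varphi_l(D_x)$ actually occur. Your lower-bound bookkeeping is organized slightly differently---you count $n(j)=\#\{k:j\in J_k\}$ and observe it is $O(m)$ only for the five indices $j\in\{m-2,\dots,m+2\}$, whereas the paper splits the $k$-sum explicitly into $k\le m+1$ and $k\ge m+2$ and inserts $\sum_{l=m-2}^{m+2}\psi_l$ in the first range---but the two arguments are equivalent and both recover the factor $(m+1)^{1/p}$.
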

In order to make the proof more transparent, we first
prove the following lemma.

\begin{lem}\label{lem:lpb0b0}
There exists a constant $C>0$, independent of $m$, such that
for $u\in\mathcal{S}\left(\mathbb{R}\right)$ and $v\in\mathcal{S}(\mathbb{R}^{n-1})$
with $\text{supp}\left(\mathcal{F}u\right)\subset K_{m}$. 
\[
\frac{1}{C}\frac{1}{\left(m+1\right)}\left\Vert u\right\Vert _{L_{p}\left(\mathbb{R}\right)}\left\Vert v\right\Vert _{B_{p}^{0}\left(\mathbb{R}^{n-1}\right)}\le\left\Vert u\otimes v\right\Vert _{B_{p}^{0}\left(\mathbb{R}^{n}\right)}\le C\left(m+1\right)\left\Vert u\right\Vert _{L_{p}\left(\mathbb{R}\right)}\left\Vert v\right\Vert _{B_{p}^{0}\left(\mathbb{R}^{n-1}\right)}.
\]
\end{lem}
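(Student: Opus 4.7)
The plan is to exploit how tightly the Fourier support of $u$ is localized. Writing $v_{\le j}:=\sum_{k=0}^{j}\tilde{\varphi}_{k}(D_{x})v$ and using the explicit formula
\[
\psi_{j}(D)(u\otimes v)=\varphi_{j}(D_{t})u\cdot v_{\le j}+\Big(\sum_{k=0}^{j-1}\varphi_{k}(D_{t})u\Big)\tilde{\varphi}_{j}(D_{x})v,
\]
the hypothesis $\mathrm{supp}(\widehat{u})\subset K_{m}$ together with the support conventions on $\varphi_{j}$ produces three regimes: (i) the block vanishes for $j\le m-2$, because then $\varphi_{k}(D_{t})u=0$ for every $k\le j$; (ii) for $j\ge m+2$ one has the clean identity $\psi_{j}(D)(u\otimes v)(t,x)=u(t)\,\tilde{\varphi}_{j}(D_{x})v(x)$, since $\varphi_{j}(D_{t})u=0$ and $\sum_{k=0}^{j-1}\varphi_{k}\equiv 1$ on $K_{m}$; (iii) only the three critical indices $j\in\{m-1,m,m+1\}$ genuinely couple frequencies of $u$ and $v$.

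For the upper bound I would apply Fubini to each block and invoke the uniform convolution estimate $\|\varphi_{j}(D_{t})u\|_{L_{p}}\le C\|u\|_{L_{p}}$ already used in the proof of Lemma \ref{lem:LpBp}. Regime (ii) contributes at most $\|u\|_{L_{p}}^{p}\|v\|_{B_{p}^{0}}^{p}$, while for the three blocks in regime (iii) H\"{o}lder's inequality gives
\[
\|v_{\le j}\|_{L_{p}}\le\sum_{k=0}^{j}\|\tilde{\varphi}_{k}(D_{x})v\|_{L_{p}}\le(m+2)^{(p-1)/p}\|v\|_{B_{p}^{0}},
\]
and analogously $\|\sum_{k=0}^{j-1}\varphi_{k}(D_{t})u\|_{L_{p}}\le C\|u\|_{L_{p}}$ and $\|\tilde{\varphi}_{j}(D_{x})v\|_{L_{p}}\le\|v\|_{B_{p}^{0}}$. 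Summing the three critical blocks and taking $p$-th root yields $\|u\otimes v\|_{B_{p}^{0}}\le C(m+1)^{(p-1)/p}\|u\|_{L_{p}}\|v\|_{B_{p}^{0}}\le C(m+1)\|u\|_{L_{p}}\|v\|_{B_{p}^{0}}$.

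For the lower bound, regime (ii) applied to $j\ge m+3$ together with the splitting $v=v_{\le m+2}+v_{\ge m+3}$ produces the telescoping identity $u\cdot v_{\le m+2}=\sum_{j=m-1}^{m+2}\psi_{j}(D)(u\otimes v)$. Taking $L_{p}$-norms, using Fubini on the left and H\"{o}lder on the four summands on the right, one obtains $\|u\|_{L_{p}}\|v_{\le m+2}\|_{L_{p}}\le 4^{(p-1)/p}\|u\otimes v\|_{B_{p}^{0}}$. The elementary fact that any $w$ with $\mathrm{supp}(\widehat{w})\subset\{|\xi|\le 2^{N}\}$ obeys $\|w\|_{B_{p}^{0}}\le C(N+1)^{1/p}\|w\|_{L_{p}}$, because its Besov sum has at most $N+2$ non-vanishing terms each bounded by $\|w\|_{L_{p}}$ via the same convolution estimate, applied to $w=v_{\le m+2}$ and combined with the disjoint-support identity $\tilde{\varphi}_{k}(D_{x})v=\tilde{\varphi}_{k}(D_{x})v_{\le m+2}$ valid for $k\le m+1$, then controls $\sum_{k=0}^{m+1}\|\tilde{\varphi}_{k}(D_{x})v\|_{L_{p}}^{p}$ by $C(m+1)\|u\otimes v\|_{B_{p}^{0}}^{p}/\|u\|_{L_{p}}^{p}$. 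The tail $\sum_{k\ge m+2}\|\tilde{\varphi}_{k}(D_{x})v\|_{L_{p}}^{p}$ is controlled directly by regime (ii); combining the two pieces gives $\|u\|_{L_{p}}\|v\|_{B_{p}^{0}}\le C(m+1)^{1/p}\|u\otimes v\|_{B_{p}^{0}}$.

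The main obstacle is regime (iii): the isotropic dyadic partition $\{\psi_{j}\}$ does not separate the tensor factors of $u\otimes v$, and the $(m+1)$ factor in the assertion is the sharp price for this mismatch, arising from H\"{o}lder's inequality applied to the $O(m)$ low frequencies of $v$ on both sides of the estimate.
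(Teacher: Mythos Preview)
Your proof is correct and follows the paper's approach. The upper bound coincides with the paper's argument: both isolate the three critical indices $j\in\{m-1,m,m+1\}$ and lose the factor $(m+1)^{(p-1)/p}$ via H\"older on the $O(m)$ low-frequency pieces of $v$, while the range $j\ge m+2$ contributes the clean tensor terms $u\cdot\tilde\varphi_j(D_x)v$. For the lower bound you organize the low-frequency range slightly differently: instead of reinserting the identity $\sum_{l=m-2}^{m+2}\psi_l=1$ on $\text{supp}(\varphi_k\otimes\tilde\varphi_j)$ into each individual term $\varphi_k(D_t)u\cdot\tilde\varphi_j(D_x)v$ (for $k\in\{m-1,m,m+1\}$, $j\le m+1$) as the paper does, you first sum these terms to obtain the telescoping identity $u\cdot v_{\le m+2}=\sum_{j=m-1}^{m+2}\psi_j(D)(u\otimes v)$ and then bound $\sum_{k\le m+1}\|\tilde\varphi_k(D_x)v\|_{L_p}^p$ by the elementary counting estimate $(m+2)C^p\|v_{\le m+2}\|_{L_p}^p$. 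This is a mild streamlining of the same Fourier-support argument; both routes yield the same power $(m+1)^{1/p}$ and then relax it to $(m+1)$.
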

\begin{proof}
Let $\left(t,x\right)\in\mathbb{R}\times\mathbb{R}^{n-1}$ and $C>0$  such that $\left\Vert \varphi_{k}\left(D_{t}\right)u\right\Vert _{L_{p}\left(\mathbb{R}\right)}\le C\left\Vert u\right\Vert _{L_{p}\left(\mathbb{R}\right)}$
and $\left\Vert \tilde{\varphi}_{k}\left(D_{x}\right)u\right\Vert _{L_{p}\left(\mathbb{R}^{n-1}\right)}\le C\left\Vert u\right\Vert _{L_{p}\left(\mathbb{R}^{n-1}\right)}$
for all $k\in\mathbb{N}_{0}$ and for all Schwartz functions $u$.
This constant exists, as we have seen in the proof of Lemma \ref{lem:LpBp}.
In particular, $\left\Vert \varphi_{k}\left(D_{t}\right)\tilde{\varphi}_{j}\left(D_{x}\right)\left(u\right)\right\Vert _{L_{p}\left(\mathbb{R}\times\mathbb{R}^{n-1}\right)}\le C^{2}\left\Vert u\right\Vert _{L_{p}\left(\mathbb{R}\times\mathbb{R}^{n-1}\right)}$,
for all $k,j\in\mathbb{N}_{0}$ and $u\in\mathcal{S}(\mathbb{R}\times\mathbb{R}^{n-1})$.

Using the conventions $\varphi_{k}=0$, $\tilde{\varphi}_{k}=0$,
$\psi_{k}=0$ and $K_{k}=\emptyset$, whenever $k\le-1$, we see that
\begin{eqnarray}
\lefteqn{\left\Vert u\otimes v\right\Vert _{B_{p}^{0}\left(\mathbb{R}^{n}\right)}=
\Big(\sum_{j=0}^{\infty}\left\Vert \psi_{j}\left(D\right)\left(u\otimes v\right)\right\Vert _{L_{p}\left(\mathbb{R}^{n}\right)}^{p}\Big)^{\frac{1}{p}}}
\nonumber\\
&=&
\Big(\sum_{j=0}^{\infty}\Vert \varphi_{j}(D_{t})u\sum_{k=0}^{j}\tilde{\varphi}_{k}\left(D_{x}\right)v+\sum_{k=0}^{j-1}\varphi_{k}(D_{t})u\tilde{\varphi}_{j}\left(D_{x}\right)v\Vert _{L_{p}\left(\mathbb{R}^{n}\right)}^{p}\Big)^{\frac{1}{p}}
\nonumber\\
&\le&
\Big(\sum_{j=m-1}^{m+1}\left\Vert \varphi_{j}(D_{t})u\right\Vert _{L_{p}\left(\mathbb{R}\right)}^{p}\Big\Vert \sum_{k=0}^{j}\tilde{\varphi}_{k}(D_{x})v\Big\Vert _{L_{p}\left(\mathbb{R}^{n-1}\right)}^{p}\Big)^{\frac{1}{p}}
\nonumber\\
&&+\Big(\sum_{j=0}^{\infty}\Big\Vert \sum_{k=m-1}^{m+1}\varphi_{k}(D_{t})u\Big\Vert _{L_{p}\left(\mathbb{R}\right)}^{p}\left\Vert \tilde{\varphi}_{j}(D_{x})v\right\Vert _{L_{p}\left(\mathbb{R}^{n-1}\right)}^{p}\Big)^{\frac{1}{p}}
\nonumber\\
&\le&
\Big(\sum_{j=m-1}^{m+1}\left\Vert \varphi_{j}(D_{t})u\right\Vert _{L_{p}\left(\mathbb{R}\right)}^{p}\left(j+1\right)^{p-1}\sum_{k=0}^{j}\left\Vert \tilde{\varphi}_{k}\left(D_{x}\right)v\right\Vert _{L_{p}\left(\mathbb{R}^{n-1}\right)}^{p}\Big)^{\frac{1}{p}}
\nonumber\\
&&+
\Big(\sum_{j=0}^{\infty}\sum_{k=m-1}^{m+1}3^{p-1}\left\Vert \varphi_{k}\left(D_{t}\right)u\right\Vert _{L_{p}\left(\mathbb{R}\right)}^{p}\left\Vert \tilde{\varphi}_{j}\left(D_{x}\right)v\right\Vert _{L_{p}\left(\mathbb{R}^{n-1}\right)}^{p}\Big)^{\frac{1}{p}}
\nonumber\\
&\le&
\left(m+2\right)^{1-\frac{1}{p}}3^{\frac{1}{p}}C\left\Vert u\right\Vert _{L_{p}\left(\mathbb{R}\right)}\Big(\sum_{k=0}^{\infty}\left\Vert \tilde{\varphi}_{k}\left(D_{x}\right)v\right\Vert _{L_{p}\left(\mathbb{R}^{n-1}\right)}^{p}\Big)^{\frac{1}{p}}
\nonumber\\
&&+3C\left\Vert u\right\Vert _{L_{p}\left(\mathbb{R}\right)}\Big(\sum_{j=0}^{\infty}\left\Vert \tilde{\varphi}_{j}\left(D_{x}\right)v\right\Vert _{L_{p}\left(\mathbb{R}^{n-1}\right)}^{p}\Big)^{\frac{1}{p}}
\nonumber\\
&\le&
\tilde{C}\left(m+1\right)\left\Vert u\right\Vert _{L_{p}\left(\mathbb{R}\right)}\left\Vert v\right\Vert _{B_{p}^{0}\left(\mathbb{R}^{n-1}\right)}.
\nonumber
\end{eqnarray}
On the other hand, with $\Vert\cdot\Vert$ denoting the norm in $L_p(\mathbb R^n)$,
\begin{eqnarray}
\lefteqn{\left\Vert u\right\Vert_{L_{p}\left(\mathbb{R}\right)}^{p}
\left\Vert v\right\Vert _{B_{p}^{0}\left(\mathbb{R}^{n-1}\right)}^{p}=\sum_{k=0}^{\infty}\left\Vert u\otimes\tilde{\varphi}_{k}\left(D_{x}\right)v\right\Vert _{L_{p}\left(\mathbb{R}^{n}\right)}^{p}}
\nonumber\\
&=&
\sum_{j=0}^{\infty}\left\Vert \sum_{k=m-1}^{m+1}\varphi_{k}(D_{t})u\,\tilde{\varphi}_{j}(D_{x})v\right\Vert^p
\nonumber\\
&\le&
\sum_{j=0}^{m+1}\left\Vert \sum_{k=m-1}^{m+1}\varphi_{k}(D_{t})u\,\tilde{\varphi}_{j}(D_{x})v\right\Vert^p
+\sum_{j=m+2}^{\infty}\left\Vert \sum_{k=m-1}^{m+1}\varphi_{k}(D_{t})u\,\tilde{\varphi}_{j}(D_{x})v\right\Vert^p
\nonumber\\
&\overset{(1)}{\le}&
\sum_{j=0}^{m+1}\sum_{k=m-1}^{m+1}3^{p-1}\left\Vert \varphi_{k}(D_{t})u\, \tilde{\varphi}_{j}(D_{x})v\right\Vert^{p}
+\left\Vert u\otimes v\right\Vert _{B_{p}^{0}\left(\mathbb{R}^{n}\right)}^{p}
\nonumber\\
&\overset{(2)}{=}&
\sum_{j=0}^{m+1}\sum_{k=m-1}^{m+1}\!\!3^{p-1}\!\left\Vert 
\sum_{l=m-2}^{m+2}\left(\varphi_{k}(D_{t})\otimes\tilde{\varphi}_{j}(D_{x})\right)\psi_{l}(D)\left(u\otimes v\right)\right\Vert^p
+\left\Vert u\otimes v\right\Vert _{B_{p}^{0}\left(\mathbb{R}^{n}\right)}^{p}
\nonumber\\
&\le&
\sum_{j=0}^{m+1}\sum_{k=m-1}^{m+1}\!\!\!15^{p-1}\!\!\!\sum_{l=m-2}^{m+2}\left\Vert \left(\varphi_{k}\left(D_{t}\right)\otimes\tilde{\varphi}_{j}\left(D_{x}\right)\right)\psi_{l}\left(D\right)\left(u\otimes v\right)\right\Vert^p
+\left\Vert u\otimes v\right\Vert _{B_{p}^{0}\left(\mathbb{R}^{n}\right)}^{p}
\nonumber
\end{eqnarray}
\begin{eqnarray}
&\le&
3^{p}5^{p-1}C^{2}\left(m+2\right)\sum_{l=0}^{\infty}\left\Vert \psi_{l}\left(D\right)\left(u\otimes v\right)\right\Vert _{L_{p}\left(\mathbb{R}^{n}\right)}^{p}+\left\Vert u\otimes v\right\Vert _{B_{p}^{0}\left(\mathbb{R}^{n}\right)}^{p}
\nonumber\\
&\le&
\tilde{C}\left(m+1\right)\left\Vert u\otimes v\right\Vert _{B_{p}^{0}\left(\mathbb{R}^{n}\right)}^{p}.\nonumber
\end{eqnarray}

We have used in $(1)$ that $\text{supp}\left(\mathcal{F}u\right)\subset K_{m}$
and, therefore, we have
\[
\sum_{j=m+2}^{\infty}\Big\Vert \Big(\sum_{k=m-1}^{m+1}\varphi_{k}\left(D_{t}\right)u\Big)\tilde{\varphi}_{j}\left(D_{x}\right)v\Big\Vert^p%
\le\sum_{j=m+2}^{\infty}\Big\Vert \sum_{k=m-1}^{m+1}\varphi_{k}\left(D_{t}\right)u\tilde{\varphi}_{j}\left(D_{x}\right)v\Big\Vert^p
\]
\[
+\sum_{j=0}^{m+1}\Big\Vert \varphi_{j}\left(D_{t}\right)u\Big(\sum_{k=0}^{j}\tilde{\varphi}_{k}\left(D_{x}\right)v\Big)
\!\!+\!\!
\Big(\sum_{k=0}^{j-1}\varphi_{k}\left(D_{t}\right)u\Big)\tilde{\varphi}_{j}\left(D_{x}\right)v\Big\Vert^p 
=
\left\Vert u\otimes v\right\Vert _{B_{p}^{0}\left(\mathbb{R}^{n}\right)}^{p}.
\]
We have used in $(2)$ that for $j\in\left\{ 0,...,m+1\right\} $
and $k\in\left\{ m-1,...,m+1\right\} $
\[
\sum_{l=m-2}^{m+2}\psi_{l}\left(D\right)\left(\varphi_{k}\left(D_{t}\right)\otimes\tilde{\varphi}_{j}\left(D_{x}\right)\right)=\varphi_{k}\left(D_{t}\right)\otimes\tilde{\varphi}_{j}\left(D_{x}\right).
\]
\end{proof}
\begin{proof}
(of Proposition \ref{prop:LpBpMellin}) Let $u\in\mathcal{T}_{\frac{1}{2}}\left(\mathbb{R}_{+}\right)$,
$v\in\mathcal{S}\left(\mathbb{R}^{n-1}\right)$ and suppose that $\text{supp}\big(\tau\mapsto\mathcal{M}_{\frac{1}{2}}u\left(i\tau\right)\big)\subset K_{m}$, 
$m\in \mathbb N_0$.
Define $\tilde{u}\in\mathcal{S}\left(\mathbb{R}\right)$ by
$\tilde{u}\left(t\right)=u\left(e^{-t}\right)$. Hence $\mathcal{F}_{t\to\xi}\tilde{u}\left(\xi\right)=\mathcal{F}_{t\to\xi}\left(u\left(e^{-t}\right)\right)\left(\xi\right)=\mathcal{M}_{\frac{1}{2}}u\left(i\xi\right)$.
Therefore, there is a constant $m\in\mathbb{N}_{0}$ such that $\text{supp}\left(\mathcal{F}\tilde{u}\right)\subset K_{m}$. Hence, Lemma \ref{lem:lpb0b0} implies that
\begin{eqnarray*}\lefteqn{
\frac{1}{C(m+1)}\left\Vert u\right\Vert _{L_{p}\left(\mathbb{R}_{+},\frac{dt}{t}\right)}
\left\Vert v\right\Vert _{B_{p}^{0}\left(\mathbb{R}^{n-1}\right)}
=\frac{1}{C(m+1)}\left\Vert \tilde{u}\right\Vert _{L_{p}\left(\mathbb{R}\right)}\left\Vert v\right\Vert _{B_{p}^{0}\left(\mathbb{R}^{n-1}\right)}}\\
&\le&\left\Vert \tilde{u}\otimes v\right\Vert _{B_{p}^{0}\left(\mathbb{R}^{n}\right)}
\le C\left(m+1\right)\left\Vert \tilde{u}\right\Vert _{L_{p}\left(\mathbb{R}\right)}\left\Vert v\right\Vert _{B_{p}^{0}\left(\mathbb{R}^{n-1}\right)}\\
&\le& C\left(m+1\right)\left\Vert u\right\Vert _{L_{p}\left(\mathbb{R}_{+},\frac{dt}{t}\right)}\left\Vert v\right\Vert _{\mathcal{B}_{p}^{0}\left(\mathbb{R}^{n-1}\right)}.
\end{eqnarray*}
As $\left\Vert \tilde{u}\otimes v\right\Vert _{B_{p}^{0}\left(\mathbb{R}^{n}\right)}=\left\Vert u\otimes v\right\Vert _{\mathcal{B}_{p}^{0,\frac{n}{2}}\left(\mathbb{R}^{n}_+\right)}$,
we conclude that 
\begin{eqnarray*}
\lefteqn{\frac{1}{C(m+1)}\left\Vert u\right\Vert _{L_{p}\left(\mathbb{R}_{+},\frac{dt}{t}\right)}\left\Vert v\right\Vert _{B_{p}^{0}\left(\mathbb{R}^{n-1}\right)}}\\
&\le&\left\Vert u\otimes v\right\Vert _{\mathcal{B}_{p}^{0,\frac{n}{2}}(\mathbb{R}^{n}_+)}
\le C\left(m+1\right)\left\Vert u\right\Vert _{L_{p}\left(\mathbb{R}_{+},\frac{dt}{t}\right)}\left\Vert v\right\Vert _{{B}_{p}^{0}\left(\mathbb{R}^{n-1}\right)}.
\end{eqnarray*}
The general result follows using a partition of unity.
\end{proof}

\subsection{Proof of the invertibility of the conormal symbol}

We notice that $\mathcal{T}_{\frac{1}{2}}\left(\mathbb{R}_{+},C^{\infty}\left(X,E_{j},F_{j}\right)\right)$
is a dense space of $\mathcal{HB}_{pE_{j},F_{j}}\left(X^{\wedge}\right)$.
\begin{defn}
Let $W$ be a Fréchet space, $\epsilon>0$, $\tau_{0}\in\mathbb{R}$.
We define $T_{\epsilon}:\mathcal{T}_{\frac{1}{2}}\left(\mathbb{R}_{+},W\right)\to\mathcal{T}_{\frac{1}{2}}\left(\mathbb{R}_{+},W\right)$
and $R_{\epsilon,\tau_{0}}:\mathcal{T}_{\frac{1}{2}}\left(\mathbb{R}_{+},W\right)\to\mathcal{T}_{\frac{1}{2}}\left(\mathbb{R}_{+},W\right)$
by $T_{\epsilon}u(t)=u\left(\frac{t}{\epsilon}\right)$ and $R_{\epsilon,\tau_{0}}u(t)=\epsilon^{\frac{1}{p}}t^{-i\tau_{0}}u\left(t^{\epsilon}\right)$.
\end{defn}
The above operators are invertible: $T_{\epsilon}^{-1}=T_{\frac{1}{\epsilon}}$
and $R_{\epsilon,\tau_{0}}^{-1}=R_{\frac{1}{\epsilon},-\frac{\tau_{0}}{\epsilon}}$.
The next proposition is analogous to Lemma \ref{lem:Propriedades R_s}.
\begin{prop}
\label{prop:Tepsilonconv} For an UMD Banach space W with the property ($\alpha$), the operators $T_{\epsilon},R_{\epsilon,\tau_{0}}:\mathcal{T}_{\frac{1}{2}}\left(\mathbb{R}_{+},W\right)\to\mathcal{T}_{\frac{1}{2}}\left(\mathbb{R}_{+},W\right)$
have the following properties:
\begin{enumerate}\renewcommand{\labelenumi}{\arabic{enumi}{\rm)}}
\item  $T_{\epsilon}$ extends to an isometry
\[
T_{\epsilon}:\mathcal{H}_{p}^{1,\frac{1}{2}}\left(\mathbb{R}_{+},W\right)\to\mathcal{H}_{p}^{1,\frac{1}{2}}\left(\mathbb{R}_{+},W\right).
\]
If $W=C^{\infty}\left(X,E_{j},F_{j}\right)$, $j=0,1$, then the operator
$T_{\epsilon}$ extends to an isometry $T_{\epsilon}:\mathcal{HB}_{pE_{j},F_{j}}\left(X^{\wedge}\right)\to\mathcal{HB}_{pE_{j},F_{j}}\left(X^{\wedge}\right)$. 

\item  For all $\varepsilon>0$, $R_{\epsilon,\tau_{0}}$ extends to a bijective continuous map
\[
R_{\epsilon,\tau_{0}}:\mathcal{H}_{p}^{1,\frac{1}{2}}\left(\mathbb{R}_{+},W\right)\to\mathcal{H}_{p}^{1,\frac{1}{2}}\left(\mathbb{R}_{+},W\right).
\]
There exists a $C\ge0$ with $\left\Vert R_{\epsilon,\tau_{0}}\right\Vert _{\mathcal{B}(\mathcal{H}_{p}^{1,\frac{1}{2}}\left(\mathbb{R}_{+},W\right))}\le C\left(1+\left|\tau_{0}\right|\right)$, $\epsilon<1$.

\item 
{\rm(i)} Let $h\in M\tilde{B}_{E_{0},F_{0},E_{1},F_{1}}^{p}\left(X,\mathbb{R}_{+};\Gamma_{0}\right)\cap C\left(\overline{\mathbb{R}_{+}},\mathcal{\tilde{B}}_{E_{0},F_{0},E_{1},F_{1}}^{p}\left(X,\Gamma_{0}\right)\right)$
and $h_{0}(z):=h(0,z)$. For any $u\in\mathcal{T}_{\frac{1}{2}}\left(\mathbb{R}_{+},C^{\infty}\left(X,E_{0},F_{0}\right)\right)$
we then have 
\[
\lim_{\epsilon\to0}\left\Vert op_{M}^{\frac{1}{2}}\left(h\right)T_{\epsilon}u-T_{\epsilon}op_{M}^{\frac{1}{2}}\left(h_{0}\right)u\right\Vert _{\mathcal{HB}_{pE_{1},F_{1}}\left(X^{\wedge}\right)}=0.
\]
{\rm(ii)} Let $h\in\tilde{B}_{E_{0},F_{0},E_{1},F_{1}}^{p}\left(X,\Gamma_{0}\right)$
and $u\in\mathcal{T}_{\frac{1}{2}}\left(\mathbb{R}_{+},C^{\infty}\left(X,E_{0},F_{0}\right)\right)$. Then
\[
\lim_{\epsilon\to0}\left\Vert op_{M}^{\frac{1}{2}}\left(h\right)R_{\epsilon,\tau_{0}}u-R_{\epsilon,\tau_{0}}h\left(i\tau_{0}\right)u\right\Vert _{\mathcal{HB}_{pE_{1},F_{1}}\left(X^{\wedge}\right)}=0.
\]
\end{enumerate}
\end{prop}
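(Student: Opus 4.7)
Parts 1) and 2) rely on the substitution $t=e^{-s}$, which gives the isomorphism $\mathcal{H}_p^{s,1/2}(\mathbb{R}_+,W)\cong H_p^s(\mathbb{R},W)$ and transforms $T_\epsilon$ into the translation $v(s)\mapsto v(s-\ln\epsilon)$ and $R_{\epsilon,\tau_0}$ into the composition $v(s)\mapsto \epsilon^{1/p}e^{is\tau_0}v(\epsilon s)$. Translation is an isometry on $H_p^s(\mathbb{R},W)$, yielding part 1); for $\mathcal{HB}_{pE_j,F_j}(X^\wedge)$ I would reduce to the Euclidean case by a partition of unity on $X$, using that translation is also isometric on $B_p^0(\mathbb{R}\times\partial X,F)$. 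For part 2), modulation by $e^{is\tau_0}$ on $H_p^1$ produces the factor $(1+|\tau_0|)$ via the Leibniz rule, while the dilation $v(\cdot)\mapsto\epsilon^{1/p}v(\epsilon\cdot)$ is $L_p$-isometric and, for $\epsilon\le 1$, contracts the $L_p$-norm of the derivative by $\epsilon$. Invertibility follows from $R_{\epsilon,\tau_0}^{-1}=R_{1/\epsilon,-\tau_0/\epsilon}$.

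\textbf{Part 3(i).} I would first establish the commutation
\[
op_M^{1/2}(h)\,T_\epsilon=T_\epsilon\,op_M^{1/2}(h_\epsilon),\qquad h_\epsilon(t,z):=h(\epsilon t,z),
\]
by direct computation using $\mathcal{M}_{1/2}(T_\epsilon u)(i\tau)=\epsilon^{i\tau}\mathcal{M}_{1/2}u(i\tau)$. The quantity in 3(i) then equals $T_\epsilon\,op_M^{1/2}(h_\epsilon-h_0)u$, and since $T_\epsilon$ is an isometry on $\mathcal{HB}_{pE_1,F_1}(X^\wedge)$ by part 1), it suffices to show $\Vert op_M^{1/2}(h_\epsilon-h_0)u\Vert_{\mathcal{HB}_{pE_1,F_1}(X^\wedge)}\to0$. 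The continuity hypothesis $h\in C(\overline{\mathbb{R}_+},\tilde{\mathcal{B}}^p(X,\Gamma_0))$ gives $\Vert h(\epsilon t,\cdot)-h(0,\cdot)\Vert_{\tilde{\mathcal{B}}^p(X,\Gamma_0)}\to 0$ uniformly for $t$ in compact sets; combined with the rapid decay of $\mathcal{M}_{1/2}u$ along $\Gamma_0$ (since $u\in\mathcal{T}_{1/2}$), dominated convergence under the Mellin integral handles the Bessel component. For the Besov component I would invoke Proposition \ref{prop:Estimativa Besov do cone} to bound $\mathcal{B}_p^{0,n/2}$ by $\mathcal{H}_p^{1,1/2}(\mathbb{R}_+,B_p^0)$, then run the same argument after differentiating once in $t$ under the Mellin integral, using $\partial_t h_\epsilon(t,z)=\epsilon(\partial_t h)(\epsilon t,z)$ to obtain the needed decay factor.

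\textbf{Part 3(ii).} The analogous commutation is
\[
op_M^{1/2}(h)\,R_{\epsilon,\tau_0}=R_{\epsilon,\tau_0}\,op_M^{1/2}(\tilde h_\epsilon),\qquad \tilde h_\epsilon(i\sigma):=h\bigl(i(\tau_0+\epsilon\sigma)\bigr),
\]
which follows from $\mathcal{M}_{1/2}(R_{\epsilon,\tau_0}u)(i\tau)=\epsilon^{1/p-1}\mathcal{M}_{1/2}u\bigl(i(\tau-\tau_0)/\epsilon\bigr)$ and a change of variable in the Mellin integral. Since $\tilde h_0(i\sigma)=h(i\tau_0)$ is $\sigma$-independent, $op_M^{1/2}(\tilde h_0)$ is the pointwise multiplication by $h(i\tau_0)$, so that the right-hand side in 3(ii) is $R_{\epsilon,\tau_0}\,op_M^{1/2}(\tilde h_0)u$. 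The difference therefore equals $R_{\epsilon,\tau_0}\,op_M^{1/2}(\tilde h_\epsilon-\tilde h_0)u$. Part 2), together with Proposition \ref{prop:Estimativa Besov do cone} to transfer the bound from $\mathcal{H}_p^{1,1/2}$ to the Besov component of $\mathcal{HB}$, gives $\Vert R_{\epsilon,\tau_0}\Vert\le C(1+|\tau_0|)$ uniformly for $\epsilon\le 1$, and the conclusion follows by the same dominated-convergence scheme as in 3(i), using that $h(i(\tau_0+\epsilon\sigma))\to h(i\tau_0)$ pointwise in $\sigma$ and that $h$ is a uniformly bounded family on $\Gamma_0$.

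\textbf{Main obstacle.} The delicate point throughout is the convergence in the Besov component, which is not directly amenable to pointwise dominated convergence. Proposition \ref{prop:Estimativa Besov do cone} is the essential bridge: it lets me replace Besov norms by $\mathcal{H}_p^{1,1/2}$-norms of $B_p^0$-valued functions, reducing everything to $L_p$-estimates in $t$ of the Mellin representation and its $t$-derivative. A secondary subtlety, also handled by this device, is the boundedness of $R_{\epsilon,\tau_0}$ on the Besov factor of $\mathcal{HB}_{pE_1,F_1}$ needed in 3(ii), which is not covered by the statement of part 2) but follows from it through the same inequality.
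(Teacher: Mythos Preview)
Your proposal is correct and follows essentially the same route as the paper: the commutation identities $T_\epsilon^{-1}op_M^{1/2}(h)T_\epsilon=op_M^{1/2}(h_\epsilon)$ and $R_{\epsilon,\tau_0}^{-1}op_M^{1/2}(h)R_{\epsilon,\tau_0}=op_M^{1/2}(\tilde h_\epsilon)$, dominated convergence for the $L_p$-component, the product-rule computation for the $t\partial_t$-derivative to lift this to $\mathcal{H}_p^{1,1/2}$, and finally Proposition~\ref{prop:Estimativa Besov do cone} together with parts 1) and 2) to reach the $\mathcal{HB}$-norm. One small point: in 3(i) the relevant derivative for $\mathcal{H}_p^{1,1/2}$ is $t\partial_t$, not $\partial_t$, and $t\partial_t h_\epsilon=(t\partial_t h)_\epsilon$ without an extra $\epsilon$-factor; the term still vanishes in the limit because $(t\partial_t h)(0,\cdot)=0$, which is how the paper argues as well.
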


\begin{proof}
1) Since $\left\Vert T_{\epsilon}u\right\Vert _{L_{p}\left(\mathbb{R}_{+},W,\frac{dt}{t}\right)}=\left\Vert u\right\Vert _{L_{p}\left(\mathbb{R}_{+},W,\frac{dt}{t}\right)}$
and $\left\Vert \left(t\partial_{t}\right)\left(T_{\epsilon}u\right)\right\Vert _{L_{p}\left(\mathbb{R}_{+},W,\frac{dt}{t}\right)}=\left\Vert t\partial_{t}u\right\Vert _{L_{p}\left(\mathbb{R}_{+},W,\frac{dt}{t}\right)}$, we conclude that  $\left\Vert T_{\epsilon}u\right\Vert _{\mathcal{H}_{p}^{1,\frac{1}{2}}\left(\mathbb{R}_{+},W\right)}=\left\Vert u\right\Vert _{\mathcal{H}_{p}^{1,\frac{1}{2}}\left(\mathbb{R}_{+},W\right)}$.

In order to show that $T_{\epsilon}:\mathcal{HB}_{pE_{j},F_{j}}\left(X^{\wedge}\right)\to\mathcal{HB}_{pE_{j},F_{j}}\left(X^{\wedge}\right)$
is an isometry, it remains to prove that $T_{\epsilon}:\mathcal{B}_{p}^{0,\frac{n}{2}}\left(\partial X^{\wedge},F_{j}\right)\to\mathcal{B}_{p}^{0,\frac{n}{2}}\left(\partial X^{\wedge},F_{j}\right)$
is an isometry. This follows with a partition of unity and the fact
that $T_{\epsilon}:\mathcal{B}_{p}^{0,\frac{n}{2}}\left(\mathbb{R}_{+}^{n}\right)\to\mathcal{B}_{p}^{0,\frac{n}{2}}\left(\mathbb{R}_{+}^{n}\right)$
given by $T_{\epsilon}u\left(t,x\right)=u\left(\frac{t}{\epsilon},x\right)$
is an isometry. In fact, if $v\left(s,x\right)=u\left(e^{-s},x\right)$,
then $\left(T_{\epsilon}u\right)\left(e^{-s},x\right)=v\left(s+\ln\left(\epsilon\right),x\right)$.
Hence
\[
\left\Vert T_{\epsilon}u\right\Vert _{\mathcal{B}_{p}^{0,\frac{n}{2}}\left(\mathbb{R}_{+}^{n}\right)}=\left\Vert \left(s,x\right)\mapsto v\left(s+\ln\left(\epsilon\right),x\right)\right\Vert _{B_{p}^{0}\left(\mathbb{R}^{n}\right)}=\Vert  
v\Vert _{B_{p}^{0}\left(\mathbb{R}^{n}\right)}=\left\Vert u\right\Vert _{\mathcal{B}_{p}^{0,\frac{n}{2}}\left(\mathbb{R}_{+}^{n}\right)}.
\]

2) It is easy to see that $\left\Vert R_{\epsilon,\tau_{0}}u\right\Vert _{L_{p}\left(\mathbb{R}_{+},W,\frac{dt}{t}\right)}=\left\Vert u\right\Vert _{L_{p}\left(\mathbb{R}_{+},W,\frac{dt}{t}\right)}.$
As $t\partial_{t}\left(R_{\epsilon,\tau_{0}}u\right)=\left(-i\tau_{0}\right)R_{\epsilon,\tau_{0}}u+\epsilon R_{\epsilon,\tau_{0}}\left(t\partial_{t}u\right)$,
we conclude that
\[
\left\Vert R_{\epsilon,\tau_{0}}u\right\Vert _{\mathcal{H}_{p}^{1,\frac{1}{2}}\left(\mathbb{R}_{+},L_{p}\left(X,E_{j}\right)\oplus B_{p}^{0}\left(X,F_{j}\right)\right)}\le\left(1+\left|\tau_{0}\right|\right)\left\Vert u\right\Vert _{\mathcal{H}_{p}^{1,\frac{1}{2}}\left(\mathbb{R}_{+},L_{p}\left(X,E_{j}\right)\oplus B_{p}^{0}\left(X,F_{j}\right)\right)}.
\]

3.i) We first show {\bf $\mathbf L_p$-convergence}: For 
$u\in\mathcal{T}_{\frac{1}{2}}\left(\mathbb{R}_{+},C^{\infty}\left(X,E_{0},F_{0}\right)\right),$
\begin{equation}
\lim_{\epsilon\to0}\left\Vert T_{\epsilon}^{-1}op_{M}^{\frac{1}{2}}\left(h\right)T_{\epsilon}u-op_{M}^{\frac{1}{2}}\left(h_{0}\right)u\right\Vert _{L_{p}\left(\mathbb{R}_{+},L_{p}\left(X,E_{1}\right)\oplus B_{p}^{0}\left(\partial X,F_{1}\right);\frac{dt}{t}\right)}=0.
\label{eq:convH1BdM-1-2}
\end{equation}

The proof here is exactly the same as the proof of \cite[Lemma 3.9]{SchroheSeilerSpectConical}. It relies on the fact that
$
T_{\epsilon}^{-1}op_{M}^{\frac{1}{2}}\left(h\right)T_{\epsilon}=op_{M}^{\frac{1}{2}}\left(h_{\epsilon}\right),
$
where $h_{\epsilon}\left(t,z\right)=h\left(\epsilon t,z\right)$, and on 
Lebesgue's dominated convergence theorem.

Next we establish the $\mathbf{L_p}${\bf -convergence of the derivative}: 
\begin{equation}
\lim_{\epsilon\to0}\left\Vert T_{\epsilon}^{-1}op_{M}^{\frac{1}{2}}\left(h\right)T_{\epsilon}u-op_{M}^{\frac{1}{2}}\left(h_{0}\right)u\right\Vert _{\mathcal{H}_{p}^{1,\frac{1}{2}}\left(\mathbb{R}_{+},L_{p}\left(X,E_{1}\right)\oplus B_{p}^{0}\left(\partial X,F_{1}\right)\right)}=0.\label{eq:convH1BdM-1-1-1}
\end{equation}
This follows almost immediately from the fact that 
\[
\left(-t\partial_{t}\right)op_{M}^{\frac{1}{2}}\left(h_{\epsilon}\right)u=op_{M}^{\frac{1}{2}}\left(\left(\left(-t\partial_{t}\right)h\right)_{\epsilon}\right)u+op_{M}^{\frac{1}{2}}\left(h_{\epsilon}\right)\left(\left(-t\partial_{t}\right)u\right).
\]
Using \eqref{eq:convH1BdM-1-1-1}, the fact that $T_{\epsilon}$ are isometries and Proposition \ref{prop:Estimativa Besov do cone}, we conclude that,
as $\epsilon \to 0$,
\begin{eqnarray*}
\lefteqn{\left\Vert op_{M}^{\frac{1}{2}}\left(h\right)T_{\epsilon}u-T_{\epsilon}op_{M}^{\frac{1}{2}}\left(h_{0}\right)u\right\Vert _{\mathcal{H}_{p}^{0,\frac{n+1}{2}}\left(X^{\wedge},E_{1}\right)\oplus\mathcal{B}_{p}^{0,\frac{n}{2}}\left(\partial X^{\wedge},F_{1}\right)}
}\\
&\le& C\left\Vert T_{\epsilon}^{-1}op_{M}^{\frac{1}{2}}\left(h\right)T_{\epsilon}u-op_{M}^{\frac{1}{2}}\left(h_{0}\right)u\right\Vert _{\mathcal{H}_{p}^{1,\frac{1}{2}}\left(\mathbb{R}_{+},L_{p}\left(X,E_{1}\right)\oplus B_{p}^{0}\left(\partial X,E_{1}\right)\right)}\to0.
\end{eqnarray*}

3.ii) It is straightforward to check that
$R_{\epsilon,\tau_{0}}^{-1}op_{M}^{\frac{1}{2}}\left(h\right)R_{\epsilon,\tau_{0}}=op_{M}^{\frac{1}{2}}\left(h_{\epsilon}\right)$,
where $h_{\epsilon}\left(z\right)=h\left(\epsilon z+i\tau_{0}\right)$.
Repeating the previous arguments, we conclude that
\[
\lim_{\epsilon\to0}\left\Vert R_{\epsilon,\tau_{0}}^{-1}op_{M}^{\frac{1}{2}}\left(h\right)R_{\epsilon,\tau_{0}}u-h\left(i\tau_{0}\right)u\right\Vert _{L_{p}\left(\mathbb{R}_{+},L_{p}\left(X,E_{1}\right)\oplus B_{p}^{0}\left(\partial X,F_{1}\right);\frac{dt}{t}\right)}=0.
\]
Moreover,  $\left(-t\partial_{t}\right)op_{M}^{\frac{1}{2}}\left(h\left(\epsilon z+i\tau_{0}\right)\right)u=op_{M}^{\frac{1}{2}}\left(h\left(\epsilon z+i\tau_{0}\right)\right)\left(-t\partial_{t}u\right)$.
Hence
\[
\lim_{\epsilon\to0}\left\Vert R_{\epsilon,\tau_{0}}^{-1}op_{M}^{\frac{1}{2}}\left(h\right)R_{\epsilon,\tau_{0}}u-h\left(i\tau_{0}\right)u\right\Vert _{\mathcal{H}^{1,\frac{1}{2}}\left(\mathbb{R}_{+},L_{p}\left(X,E_{1}\right)\oplus B_{p}^{0}\left(\partial X,F_{1}\right)\right)}=0.
\]
Finally, using Proposition \ref{prop:Estimativa Besov do cone} and item 2, we conclude that, as $\epsilon\to 0$, 
\begin{eqnarray*}
\lefteqn{\left\Vert op_{M}^{\frac{1}{2}}\left(h\right)R_{\epsilon,\tau_{0}}u
-R_{\epsilon,\tau_{0}}h\left(i\tau_{0}\right)u\right\Vert _{\mathcal{HB}_{pE_{1},F_{1}}\left(X^{\wedge}\right)}}\\
&\le& 
C\left\Vert R_{\epsilon,\tau_{0}}\left(R_{\epsilon,\tau_{0}}^{-1}op_{M}^{\frac{1}{2}}\left(h\right)R_{\epsilon,\tau_{0}}u-h\left(i\tau_{0}\right)u\right)\right\Vert _{\mathcal{H}_{p}^{1,\frac{1}{2}}\left(\mathbb{R}_{+},L_{p}\left(X,E_{1}\right)\oplus B_{p}^{0}\left(\partial X,E\right)\right)}\\
&\le&\tilde{C}\left(1+\left|\tau_{0}\right|\right)\left\Vert R_{\epsilon,\tau_{0}}^{-1}op_{M}^{\frac{1}{2}}\left(h\right)R_{\epsilon,\tau_{0}}u-h\left(i\tau_{0}\right)u\right\Vert _{\mathcal{H}_{p}^{1,\frac{1}{2}}(\mathbb{R}_{+},L_{p}\left(X,E_{1})\oplus B_{p}^{0}\left(\partial X,E\right)\right)}\to0.
\end{eqnarray*}
\end{proof}
The next lemma is analogous to Lemma \ref{lem:Rs em supp de F na bola}.
\begin{lem}
\label{lem:propertiesRepslonTepsilon}The operators $T_{\epsilon}$
and $R_{\epsilon,\tau_{0}}$ satisfy the following properties:
\begin{enumerate}\renewcommand{\labelenumi}{\arabic{enumi}{\rm)}}
\item If $u\in\mathcal{T}_{\frac{1}{2}}\left(\mathbb{R}_{+}\right)$
with $\mbox{supp}(\mathcal{M}_{\frac{1}{2}}u)\subset\left\{ \xi\in\Gamma_{0};\,\left|\xi\right|\le\frac{1}{2}\right\} $, $v\in C^{\infty}\left(X,E_{j},F_{j}\right)$ and $\epsilon<1$, then $\mbox{supp}(\mathcal{M}_{\frac{1}{2}}\left(R_{\epsilon,\tau_{0}}u\right))\subset K_{m}$,
where $K_{0}:=\left\{ \xi\in\Gamma_{0};\left|\xi\right|\le2\right\} $,
$K_{j}:=\left\{ \xi\in\Gamma_{0};2^{j-1}\le\left|\xi\right|\le2^{j+1}\right\} $,
$j\in\mathbb{N}_{0}\backslash\left\{ 0\right\} $. The number $m\in\mathbb{N}_{0}$
is equal to $0$ if $\left|\tau_{0}\right|+\frac{1}{2}<2$ and, for $\left|\tau_{0}\right|+\frac{1}{2}>2$,
$m$ is the smallest number such that $2^{m-1}<\left|\tau_{0}\right|-\frac{1}{2}<\left|\tau_{0}\right|+\frac{1}{2}<2^{m+1}$.
Hence $m\le C\left\langle \ln\left(\tau_{0}\right)\right\rangle $.

\item There is a constant $C>0$ such that for all $\epsilon<1$, $v\in C^{\infty}\left(X,E_{j},F_{j}\right)$
and $u\in\mathcal{T}_{\frac{1}{2}}\left(\mathbb{R}_{+}\right)$ with
$\mbox{supp}(\mathcal{M}_{\frac{1}{2}}u)\subset\left\{ \xi\in\mathbb{R};\,\left|\xi\right|\le\frac{1}{2}\right\} $
\begin{eqnarray*}
\lefteqn{\frac{1}{C\left\langle \ln\tau_{0}\right\rangle }\left\Vert u\otimes v\right\Vert _{\mathcal{HB}_{pE_{j},F_{j}}\left(X^{\wedge}\right)}}\\
&\le&\left\Vert R_{\epsilon,\tau_{0}}\left(u\otimes v\right)\right\Vert _{\mathcal{HB}_{pE_{j},F_{j}}\left(X^{\wedge}\right)}\le C\left\langle \ln\tau_{0}\right\rangle \left\Vert u\otimes v\right\Vert _{\mathcal{HB}_{pE_{j},F_{j}}\left(X^{\wedge}\right)}.
\end{eqnarray*}

\item For all $u\in\mathcal{T}_\frac12\left(\mathbb{R}_{+},C^{\infty}\left(X,E_{j},F_{j}\right)\right)$,
we have $\lim_{\epsilon\to0}T_{\epsilon}\left(u\right)=0$ weakly
in $\mathcal{HB}_{pE_{j},F_{j}}\left(X^{\wedge}\right)$.
\end{enumerate}
\end{lem}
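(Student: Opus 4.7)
I would handle the three items in order, with item 2) the main synthesis via Proposition \ref{prop:LpBpMellin}. For item 1), a direct computation of the Mellin transform of $R_{\epsilon,\tau_0}u$ by the substitution $s=t^\epsilon$ gives
\[
\mathcal M_{1/2}(R_{\epsilon,\tau_0}u)(i\tau)=\epsilon^{1/p-1}\,\mathcal M_{1/2}u\bigl(i(\tau-\tau_0)/\epsilon\bigr).
\]
Hence its support lies in $\{|\tau-\tau_0|\le\epsilon/2\}\subset[\tau_0-1/2,\tau_0+1/2]$ since $\epsilon<1$. The dyadic bound on $m$ then follows by an elementary case distinction on the size of $|\tau_0|$: if $|\tau_0|+1/2<2$ the support sits in $K_0$, otherwise the smallest $m\ge 1$ with $2^{m-1}<|\tau_0|-1/2$ and $|\tau_0|+1/2<2^{m+1}$ works, and obviously $m\le C\langle\ln\tau_0\rangle$.

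For item 2), the essential observation is that $R_{\epsilon,\tau_0}$ acts only on the $t$-variable, so $R_{\epsilon,\tau_0}(u\otimes v)=(R_{\epsilon,\tau_0}u)\otimes v$, and a change of variables shows $R_{\epsilon,\tau_0}$ is an isometry on $L_p(\mathbb R_+,dt/t)$ (this has already been observed in the proof of item 1) of Proposition \ref{prop:Tepsilonconv}). Since the Mellin support of $u$ lies in $K_0$ while that of $R_{\epsilon,\tau_0}u$ lies in $K_m$ with $m\le C\langle\ln\tau_0\rangle$ by item 1), applying Proposition \ref{prop:LpBpMellin} twice, to $u\otimes v$ at level $0$ and to $(R_{\epsilon,\tau_0}u)\otimes v$ at level $m$, and chaining the resulting inequalities via the isometry identity $\|R_{\epsilon,\tau_0}u\|_{L_p(dt/t)}=\|u\|_{L_p(dt/t)}$ gives the claimed two-sided comparison, with the $(m+1)^{\pm 1}$ factors absorbed into $\langle\ln\tau_0\rangle^{\pm 1}$.

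For item 3), by item 1) of Proposition \ref{prop:Tepsilonconv}, $T_\epsilon$ is an isometric family in $\mathcal{HB}_{pE_j,F_j}(X^\wedge)$, so weak convergence only needs to be tested against a dense subspace of the dual. Using the dual identifications recorded in the remark after the spaces with asymptotics (via the $L^2$-pairing with measure $dx\,dt/t$), take any $w\in\mathcal T_{1/2}(\mathbb R_+,C^\infty(X,E_j,F_j))$; the substitution $s=t/\epsilon$ turns the pairing into $\int_{X^\wedge}\langle u(s,x),\overline{w(\epsilon s,x)}\rangle\,dx\,ds/s$. Since elements of $\mathcal T_{1/2}$ decay faster than any negative power of $\langle\ln t\rangle$, $w(\epsilon s,x)\to 0$ pointwise as $\epsilon\to 0$ and $u\in L^1(X^\wedge,dx\,dt/t)$, so dominated convergence concludes.

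The main technical subtlety sits in item 2): one must track the logarithmic penalty inherent to the Besov component, which arises from Proposition \ref{prop:LpBpMellin} at precisely the dyadic level produced by item 1). A naive estimate using only the $L_p(dt/t)$-isometry of $R_{\epsilon,\tau_0}$ would miss this factor entirely, and overlooking the Besov part would give an erroneous bound independent of $\tau_0$.
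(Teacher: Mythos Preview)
Your argument is correct and, for items 1) and 2), essentially identical to the paper's: the same Mellin-transform computation for $R_{\epsilon,\tau_0}$, and the same chaining of Proposition~\ref{prop:LpBpMellin} at levels $0$ and $m$ via the $L_p(\mathbb R_+,dt/t)$-isometry of $R_{\epsilon,\tau_0}$ (a minor point: that isometry is recorded in the proof of item~2 of Proposition~\ref{prop:Tepsilonconv}, not item~1).

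For item 3) there is a small but genuine difference in the endgame. Both you and the paper reduce to testing against a dense subspace of the dual, using that $T_\epsilon$ is an isometry. The paper then takes $u,v\in C_c^\infty(\mathbb R_+,C^\infty(X,E_j,F_j))$ and observes that for $\epsilon$ small the supports of $T_\epsilon u$ and $v$ are disjoint, so the pairing vanishes identically. You instead test against $w\in\mathcal T_{1/2}$, change variables, and invoke dominated convergence using the decay $|w(t,\cdot)|\lesssim\langle\ln t\rangle^{-l}$ together with $u\in L^1(X^\wedge,dx\,dt/t)$. Both are valid; the paper's disjoint-support trick is slightly more elementary and avoids any integrability check, while your dominated-convergence route is closer in spirit to the analogous Lemma~\ref{lem:Rs em supp de F na bola} in the Euclidean setting.
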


\begin{proof}
1) An easy computation shows  that $\mathcal{M}_{\frac{1}{2}}\left(R_{\epsilon,\tau_{0}}u\right)(z)=\epsilon^{\frac{1}{p}-1}\mathcal{M}_{\frac{1}{2}}u\left(\frac{z}{\epsilon}-\frac{i\tau_{0}}{\epsilon}\right)$.
When $\epsilon<1$, this means that, if $x\in\mathbb{R}$
is such that $\mathcal{M}_{\frac{1}{2}}\left(R_{\epsilon,\tau_{0}}u\right)\left(ix\right)\ne0$,
then $\tau_{0}-\frac{1}{2}<x<\tau_{0}+\frac{1}{2}$,
which implies that $\text{supp}(\mathcal{M}_{\frac{1}{2}}\left(R_{\epsilon,\tau_{0}}u\right))$
is contained in some ball of radius $\frac{1}{2}$.

2) As $\mbox{supp}(\mathcal{M}_{\frac{1}{2}}u)\subset K_{0}$, Proposition \ref{prop:LpBpMellin} implies that
\begin{eqnarray}
\lefteqn{\left\Vert u\otimes v\right\Vert _{\mathcal{HB}_{pE_{j},F_{j}}\left(X^{\wedge}\right)}\le C_{1}\left\Vert u\otimes v\right\Vert _{L_{p}\left(\mathbb{R}_{+},L_{p}\left(X,E_{j}\right)\oplus B_{p}^{0}\left(\partial X,F_{j}\right);\frac{dt}{t}\right)}}
\nonumber\\
&=&
C_{1}\left\Vert \left(R_{\epsilon,\tau_{0}}u\right)\otimes v\right\Vert _{L_{p}\left(\mathbb{R}_{+},L_{p}\left(X,E_{j}\right)\oplus B_{p}^{0}\left(\partial X,F_{j}\right);\frac{dt}{t}\right)}\nonumber\\
&\le& C_{2}\left\langle \ln\tau_{0}\right\rangle \left\Vert \left(R_{\epsilon,\tau_{0}}u\right)\otimes v\right\Vert _{\mathcal{HB}_{pE_{j},F_{j}}\left(X^{\wedge}\right)}
\nonumber
\end{eqnarray}
and
\begin{eqnarray}
\lefteqn{\left\Vert \left(R_{\epsilon,\tau_{0}}u\right)\otimes v\right\Vert _{\mathcal{HB}_{pE_{j},F_{j}}\left(X^{\wedge}\right)}\le C_{3}\left\langle \ln\tau_{0}\right\rangle \left\Vert \left(R_{\epsilon,\tau_{0}}u\right)\otimes v\right\Vert _{L_{p}\left(\mathbb{R}_{+},L_{p}\left(X,E_{j}\right)\oplus B_{p}^{0}\left(\partial X,F_{j}\right);\frac{dt}{t}\right)}}
\nonumber\\
&\!\!\!\!=&\!\!\!\!
C_{3}\left\langle \ln\tau_{0}\right\rangle \left\Vert u\otimes v\right\Vert _{L_{p}\left(\mathbb{R}_{+},L_{p}\left(X,E_{j}\right)\oplus B_{p}^{0}\left(\partial X,F_{j}\right);\frac{dt}{t}\right)}\le C_{4}\left\langle \ln\tau_{0}\right\rangle \left\Vert u\otimes v\right\Vert _{\mathcal{HB}_{pE_{j},F_{j}}\left(X^{\wedge}\right)}.
\nonumber
\end{eqnarray}

3) We  identify the dual of $\mathcal{HB}_{pE_{j},F_{j}}\left(X^{\wedge}\right)$
with $\mathcal{HB}_{qE_{j},F_{j}}\left(X^{\wedge}\right)$,
where $\frac{1}{p}+\frac{1}{q}=1$, using the scalar product $L_{2}\left(\mathbb{R}_{+},L_{2}\left(X,E_{j}\right)\oplus L_{2}\left(\partial X,F_{j}\right),\frac{dt}{t}\right)$.
As $T_{\epsilon}$ is an isometry in $\mathcal{HB}_{pE_{j},F_{j}}\left(X^{\wedge}\right)$,
it is enough to prove that
\[
\lim_{\epsilon\to0}\int_{\mathbb{R}_{+}}\left\langle u(t/\epsilon),v(t)\right\rangle _{L_{2}\left(X,E_{j}\right)\oplus L_{2}\left(\partial X,F_{j}\right)}\frac{dt}{t}=0
\]
for all $u,v\in C_{c}^{\infty}\left(\mathbb{R}_{+},C^{\infty}\left(X,E_{j},F_{j}\right)\right)$.  But this is true. In fact, let $a,b,R>0$ be such that $\text{supp}\left(u\right)\subset\left[0,R\right]$
and $\text{supp}\left(v\right)\subset\left[a,b\right]$, then, for
$\epsilon<\frac{a}{R}$, we have $\text{supp}\left(T_{\epsilon}u\right)\cap\text{supp}\left(v\right)=\emptyset$.
Hence we obtain the result.
\end{proof}
\begin{lem}
\label{lem:Lemafundamentalconce}Let $h\in\tilde{\mathcal{B}}_{E_{0},F_{0},E_{1},F_{1}}^{p}\left(X,\Gamma_{0}\right)$
and suppose that there is a constant $c>0$ such that, for each $u\in\mathcal{T}_{\frac{1}{2}}\left(\mathbb{R}_{+},C^{\infty}\left(X,E_{0},F_{0}\right)\right)$,
we have
\[
\left\Vert u\right\Vert _{\mathcal{HB}_{pE_{0},F_{0}}\left(X^{\wedge}\right)}\le c\left\Vert \mbox{op}_{M}^{\frac{1}{2}}\left(h\right)\left(u\right)\right\Vert _{\mathcal{HB}_{pE_{1},F_{1}}\left(X^{\wedge}\right)}.
\]
Then, for every $v\in C^{\infty}\left(X,E_{0},F_{0}\right)$ and $\tau\in\mathbb{R}$,
we have 
\[
\left\Vert v\right\Vert _{H_{p}^{0}\left(X,E_{0}\right)\oplus B_{p}^{0}\left(\partial X,F_{0}\right)}\le C\left\langle \ln\left(\tau\right)\right\rangle ^{2}\left\Vert h\left(i\tau\right)v\right\Vert _{H_{p}^{0}\left(X,E_{1}\right)\oplus B_{p}^{0}\left(\partial X,F_{1}\right)},
\]
for some constant $C$ independent of $v$.
\end{lem}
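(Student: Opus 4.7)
The plan is to exploit the concentration operators $R_{\epsilon,\tau}$ from Proposition \ref{prop:Tepsilonconv} to turn the operator-norm hypothesis on $\mathrm{op}_M^{1/2}(h)$ into a pointwise bound on the symbol $h(i\tau)$. Intuitively, $R_{\epsilon,\tau}$ concentrates the Mellin frequency of a test function near $i\tau$, and by part (ii) of Proposition \ref{prop:Tepsilonconv}(3), when applied to $\mathrm{op}_M^{1/2}(h)$ it freezes the Mellin symbol at $h(i\tau)$.

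Fix $v\in C^\infty(X,E_0,F_0)$ and $\tau\in\mathbb{R}$, and choose once and for all a nonzero $u\in\mathcal{T}_{\frac12}(\mathbb{R}_+)$ with $\mathrm{supp}(\mathcal{M}_{\frac12}u)\subset\{\xi\in\Gamma_0;\,|\xi|\le\tfrac12\}$. Since $R_{\epsilon,\tau}$ acts only in $t$, we have $R_{\epsilon,\tau}(u\otimes v)=(R_{\epsilon,\tau}u)\otimes v\in \mathcal{T}_{\frac12}(\mathbb{R}_+,C^\infty(X,E_0,F_0))$, so the hypothesis applies and gives
\[
\|R_{\epsilon,\tau}(u\otimes v)\|_{\mathcal{HB}_{pE_0,F_0}(X^\wedge)}\le c\,\|\mathrm{op}_M^{1/2}(h)R_{\epsilon,\tau}(u\otimes v)\|_{\mathcal{HB}_{pE_1,F_1}(X^\wedge)}.
\]
By Proposition \ref{prop:Tepsilonconv}(3)(ii), noting that $h(i\tau)(u\otimes v)=u\otimes h(i\tau)v$,
\[
\mathrm{op}_M^{1/2}(h)R_{\epsilon,\tau}(u\otimes v)=R_{\epsilon,\tau}\bigl(u\otimes h(i\tau)v\bigr)+o(1)\quad\text{in }\mathcal{HB}_{pE_1,F_1}(X^\wedge),\ \epsilon\to 0.
\]

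Next I apply Lemma \ref{lem:propertiesRepslonTepsilon}(2), once as a lower bound on the left and once as an upper bound on the right (legitimate because $\mathcal{M}_{\frac12}u$ has support in $\{|\xi|\le 1/2\}$), to obtain, after sending $\epsilon\to 0$,
\[
\frac{1}{C\langle\ln\tau\rangle}\|u\otimes v\|_{\mathcal{HB}_{pE_0,F_0}(X^\wedge)}\le cC\langle\ln\tau\rangle\,\|u\otimes h(i\tau)v\|_{\mathcal{HB}_{pE_1,F_1}(X^\wedge)},
\]
which yields the two factors of $\langle\ln\tau\rangle$ responsible for the final $\langle\ln\tau\rangle^2$. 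Finally, since $\mathrm{supp}(\mathcal{M}_{\frac12}u)\subset K_0$, Proposition \ref{prop:LpBpMellin} (with $m=0$, hence with constants independent of $\tau$) bounds the cone-space norms from below and above by $\|u\|_{L_p(\mathbb{R}_+,dt/t)}$ times the corresponding norms of $v$ and $h(i\tau)v$ in $H^0_p(X,E_j)\oplus B^0_p(\partial X,F_j)$. Dividing by the fixed, nonzero factor $\|u\|_{L_p(\mathbb{R}_+,dt/t)}$ gives the claimed inequality.

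The main technical hurdle is verifying the $o(1)$ passage to the limit in the presence of the $\langle\ln\tau\rangle$-factors from Lemma \ref{lem:propertiesRepslonTepsilon}(2): the error term from Proposition \ref{prop:Tepsilonconv}(3)(ii) is vanishing in $\epsilon$ but may depend unboundedly on $\tau$, so one must fix $\tau$ first and then let $\epsilon\to 0$, after which the logarithmic bounds emerge cleanly. A secondary check is that $R_{\epsilon,\tau}(u\otimes v)$ lies in $\mathcal{T}_{1/2}$, which is immediate from the definition of $R_{\epsilon,\tau}$.
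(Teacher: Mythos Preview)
Your proposal is correct and follows essentially the same route as the paper: apply the hypothesis to $R_{\epsilon,\tau}(u\otimes v)$, use Lemma~\ref{lem:propertiesRepslonTepsilon}(2) once on each side to produce the two factors of $\langle\ln\tau\rangle$, invoke Proposition~\ref{prop:Tepsilonconv}(3)(ii) to freeze the symbol at $h(i\tau)$ as $\epsilon\to0$, and finish with Proposition~\ref{prop:LpBpMellin} (case $m=0$) to pass from the cone norms of the tensor products to the $X$-norms of $v$ and $h(i\tau)v$. Your remark about fixing $\tau$ before sending $\epsilon\to0$ is exactly how the paper handles the limit.
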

\begin{proof}
Let $0\ne u\in\mathcal{T}_{\frac{1}{2}}\left(\mathbb{R}_{+}\right)$, 
be a function with $\text{supp}(\mathcal{M}_{\frac{1}{2}}\left(u\right))\subset\left\{ z\in\Gamma_{0};\left|z\right|<\frac{1}{2}\right\} $
and $v\in C^{\infty}\left(X\right)$. Then item 2 of Lemma \ref{lem:propertiesRepslonTepsilon}
implies that
\begin{eqnarray*}\lefteqn{\left\Vert u\otimes v\right\Vert _{\mathcal{HB}_{pE_{0,}F_{0}}\left(X^{\wedge}\right)}}\\
&\le& C_{1}\left\langle \ln\tau_{0}\right\rangle \big\Vert op_{M}^{\frac{1}{2}}\left(h\right)R_{\epsilon,\tau_{0}}\left(u\otimes v\right)-R_{\epsilon,\tau_{0}}h\left(i\tau_{0}\right)\left(u\otimes v\right)\big\Vert _{\mathcal{HB}_{pE_{1},F_{1}}\left(X^{\wedge}\right)}
\nonumber\\
&&+
C_{2}\left\langle \ln\tau_{0}\right\rangle \left\Vert R_{\epsilon,\tau_{0}}h\left(i\tau_{0}\right)\left(u\otimes v\right)\right\Vert _{\mathcal{HB}_{pE_{1},F_{1}}\left(X^{\wedge}\right)}.
\nonumber
\end{eqnarray*}
As $\lim_{\epsilon\to0}\big\Vert op_{M}^{\frac{1}{2}}\left(h\right)R_{\epsilon,\tau_{0}}\left(u\otimes v\right)-h\left(i\tau_{0}\right)\left(u\otimes v\right)\big\Vert _{\mathcal{HB}_{pE_{1,}F_{1}}\left(X^{\wedge}\right)}=0$,
we see again from Lemma \ref{lem:propertiesRepslonTepsilon} that
\begin{eqnarray*}
\left\Vert u\otimes v\right\Vert _{\mathcal{HB}_{pE_{0,}F_{0}}\left(X^{\wedge}\right)}
&\le& C_{1}\left\langle \ln\tau_{0}\right\rangle \big\Vert \left(R_{\epsilon,\tau_{0}}u\right)\otimes h\left(i\tau_{0}\right)v\big\Vert _{\mathcal{HB}_{pE_{1},F_{1}}\left(X^{\wedge}\right)}\\
&\le& C_{2}\left\langle \ln\tau_{0}\right\rangle ^{2}\left\Vert u\otimes h\left(i\tau_{0}\right)v\right\Vert _{\mathcal{HB}_{pE_{1},F_{1}}\left(X^{\wedge}\right)},
\end{eqnarray*}
where $\left(u\otimes h\left(i\tau_{0}\right)v\right)(t,x):=u(t)\left(h\left(i\tau_{0}\right)v\right)(x)$.
Now, it is easy to conclude that
\begin{eqnarray*}
\lefteqn{\left\Vert v\right\Vert _{H_{p}^{0}\left(X,E_{0}\right)\oplus B_{p}^{0}\left(\partial X,F_{0}\right)}\le\frac{C}{\left\Vert u\right\Vert _{L_{p}\left(\mathbb{R}_{+},\frac{dt}{t}\right)}}\left\Vert u\otimes v\right\Vert _{\mathcal{HB}_{pE_{0,}F_{0}}\left(X^{\wedge}\right)}}
\nonumber\\
&\le&
\tilde{C}\frac{1}{\left\Vert u\right\Vert _{L_{p}\left(\mathbb{R}_{+},\frac{dt}{t}\right)}}\left\langle \ln\tau_{0}\right\rangle ^{2}\left\Vert u\otimes h\left(i\tau_{0}\right)v\right\Vert _{\mathcal{HB}_{pE_{1,}F_{1}}\left(X^{\wedge}\right)}
\\
&\le&
\tilde{C}\left\langle \ln\tau_{0}\right\rangle ^{2}\left\Vert h\left(i\tau_{0}\right)v\right\Vert _{H_{p}^{0}\left(X,E_{1}\right)\oplus B_{p}^{0}\left(\partial X,F_{1}\right)}.
\nonumber
\end{eqnarray*}

\end{proof}
We finish with the following proposition that proves the invertibility
of the conormal symbol.
\begin{prop}
Let $A\in\tilde{C}^{p}\left(\mathbb{D};k\right)$ be a Fredholm operator
in the space 
$$\mathcal{B}\left(\mathcal{HB}_{pE_{0,}F_{0}}\left(X^{\wedge}\right),\mathcal{HB}_{pE_{1,}F_{1}}\left(X^{\wedge}\right)\right).$$
Then the conormal symbol is invertible on $\Gamma_{0}$, and its inverse
is an element of  $\tilde{\mathcal{B}}_{E_{1},F_{1},E_{0},F_{0}}^{p}\left(X,\Gamma_{0}\right)$.
\end{prop}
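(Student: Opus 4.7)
The plan is to extract a pointwise lower bound for the conormal symbol $\sigma_M(A)(z)=h(0,z)+h_0(z)$ on $\Gamma_0$ from the Fredholm hypothesis by means of the scaling operators $T_\epsilon$, to obtain invertibility through a duality argument, and to promote the resulting norm bound to membership in the parameter-dependent Boutet de Monvel class via Theorem~\ref{thm:Teoremabmcomparametros}.

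First, since $A$ is Fredholm, there exist a bounded operator $B$ and a compact $K$ on the respective cone spaces such that $BA=I+K$, giving the a priori estimate
\[
\|u\|_{\mathcal{HB}_{pE_0,F_0}(X^\wedge)}\le\|B\|\,\|Au\|_{\mathcal{HB}_{pE_1,F_1}(X^\wedge)}+\|Ku\|_{\mathcal{HB}_{pE_0,F_0}(X^\wedge)}
\]
for every $u\in\mathcal T_{1/2}(\mathbb R_+,C^\infty(X,E_0,F_0))$. I will specialize it to $u=T_\epsilon w$ with $w=w_0\otimes v$, where $v\in C^\infty(X,E_0,F_0)$ and $w_0\in\mathcal T_{1/2}(\mathbb R_+)$ has weighted Mellin transform supported in $\{z\in\Gamma_0:|z|<1/2\}$. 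By Proposition~\ref{prop:Tepsilonconv}(1), $T_\epsilon$ is an isometry on $\mathcal{HB}_{pE_0,F_0}(X^\wedge)$, while item~3 of Lemma~\ref{lem:propertiesRepslonTepsilon} shows $T_\epsilon w\to 0$ weakly, so $\|KT_\epsilon w\|\to 0$ by compactness.

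The decisive step is to verify that $AT_\epsilon w-T_\epsilon\,op_M^{1/2}(\sigma_M(A))w\to 0$ in $\mathcal{HB}_{pE_1,F_1}(X^\wedge)$. Using the decomposition $A=\omega_0 A_M\omega_1+(1-\omega_2)A_\psi(1-\omega_3)+M+G$ I examine each summand. Because $T_\epsilon$ shifts the mass of $w$ toward the tip on the logarithmic scale, $(1-\omega_3)T_\epsilon w\to 0$, and hence the interior Boutet de Monvel contribution $(1-\omega_2)A_\psi(1-\omega_3)T_\epsilon w$ vanishes in the limit. In the smoothing Mellin term $M=\omega_0\sum_{l=0}^{k-1}t^l\,op_M^{\gamma_l-n/2}(h_l)\omega_1$ the summands with $l\ge 1$ acquire a factor $t^l$ that forces their norm on the shrinking support to zero, while the $l=0$ summand yields $T_\epsilon\,op_M^{1/2}(h_0)w$ after absorbing the cut-offs. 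Proposition~\ref{prop:Tepsilonconv}(3i) handles the leading Mellin piece, giving $\omega_0 A_M\omega_1 T_\epsilon w\to T_\epsilon\,op_M^{1/2}(h(0,\cdot))w$. Finally, $GT_\epsilon w\to 0$ because $G$ maps $\mathcal{HB}_{pE_0,F_0}(X^\wedge)$ into a space that embeds compactly into $\mathcal{HB}_{pE_1,F_1}(X^\wedge)$ (see the remark preceding Definition~\ref{def:boutetconical}), combined with $T_\epsilon w\to 0$ weakly. Passage to the limit in the a priori estimate therefore yields $\|w\|\le c\,\|op_M^{1/2}(\sigma_M(A))w\|$ for every admissible $w$.

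Lemma~\ref{lem:Lemafundamentalconce} then delivers the pointwise lower bound $\|v\|\le C\langle\ln\tau\rangle^2\|\sigma_M(A)(i\tau)v\|$ on $\Gamma_0$, so $\sigma_M(A)(i\tau)$ is injective with closed range. Applying the same reasoning to the Fredholm adjoint $A^*\in\tilde C^q(\mathbb D;k)$ furnished by Proposition~\ref{prop:propriedadesconecalculus}(3)—whose conormal symbol is the pointwise adjoint of $\sigma_M(A)$ on $\Gamma_0$ up to the usual identification—shows that $\sigma_M(A)(i\tau)^*$ is also injective. Lemma~\ref{lem:injetora-adj-implica-iso} therefore yields pointwise invertibility with $\|\sigma_M(A)(i\tau)^{-1}\|\le C\langle\ln\tau\rangle^2$. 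Since $\sigma_M(A)$ lies in $\tilde{\mathcal B}_{E_0,F_0,E_1,F_1}^p(X,\Gamma_0)$ by the very structure of $\tilde C^p(\mathbb D;k)$, Theorem~\ref{thm:Teoremabmcomparametros} finally concludes that $\sigma_M(A)^{-1}\in\tilde{\mathcal B}_{E_1,F_1,E_0,F_0}^p(X,\Gamma_0)$. The main technical hurdle I anticipate is the bookkeeping of the four convergences above—in particular bounding $\|GT_\epsilon w\|$ via the compact embedding paired with the weak convergence $T_\epsilon w\to 0$, and handling the cut-off interactions, where the rapid logarithmic decay of $w_0$ away from the tip is what lets $(1-\omega_i)T_\epsilon w$ vanish at the required rate—but each of these estimates is a routine, if delicate, consequence of the cone-calculus mapping properties collected earlier.
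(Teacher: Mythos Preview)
Your overall strategy—derive $\|u\|\le c\,\|op_M^{1/2}(\sigma_M(A))u\|$ via $T_\epsilon$, then apply Lemma~\ref{lem:Lemafundamentalconce}, duality, and Theorem~\ref{thm:Teoremabmcomparametros}—is exactly the paper's. The implementation differs at the localization step. The paper does \emph{not} analyze the summands of $A$ individually; it first rewrites $A=\omega\,op_M^{1/2}(h)\,\omega_0+(1-\omega)P(1-\omega_1)+G$, with the $l=0$ smoothing Mellin symbol folded into $h$ and the $l\ge1$ terms absorbed into $G$, and then sandwiches the Fredholm identity $B_1A=I+K_1$ between further cut-offs $\sigma,\sigma_1$ supported in the collar to obtain the clean relation $\sigma=B\,op_M^{1/2}(h)\,\sigma-K$ with $B=\sigma_1B_1\sigma_1$ and $K$ compact. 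Applied to $T_\epsilon u$ with $u\in C_c^\infty(\mathbb R_+,C^\infty(X,E_0,F_0))$ this gives $T_\epsilon u=B\,op_M^{1/2}(h)\,T_\epsilon u-KT_\epsilon u$ for small $\epsilon$, so Proposition~\ref{prop:Tepsilonconv}(3i) applies directly, with \emph{no} cut-offs left and the passage $\mathbb D\to X^\wedge$ already handled.

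Your term-by-term route can be completed, but the claim $\omega_0A_M\omega_1T_\epsilon w\to T_\epsilon\,op_M^{1/2}(h(0,\cdot))w$ is not what Proposition~\ref{prop:Tepsilonconv}(3i) delivers: that proposition carries no outer cut-off $\omega_0$, and $op_M^{1/2}(h)T_\epsilon w$ is not supported in the collar, so you additionally need $(1-\omega_0)\,op_M^{1/2}(h)\,T_\epsilon w\to 0$ in the cone Besov norm (true, but an extra argument), or better, observe that only an upper bound is required and multiplication by $\omega_0$ is bounded on $\mathcal{HB}$. Also drop the Mellin-support restriction on $w_0$: it is never used in your $T_\epsilon$-argument, and if taken literally it would be too weak for Lemma~\ref{lem:Lemafundamentalconce}, whose proof applies the hypothesis to $R_{\epsilon,\tau_0}(u\otimes v)$, which has Mellin support in $K_m$ for $m$ depending on $\tau_0$, not only in $\{|z|<\tfrac12\}$.
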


\begin{proof}
We are going to consider operators given as
\begin{equation}
A=\omega op_{M}^{\frac{1}{2}}\left(h\right)\omega_{0}+\left(1-\omega\right)P\left(1-\omega_{1}\right)+G,\label{eq:FormulaA}
\end{equation}
where $P\in\tilde{\mathcal{B}}_{2E_{0},2E_{1},2F_{0},2F_{1}}^{p}\left(2\mathbb{D}\right)$, 
$G\in\tilde{C}_{\mathcal{O}\,E_{0},F_{0},E_{1},F_{1}}^{p}\left(\mathbb{D},k\right)$, 
and $h\left(t,z\right)=a\left(t,z\right)+\tilde{a}\left(z\right)$
with functions
$a\in C^{\infty}\left(\overline{\mathbb{R}_{+}},\tilde{M}_{\mathcal{O}\,E_{0},F_{0},E_{1},F_{1}}^{p}\left(X\right)\right)$ and $\tilde{a}\in M_{P\,E_{0},F_{0},E_{1},F_{1}}^{-\infty}\left(X\right)$
for some asymptotic type $P$ with 
$\pi_{\mathbb{C}}P\cap\Gamma_{0}=\emptyset$. In particular,
$h_{0}\left(z\right):=h\left(0,z\right)=\sigma_{M}^{0}\left(A\right)\left(z\right)$.

Let us first prove that 
for all $u\in\mathcal{HB}_{pE_{0},F_{0}}\left(X^{\wedge}\right)$
\begin{equation}
\left\Vert u\right\Vert _{\mathcal{HB}_{pE_{0},F_{0}}\left(X^{\wedge}\right)}
\le 
c\big\Vert \mbox{op}_{M}^{\frac{1}{2}}\left(h_{0}\right)\left(u\right)\big\Vert _{\mathcal{HB}_{pE_{1},F_{1}}\left(X^{\wedge}\right)}.\label{eq:Inequality}
\end{equation}
It suffices to show this for $u\in C_{c}^{\infty}\left(\mathbb{R}_{+},C^{\infty}\left(X,E_{0},F_{0}\right)\right)$.
We find operators $B_{1}\in\mathcal{B}\left(\mathcal{HB}_{pE_{1},F_{1}}\left(\mathbb{D}\right),\mathcal{HB}_{pE_{0},F_{0}}\left(\mathbb{D}\right)\right)$
and $K_{1}\in\mathcal{B}\left(\mathcal{HB}_{pE_{0},F_{0}}\left(\mathbb{D}\right),\mathcal{HB}_{pE_{0},F_{0}}\left(\mathbb{D}\right)\right)$,
where $K_{1}$ is compact, such that $B_{1}A-1=K_{1}$. Let us choose
$\sigma$ and $\sigma_{1}$ in $C_{c}^{\infty}\left(\left[0,1\right[\right)$
such that $\sigma\sigma_{1}=\sigma$, $\sigma_{1}\omega_{1}=\sigma_{1}$
and $\sigma_{1}\omega=\sigma_{1}$. Then
\[
K_{1}\sigma=B_{1}A\sigma-\sigma=B_{1}\sigma_{1}A\sigma+B_{1}\left(1-\sigma_{1}\right)A\sigma-\sigma.
\]
As the supports of $\sigma$ and $1-\sigma_{1}$ are disjoint, the
operator $\left(1-\sigma_{1}\right)A\sigma$ is a Green operator and
therefore compact. Hence 
\[
\sigma_{1}B_{1}\sigma_{1}A\sigma-\sigma=\sigma_{1}K_{1}\sigma-\sigma_{1}B_{1}\left(1-\sigma_{1}\right)A\sigma=\sigma_{1}K_{2}\sigma,
\]
where $K_{2}$ is a compact. Using Equation (\ref{eq:FormulaA}) for
$A\sigma$, we conclude that $\sigma=Bop_{M}^{\frac{1}{2}}\left(h\right)\sigma-K$,
where $B=\sigma_{1}B_{1}\sigma_{1}$ and $K=\sigma_{1}\left(K_{2}-B_{1}\sigma_{1}G\right)\sigma$
is compact.

Now let $u\in C_{c}^{\infty}\left(\mathbb{R}_{+},C^{\infty}\left(X,E_{0},F_{0}\right)\right)$.
We know that $T_{\epsilon}\left(u\right)=\sigma T_{\epsilon}\left(u\right)$,
when $\epsilon$ is small. As $\sigma=Bop_{M}^{\frac{1}{2}}\left(h\right)\sigma-K$,
we have that, for $\epsilon$ sufficiently small,
\begin{eqnarray*}
\lefteqn{\left\Vert u\right\Vert _{\mathcal{HB}_{pE_{0},F_{0}}\left(X^{\wedge}\right)}=\left\Vert \sigma T_{\epsilon}\left(u\right)\right\Vert _{\mathcal{HB}_{pE_{0},F_{0}}\left(X^{\wedge}\right)}}
\nonumber\\
&\le&
\left\Vert B\right\Vert _{\mathcal{B}\left(\mathcal{HB}_{pE_{1},F_{1}}\left(X^{\wedge}\right),\mathcal{HB}_{pE_{0},F_{0}}\left(X^{\wedge}\right)\right)}
\big\Vert op_{M}^{\frac{1}{2}}\left(h\right)T_{\epsilon}\left(u\right)-T_{\epsilon}op_{M}^{\frac{1}{2}}\left(h_{0}\right)u\big\Vert _{\mathcal{HB}_{pE_{1},F_{1}}\left(X^{\wedge}\right)}
\nonumber\\
&&+
\left\Vert B\right\Vert _{\mathcal{B}\left(\mathcal{HB}_{pE_{1},F_{1}}\left(X^{\wedge}\right),\mathcal{HB}_{pE_{0},F_{0}}\left(X^{\wedge}\right)\right)}\big\Vert T_{\epsilon}op_{M}^{\frac{1}{2}}\left(h_{0}\right)u\big\Vert _{\mathcal{HB}_{pE_{1},F_{1}}\left(X^{\wedge}\right)}\\
&&+\left\Vert KT_{\epsilon}\left(u\right)\right\Vert _{\mathcal{HB}_{pE_{0},F_{0}}\left(X^{\wedge}\right)}.
\nonumber
\end{eqnarray*}

As $T_{\epsilon}u$  weakly tends to zero  and $K$ is compact,
 $\lim_{\epsilon\to0}\left\Vert KT_{\epsilon}\left(u\right)\right\Vert _{\mathcal{HB}_{pE_{0},F_{0}}\left(X^{\wedge}\right)}=0$.
Using that $T_{\epsilon}$ is an isometry and  item $3.ii)$ of
Proposition \ref{prop:Tepsilonconv}, we conclude that Inequality
(\ref{eq:Inequality}) holds. This result together with Lemma \ref{lem:Lemafundamentalconce}
implies that
\begin{equation}
\left\Vert v\right\Vert _{H_{p}^{0}\left(X,E_{0}\right)\oplus B_{p}^{0}\left(\partial X,F_{0}\right)}\le C\left\langle \ln\left(\tau\right)\right\rangle ^{2}\left\Vert h\left(i\tau\right)v\right\Vert _{H_{p}^{0}\left(X,E_{1}\right)\oplus B_{p}^{0}\left(\partial X,F_{1}\right)},\label{eq:vlehv}
\end{equation}
for some constant $C$ independent of $v$.

As $A\in\tilde{C}_{E_{0},F_{0},E_{1},F_{1}}^{p}\left(\mathbb{D};k\right)$
is Fredholm, so is $A^{*}\in\tilde{C}_{E_{1},F_{1},E_{0},F_{0}}^{q}\left(\mathbb{D};k\right)$. The above argument implies that $\sigma_{M}^{0}\left(A^{*}\right)\left(z\right)=h\left(i\tau\right)^{*}$
also satisfies an estimate as (\ref{eq:vlehv}),
for $q$ instead of $p$, where $\frac{1}{p}+\frac{1}{q}=1$. Hence,
for all $\tau\in\mathbb{R}$, $h\left(i\tau\right)$ is injective,
has closed range and the same is true for its adjoint. Lemma
\ref{lem:injetora-adj-implica-iso} implies that
$h\left(i\tau\right)$ is bijective and
\[
\big\Vert h\left(i\tau\right)^{-1}\big\Vert _{\mathcal{B}\left(H_{p}^{0}\left(X,E_{1}\right)\oplus B_{p}^{0}\left(\partial X,F_{1}\right),H_{p}^{0}\left(X,E_{0}\right)\oplus B_{p}^{0}\left(\partial X,F_{0}\right)\right)}\le\tilde{C}\left\langle \ln\left(\tau\right)\right\rangle ^{2}.
\]

Theorem \ref{thm:Teoremabmcomparametros} implies that $h_{0}^{-1}\in\tilde{\mathcal{B}}_{E_{1},F_{1},E_{0},F_{0}}^{p}\left(X,\Gamma_{0}\right)$.
\end{proof}

\subsection{Spectral invariance of boundary value problems with conical
singularities\label{subsec:SpectralInvarianceofBVConeAlgebra}}

Once we know the equivalence of Fredholm property and ellipticity,
we can establish the spectral invariance.

\begin{thm}\label{spectralinvariance}
Let $A\in\tilde{C}_{E_{0},F_{0},E_{1},F_{1}}^{p}\left(\mathbb{D},k\right)$. Suppose that, for
each $\lambda\in\Lambda$, the operator
\[
A:\mathcal{H}_{p}^{0,\frac{n+1}{2}}\left(\mathbb{D},E_{0}\right)\oplus\mathcal{B}_{p}^{0,\frac{n}{2}}\left(\mathbb{B},F_{0}\right)\to\mathcal{H}_{p}^{0,\frac{n+1}{2}}\left(\mathbb{D},E_{1}\right)\oplus\mathcal{B}_{p}^{0,\frac{n}{2}}\left(\mathbb{B},F_{1}\right).
\]
is invertible. Then $A^{-1}\in\tilde{C}_{E_{1},F_{1},E_{0},F_{0}}^{p}\left(\mathbb{D},k\right)$.
\end{thm}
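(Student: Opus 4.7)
The plan is to follow the same strategy as in the proof of Theorem \ref{thm:Teoremabmcomparametros}, but now using the cone calculus and its parametrix. Since $A$ is invertible on $\mathcal H_p^{0,(n+1)/2}(\mathbb D,E_0)\oplus \mathcal B_p^{0,n/2}(\mathbb B,F_0)$ it is in particular Fredholm there, so by Theorem \ref{thm:EquivFredEll} it is elliptic. Item 4) of Proposition \ref{prop:propriedadesconecalculus} then yields a parametrix $B\in \tilde C_{E_1,F_1,E_0,F_0}^p(\mathbb D,k)$ and Green remainders $K_1\in\tilde C^p_{G\,E_1,F_1,E_1,F_1}(\mathbb D,k)$, $K_2\in\tilde C^p_{G\,E_0,F_0,E_0,F_0}(\mathbb D,k)$ with $AB=I+K_1$ and $BA=I+K_2$.

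Next, I would derive the algebraic identity
\[
A^{-1}=B-K_2 B+K_2A^{-1}K_1,
\]
which follows from $A^{-1}=B-K_2A^{-1}$ combined with $A^{-1}=B-A^{-1}K_1$. The first two terms already lie in $\tilde C^p_{E_1,F_1,E_0,F_0}(\mathbb D,k)$: $B$ by construction, and $K_2B$ as a composition of cone operators (item 1 of Proposition \ref{prop:propriedadesconecalculus}, combined with the fact that composing a Green operator with any cone operator yields a Green operator). So the whole problem reduces to verifying that the remainder $R:=K_2A^{-1}K_1$ belongs to $\tilde C^p_{G\,E_1,F_1,E_0,F_0}(\mathbb D,k)$.

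To establish this, I would first observe that $K_1$ maps the 0-level cone space continuously into $\mathcal H^{\infty,(n+1)/2}_{p,P}(\mathbb D,E_1)\oplus \mathcal B^{\infty,n/2}_{p,Q}(\mathbb B,F_1)$ for fixed asymptotic types $P,Q$, and likewise for $K_2$ with types $P',Q'$. The crux is then an elliptic regularity argument showing that $A^{-1}$ sends the image of $K_1$ into a space with (possibly modified) asymptotics. This uses the bootstrap $A^{-1}=B-K_2 A^{-1}$: applied to $K_1 u$, the term $B K_1 u$ inherits the asymptotic structure of $K_1 u$ from the mapping properties of $B$ (including that $B$ respects the Mellin asymptotics, since $B$ itself is in the cone algebra), while $K_2 A^{-1} K_1 u$ is again mapped into a smooth asymptotic space by $K_2$. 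Iterating, one obtains $A^{-1}K_1 u\in \mathcal H^{\infty,(n+1)/2}_{p,\tilde P}(\mathbb D,E_0)\oplus \mathcal B^{\infty,n/2}_{p,\tilde Q}(\mathbb B,F_0)$ for suitable asymptotic types. Composing with $K_2$ on the left yields that $R$ maps into a space with asymptotics of the required kind. The analogous statement for the formal adjoint $R^*=K_1^*(A^*)^{-1}K_2^*$ follows from item 3) of Proposition \ref{prop:propriedadesconecalculus}, since $A^*$ is itself elliptic and invertible on the corresponding dual cone spaces (with the same logarithmic-norm bound, trivially), so the argument applies symmetrically.

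The main obstacle will be step three: tracking the asymptotic types through the composition $K_2A^{-1}K_1$ and verifying that $A^{-1}$, which is only given as a bounded operator on the 0-level Bessel potential/Besov cone spaces, really does preserve the finer asymptotic spaces into which $K_1$ takes its values. This rests on an elliptic regularity/bootstrap argument based on the parametrix identities and the fact that $B$ lives in the cone algebra and thus automatically respects these asymptotic scales. The rest is essentially bookkeeping.
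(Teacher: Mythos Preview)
Your approach is correct and matches the paper's proof exactly: invertibility $\Rightarrow$ Fredholm $\Rightarrow$ elliptic (Theorem~\ref{thm:EquivFredEll}) $\Rightarrow$ parametrix $B$ with Green remainders $K_1,K_2$, then the identity $A^{-1}=B-K_2B+K_2A^{-1}K_1$, and finally the observation that all three terms on the right lie in the cone algebra.

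One remark: you are overworking the last step. No bootstrap or elliptic regularity for $A^{-1}$ is needed to see that $K_2A^{-1}K_1\in\tilde C_G^p$. By the very definition of $\tilde C_G^p$, $K_1$ maps \emph{every} $\mathcal H_p^{s,(n+1)/2}\oplus\mathcal B_p^{r,n/2}$ continuously into a fixed asymptotic space, which in particular sits inside the $0$-level space on which $A^{-1}$ is bounded; then $K_2$ takes the $0$-level space into an asymptotic space of the required kind. The adjoint $(K_2A^{-1}K_1)^*=K_1^*(A^*)^{-1}K_2^*$ is handled identically, since $A^*$ is invertible on the dual $0$-level spaces. Thus $A^{-1}$ need not preserve any asymptotics itself; the Green operators on either side do all the work, and the paper simply asserts the conclusion in one line.
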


\begin{proof}
The operator $A$ is invertible, hence it is Fredholm and there
are operators $B\in\tilde{C}_{E_{1},F_{1},E_{0},F_{0}}^{p}\left(\mathbb{D},k\right)$,
$K_{1}\in\tilde{C}_{G\,E_{1},F_{1},E_{1},F_{1}}^{p}\left(\mathbb{D},k\right)$
and $K_{2}\in\tilde{C}_{G\,E_{0},F_{0},E_{0},F_{0}}^{p}\left(\mathbb{D},k\right)$
such that $AB=I+K_{1}$ and $BA=I+K_{2}$. These identities imply
that
\[
A^{-1}=B-K_{2}B+K_{2}A^{-1}K_{1}.
\]
As $B\in\tilde{C}_{E_{1},F_{1},E_{0},F_{0}}^{p}\left(\mathbb{D},k\right)$,
$K_{2}B\in\tilde{C}_{G\,E_{1},F_{1},E_{0},F_{0}}^{p}\left(\mathbb{D},k\right)$
and $K_{2}A^{-1}K_{1}$ belongs to $\tilde{C}_{G\,E_{1},F_{1},E_{0},F_{0}}^{p}\left(\mathbb{D},k\right)$,
we obtain the result. 
\end{proof}

\begin{thm}
\label{thm:Teorema Principal}Let $A\in C_{E_{0},F_{0},E_{1},F_{1}}^{m,d}\left(\mathbb{D},\gamma,\gamma-m,k\right)$, where $m\in \mathbb Z$, $d=\max\left\{ m,0\right\} $. Suppose that
there is an $s\in \mathbb Z$, $s\ge d$ such that
\[
A:\mathcal{H}_{p}^{s,\gamma}\left(\mathbb{D},E_{0}\right)\oplus\mathcal{B}_{p}^{s-\frac{1}{p},\gamma-\frac{1}{2}}\left(\mathbb{B},F_{0}\right)\to\mathcal{H}_{p}^{s-m,\gamma-m}\left(\mathbb{D},E_{1}\right)\oplus\mathcal{B}_{p}^{s-m-\frac{1}{p},\gamma-m-\frac{1}{2}}\left(\mathbb{B},F_{1}\right)
\]
is invertible. Then, $A^{-1}\in C_{E_{1},F_{1},E_{0},F_{0}}^{-m,d'}\left(\mathbb{D},\gamma-m,\gamma,k\right)$
, where $d':=\max\left\{ -m,0\right\} $. 
In particular, for all $s>d-1+\frac{1}{q}$ and $1<q<\infty$ the operator 
$A$ is invertible
in $\mathcal{B}\big(\mathcal{H}_{q}^{s,\gamma}\left(\mathbb{D},E_{0}\right)\oplus\mathcal{B}_{q}^{s-\frac{1}{p},\gamma-\frac{1}{2}}\left(\mathbb{B},F_{0}\right),\mathcal{H}_{q}^{s-m,\gamma-m}\left(\mathbb{D},E_{1}\right)\oplus\mathcal{B}_{q}^{s-m-\frac{1}{q},\gamma-m-\frac{1}{2}}\left(\mathbb{B},F_{1}\right)\big)$.
\end{thm}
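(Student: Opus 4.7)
The plan is to construct a parametrix for $A$ in the cone algebra and then write $A^{-1}$ as the parametrix modulo a residual Green term, following the scheme used in the proof of Theorem \ref{spectralinvariance} but with the general weights, orders and classes present here.

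First, since $A$ is invertible at $(s,p)$ it is in particular Fredholm there, and Corollary \ref{equivalence} gives that $A$ is elliptic and that the kernel (trivial by invertibility) and index (zero) are independent of $(s',q)$ with $s'\in\mathbb Z$, $s'\ge d$, $q\in(1,\infty)$; hence $A$ is invertible on every such space. Proposition \ref{prop:propriedadesconecalculus}(4) then supplies a parametrix
$B \in C^{-m,d'}_{E_1,F_1,E_0,F_0}(\mathbb D, (\gamma-m,\gamma,k))$ with
$AB-I = K_1 \in C^{d'}_{G\,E_1,F_1,E_1,F_1}(\mathbb D,(\gamma-m,\gamma-m,k))$ and
$BA-I = K_2 \in C^d_{G\,E_0,F_0,E_0,F_0}(\mathbb D,(\gamma,\gamma,k))$.

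Second, combining $A^{-1}=B-A^{-1}K_1$ (from $AB=I+K_1$) and $A^{-1}=B-K_2A^{-1}$ (from $BA=I+K_2$) yields
\[
A^{-1}=B-K_2B+K_2A^{-1}K_1.
\]
The composition rule of Proposition \ref{prop:propriedadesconecalculus}(1) applied to $K_2 B$ gives class $\max\{-m+d,d'\} = d'$ in both cases $m\ge 0$ and $m<0$, so $K_2 B \in C^{d'}_{G\,E_1,F_1,E_0,F_0}(\mathbb D,(\gamma-m,\gamma,k))$. For the term $K_2 A^{-1} K_1$, I will verify that $K_1$ takes values in $\mathcal H^{\infty,\gamma-m}_{p,P}(\mathbb D, E_1)\oplus \mathcal B^{\infty, \gamma-m-1/2}_{p,Q}(\mathbb B, F_1)$ for a suitable asymptotic pair $(P,Q)$; that $A^{-1}$ preserves both smoothness and the asymptotic structure --- this is the elliptic regularity for the cone algebra, which follows by bootstrapping with the parametrix --- mapping into $\mathcal H^{\infty,\gamma}_{p,P'}\oplus \mathcal B^{\infty, \gamma-1/2}_{p,Q'}$ for some $(P',Q')$ determined by the conormal structure of $A$; and that $K_2$ then outputs into spaces with asymptotic type $(P'',Q'')$. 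The symmetric argument applied to the formal adjoint $A^*$ (which is itself elliptic by Proposition \ref{prop:propriedadesconecalculus}(3)) handles the adjoint condition required in the definition of Green operators. Together these place $K_2A^{-1}K_1$ in $C^{d'}_{G\,E_1,F_1,E_0,F_0}(\mathbb D,(\gamma-m,\gamma,k))$, giving $A^{-1}\in C^{-m,d'}_{E_1,F_1,E_0,F_0}(\mathbb D,(\gamma-m,\gamma,k))$.

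Finally, the invertibility on the remaining spaces follows because, by Proposition \ref{prop:propriedadesconecalculus}(2), $A^{-1}$ extends continuously to $\mathcal H^{s-m,\gamma-m}_q(\mathbb D, E_1)\oplus \mathcal B^{s-m-1/q,\gamma-m-1/2}_q(\mathbb B, F_1)\to\mathcal H^{s,\gamma}_q(\mathbb D, E_0)\oplus \mathcal B^{s-1/q,\gamma-1/2}_q(\mathbb B, F_0)$ for every $s>d-1+1/q$ and $1<q<\infty$; the identities $AA^{-1}=I$ and $A^{-1}A=I$, already valid on the dense subspace of smooth sections, then extend to the full spaces by continuity. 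The main obstacle in the plan is the second step, and specifically the verification that $K_2A^{-1}K_1\in C^{d'}_G$: while the mapping properties follow once elliptic regularity in the cone algebra is invoked, the adjoint condition that is part of the definition of Green operators must be checked by a parallel argument using the ellipticity of $A^*$ together with the structure of asymptotic types under cone-algebra composition.
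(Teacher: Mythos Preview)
Your approach is correct, but it takes a different and more laborious route than the paper. The paper does not repeat the parametrix argument in the general $(m,d,\gamma)$-setting. Instead it conjugates $A$ by the order-reducing isomorphisms and the weight shift from \eqref{eq:reduction} to obtain an operator $\tilde A\in\tilde C^p_{E_0,F_0,E_1,F_1}(\mathbb D,k)$; since the conjugating operators are invertible elements of the calculus, $\tilde A$ is invertible, Theorem~\ref{spectralinvariance} gives $\tilde A^{-1}\in\tilde C^p$, and undoing the conjugation places $A^{-1}$ in $C^{-m,d'}$. This bypasses entirely the issue you single out as the ``main obstacle'': in the class-$0$ algebra $\tilde C^p$ the Green ideal has no differential prefactors and the formal adjoint stays in the calculus (Proposition~\ref{prop:propriedadesconecalculus}(3)), so the verification that $K_2A^{-1}K_1$ is Green is immediate there.

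Your direct argument can be made to work, but two remarks are in order. First, you do not actually need elliptic regularity with asymptotics for $A^{-1}$: since each $K_{1,j}$ already lands in $\mathcal H^{\infty,\gamma-m}_p\oplus\mathcal B^{\infty,\gamma-m-1/2}_p$, the boundedness of $A^{-1}$ on all integer Sobolev scales (which you correctly derive from Corollary~\ref{equivalence}) suffices to feed into $K_2$, and it is $K_2$ that supplies the output asymptotic type. Second, the adjoint condition is the genuinely delicate point in your route: when $d>0$ the formal adjoint $A^*$ is not in the cone algebra, so ``a parallel argument using the ellipticity of $A^*$'' does not go through verbatim; one must instead argue via the mapping properties of $K_{1,j}^*$ and $K_{2,l}^*$ (which are part of the Green definition) together with the boundedness of $(A^{-1})^*=(A^*)^{-1}$ on the dual scales. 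This is doable but fiddly, and it is precisely what the paper's reduction to $\tilde C^p$ is designed to avoid.
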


\begin{proof}
As in the proof of Corollary \ref{equivalence} we consider the operator 
$\tilde A\in \tilde{C}_{E_{0},F_{0},E_{1},F_{1}}^{p}\left(\mathbb{D},k\right)$ 
defined by \eqref{eq:reduction}. As $A$ is invertible,  so is $\tilde A$. 
We infer from Theorem  \ref{spectralinvariance} that $(\tilde A)^{-1}$ belongs to $\tilde{C}_{E_{1},F_{1},E_{0},F_{0}}^{p}\left(\mathbb{D},k\right)$ and hence 
$A^{-1}\in C_{E_{1},F_{1},E_{0},F_{0}}^{-m,d'}\left(\mathbb{D},\gamma-m,\gamma,k\right)$.
\end{proof}

\end{document}